\documentclass[
a4paper,reqno,10pt]{amsart}%{article}

\usepackage{amssymb,amsmath,amsthm,array}
\usepackage{graphicx}
\usepackage{float}
\usepackage{xcolor}
\usepackage{enumitem}
\usepackage{booktabs}
\usepackage{adjustbox}
\usepackage{mathtools}
\usepackage[top=2.5cm,bottom=2.5cm,left=2.5cm,right=2.5cm]{geometry}
\usepackage{placeins}
\usepackage[numbers,sort&compress]{natbib}

\usepackage[
colorlinks=true,allcolors=black,bookmarks=true,
bookmarksnumbered=true,bookmarkstype=toc,linktocpage=true
]{hyperref}

\newtheorem{thm}{Theorem}[section]

\newtheorem{lem}[thm]{Lemma}
\newtheorem{prop}[thm]{Proposition}
\newtheorem{cor}[thm]{Corollary}

\newtheorem{rem}[thm]{Remark}
\newtheorem{ass}[thm]{Assumption}

\numberwithin{equation}{section}
\allowdisplaybreaks

\newcommand{\mcl}{\mathcal{L}}
\newcommand{\cil}{\xrightarrow{\mcl}} % convergence in law
\newcommand{\cip}{\xrightarrow{p}} % convergence in probability
\newcommand{\argmax}{\mathop{\rm argmax}}
\newcommand{\E}{\mathbb{E}}
\newcommand{\F}{\mathcal{F}}
\newcommand{\Pb}{\mathbb{P}}
\newcommand{\proofstep}[1]{%
	\par\medskip\noindent\emph{#1.}\quad
}

\title[Tail-Corrected Inference for Switching Jump Diffusions]
{{Tail-corrected semiparametric inference for regime-switching jump
diffusions}}

\author{Yuzhong Cheng}
\address{Institute of Mathematics for Industry, Kyushu University, 744 Motooka, Nishi-ku, Fukuoka 819-0395, Japan}
\email{cheng.yuzhong.451@m.kyushu-u.ac.jp}
\thanks{The work was conducted at the Institute of Mathematics for Industry,
Kyushu University, 744 Motooka, Nishi-ku, Fukuoka 819-0395, Japan.}

\keywords{Markov switching diffusion, infinite-variation L\'evy jumps,
debiased truncated variation, tail-mean correction, Gaussian
quasi-likelihood, semiparametric inference, L\'evy density estimation.}

\begin{document}
	\begin{abstract}
		{
		Regime-switching jump diffusions describe continuous-time dynamical
		systems that exhibit both abrupt jumps and changes in regime, with
		applications in economics, ecology, and physics. Statistical inference for
		such processes is complicated by the interaction between jumps and regime
		switching. We study an ergodic jump diffusion with exogenous finite-state
		regime switching. From discrete observations of both the state and regime
		processes, the estimation targets are the drift and diffusion parameters
		and the unknown regime-wise L\'evy
		densities, in settings with either finite-variation jumps or locally stable
		infinite-variation jumps. We first estimate the tail means and coefficient
		parameters. For finite-variation jumps, coefficient estimation is based on
		a Gaussian quasi-likelihood; for locally stable infinite-variation jumps
		with common \(1<\beta<8/5\), diffusion estimation uses a two-step debiasing
		of truncated realized variation. We then use drift-corrected detected
		residuals to estimate each L\'evy density away from zero. We establish
		consistency and asymptotic normality for the coefficient estimators, together
		with an \(L^2(B)\)-convergence rate for the density estimators. Tail-mean
		estimation contributes an explicit L\'evy-measure term to the drift
		covariance, whereas the diffusion block retains the Gaussian quasi-score
		covariance. Simulations illustrate the finite-sample performance of the
		estimators.
		}
	\end{abstract}

	\maketitle

	%%%%%%%%%%%%%%%%%%%%%%%%%

	%%%%%%%%%%%%%%%%%%%%%%%%%
	{
	\section{Introduction}

	Diffusions with Markovian switching describe systems whose local dynamics vary with
	a finite-state Markov chain; see
	\citet{mao2006stochastic,yin2009hybrid}. Their L\'evy extensions allow the
	continuous dynamics and the distribution of abrupt shocks to change across
	regimes.

	This flexibility is important when persistent market, operating, or
	environmental conditions affect not only drift and volatility but also jump
	frequency and magnitude. {Such models appear in the
	regime-switching L\'evy models of \citet{ChevallierGoutte2017}, the
	switching spike models of \citet{janczura2010empirical}, and the multistate
	Langevin diffusion of \citet{McClintockLander2024} for behavior-specific
	animal movement and habitat selection.}

	On a complete filtered probability space
	satisfying the usual conditions,
	let \(Z=(Z_t)_{t\ge0}\) be a continuous-time Markov chain on
	\(S=\{1,\ldots,m\}\) with generator \(Q=(q_{ik})_{i,k\in S}\),
	\begin{equation}
		\label{eq:Z_Q}
		\mathbb P(Z_{t+h}=j\mid Z_t=i)=
		\begin{cases}
			q_{ij}\,h+o(h), & i\neq j,\\
			1+q_{ii}\,h+o(h), & i=j,
		\end{cases}
		\qquad h\downarrow0,
	\end{equation}
	with \(q_{ij}\ge0\) for \(i\neq j\) and \(q_{ii}=-\sum_{j\neq i}q_{ij}\). 
	Let \(W\) be a standard Wiener process and, for
	each \(i\in S\), let \(N_i\) be a Poisson random measure with compensator
	\(\nu_i(dz)\,dt\), and define
	$\widetilde N_i(dt,dz):=N_i(dt,dz)-\nu_i(dz)\,dt.$
	The random elements \(W,Z,X_0,N_1,\ldots,N_m\) are mutually independent.
	The continuous state process satisfies
	\[
	dX_t
	=b(X_t,Z_t,\alpha)\,dt
	+a(X_t,Z_t,\gamma)\,dW_t
	+\sum_{i=1}^m\int_{\mathbb R}
	z\,\mathbf 1_{\{Z_{t-}=i\}}\,\widetilde N_i(dt,dz).
	\]
	The drift and diffusion coefficients are measurable functions
	$b:\mathbb R\times S\times\Theta_\alpha\to\mathbb R$,
	$a:\mathbb R\times S\times\Theta_\gamma\to\mathbb R$,
	where
	$\Theta_\alpha=\overline{\mathcal O_\alpha}
	\subset\mathbb R^{p_\alpha}$,
	$\Theta_\gamma=\overline{\mathcal O_\gamma}
	\subset\mathbb R^{p_\gamma}$,
	and \(\mathcal O_\alpha,\mathcal O_\gamma\) are bounded convex domains
	with Lipschitz boundaries. Let $\Theta := \Theta_\alpha \times \Theta_\gamma.$
	Throughout, \(\theta^\star=(\alpha^\star,\gamma^\star)\in\operatorname{int}\Theta\)
	denotes the true parameter value, and \(s_i^\star\) the true L\'evy density in
	regime \(i\), with \(\nu_i^\star(dz)=s_i^\star(z)\,dz\).

	We observe \(\{(X_{t_j},Z_{t_j})\}_{j=0}^n\) at \(t_j=jh_n\), where
	\(h_n\to0\), \(T_n=nh_n\to\infty\), and \(nh_n^2\to0\). 
	{
	Imposing a common jump law across regimes makes inference on the
	continuous-dynamics parameters sensitive to the specification of the jump
	distribution. We therefore parameterize the drift and diffusion coefficients
	but leave the regime-wise L\'evy densities nonparametric.
}
	Under
	exponential ergodicity, the objective is inference on
	\(\theta=(\alpha,\gamma)\) and nonparametric recovery of the unknown
	regime-wise L\'evy densities \(s_i^\star\) on compact sets separated from
	the origin. 

	The behavior of \(\nu_i^\star\) near zero determines whether the
	small-jump contribution to truncated quadratic variation is asymptotically
	negligible or must be removed. We distinguish
	\[
	\begin{aligned}
	&\int_{|z|\le1}|z|\,\nu_i^\star(dz)<\infty
	&&\text{(finite variation)},\\
	&s_i^\star(z)\sim c_i|z|^{-1-\beta}\quad(z\to0),
	\qquad 1<\beta<\frac85
	&&\text{(locally stable infinite variation)}.
	\end{aligned}
	\]
	We refer to these specifications as Case FV and Case IV, respectively.
	Case FV permits finite activity and
	infinite activity of finite variation with upper-envelope exponent
	\(\beta<1\), together with state-dependent volatility
	\(a(x,i,\gamma)\). Case IV
	assumes a common exact index \(1<\beta<8/5\), symmetric leading stable
	activity, and regime-wise constant volatility \(a_i(\gamma)\). 
	
		Parametric inference for diffusion from discrete observations is classical;
	see \citet{Kes97,Gobet2002}. For jump diffusions,
	\citet{shimizu2006estimation} and \citet{ogihara2011quasi} developed
	threshold quasi-likelihood methods that motivate our contrast function in Case FV. For
	switching diffusions without jumps, \citet{Yuzhong2025} established Gaussian
	quasi-likelihood inference from an observed regime process. 
	{
	Extending these approaches to switching jump diffusions requires more than
	inserting regime indicators: jump compensation shifts the drift of the
	retained increments, and the corresponding regime-specific jump functionals
	must therefore be estimated.}
	
	For infinite-variation semimartingales,
	\citet{JacodTodorov2014} used local empirical characteristic functions for
	efficient volatility estimation, while
	\citet{BonieceFigueroaLopezHan2024} developed the two-step debiasing of
	truncated realized variation used here for \(1<\beta<8/5\).
	\citet{Mies2021} studied state-dependent jump activity and drift estimation
	robust to symmetric infinite-variation jumps. Our use of the algebraic
	debiasing method differs in its target and coupling: regime-specific
	variances first identify \(\gamma\), after which a structural drift contrast
	is centered by estimated tail means. The joint limit therefore
	requires one coupled score argument rather than separate marginal limits.
	
	L\'evy density estimation was studied by
	\citet{comte2011estimation}, and \citet{shimizu2006density} treated density
	estimation for jump diffusions. Quantitative small-time approximations of
	increment laws by L\'evy measures were developed by
	\citet{figueroalopez2008small,figueroalopezhoudre2009small}. Here the kernel
	changes with the bandwidth, the increment law depends on a random initial
	state, and the regime may switch inside the interval. A pointwise vague
	limit is therefore insufficient; the density proof requires the uniform
	conditional \(L^2\) expansion.

	{
		{We construct the contrast functions by
		thresholding the observed increments.}
	At high frequency, the continuous increment is of order \(\sqrt{h_n}\),
	whereas jumps in a fixed set separated from zero remain detectable. We
	therefore choose a threshold satisfying \(\sqrt{h_n}\ll u_n\to0\), retain
	increments below \(u_n\) for continuous-parameter inference, and use
	increments above \(u_n\) for jump-functional and density estimation. This
	truncation avoids specifying the complete L\'evy density in the parametric
	contrast, but it changes the conditional moments of both retained and
	detected increments.
	Two jump-induced biases govern the inferential problem. For a threshold
	\(u_n\), define
	\begin{equation}
		\label{eq:truncated_jump_features}
		\kappa_{i,n}:=\int_{|z|>u_n}z\,\nu_i^\star(dz),
		\qquad
		q_{i,n}:=\int_{|z|\le u_n}z^2\,\nu_i^\star(dz).
	\end{equation}
	Conditionally on a left endpoint in regime \(i\), the leading truncated
	moments satisfy, schematically,
	\[
		\frac1{h_n}\E_{j-1}\!\left[
		\Delta_jX\mathbf 1_{\{|\Delta_jX|\le u_n\}}\right]
		\simeq b_{j-1}^\star-\kappa_{i,n},
		\qquad
		\frac1{h_n}\E_{j-1}\!\left[
		(\Delta_jX)^2\mathbf 1_{\{|\Delta_jX|\le u_n\}}\right]
		\simeq (a_{j-1}^\star)^2+q_{i,n}.
	\]
	Thus thresholding removes realized large jumps but leaves their signed
	compensator in the retained drift, while undetected small jumps contaminate
	quadratic variation. 
	{
	It is essential to target \(\kappa_{i,n}\) itself rather than only its
	limit. In Case FV, \(\kappa_{i,n}\) converges to the ordinary jump mean
	\(m_i^\star=\int z\,\nu_i^\star(dz)\), but replacing \(\kappa_{i,n}\) by
	\(m_i^\star\) at finite \(n\) leaves the signed mean of the undetected small
	jumps in the drift. In Case IV, where the first absolute moment near zero is
	infinite, \(m_i^\star\) is defined through the common symmetric-cutoff
	principal value. In both cases, the signed statistic
	\(\widehat\kappa_{i,n}\) directly estimates \(\kappa_{i,n}\).}

{
	Consequently, parametric inference requires \(\kappa_{i,n}\) and control of
	\(q_{i,n}\), but not a preliminary estimator of the complete density
	\(s_i^\star\). In Case FV, we adapt the threshold-centering methods of
	\citet{shimizu2006estimation} and \citet{ogihara2011quasi}; the resulting
	threshold bounds make the residual quadratic jump contribution negligible at
	the parametric scales. In Case IV, ordinary truncated variation contains the
	two non-negligible terms of orders \(v^{2-\beta}\) and
	\(h_nv^{-\beta}\), which we remove by adapting the algebraic debiasing method
	of \citet{BonieceFigueroaLopezHan2024}. In both cases, L\'evy-density
	estimation follows the residual-based approach of
	\citet{shimizu2006density}, with the quantitative small-time expansions of
	\citet{figueroalopez2008small,figueroalopezhoudre2009small} controlling the
	state- and bandwidth-dependent kernel remainder.}

	The paper makes two contributions. {Proposition~\ref{prop:tail_functional},
	Theorem~\ref{thm:parametric_inference}, and
	Corollary~\ref{cor:studentization} establish a feasible parametric procedure
	under which the drift and diffusion estimators converge jointly at rates
	\(\sqrt{T_n}\) and \(\sqrt n\), respectively, with limit
	\(N\{0,V(\theta^\star)\}\); the corresponding feasible studentized statistic
	converges to \(N(0,I_{p_\alpha+p_\gamma})\) in both cases.} Tail-mean estimation adds an explicit
	L\'evy-measure term to the drift covariance but does not alter the
	first-order diffusion covariance. The Case IV argument supplies a
	conditional truncated-moment expansion. In parallel,
	Theorem~\ref{thm:levy_density_rate} gives one regime-wise kernel estimator
	for both cases and proves
	\[
		\big\|\widehat s_{i,n}^{\,\ell}-s_i^\star\big\|_{L^2(B)}
		=O_p\!\left(\eta_n^r+\frac1{\sqrt{T_n\eta_n}}
		+\frac{h_n}{\eta_n^{5/2}}\right).
	\]
	The density proof uses a quantitative conditional small-time kernel expansion that
	is uniform in the random initial state and, in Case IV, avoids a nonexistent
	first absolute moment on the no-target-jump event. Estimation of the
	structural parameters is negligible under the bandwidth conditions. The
	two contributions formalize the main implication of nuisance reduction: the jump
	law affects first-order parametric inference through a small collection of
	functionals, while its full density can be recovered afterward without
	changing the parametric limit.

	The remainder of the paper is organized as follows.
	Section~\ref{sec:setup} specifies the model and assumptions,
	Section~\ref{sec:estimation} defines the estimators, and
	Section~\ref{sec:main_results} states the main results.
	Section~\ref{sec:proof_strategy} gives the principal proofs, and
	Section~\ref{sec:sim} reports the numerical study. A technical supplement
	contains the detailed auxiliary arguments.

	%%%%%%%%%%%%%%%%%%%%%%%%%
	\section{Model, observation scheme, and assumptions}
	\label{sec:setup}
	}
	}

	\subsection{Model}
	For \(\theta=(\alpha,\gamma)\in\Theta\), the state process satisfies
	\begin{equation}
		\label{eq:SDE_model}
		dX_t
		=b(X_t,Z_t,\alpha)\,dt
		+a(X_t,Z_t,\gamma)\,dW_t
		+\sum_{i=1}^m\int_{\mathbb R}
		z\,\mathbf 1_{\{Z_{t-}=i\}}\,\widetilde N_i(dt,dz).
	\end{equation}
	Under the assumptions below, the compensated integral is the
	\(L^2\)-limit of the integrals restricted to
	\(\{|z|>\varepsilon\}\) as \(\varepsilon\downarrow0\), using symmetric
	absolute cutoffs. In Case IV, every principal-value integral of
	\(z\,\nu_i(dz)\) uses the same cutoff convention.

	We
	assume \(Q\) is irreducible; since \(S\) is finite, \(Z\) then admits a unique
	stationary distribution (cf.\ \cite[Definition~A.7]{yin2009hybrid}).
	The generator \(Q\) is treated as a nuisance parameter. We suppress \(Q\) from the
	notation for the law below.

	\subsection{Notation}
	Write \(\Delta_jX=X_{t_j}-X_{t_{j-1}}\). Throughout, we abbreviate
	\(Z_{j-1}:=Z_{t_{j-1}}\) and \(Z_j:=Z_{t_j}\).

	We write \(\mathbb P_{\theta,s}\) and \(\mathbb E_{\theta,s}\) for the law of
	\((X,Z)\) and the corresponding expectation when the parameter is \(\theta\)
	and the L\'evy densities are \(s=\{s_i\}_{i\in S}\), and we abbreviate
	$\mathbb P:=\mathbb P_{\theta^\star,s^\star}$,
	$\mathbb E:=\mathbb E_{\theta^\star,s^\star}$,
	$\E_{j-1}[\cdot]:=\mathbb E[\cdot\mid\mathcal F_{t_{j-1}}]$.
	For any random variable \(Y\) and event \(B\) such that
	\(Y\mathbf1_B\) is integrable, we use the semicolon notation
		$\E_{j-1}[Y;B]
		:=\E_{j-1}[Y\mathbf1_B]$,
		$\Pb_{j-1}(B):=\E_{j-1}[\mathbf1_B]$.
	The symbols \(\cip\) and \(\cil\) denote convergence in probability and in
	distribution under \(\mathbb P\), respectively.

	{
	We use \(|\cdot|\) for the Euclidean norm, \(\mathbf1_A\) for an
	indicator, \(v^{\otimes2}=vv^\top\). For nonnegative quantities
	\(x_n,y_n\), the notation \(x_n\lesssim y_n\) means
	\(x_n\le Cy_n\) for a deterministic constant \(C\) independent of \(n\);
	\(x_n\asymp y_n\) means that both \(x_n\lesssim y_n\) and
	\(y_n\lesssim x_n\) hold. Whenever a bound is stated uniformly over
	additional indices, the implied constant is independent of those indices
	as well.

	For an open set \(U\subset\mathbb R\) and \(r>0\), put
	\(\underline r=\lceil r\rceil-1\) and
	\(\varrho=r-\underline r\in(0,1]\), and set
	\[
		\|f\|_{\mathcal H^r(U)}
		:=\sum_{\ell=0}^{\underline r}\sup_{x\in U}|f^{(\ell)}(x)|
		+
		\sup_{\substack{x,y\in U\\x\ne y}}
		\frac{|f^{(\underline r)}(x)-f^{(\underline r)}(y)|}
		{|x-y|^\varrho}.
	\]
	Throughout, \(R_{j-1}\) denotes a generic predictable polynomial
	envelope: for deterministic constants \(C,c>0\), independent of
	\(n,j\), the regime, the threshold, and the parameter,
		$R_{j-1}=C\{1+|X_{t_{j-1}}|^c\}$.
	The constants and exponent may change from line to line. Thus
	a bound \(r_nR_{j-1}\) always has a deterministic rate \(r_n\), uniformly
	in the indices displayed in the statement.
	\par}

	\subsection{Assumptions}
	\label{sec:assumptions}

	%---------------------------------------------------------------------------
	\subsubsection*{Regularity of the coefficients}

	\begin{ass}
		\label{ass:smooth}
		For each \(i\in S\):
		\begin{enumerate}[label=(\roman*), leftmargin=1.8em]
			\item \(b(\cdot,i,\cdot)\) and \(a(\cdot,i,\cdot)\) are continuous on
			\(\mathbb R\times\Theta_\alpha\) and \(\mathbb R\times\Theta_\gamma\),
			respectively, and there is a constant \(C>0\) such that, for all
			\(x,y\in\mathbb R\), \(\alpha\in\Theta_\alpha\), \(\gamma\in\Theta_\gamma\),
			\begin{align}
				|b(x,i,\alpha)-b(y,i,\alpha)|^2+|a(x,i,\gamma)-a(y,i,\gamma)|^2
				&\le C|x-y|^2, \label{eq:Lip_x}\\
				|b(x,i,\alpha)|^2+|a(x,i,\gamma)|^2
				&\le C\,(1+|x|^2). \label{eq:lin_growth}
			\end{align}
			\item (Uniform ellipticity) there exists \(c_0>0\) such that
			\(\inf_{x,i,\gamma}a(x,i,\gamma)\ge c_0\).
			\item \(x\mapsto b(x,i,\alpha)\) and \(x\mapsto a(x,i,\gamma)\) are twice
			continuously differentiable, \(\alpha\mapsto b(x,i,\alpha)\) and
			\(\gamma\mapsto a(x,i,\gamma)\) are three times continuously differentiable, and
			there are constants \(C>0\) such that
			\begin{equation}
				\label{eq:deriv_poly_growth}
				\max_{i\in S}\;
				\sup_{(x,\alpha,\gamma)\in\mathbb R\times\Theta}
				\frac{\big|\partial_\alpha^{k}\partial_x^{l}b(x,i,\alpha)\big|
					+\big|\partial_\gamma^{k}\partial_x^{l}a(x,i,\gamma)\big|
					+a(x,i,\gamma)^{-1}}
				{1+|x|^{C}}
				<\infty,
			\end{equation}
			for all \(k\in\{0,1,2,3\}\) and \(l\in\{0,1,2\}\).
		\end{enumerate}
	\end{ass}

	%---------------------------------------------------------------------------
	\subsubsection*{The L\'evy measures}

	We fix, once and for all, an arbitrary compact set
	$B\Subset\mathbb R\setminus\{0\}$,
	which serves as the \emph{window} on which the regime-wise L\'evy densities
	\(s_i^\star\) are estimated. Set \(\delta_B:=\inf_{z\in B}|z|>0\), and for
	\(\eta>0\) write \(B^{\eta}:=\{z\in\mathbb R:\operatorname{dist}(z,B)\le\eta\}\)
	for the closed \(\eta\)-enlargement of \(B\).

	\subsubsection*{Common L\'evy-measure conditions}

	Both cases use the same observation model and the same unrestricted
	regime-wise density model away from the origin. They differ only in their
	near-origin activity, threshold rates, and the additional coefficient
	restriction needed for the {infinite-variation
	coefficient estimator}.

	\begin{ass}
		\label{ass:levy_density_local}
		There exist \(\eta_0>0\), \(r>0\), and an open set \(U_0\subset\mathbb R\)
		such that
		\(B^{\eta_0}\Subset U_0\Subset\mathbb R\setminus\{0\}\), each
		\(s_i^\star\) belongs to \(\mathcal H^r(U_0)\), and
		\(\sup_{i\in S}\|s_i^\star\|_{\mathcal H^r(U_0)}<\infty\).
	\end{ass}

	\begin{ass}
		\label{ass:levy}
		For each \(i\in S\) and every \(q>0\),
			$\int_{|z|>1}|z|^q\,s_i^\star(z)\,dz<\infty$.
	\end{ass}

Throughout, \(\beta\) denotes a regime-uniform near-origin tail exponent:
the assumptions below imply that, for some \(C>0\),
\[
	\sup_{i\in S}\nu_i^\star(\{|z|>r\})
	\le Cr^{-\beta},\qquad 0<r\le1.
\]
In Case FV, \(\beta\in[0,1)\) is an upper-envelope exponent and need not
coincide with the exact activity index in every regime. In Case IV,
\(\beta\in(1,8/5)\) is the exact common locally stable index of every
regime.

	\subsubsection*{{Case FV: finite variation
		(\(0\le\beta<1\))}}

	\begin{ass}
		\label{ass:fv}
		There exist \(\beta\in[0,1)\) and \(C>0\) such that
			$\sup_{i\in S}\nu_i^\star(\{|z|>r\})
			\le C r^{-\beta}$ for $0<r\le1$.
	\end{ass}

	\begin{ass}[{FV sampling and threshold
		(\(0\le\beta<1\))}]
		\label{ass:trunc}
		There exist \(q_0\ge8\), \(c_{\rm FV}>0\), and \(\rho\) such that
		\[
			q_0>2(p_\alpha\vee p_\gamma),\qquad
			q_0>\frac{4-3\beta}{1-\beta},\qquad
			\max\left\{
				\frac{q_0-2}{2(q_0-\beta)},
				\frac1{2(2-\beta)}
			\right\}
			<\rho<
			\frac{q_0-3}{2(q_0-2)},
			\qquad
			u_n=c_{\rm FV}h_n^\rho.
		\]
		The sampling scheme satisfies
		\[
			\sqrt{T_n}\,h_n^{\delta_{\rm FV}}\to0,
			\tag{FV-span}
			\label{eq:fv_span}
		\] where $\delta_{\rm FV}
		:=
		\min\left\{
		\frac{q_0-3}{2}-\rho(q_0-2),
		\rho(2-\beta)-\frac12
		\right\}>0.$
	\end{ass}

	{
	The FV assumptions imply the following abstract bounds, uniformly over
	regimes:
	\begin{align}
		h_n^{1-q/2}\sup_i\int_{|z|\le u_n}|z|^q\nu_i^\star(dz)&\to0,
		&&3\le q\le q_0, \label{eq:smalljump_un}\\
		h_n^{1-q/2}u_n^q\sup_i\nu_i^\star(|z|>u_n)&\to0,
		&&4\le q\le q_0, \label{eq:trunc-levy-tail}\\
		\sqrt n\sup_i\int_{|z|\le u_n}z^2\nu_i^\star(dz)&\to0,
		\label{eq:AN_extra_rates}\\
		\sqrt{T_n}\sup_i\int_{|z|\le u_n}|z|\nu_i^\star(dz)&\to0.
		\label{eq:tailmean_bias_rate}
	\end{align}
	Each comparison of a lower endpoint for \(\rho\) with its upper endpoint
	is equivalent to \(q_0(1-\beta)>4-3\beta\), so the displayed interval is
	nonempty under Assumption~\ref{ass:trunc}.

	For later conditional-moment statements define
	\begin{align}
		q_{2,n}^{\rm FV}
		&:=\sup_{i\in S}\int_{|z|\le u_n}z^2\nu_i^\star(dz),
		\qquad
		p_n^{\rm FV}:=h_n^{q_0/2}u_n^{-q_0},
		\label{eq:fv_deterministic_remainders_1}\\
		r_{4,n}^{\rm FV}
		&:=\Bigg[
		h_n^{1/2}
		+h_n^{-1}u_n^4
		\sup_{i\in S}\nu_i^\star(\{|z|>u_n\})
		+\left(\frac{h_n}{u_n^2}\right)^2\nonumber\\
		&\quad
		+h_n^{-1}\sup_{i\in S}
		\int_{|z|\le u_n}|z|^4\nu_i^\star(dz)
		+(q_{2,n}^{\rm FV})^2
		\Bigg]^{1/4}.
		\label{eq:fv_deterministic_remainders_2}
	\end{align}
	Assumptions~\ref{ass:fv}--\ref{ass:trunc} imply
		$q_{2,n}^{\rm FV}\to0$,
		$p_n^{\rm FV}\to0$,
		$r_{4,n}^{\rm FV}\to0$.

	In Case FV, define the full jump mean by
		$m_i^\star=\int_{\mathbb R}z\,\nu_i^\star(dz)$.
	Assumption~\ref{ass:fv} implies that the signed tail mean in
	\eqref{eq:truncated_jump_features} satisfies
	\(\sup_{i,n}|\kappa_{i,n}|<\infty\) and
	\(\kappa_{i,n}\to m_i^\star\).
	\par}

	\subsubsection*{{Case IV: locally stable infinite variation
		(\(1<\beta<8/5\))}}

	\begin{ass}
		\label{ass:iv}
		There is a common \(\beta\in(1,8/5)\) such that, for every
		\(i\in S\),
		\[
			s_i^\star(z)
			=c_iq_i(z)|z|^{-1-\beta},\qquad 0<|z|\le1,
		\]
		where \(c_i>0\),
		\(q_i:[-1,1]\setminus\{0\}\to[0,\infty)\) is bounded, and
		\(q_i(z)\to1\) as \(z\to0\). For some
		\(\ell_i^+,\ell_i^-\in\mathbb R\), \(\rho_q>0\), and \(C>0\),
		\[
			|q_i(z)-1-\ell_i^+z|
			\le C|z|^{\beta+\rho_q}\quad(0<z\le1),
			\qquad
			|q_i(z)-1-\ell_i^-z|
			\le C|z|^{\beta+\rho_q}\quad(-1\le z<0).
		\]
	\end{ass}

	\begin{ass}
		\label{ass:iv_coeff}
		For every \(i\in S\),
			$a(x,i,\gamma)=a_i(\gamma)$, for
			$(x,\gamma)\in\mathbb R\times\Theta_\gamma$.
	\end{ass}

	\begin{ass}[{IV sampling and threshold
		(\(1<\beta<8/5\))}]
		\label{ass:iv_sampling}
		For fixed \(\zeta_1,\zeta_2>1\), there exist \(c_{\rm IV}>0\) and
		\(\omega\) such that
		\[
			u_n=c_{\rm IV}h_n^\omega,\qquad
			\max\left\{\frac1{4-\beta},\frac2{4+\beta}\right\}
			<\omega<\frac4{8+\beta}.
			\tag{IV}
			\label{eq:iv_threshold}
		\]
		Put
		\[
			\epsilon_0:=\frac{3\beta-2}{4},\qquad
			\rho_n:=h_n^2u_n^{-\beta-2}
			+h_n^{1/2}u_n^{1-\beta/2}
			+h_nu_n^{2-2\beta}
			+u_n^{2-\epsilon_0}+h_n.
		\]
		The sampling scheme satisfies the direct remainder condition
		\[
			\sqrt n\,\rho_n\to0.
			\tag{IV-span}
			\label{eq:iv_span}
		\]
	\end{ass}

	\begin{rem}
		\label{rem:iv_span_interpretation}
		Under \eqref{eq:iv_threshold}, define
		\[
			\delta_{\rm iv}^{\sharp}:=
			\min\left\{
			\frac32-\omega(\beta+2),
			\ \omega\left(1-\frac\beta2\right),
			\ \frac12-2\omega(\beta-1),
			\ -\frac12+\frac{\omega(10-3\beta)}4,
			\ \frac12
			\right\}>0.
		\]
		Since \(u_n=c_{\rm IV}h_n^\omega\), condition
		\eqref{eq:iv_span} is equivalent to
		\(\sqrt{T_n}h_n^{\delta_{\rm iv}^{\sharp}}\to0\).
		For the polynomial mesh \(h_n=n^{-\kappa}\),
		\(T_n=n^{1-\kappa}\), condition \eqref{eq:iv_span} is equivalent to
			$\kappa>\{1+2\delta_{\rm iv}^{\sharp}\}^{-1}$.
	\end{rem}

	Assumption~\ref{ass:iv} requires only the leading stable singularity to be
	symmetric; lower-order tempering and the jump tails may be asymmetric. With
	the symmetric-cutoff convention specified after \eqref{eq:SDE_model},
		$m_i^\star
		=\lim_{\varepsilon\downarrow0}
		\int_{|z|>\varepsilon}z\,\nu_i^\star(dz)$.
	The signed tail mean in \eqref{eq:truncated_jump_features} satisfies
	\(\kappa_{i,n}=m_i^\star+O(u_n^{2-\beta})\).

	%---------------------------------------------------------------------------
	\subsubsection*{Well-posedness}

	Assumptions~\ref{ass:smooth} and~\ref{ass:levy}, together with the
	finite-state bounded switching rates, imply that
	\eqref{eq:SDE_model}, at the true parameter, admits a unique
	non-explosive strong solution. These are the hypotheses of
	\citet[Theorem~2.1]{xi2017feller}. 
	%---------------------------------------------------------------------------
	\subsubsection*{Ergodicity}

	Let \((P_t)_{t\ge0}\) denote the transition semigroup of \((X,Z)\). For a
	measurable weight \(g:\mathbb R\times S\to[1,\infty)\) and a signed measure
	\(m\) on \(\mathcal B(\mathbb R\times S)\), define the \(g\)-norm
	\[
	\|m\|_g:=\sup\Big\{\,|m(f)|\;:\;f\ \text{measurable},\ |f|\le g\,\Big\}.
	\]

	\begin{ass}[Exponential ergodicity in every polynomial weight]
		\label{ass:ergodic}
		\begin{enumerate}[label=(\roman*), leftmargin=1.8em]
			\item There exists a probability measure \(\mu\) on
			\(\mathbb R\times S\). For every \(q>0\), there are constants
			\(C_q,\varkappa_q>0\) such that
			\[
			\sup_{t\ge0}e^{\varkappa_q t}\,
			\big\|P_t((x,i),\cdot)-\mu(\cdot)\big\|_{g_q}
			\le C_q\,g_q(x,i),
			\qquad (x,i)\in\mathbb R\times S,
			\]
			where \(g_q(x,i):=1+|x|^{q}\).
			\item The initial value satisfies \(\E_{\theta^\star}\big[|X_0|^{q}\big]<\infty\)
			for every \(q>0\).
		\end{enumerate}
	\end{ass}

\begin{rem}
	The weighted exponential ergodicity in
	Assumption~\ref{ass:ergodic} can be verified through the
	Foster--Lyapunov conditions in
	\citet[Assumption~5.2 and Theorem~6.3]{Xi2009asynptotic}, together with
	the corresponding irreducibility and regularity hypotheses.
\end{rem}

	\noindent
	{
	Assumption~\ref{ass:ergodic} supplies the invariant law \(\mu\),
	exponential mixing, and the ergodic laws used in the statistical
	arguments below. In particular, by
	\cite[Part~III]{meyn2012markov} and \cite{bhattacharya1982functional}, for every
	measurable \(f\) with \(|f(x,i)|\lesssim 1+|x|^{C}\),
	\begin{equation}
		\label{eq:ergodic_thm}
		\frac1T\int_0^T f(X_t,Z_t)\,dt\;\cip\;\int_{\mathbb R\times S}f\,d\mu,
		\qquad T\to\infty.
	\end{equation}
	For \(i\in S\) we write \(\pi_i:=\mu(\mathbb R\times\{i\})\); since \(Z\) is
	irreducible, \(\pi_i>0\) for every \(i\).
	\par}

	%---------------------------------------------------------------------------

	\begin{ass}
		\label{ass:ident}
		If, for some \(\theta_1=(\alpha_1,\gamma_1)\) and
		\(\theta_2=(\alpha_2,\gamma_2)\),
		$b(x,i,\alpha_1)=b(x,i,\alpha_2)$
		and
		$a(x,i,\gamma_1)^2=a(x,i,\gamma_2)^2$
		$\mu\text{-a.e.\ }(x,i)$,
		then \(\theta_1=\theta_2\).
	\end{ass}

	Define the (block-diagonal) quasi-information matrix
	\(I(\theta^\star):=\operatorname{diag}\big(I_\alpha(\theta^\star),I_\gamma(\theta^\star)\big)\),
	where
	\[
	I_\alpha(\theta^\star):=\int_{\mathbb R\times S}
	\frac{\{\partial_\alpha b(x,i,\alpha^\star)\}^{\otimes2}}{a(x,i,\gamma^\star)^2}\,\mu(dx,di),
	\qquad
	I_\gamma(\theta^\star):=\frac12\int_{\mathbb R\times S}
	\big\{\partial_\gamma\log a^2(x,i,\gamma^\star)\big\}^{\otimes2}\,\mu(dx,di).
	\]

	{
	For the effect of tail-mean estimation on the drift limit, define the
	\(p_\alpha\times m\) matrix
	\[
		J(\theta^\star)
		:=\bigl(J_1(\theta^\star),\ldots,J_m(\theta^\star)\bigr),
		\qquad
		J_i(\theta^\star)
		:=\int_{\mathbb R\times\{i\}}
		\frac{\partial_\alpha b(x,i,\alpha^\star)}
		{a(x,i,\gamma^\star)^2}\,\mu(dx,di).
	\]
	Define also
	\[
		\Sigma_\kappa
		:=\operatorname{diag}\left(
		\frac1{\pi_i}\int_{\mathbb R}z^2\,\nu_i^\star(dz):
		i\in S\right).
	\]
	Set
	\[
		\Omega(\theta^\star)
		:=\operatorname{diag}\left(
		I_\alpha(\theta^\star)+
		J(\theta^\star)\Sigma_\kappa J(\theta^\star)^\top,
		I_\gamma(\theta^\star)\right),
	\]
	so that the sandwich covariance
	\[
		V_\alpha(\theta^\star)
		:=I_\alpha(\theta^\star)^{-1}
		\left\{I_\alpha(\theta^\star)
		+J(\theta^\star)\Sigma_\kappa J(\theta^\star)^\top\right\}
		I_\alpha(\theta^\star)^{-1},
	\qquad
		V(\theta^\star)
		:=\operatorname{diag}\left(
		V_\alpha(\theta^\star),I_\gamma(\theta^\star)^{-1}\right).
	\]
	}

	\begin{ass}
		\label{ass:info_nondeg}
		The matrices \(I_\alpha(\theta^\star)\) and \(I_\gamma(\theta^\star)\) are
		positive definite.
	\end{ass}

	%---------------------------------------------------------------------------
	\subsubsection*{Kernel and bandwidth}

	Let \(K:\mathbb R\to\mathbb R\) be a kernel and \(\eta_n>0\) a bandwidth.

	\begin{ass}[Kernel and bandwidth]
		\label{ass:kernel_bandwidth}
		\begin{enumerate}[label=(\roman*), leftmargin=1.8em]
			\item \(K\in C_c^2(\mathbb R)\), \(\int_{\mathbb R}K(z)\,dz=1\), and \(K\) is of
			order \(r\) (the smoothness in Assumption~\ref{ass:levy_density_local}):
			\[
			\int_{\mathbb R}z^{\ell}K(z)\,dz=0\ \ (\ell=1,\dots,\lfloor r\rfloor),
			\qquad
			\int_{\mathbb R}|z|^{r}|K(z)|\,dz<\infty,
			\]
			the moment conditions being vacuous when \(\lfloor r\rfloor=0\).
			\item The bandwidth satisfies
			\begin{equation}
				\label{eq:bandwidth_conditions}
				\eta_n\to0,\qquad T_n\eta_n\to\infty,\qquad \frac{h_n}{\eta_n^{5/2}}\to0.
			\end{equation}
		\end{enumerate}
	\end{ass}

	%%%%%%%%%%%%%%%%%%%%%%%%
	%%%%%%%%%%%%%%%%%%%%%%%%

	\section{Estimation}
	\label{sec:estimation}

	%===========================================================================
	\subsection{Exposure time and tail-functional estimator}
	\label{sec:construction}

	Write
	$b_{j-1}(\alpha):=b(X_{t_{j-1}},Z_{j-1},\alpha)$,
	$a_{j-1}(\gamma):=a(X_{t_{j-1}},Z_{j-1},\gamma)$,
	and set the small-increment indicator
	\(I_{n,j}:=\mathbf 1_{\{|\Delta_jX|\le u_n\}}\).
	For \(i\in S\) and \(j\in\{1,\dots,n\}\), define
	$A_{i,j}:=\mathbf 1_{\{Z_{j-1}=i,\;Z_j=i\}}$.

	The empirical exposure time in regime \(i\) is
	\(T_{i,n}:=h_n\sum_{j=1}^n A_{i,j}\), and the corresponding empirical
	occupation proportion is \(\widehat\pi_{i,n}:=T_{i,n}/T_n\).
	By the continuous-time ergodic theorem \eqref{eq:ergodic_thm} and the
	endpoint--occupation comparison in
	Lemma~\ref{lem:endpoint_exposure_main},
	\begin{equation}
		\label{eq:Tin_convergence}
		\frac{T_{i,n}}{T_n}\;\cip\;\pi_i,\qquad i\in S.
	\end{equation}

	{
	The feasible signed tail-mean estimator is
	\begin{equation}
		\label{eq:kappa_estimator}
		\widehat\kappa_{i,n}
		:=
		\frac1{T_{i,n}}\sum_{j=1}^n
		A_{i,j}\Delta_jX\,\mathbf 1_{\{|\Delta_jX|>u_n\}},
		\qquad i\in S.
	\end{equation}
	On \(\{T_{i,n}=0\}\), set \(\widehat\kappa_{i,n}=0\). This estimator targets
	the threshold-dependent quantity \(\kappa_{i,n}\), which is precisely the
	large-jump compensator appearing in retained small increments.
	\par}

	\subsection{Case FV: corrected Gaussian quasi-likelihood}

	{
	For Case FV, define
	\begin{equation}
		\label{eq:GQML}
		\mathbb H_n^{\rm fv}(\theta)
		:=-\frac12\sum_{j=1}^n I_{n,j}
		\left\{
		\log a_{j-1}^2(\gamma)
		+\frac{\big[\Delta_jX-h_n\{
		b_{j-1}(\alpha)-\widehat\kappa_{Z_{j-1},n}\}\big]^2}
		{h_n\,a_{j-1}^2(\gamma)}
		\right\},
	\end{equation}
		and the Gaussian quasi-maximum likelihood estimator (GQMLE) is
			\begin{equation}
				\label{eq:QMLE_def}
				\bar\theta_n^{\rm fv}
				=(\bar\alpha_n^{\rm fv},\bar\gamma_n^{\rm fv})
				\in\argmax_{\theta\in\Theta}\mathbb H_n^{\rm fv}(\theta).
			\end{equation}
	}

		{
		\subsection{Case IV: debiased variance and adaptive drift}

		Under Assumptions~\ref{ass:iv}, \ref{ass:iv_coeff}, and
		\ref{ass:iv_sampling}, the ordinary truncated quadratic term in
		\eqref{eq:GQML} contains two non-negligible small-jump biases. We therefore
		replace only its diffusion step by the following regime-wise adaptation
		of the debiased truncated realized variation of
		\citet{BonieceFigueroaLopezHan2024}. Put
		\[
			\overline\Delta_{j,n}X
			:=\Delta_jX+h_n\widehat\kappa_{Z_{j-1},n}
		\]
		and, for \(v>0\),
		\begin{equation}
			\label{eq:iv_C_raw}
			\widehat C_{i,n}(v)
			:=\frac1{T_{i,n}}\sum_{j=1}^n
			A_{i,j}(\overline\Delta_{j,n}X)^2
			\mathbf 1_{\{|\overline\Delta_{j,n}X|\le v\}}.
		\end{equation}
		For a function \(F:(0,\infty)\to\mathbb R\) and \(\zeta>1\), define
		\begin{equation}
			\label{eq:iv_debias_operator}
			(\mathcal D_{\zeta,n}F)(v)
			:=F(v)-
			\frac{\{F(\zeta v)-F(v)\}^2}
			{F(\zeta^2v)-2F(\zeta v)+F(v)}
			\mathbf 1_{\{
			|F(\zeta^2v)-2F(\zeta v)+F(v)|\ge h_n\}}.
		\end{equation}
		 The two-step
		estimate of the regime variance is
		\begin{equation}
			\label{eq:iv_C_debiased}
			\widehat C_{i,n}^{(1)}
			:=\mathcal D_{\zeta_1,n}\widehat C_{i,n},
			\qquad
			\widehat C_{i,n}^{(2)}
			:=\mathcal D_{\zeta_2,n}\widehat C_{i,n}^{(1)},
			\qquad
			\widehat a_{i,n}^{\,2,\rm db}
			:=\widehat C_{i,n}^{(2)}(u_n).
		\end{equation}

		Define
		\begin{equation}
			\label{eq:iv_gamma_estimator}
			\mathbb H_{\gamma,n}^{\,\rm iv}(\gamma)
			:=-\frac12\sum_{i=1}^m\widehat\pi_{i,n}
			\left\{\log a_i^2(\gamma)
			+\frac{\widehat a_{i,n}^{\,2,\rm db}}{a_i^2(\gamma)}\right\},
			\qquad
			{\bar\gamma_n^{\rm iv}}
			\in\argmax_{\gamma\in\Theta_\gamma}
			\mathbb H_{\gamma,n}^{\,\rm iv}(\gamma).
		\end{equation}
		The drift is then estimated adaptively, conditional on
		\({\bar\gamma_n^{\rm iv}}\):
		\begin{equation}
			\label{eq:iv_alpha_estimator}
			\mathbb H_{\alpha,n}^{\,\rm iv}(\alpha,\gamma)
			:=-\frac12\sum_{j=1}^n I_{n,j}
			\frac{\big[\Delta_jX-h_n\{
			b_{j-1}(\alpha)-\widehat\kappa_{Z_{j-1},n}\}\big]^2}
			{h_na_{Z_{j-1}}^2(\gamma)},
			\qquad
			{\bar\alpha_n^{\rm iv}}
			\in\argmax_{\alpha\in\Theta_\alpha}
			\mathbb H_{\alpha,n}^{\,\rm iv}
			(\alpha,{\bar\gamma_n^{\rm iv}}).
		\end{equation}
		{
		Thus \(\bar\theta_n^{\rm iv}
		=(\bar\alpha_n^{\rm iv},\bar\gamma_n^{\rm iv})\) is sequential rather
		than a joint maximizer.} }

		\subsection{Common density estimator}
		{
		For \(\ell\in\{\mathrm{fv},\mathrm{iv}\}\), use the corresponding
		{coefficient estimator} \(\bar\theta_n^\ell\). The residual and its detection
		indicator are
		\[
			\widehat Y_{i,j}^{\,\ell}
			:=\Delta_jX-h_n\{
			b(X_{t_{j-1}},i,\bar\alpha_n^\ell)-\widehat\kappa_{i,n}\},
			\qquad
			\widehat J_{i,j}^{\,\ell}
			:=\mathbf 1_{\{|\widehat Y_{i,j}^{\,\ell}|>u_n\}}.
		\]
	With the rescaled kernel \(K_{\eta_n}(z):=\eta_n^{-1}K(z/\eta_n)\), where
	\(K\) and \(\eta_n\) are as in Assumption~\ref{ass:kernel_bandwidth}, the
	regime-wise L\'evy density estimator is, for \(z\in B\),
	\begin{equation}
		\label{eq:levy_kernel_estimator}
		\widehat s_{i,n}^{\,\ell}(z):=\frac1{T_{i,n}}\sum_{j=1}^n
		A_{i,j}\,\widehat J_{i,j}^{\,\ell}\,
		K_{\eta_n}\big(z-\widehat Y_{i,j}^{\,\ell}\big).
	\end{equation}
		On \(\{T_{i,n}=0\}\) we set
		\(\widehat s_{i,n}^{\,\ell}\equiv0\); by
		\eqref{eq:Tin_convergence} this event has probability tending to zero.
		}
	%===========================================================================
	{
	\section{Main results}
	\label{sec:main_results}
	}
\subsection{Parametric and tail-functional inference}
\label{sec:asymptotics}

	{
	Write
	\[
		\widehat\kappa_n
		=(\widehat\kappa_{1,n},\ldots,\widehat\kappa_{m,n})^\top,
		\quad
		\kappa_n=(\kappa_{1,n},\ldots,\kappa_{m,n})^\top,
		\quad
		m^\star=(m_1^\star,\ldots,m_m^\star)^\top,
	\]
	and 
	$D_n:=\operatorname{diag}
	(\sqrt{T_n}I_{p_\alpha},\sqrt n I_{p_\gamma})$.

	For later use, define the oracle drift-corrected increment
	\[
		U_j^\star
		:=\Delta_jX-h_n\{b_{j-1}(\alpha^\star)
		-\kappa_{Z_{j-1},n}\}.
	\]
	For concise statements, define the two  assumption
	bundles
	\begin{equation}
		\label{eq:case_assumption_bundles}
		\begin{aligned}
		\mathcal A_{\rm FV}
		&:=\{\text{Assumptions~\ref{ass:smooth}, \ref{ass:levy},
		\ref{ass:fv}, \ref{ass:trunc}, and \ref{ass:ergodic}}\},\\
		\mathcal A_{\rm IV}
		&:=\{\text{Assumptions~\ref{ass:smooth}, \ref{ass:levy},
		\ref{ass:iv}, \ref{ass:iv_coeff}, \ref{ass:iv_sampling},
		and \ref{ass:ergodic}}\}.
		\end{aligned}
	\end{equation}
	The phrase ``under \(\mathcal A_{\rm FV}\)'' or ``under
	\(\mathcal A_{\rm IV}\)'' means that every assumption in the
	corresponding displayed list holds. 

	\begin{prop}[Tail-functional estimation]
		\label{prop:tail_functional}
		\leavevmode
		\begin{enumerate}[label=(\alph*),leftmargin=1.8em]
			\item In Case FV, under \(\mathcal A_{\rm FV}\),
			\[
			\widehat\kappa_n\cip m^\star,
			\qquad
			\sqrt{T_n}\bigl(\widehat\kappa_n-m^\star\bigr)
			\cil N(0,\Sigma_\kappa).
			\]
			\item In Case IV, under \(\mathcal A_{\rm IV}\),
			\[
				\widehat\kappa_n\cip m^\star,\qquad
				\sqrt{T_n}(\widehat\kappa_n-m^\star)
				\cil N(0,\Sigma_\kappa).
			\]
		\end{enumerate}
	\end{prop}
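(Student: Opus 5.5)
Write $\widehat\kappa_{i,n}-m_i^\star=(\widehat\kappa_{i,n}-\kappa_{i,n})+(\kappa_{i,n}-m_i^\star)$ and treat the two parts separately. The deterministic bias is handled first. In Case FV, $|\kappa_{i,n}-m_i^\star|\le\int_{|z|\le u_n}|z|\nu_i^\star(dz)$, which is $o(T_n^{-1/2})$ by \eqref{eq:tailmean_bias_rate}. In Case IV, $\kappa_{i,n}-m_i^\star=O(u_n^{2-\beta})$; this uses the symmetry of the leading stable part, so that only the linear correction $\ell_i^\pm z$ survives in the principal value. We then need $\sqrt{T_n}u_n^{2-\beta}\to0$. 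I would obtain this from \eqref{eq:iv_span}: since $\sqrt n\,h_n^{1/2}u_n^{1-\beta/2}\to0$ and $u_n^{2-\beta}\le u_n^{1-\beta/2}$ (because $u_n\le1$ and $2-\beta>1-\beta/2$ fails only if $\beta>2$), the product is at most $\sqrt{T_n}\,u_n^{1-\beta/2}=\sqrt n\,h_n^{1/2}u_n^{1-\beta/2}\to0$. Consistency in both cases then follows from the CLT part together with $\kappa_{i,n}\to m_i^\star$.

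For the stochastic part, write $T_{i,n}(\widehat\kappa_{i,n}-\kappa_{i,n})=\sum_j A_{i,j}\{\Delta_jX\mathbf 1_{\{|\Delta_jX|>u_n\}}-h_n\kappa_{i,n}\}$ and decompose each summand.

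First, replace $A_{i,j}$ by $\mathbf 1_{\{Z_{j-1}=i\}}$. The difference involves intervals with a regime switch. By Lemma~\ref{lem:endpoint_exposure_main}, these number $O_p(T_n)$ and each has probability $O(h_n)$ conditionally. Their contribution after dividing by $\sqrt{T_n}$ should be bounded by $\sqrt{T_n}\,h_n R$, which vanishes since $nh_n^2\to0$.

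Second, on $\{Z_{j-1}=i\}$ compare $\Delta_jX\mathbf 1_{\{|\Delta_jX|>u_n\}}$ with the jump sum $\int_{t_{j-1}}^{t_j}\int_{|z|>u_n}z\,N_i(ds,dz)$.

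- The error is controlled by the usual threshold bounds: a large jump masked by the continuous part, small jumps pushed over the threshold, or two large jumps in one interval. These events have conditional probabilities bounded by \eqref{eq:trunc-levy-tail}, $p_n^{\rm FV}$, and $h_n^2\nu(|z|>u_n)^2$ in Case FV.
- In Case IV the analogous bounds come from the small-jump moment estimates behind $\rho_n$.
- Each error must be shown to be $o_p(\sqrt{T_n})$ after summation. I would use conditional $L^1$ bounds of the form $h_n r_n R_{j-1}$ with $\sqrt{T_n}r_n\to0$, for example $r_n$ built from $u_n^{-1}h_n^{1/2}$ moment terms and the drift piece $h_nR_{j-1}$.
- In Case IV the small-jump part on the event $\{|\Delta_jX|>u_n\}$ cannot be bounded by a first absolute moment. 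Instead I would use a Markov bound with $q$th moments of the truncated small-jump martingale, whose $L^2$ norm is of order $(h_nu_n^{2-\beta})^{1/2}$. This is the step I expect to be the main obstacle, and the exponent restrictions in \eqref{eq:iv_threshold} should be what makes it close.

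The main term is then
$$M_{i,n}=\sum_j\mathbf 1_{\{Z_{j-1}=i\}}\int_{t_{j-1}}^{t_j}\int_{|z|>u_n}z\,\widetilde N_i(ds,dz),$$
up to replacing $\mathbf 1_{\{Z_{j-1}=i\}}$ by $\mathbf 1_{\{Z_{s-}=i\}}$, which again costs only switching intervals. This is a sum of martingale differences. The $N_i$ are independent and each regime uses its own measure, so the cross-covariations between regimes vanish. The predictable quadratic variation is
$$T_n^{-1}\langle M_{i}\rangle=T_n^{-1}h_n\sum_j\mathbf 1_{\{Z_{j-1}=i\}}\int_{|z|>u_n}z^2\nu_i^\star(dz)\cip\pi_i\int z^2\nu_i^\star(dz),$$
using \eqref{eq:Tin_convergence}. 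A Lyapunov condition follows from fourth moments: the bound $T_n^{-2}\sum_j h_n\int_{|z|>u_n} z^4\nu_i^\star\to0$, and the fourth moment is finite by Assumption~\ref{ass:levy}.

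The martingale CLT gives $M_n/\sqrt{T_n}\cil N\bigl(0,\operatorname{diag}(\pi_i\int z^2\nu_i^\star)\bigr)$. Dividing by $T_{i,n}/T_n\cip\pi_i$ via Slutsky yields the covariance $\Sigma_\kappa$.
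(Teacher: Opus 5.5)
\textbf{Gap: the conditional $L^1$ plan for the detection error fails in Case IV.}

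Your bias step (including $\sqrt{T_n}u_n^{2-\beta}\le\sqrt{T_n}u_n^{1-\beta/2}\to0$), your reduction to no-switch intervals, and your martingale CLT for the ideal compensated jump sum are correct. Write $D_{i,j,n}$ for the detection error: the thresholded increment minus the ideal sum of regime-$i$ jumps larger than $u_n$.

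In Case FV, your plan to control $D_{i,j,n}$ through conditional $L^1$ bounds $\E_{j-1}|D_{i,j,n}|\le h_nr_nR_{j-1}$ with $\sqrt{T_n}\,r_n\to0$ does close. For instance, the Brownian noise carried by a detected jump gives $r_n\lesssim\sqrt{h_n}\sup_i\nu_i^\star(|z|>u_n)\lesssim h_n^{1/2-\rho\beta}$, and $1/2-\rho\beta>\delta_{\rm FV}$ because $\rho<1/2$.

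In Case IV, the $L^1$ route fails even before the small-jump issue you flag. Consider the event of exactly one jump $\xi$ with $|\xi|>2u_n$, which has conditional probability $\asymp h_nu_n^{-\beta}$. On it, $D_{i,j,n}$ is the remainder containing $a_i(\gamma^\star)\Delta_jW$. Hence $\E_{j-1}|D_{i,j,n}|\gtrsim h_n^{3/2}u_n^{-\beta}$, and so $\sqrt{T_n}\,r_n\gtrsim\sqrt{T_n}\,h_n^{1/2-\omega\beta}$. For every $\beta\in(4/3,8/5)$, the lower bound $\omega>1/(4-\beta)$ in \eqref{eq:iv_threshold} forces $\omega\beta>1/2$, so this diverges. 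It also diverges in the paper's design $\beta=5/4$, $\omega=32/77$.

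Your proposed remedy does not help either. In Case IV, $\E|J_{h_n}^{\le u_n}|^q\asymp h_nu_n^{q-\beta}$ for every $q\ge2$, because the single-jump term dominates. Markov's inequality therefore only gives $\Pb(|J_{h_n}^{\le u_n}|>cu_n)\lesssim h_nu_n^{-\beta}$, which is just the probability of one jump of size comparable to $u_n$.

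\textbf{What is missing: a mean/variance split and a symmetry cancellation.} Write $\sum_jD_{i,j,n}=\sum_j(D_{i,j,n}-\E_{j-1}D_{i,j,n})+\sum_j\E_{j-1}D_{i,j,n}$.

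The martingale part only needs $\E_{j-1}D_{i,j,n}^2\lesssim h_n\varepsilon_{2,n}R_{j-1}$ with $\varepsilon_{2,n}\to0$. There the Brownian noise on detected jumps contributes only $h_n\cdot h_nu_n^{-\beta}$, which is harmless.

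The predictable part needs the signed bound $|\E_{j-1}D_{i,j,n}|\lesssim h_nb_nR_{j-1}$ with $\sqrt{T_n}\,b_n\to0$, and this is where symmetry is indispensable. Take a single jump near $\pm u_n$ smeared by the centered noise. Each side of the threshold produces a signed bias of order $h_n^2u_n^{-1-\beta}$. After normalization this is $\sqrt{T_n}\,h_nu_n^{-1-\beta}=\sqrt{T_n}\,h_n^{1-\omega(1+\beta)}$, which need not vanish under \eqref{eq:iv_span}: in the paper's design, $1-\omega(1+\beta)=5/77<\delta_{\rm iv}^{\sharp}=23/154$. The two sides cancel only because the leading stable part is symmetric.

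The paper makes this precise as follows.
\begin{itemize}
\item It splits $\nu_i^\star=\nu_i^0+\bar\nu_i$ with $\nu_i^0$ symmetric and $d\bar\nu_i/dz\lesssim|z|^{-\beta}$. Then $S_{i,j,n}^0$ (Brownian part plus compensated $\nu_i^0$-jumps below $u_n/2$) satisfies $S_{i,j,n}^0\stackrel d=-S_{i,j,n}^0$.
\item Since $\psi_{u_n}(y)=y\mathbf 1_{\{|y|>u_n\}}$ is odd, this gives $\E\psi_{u_n}(S_{i,j,n}^0)=0$ on the zero-count event. Via the one-point Campbell--Mecke formula, it also gives exact cancellation of the $\nu_i^0$-part of the one-jump mean.
\item The asymmetric $\bar\nu_i$-part, the drift perturbation, and the random boundary strips are then controlled by Proposition~\ref{prop:iv_off_diagonal_density}.
\end{itemize}
This yields $b_n^{\rm det}=u_n^{2-\beta}+h_nu_n^{1-2\beta}+h_nu_n^{-\beta}+h_n$, and $\sqrt{T_n}\,b_n^{\rm det}\to0$ follows from \eqref{eq:iv_span}.

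Without the centering step and the symmetric cancellation, your Case IV argument does not close.
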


		\begin{thm}[Joint parametric inference]
		\label{thm:parametric_inference}
		\leavevmode
		\begin{enumerate}[label=(\alph*),leftmargin=1.8em]
			\item In Case FV, under \(\mathcal A_{\rm FV}\) and
			Assumptions~\ref{ass:ident} and \ref{ass:info_nondeg},
			\[
				{\bar\theta_n^{\rm fv}}\cip\theta^\star,\qquad
				D_n({\bar\theta_n^{\rm fv}}-\theta^\star)
				\cil N\{0,V(\theta^\star)\}.
			\]
			\item In Case IV, under \(\mathcal A_{\rm IV}\) and
			Assumptions~\ref{ass:ident} and \ref{ass:info_nondeg},
			\[
				{\bar\theta_n^{\rm iv}}\cip\theta^\star,\qquad
				D_n({\bar\theta_n^{\rm iv}}-\theta^\star)
				\cil N\{0,V(\theta^\star)\}.
			\]
		\end{enumerate}
	\end{thm}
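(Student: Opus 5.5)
We follow the standard route for $M$-estimators under ergodicity: uniform convergence of the normalized contrasts, which gives consistency; then a Taylor expansion of the score, a central limit theorem for the score, and convergence of the Hessian. The new ingredient is that the score contains the estimated tail means $\widehat\kappa_{Z_{j-1},n}$. We handle this by writing $\widehat\kappa_{i,n}-\kappa_{i,n}$ as a martingale array, using the jump-detection representation underlying Proposition~\ref{prop:tail_functional}, and then proving a joint martingale CLT for the score together with these tail-mean fluctuations.

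\emph{Consistency.} In Case FV we study $n^{-1}\{\mathbb H_n^{\rm fv}(\theta)-\mathbb H_n^{\rm fv}(\theta^\star)\}$. Using the conditional truncated-moment expansions (the moment bounds behind \eqref{eq:smalljump_un}--\eqref{eq:tailmean_bias_rate}), the ergodic theorem \eqref{eq:ergodic_thm}, and $\sup_i|\widehat\kappa_{i,n}|=O_p(1)$, this converges uniformly to
\[
-\tfrac12\int\{\log(a^2(\gamma)/a^2(\gamma^\star))+a^2(\gamma^\star)/a^2(\gamma)-1\}\,d\mu .
\]
Uniformity comes from the Sobolev-type moment bound $\E\sup_\theta|\cdot|\lesssim\E|\cdot|+\E|\partial_\theta\cdot|$, which is available because $\Theta$ is a Lipschitz convex domain and $q_0>2(p_\alpha\vee p_\gamma)$. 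By Assumption~\ref{ass:ident} this gives $\bar\gamma_n\cip\gamma^\star$. Next, $T_n^{-1}\{\mathbb H_n(\alpha,\bar\gamma_n)-\mathbb H_n(\alpha^\star,\bar\gamma_n)\}$ converges uniformly to $-\tfrac12\int(b(\alpha)-b(\alpha^\star))^2/a^2\,d\mu$. This step uses the fact that the centered retained increment $\Delta_jX\,I_{n,j}-h_n(b_{j-1}-\kappa_{Z_{j-1},n})$ has conditional mean $o(h_n)$, together with $\widehat\kappa_n-\kappa_n=o_p(1)$; it gives $\bar\alpha_n\cip\alpha^\star$. In Case IV the argument is simpler. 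Here $\widehat a_{i,n}^{\,2,\rm db}\cip a_i^2(\gamma^\star)$ by the debiasing expansion, and $\mathbb H^{\rm iv}_{\gamma,n}$ is then a finite-dimensional contrast whose limit is uniquely maximized at $\gamma^\star$ by Assumption~\ref{ass:ident}. Consistency of the drift estimator then follows exactly as in Case FV.

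\emph{Score decomposition.} In Case FV the drift score is
\[
T_n^{-1/2}\partial_\alpha\mathbb H_n(\theta^\star)=T_n^{-1/2}\sum_j I_{n,j}\frac{\partial_\alpha b_{j-1}}{a^2_{j-1}}\,\big[U_j^\star-h_n(\widehat\kappa_{Z_{j-1},n}-\kappa_{Z_{j-1},n})\big].
\]
The first part is a martingale plus a bias of order $\sqrt{T_n}\{o(\cdot)\}$, which vanishes by \eqref{eq:fv_span}. The second part equals
\[
-\sum_i\Big(T_n^{-1}h_n\sum_j I_{n,j}\mathbf 1_{\{Z_{j-1}=i\}}\partial_\alpha b_{j-1}/a_{j-1}^2\Big)\sqrt{T_n}(\widehat\kappa_{i,n}-\kappa_{i,n}),
\]
and the bracketed factor converges to $J_i(\theta^\star)$. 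We write
\[
\sqrt{T_n}(\widehat\kappa_{i,n}-\kappa_{i,n})=\frac{T_n}{T_{i,n}}\,T_n^{-1/2}\sum_j A_{i,j}\big\{\Delta_jX\mathbf 1_{\{|\Delta_jX|>u_n\}}-h_n\kappa_{i,n}\big\}+o_p(1),
\]
which is a martingale array with conditional variance $h_n\int z^2\nu_i^\star(dz)$ per step; the prefactor $T_n/T_{i,n}$ tends to $1/\pi_i$. The crucial fact is that the cross-covariation between the retained Gaussian part $I_{n,j}U_j^\star$ and the detected part is $o(h_n)$, since the two live on complementary events and the jumps are independent of $W$. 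A Lindeberg martingale CLT (e.g.\ Hall--Heyde), applied jointly with the diffusion score $n^{-1/2}\partial_\gamma\mathbb H_n$, then yields the limit $N(0,\Omega(\theta^\star))$. The diffusion score has asymptotic covariance $I_\gamma$ (via the fourth conditional moment, controlled by $r_{4,n}^{\rm FV}$ and \eqref{eq:AN_extra_rates}), and it has vanishing covariation with the drift and tail-mean arrays because odd Gaussian moments vanish.

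Because the exposure normalization is $T_n/T_{i,n}\to1/\pi_i$, the tail-mean block contributes $J\,\mathrm{diag}(\pi_i\cdot\pi_i^{-2}\!\int z^2\nu_i^\star)J^\top=J\Sigma_\kappa J^\top$. The Hessian, normalized by $D_n^{-1}(\cdot)D_n^{-1}$, converges uniformly near $\theta^\star$ to $-\mathrm{diag}(I_\alpha,2I_\gamma)$. The scaling in our contrast gives the factor for the $\gamma$ block, and the argument is checked against the definition of $I_\gamma$. The off-diagonal terms vanish because $\sqrt{T_n}/\sqrt n\to0$ and by centering. Combining these gives $V(\theta^\star)$.

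\emph{Case IV and the main obstacle.} For $\gamma$ we use the delta method on $\sqrt n(\widehat a_{i,n}^{\,2,\rm db}-a_i^2)$. Adapting \citet{BonieceFigueroaLopezHan2024} regime-wise, the two-step operator removes the $v^{2-\beta}$ and $h_nv^{-\beta}$ biases. It leaves a remainder of order $\rho_n$, which \eqref{eq:iv_span} kills, plus the martingale $T_{i,n}^{-1}\sum_jA_{i,j}h_n(\xi_j^2-1)a_i^2$ with variance $2a_i^4/(n\pi_i)$. This reproduces $I_\gamma^{-1}$ when $a$ depends only on $i$. The drift step is then as in Case FV, with the plug-in $\bar\gamma_n^{\rm iv}$ entering only through the Hessian weight, since the cross derivative has mean zero. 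The step I expect to be hardest is the conditional truncated-moment expansion in Case IV. It must show that $\E_{j-1}[\overline\Delta_{j,n}X^2;|\overline\Delta_{j,n}X|\le v]$ has the exact algebraic form $h_na_i^2+c_1v^{2-\beta}h_n+c_2h_n^2v^{-\beta}+O(h_n\rho_n)$ uniformly for $v\in\{u_n,\zeta u_n,\ldots,\zeta^4u_n\}$ and in the random state, and that it stays valid after regime switches inside intervals, which are excluded by $A_{i,j}$ up to $O(h_n^2)$. To do this I would first couple the increment with a pure $\beta$-stable plus Brownian increment using Assumption~\ref{ass:iv}. Then I would apply the small-time density expansions of \citet{figueroalopez2008small}, and finally check that the debiasing operator's nonlinearity propagates the errors only linearly, thanks to the indicator guard at level $h_n$.
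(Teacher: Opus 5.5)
Your architecture is the paper's. It has uniform contrast limits (normalized by $n$ for $\gamma$, then by $T_n$ for $\alpha$), and a drift score equal to a retained Brownian martingale plus $J(\theta^\star)$ times $\sqrt{T_n}(\widehat\kappa_n-\kappa_n)$, centered at the threshold-dependent $\kappa_n$. It then uses one joint martingale CLT, local Hessian limits, and a Taylor expansion. In Case IV that expansion is sequential, and the vanishing normalized mixed Hessian removes the effect of $\bar\gamma_n^{\rm iv}$ from the drift score.

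The only methodological difference is in the CLT. You apply the Lindeberg CLT directly to the centered retained and detected increments, reading their predictable covariances off the conditional truncated moments. In Case FV this needs the third-moment statement of Lemma~\ref{lem:fv_conditional_bridge} for the linear--quadratic block, and the disjointness of retained and detected events for the tail blocks. The paper instead first replaces those increments in conditional $L^2$ by the exact array $\xi^W_{n,j}$, $\xi^Q_{n,j}$ (or $\xi^{Q,i}_{n,j}$), $\xi^{N,i}_{n,j}$. Its cross-covariances vanish by Gaussian odd moments and Brownian--Poisson orthogonality. Either route works.

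The step that is wrong as written is the $\gamma$-block of the Hessian. Put $\ell_{j-1}(\gamma):=\log a_{j-1}^2(\gamma)$ and $c_j:=\widehat U_j^2/h_n$. Each summand of $-\partial_\gamma^2\mathbb H_n^{\rm fv}$ is $\tfrac12 I_{n,j}\{\partial_\gamma^2\ell_{j-1}(1-c_je^{-\ell_{j-1}})+(\partial_\gamma\ell_{j-1})^{\otimes2}c_je^{-\ell_{j-1}}\}$. At $\gamma^\star$ the first term averages to zero and $c_je^{-\ell_{j-1}}$ averages to one. Hence $-n^{-1}\partial_\gamma^2\mathbb H_n^{\rm fv}\to\tfrac12\int(\partial_\gamma\log a^2)^{\otimes2}\,d\mu=I_\gamma(\theta^\star)$, not $2I_\gamma(\theta^\star)$.

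You correctly give the diffusion-score covariance as $I_\gamma$. With your stated Hessian the sandwich would then be $(2I_\gamma)^{-1}I_\gamma(2I_\gamma)^{-1}=\tfrac14 I_\gamma^{-1}$, so ``combining these gives $V(\theta^\star)$'' does not follow from what you wrote. The fix is simply the correct differentiation.

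There is also a harmless sign slip. The feasible residual is $\widehat U_j=U_j^\star+h_n(\widehat\kappa_{Z_{j-1},n}-\kappa_{Z_{j-1},n})$, so the tail term enters the drift score with $+J(\theta^\star)$. The sign does not affect $J\Sigma_\kappa J^\top$, because the Brownian and tail-mean arrays are asymptotically uncorrelated.

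In Case IV, say explicitly that the linearized debiased variances enter the \emph{same} vector CLT as the drift and tail-mean arrays (the paper's $\xi^{Q,i}_{n,j}$). The block-diagonal $V(\theta^\star)$ is a statement about the joint law, and your write-up treats the $\gamma$ block only marginally.

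For the expansion you flag as hardest, your plan needs two corrections. First, the small-time moment expansions of Figueroa-L\'opez are for fixed test functions. They do not by themselves deliver the term $d_{i,2}h_nv^{-\beta}$ of the normalized truncated variation uniformly as $v\to0$. The paper instead adapts the change-of-measure argument of Boniece, Figueroa-L\'opez and Han: it extends it to vanishing one-sided coefficients, localizes through an auxiliary density, and transfers it through the drift via the off-diagonal density bounds. That is what Lemma~\ref{lem:iv_debiasing_main} packages.

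Second, the guard at level $h_n$ is not what keeps the rational maps under control. You need the two second differences to be of exact orders $u_n^{2-\beta}$ and $h_nu_n^{-\beta}$, so that the guard indicator equals one with probability tending to one. Even then, the first map leaves a genuinely quadratic remainder of order $(h_nu_n^{-\beta})^2/u_n^{2-\beta}=h_n^2u_n^{-\beta-2}$. This is the first term of $\rho_n$, and only \eqref{eq:iv_span} removes it.
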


	{
	\subsection{Feasible studentization}
	\label{sec:studentization}

	{
	Fix \(\ell\in\{\mathrm{fv},\mathrm{iv}\}\) and use
	\(\bar\theta_n^\ell=(\bar\alpha_n^\ell,\bar\gamma_n^\ell)\).
	Define
	\[
	\begin{aligned}
		\widehat I_{\alpha,n}^{\,\ell}
		&:=\frac1n\sum_{j=1}^n I_{n,j}
		\frac{\{\partial_\alpha b(X_{t_{j-1}},Z_{j-1},
		\bar\alpha_n^\ell)\}^{\otimes2}}
		{a(X_{t_{j-1}},Z_{j-1},\bar\gamma_n^\ell)^2},\\
		\widehat I_{\gamma,n}^{\,\ell}
		&:=\frac1{2n}\sum_{j=1}^n I_{n,j}
		\{\partial_\gamma\log a^2(X_{t_{j-1}},Z_{j-1},
		\bar\gamma_n^\ell)\}^{\otimes2},\\
		\widehat J_{i,n}^{\,\ell}
		&:=\frac1n\sum_{j=1}^n I_{n,j}
		\mathbf 1_{\{Z_{j-1}=i\}}
		\frac{\partial_\alpha b(X_{t_{j-1}},i,\bar\alpha_n^\ell)}
		{a(X_{t_{j-1}},i,\bar\gamma_n^\ell)^2},
		\qquad i\in S.
	\end{aligned}
	\]
	Set \(\widehat J_n^\ell:=(\widehat J_{1,n}^\ell,\ldots,
	\widehat J_{m,n}^\ell)\) and
	\(g_n(y):=y^2\mathbf1_{\{|y|>u_n\}}\), and let
	\begin{equation}
		\label{eq:studentization_Q_definition}
		\mathcal Q_{i,n}
		:=\int_0^{T_n}\!\!\int_{|z|>u_n}
		z^2\mathbf1_{\{Z_{s-}=i\}}N_i(ds,dz),
	\end{equation}
	and define
	\[
		\widehat M_{2,i,n}^{\,\ell}
		:=\frac1{T_{i,n}}\sum_{j=1}^n
		A_{i,j}g_n(\widehat Y_{i,j}^{\,\ell}),
		\qquad
		M_{2,i,n}^\star
		:=\frac1{T_{i,n}}\sum_{j=1}^nA_{i,j}g_n(U_j^\star).
	\]
	On \(\{T_{i,n}=0\}\), set these quantities and
	\(\mathcal Q_{i,n}/T_{i,n}\) equal to zero. Put
	\[
		\widehat\Sigma_{\kappa,n}^{\,\ell}
		:=\operatorname{diag}\left(
		\frac{\widehat M_{2,i,n}^{\,\ell}}{\widehat\pi_{i,n}}:i\in S\right).
	\]

	On the event that the empirical quasi-information matrices are
	nonsingular, define
	\[
	\begin{aligned}
		\widehat V_{\alpha,n}^{\,\ell}
		&:=(\widehat I_{\alpha,n}^{\,\ell})^{-1}
		\left\{\widehat I_{\alpha,n}^{\,\ell}
		+\widehat J_n^\ell\widehat\Sigma_{\kappa,n}^{\,\ell}
		(\widehat J_n^\ell)^\top\right\}
		(\widehat I_{\alpha,n}^{\,\ell})^{-1},\\
		\widehat V_n^{\,\ell}
		&:=\operatorname{diag}\left(
		\widehat V_{\alpha,n}^{\,\ell},
		(\widehat I_{\gamma,n}^{\,\ell})^{-1}\right).
	\end{aligned}
	\]
	On the complementary event, whose probability tends to zero, assign any
	fixed positive-definite value.}

	\begin{cor}[Studentization]
		\label{cor:studentization}
		{
		In Case FV, let \(\ell=\mathrm{fv}\) and suppose
		\(\mathcal A_{\rm FV}\) and Assumptions~\ref{ass:ident} and
		\ref{ass:info_nondeg} hold. In Case IV, let
		\(\ell=\mathrm{iv}\) and suppose
		\(\mathcal A_{\rm IV}\) and Assumptions~\ref{ass:ident} and
		\ref{ass:info_nondeg} hold. In either case,
		\[
			\max_{i\in S}\left|
			\widehat M_{2,i,n}^{\,\ell}
			-\int_{\mathbb R}z^2\nu_i^\star(dz)
			\right|\cip0,\qquad
			\widehat V_n^{\,\ell}\cip V(\theta^\star),
			\qquad
			(\widehat V_n^{\,\ell})^{-1/2}D_n
			(\bar\theta_n^\ell-\theta^\star)
			\cil N(0,I_{p_\alpha+p_\gamma}).
		\]}
	\end{cor}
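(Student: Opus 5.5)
The plan has three parts: consistency of $\widehat M_{2,i,n}^{\,\ell}$, consistency of $\widehat V_n^{\,\ell}$, and Slutsky's lemma applied to Theorem~\ref{thm:parametric_inference}. Since $\widehat V_n^{\,\ell}\cip V(\theta^\star)$ with $V(\theta^\star)$ positive definite (Assumption~\ref{ass:info_nondeg}), the matrix square root is continuous at the limit. Hence $(\widehat V_n^{\,\ell})^{-1/2}\cip V(\theta^\star)^{-1/2}$. Combined with $D_n(\bar\theta_n^\ell-\theta^\star)\cil N\{0,V(\theta^\star)\}$, the studentized statistic converges to $N(0,I_{p_\alpha+p_\gamma})$. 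The nonsingularity event has probability tending to one, so the fixed default value is irrelevant.

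\emph{Information blocks.} For $\widehat I_{\alpha,n}^{\,\ell}$, $\widehat I_{\gamma,n}^{\,\ell}$ and $\widehat J_{i,n}^{\,\ell}$, I first replace $\bar\theta_n^\ell$ by $\theta^\star$. A mean-value expansion is controlled by $\sup_\theta$ of the $\theta$-derivatives of the summands. These have polynomial envelopes $R_{j-1}$ by \eqref{eq:deriv_poly_growth} and ellipticity, so with $\bar\theta_n^\ell\cip\theta^\star$ the replacement error is $o_p(1)$. Next I drop $I_{n,j}$. The error is bounded by $n^{-1}\sum_j R_{j-1}\Pb_{j-1}(|\Delta_jX|>u_n)$. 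Its expectation is $O(h_n^{q_0/2}u_n^{-q_0}+h_n\sup_i\nu_i^\star(|z|>u_n/2))\to0$ in Case FV, and the analogue holds in Case IV since $h_nu_n^{-\beta}\to0$. Finally, the Riemann sums $n^{-1}\sum_j f(X_{t_{j-1}},Z_{j-1})$ are compared with $T_n^{-1}\int_0^{T_n}f(X_t,Z_t)\,dt$ using $\E|f(X_t,Z_t)-f(X_{t_{j-1}},Z_{j-1})|\lesssim h_n^{1/2}+h_n$, where the regime switches with probability $O(h_n)$. The ergodic theorem \eqref{eq:ergodic_thm} then yields $I_\alpha$, $I_\gamma$ and $J_i$. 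Together with $\widehat\pi_{i,n}\cip\pi_i$ from \eqref{eq:Tin_convergence}, $\widehat V_n^{\,\ell}\cip V(\theta^\star)$ follows once $\widehat M_{2,i,n}^{\,\ell}$ is handled.

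\emph{Second jump moment (main obstacle).} I expect $\widehat M_{2,i,n}^{\,\ell}\cip\int z^2\nu_i^\star(dz)$ to be the hardest step, and I would chain three approximations through $\mathcal Q_{i,n}$. \emph{Step 1.} $\widehat M^{\,\ell}_{2,i,n}-M^\star_{2,i,n}\cip0$. The difference $\widehat Y_{i,j}^{\,\ell}-U_j^\star=h_n\{b_{j-1}(\alpha^\star)-b(X_{t_{j-1}},i,\bar\alpha_n^\ell)+\widehat\kappa_{i,n}-\kappa_{i,n}\}$ is $o_p(h_n)R_{j-1}$ uniformly in $j$, by Theorem~\ref{thm:parametric_inference} and Proposition~\ref{prop:tail_functional} together with $\kappa_{i,n}-m_i^\star\to0$. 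For $g_n$, the smooth part $y^2$ contributes $o_p(h_n)(|U_j^\star|+h_n)$, and its sum over $j$ divided by $T_{i,n}$ is negligible. The indicator flip can occur only when $|U_j^\star|\in[u_n-\epsilon_n,u_n+\epsilon_n]$ with $\epsilon_n=o(h_n^{1/2})$. On that event $g_n$ changes by at most $2u_n^2$, and the flip probability, bounded by a near-threshold estimate, is $O(h_n)$ overall at most, giving $O(u_n^2)\to0$. \emph{Step 2.} $M^\star_{2,i,n}-\mathcal Q_{i,n}/T_{i,n}\cip0$. On intervals with $A_{i,j}=1$ and at most one jump larger than $u_n/2$, decompose $U_j^\star$ into the continuous part (of order $\sqrt{h_n}\ll u_n$), the compensated small-jump part, and the big jump. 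The truncated-moment bounds \eqref{eq:smalljump_un}--\eqref{eq:trunc-levy-tail} in Case FV, and the analogous Case IV bounds built into $\rho_n$, show that the $L^1$ discrepancy per unit time vanishes. Intervals with two big jumps have probability $O(h_n^2u_n^{-2\beta})$ and are negligible after summation. Intervals with $A_{i,j}=0$ but $Z_{j-1}=i$ are excluded from both sides, and the leftover exposure is controlled by Lemma~\ref{lem:endpoint_exposure_main}. \emph{Step 3.} $\mathcal Q_{i,n}/T_{i,n}\cip\int z^2\nu_i^\star(dz)$. The compensator of $\mathcal Q_{i,n}$ is $\int_{|z|>u_n}z^2\nu_i^\star(dz)\int_0^{T_n}\mathbf1_{\{Z_s=i\}}ds$. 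The martingale remainder has variance $T_n\int z^4\nu_i^\star(dz)=O(T_n)$ by Assumption~\ref{ass:levy}, so it is $O_p(T_n^{-1/2})$ after normalization. The occupation time matches $T_{i,n}$ up to $o_p(T_n)$, and $\int_{|z|\le u_n}z^2\nu_i^\star(dz)\to0$. The delicate part of the whole argument is the indicator-flip and boundary-band control near $|y|=u_n$ in Step 1–2, especially in Case IV where small jumps are dense.
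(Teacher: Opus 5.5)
Your outline matches the paper's proof. You chain $\widehat M_{2,i,n}^{\,\ell}\to M_{2,i,n}^\star\to\mathcal Q_{i,n}/T_{i,n}\to\int z^2\nu_i^\star(dz)$, take ergodic limits for the information pieces after removing $I_{n,j}$, and finish with Slutsky. There are two differences in execution.

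\emph{The middle link.} You compare $A_{i,j}g_n(U_j^\star)$ with $Q_{i,j,n}$ directly, through a fresh zero/one/multiple big-jump decomposition. The paper instead first compares $H_{i,j,n}^2=A_{i,j}g_n(\Delta_jX)$ with $Q_{i,j,n}$. It does this by recycling $T_n^{-1}\sum_jD_{i,j,n}^2\cip0$ from the proof of Proposition~\ref{prop:tail_functional}, together with Cauchy--Schwarz and the pair-product identity for $L_{i,j,n}^2-Q_{i,j,n}$. Only afterwards does it remove the shift $h_n\{b-\kappa_{i,n}\}$, by a deterministic perturbation bound for $g_n$. Your route uses only crude second-moment bounds, but it redoes counting the paper already has available.

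\emph{The information blocks.} You use a mean-value step at $\theta^\star$ plus the pointwise sampled ergodic law. This is valid because $\Theta$ is convex and the $\theta$-derivatives have polynomial envelopes, and it is a more elementary substitute for the paper's uniform ergodic law.

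One step, as you justify it, does not go through. For intervals with at least two jumps larger than $u_n/2$ you argue from $\Pb_{j-1}(K\ge2)\lesssim h_n^2u_n^{-2\beta}$. After summation and division by $T_n$ this gives $h_nu_n^{-2\beta}\asymp h_n^{1-2\omega\beta}$, which diverges in several cases:
\begin{itemize}
\item for every admissible $\omega$ once $\beta>4/3$, since the lower end of \eqref{eq:iv_threshold} then exceeds $1/(2\beta)$;
\item for the paper's own simulated design $\beta=5/4$, $\omega=32/77$.
\end{itemize}
Besides, the summands on these intervals are unbounded, so a probability is the wrong quantity to control. You need the second factorial moment of the big-jump count:
$\E_{j-1}[(U_j^\star)^2+Q_{i,j,n};K\ge2]\lesssim h_n^2\{(\int_{|z|>u_n/2}|z|\nu_i^\star(dz))^2+\nu_i^\star(|z|>u_n/2)\}R_{j-1}\lesssim h_n^2(u_n^{2-2\beta}+u_n^{-\beta})R_{j-1}$.
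Per unit time this is $h_nu_n^{2-2\beta}+h_nu_n^{-\beta}$. The fact that $h_nu_n^{2-2\beta}=h_n^{1-2\omega(\beta-1)}\to0$ is exactly the check the paper makes using $\beta<8/5$ and $\omega<4/(8+\beta)$. In Case FV the first-moment integral is bounded, so the bound is immediate. The same factor $u_n^{2-2\beta}$ controls the cross products $\zeta_k\zeta_r$ inside $g_n$ of a multi-jump increment.

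A smaller point concerns your Step~1: the flip count is not $O(h_n)$ per interval. Any band around $|y|=u_n$ lies in the annulus $\{u_n/2<|y|\le2u_n\}$, whose conditional probability is of order $h_nu_n^{-\beta}$, plus $h_n^{q_0/2}u_n^{-q_0}$ in Case FV. The contribution is therefore $O_p(u_n^{2-\beta}+h_n^{q_0/2-1}u_n^{2-q_0})$ in Case FV and $O_p(u_n^{2-\beta}+h_nu_n^{-\beta}+h_n)$ in Case IV, not $O(u_n^2)$; it still vanishes. Work with that fixed annulus on the event $\max_j|\widehat Y_{i,j}^{\,\ell}-U_j^\star|\le u_n/2$, as the paper does, for two reasons:
\begin{itemize}
\item in Case FV there is no density bound for $\nu_i^\star$ near $u_n$;
\item a band whose width involves $\bar\alpha_n^\ell$ is not $\mathcal F_{t_{j-1}}$-measurable, so a near-threshold estimate with that band would need additional localization.
\end{itemize}
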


	\begin{rem}[Coordinatewise confidence intervals]
		{
		In either Case FV or Case IV, Corollary~\ref{cor:studentization}
		gives, for every coordinate \(k\), the asymptotic
		\(100(1-\tau)\%\) confidence interval
		\[
			\left[
			\bar\theta_{n,k}^{\,\ell}
			-z_{1-\tau/2}
			\sqrt{\left[D_n^{-1}\widehat V_n^{\,\ell}D_n^{-1}\right]_{kk}},
			\quad
			\bar\theta_{n,k}^{\,\ell}
			+z_{1-\tau/2}
			\sqrt{\left[D_n^{-1}\widehat V_n^{\,\ell}D_n^{-1}\right]_{kk}}
			\right],
		\]
		where \(z_{1-\tau/2}\) is the corresponding standard-normal
		quantile.}
	\end{rem}
	}

	\subsection{Regime-wise L\'evy densities}

	{
	Define the oracle residual
	\[
		Y_j^\star
		:=\Delta_jX-h_n\left\{
		b(X_{t_{j-1}},Z_{j-1},\alpha^\star)
		-\kappa_{Z_{j-1},n}\right\}.
	\]

	}

	{
	\begin{thm}[Regime-wise L\'evy-density rate]
		\label{thm:levy_density_rate}
		{
		In Case FV, let \(\ell=\mathrm{fv}\) and suppose
		\(\mathcal A_{\rm FV}\) and
		Assumptions~\ref{ass:levy_density_local}, \ref{ass:ident},
		\ref{ass:info_nondeg}, and \ref{ass:kernel_bandwidth} hold. In Case IV, let
		\(\ell=\mathrm{iv}\) and suppose
		\(\mathcal A_{\rm IV}\) and
		Assumptions~\ref{ass:levy_density_local}, \ref{ass:ident},
		\ref{ass:info_nondeg}, and \ref{ass:kernel_bandwidth} hold.
		In either case, for each \(i\in S\),
		\begin{equation}
			\label{eq:levy_density_rate}
			\big\|\widehat s_{i,n}^{\,\ell}-s_i^\star\big\|_{L^2(B)}
			=O_p\!\left(
			\eta_n^r+\frac1{\sqrt{T_n\eta_n}}
			+\frac{h_n}{\eta_n^{5/2}}\right).
		\end{equation}
		}
	\end{thm}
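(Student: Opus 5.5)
The estimate \eqref{eq:levy_density_rate} would follow from four reductions: replace estimated quantities by oracle ones, centre the oracle estimator on a conditional mean, bound the centred fluctuation, and bound the deterministic bias.

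\emph{Step 1: oracle reduction.} We work on $\{T_{i,n}>0\}$, whose probability tends to one by \eqref{eq:Tin_convergence}. Define the oracle estimator
\[
s_{i,n}^\star(z):=\frac1{T_{i,n}}\sum_{j=1}^nA_{i,j}\mathbf 1_{\{|Y_j^\star|>u_n\}}K_{\eta_n}(z-Y_j^\star).
\]
On $A_{i,j}=1$ we have $Z_{j-1}=i$, so
\[
\widehat Y_{i,j}^{\,\ell}-Y_j^\star=-h_n\{b(X_{t_{j-1}},i,\bar\alpha_n^\ell)-b(X_{t_{j-1}},i,\alpha^\star)\}+h_n(\widehat\kappa_{i,n}-\kappa_{i,n}).
\]
Theorem~\ref{thm:parametric_inference} and Proposition~\ref{prop:tail_functional} give $|\bar\alpha_n^\ell-\alpha^\star|=O_p(T_n^{-1/2})$. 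Assuming the consistency argument for $\widehat\kappa_{i,n}$ centres at $\kappa_{i,n}$ with $\sqrt{T_n}$-fluctuations, we also get $|\widehat\kappa_{i,n}-\kappa_{i,n}|=O_p(T_n^{-1/2})$. The shift is therefore $O_p(h_nT_n^{-1/2})R_{j-1}$.

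The kernel difference is handled by the mean value theorem with $\|K'\|_\infty$. This gives $|K_{\eta}(z-\widehat Y)-K_\eta(z-Y^\star)|\lesssim \eta^{-2}|\widehat Y-Y^\star|\mathbf 1\{\cdot\}$, supported where $Y^\star$ lies within $O(\eta)$ of $B$. The number of such $j$ is $O_p(T_n)$, so this term contributes $O_p(h_nT_n^{-1/2}\eta_n^{-2})$ in $L^\infty(B)$. That is $o(h_n\eta_n^{-5/2})$ whenever $T_n\eta_n\to\infty$.

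The indicator discrepancy $\widehat J\neq \mathbf 1_{\{|Y^\star|>u_n\}}$ only occurs when $|Y_j^\star|$ lies within $O_p(h_nT_n^{-1/2})$ of $u_n$. For such $j$ the kernel term $K_{\eta_n}(z-Y_j^\star)$ vanishes for $z\in B$ once $u_n+C\eta_n<\delta_B$. Hence that term is zero with probability tending to one.

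\emph{Step 2: martingale/bias split.} Write
\[
\xi_j(z):=A_{i,j}\mathbf 1_{\{|Y_j^\star|>u_n\}}K_{\eta_n}(z-Y_j^\star),
\qquad
T_{i,n}(s^\star_{i,n}-s_i^\star)=\sum_j\{\xi_j-\E_{j-1}\xi_j\}+\sum_j\{\E_{j-1}\xi_j-h_nA_{i,j}s_i^\star\}.
\]
The martingale part is bounded in $L^2(B)$ by orthogonality:
\[
\E\Bigl\|\sum_j(\xi_j-\E_{j-1}\xi_j)\Bigr\|_{L^2(B)}^2\le\sum_j\E\int_B\xi_j(z)^2dz.
\]
The conditional probability of a jump near $B$ during one interval is $O(h_n)$, uniformly with envelope $R_{j-1}$, so $\int_B\xi_j^2\,dz\lesssim \eta_n^{-1}$ on that event. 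This gives the bound $O(n h_n\eta_n^{-1})=O(T_n\eta_n^{-1})$. Dividing by $T_{i,n}\asymp T_n$ yields the term $(T_n\eta_n)^{-1/2}$.

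\emph{Step 3: conditional kernel expansion (main obstacle).} We need, uniformly in $z\in B$, with predictable envelope,
\[
\E_{j-1}\bigl[\mathbf 1_{\{Z_j=i,|Y_j^\star|>u_n\}}K_{\eta_n}(z-Y_j^\star)\bigr]=\mathbf 1_{\{Z_{j-1}=i\}}\Bigl\{h_n\int K_{\eta_n}(z-y)s_i^\star(y)\,dy+O(h_n^2\eta_n^{-5/2})R_{j-1}\Bigr\}.
\]
I would first decompose on the number of jumps of size at least $\delta_B/2$ in $(t_{j-1},t_j]$ and on whether a regime switch occurs. Zero such jumps: the residual is a continuous part plus small jumps, and $K_{\eta_n}(z-\cdot)$ is supported away from $0$. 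So a moment bound of arbitrary order, $P_{j-1}(|Y|>\delta_B/4)\lesssim h_n^{k}$, makes the contribution negligible. In Case IV this needs the small-jump component only through truncated moments of order $>\beta$ and never through the first absolute moment, which is why the expansion is stated this way. Two or more big jumps, or a switch combined with a big jump, have probability $O(h_n^2)$, and $\|K_\eta\|_\infty\lesssim\eta^{-1}$ bounds these. Exactly one big jump $J$ and no switch: $Y^\star=J+\epsilon$, with $\epsilon$ the remaining increment, independent of $J$ conditionally on $\mathcal F_{t_{j-1}}$ up to the drift's dependence on the path. A second-order Taylor expansion of $K_\eta(z-J-\epsilon)$ in $\epsilon$ uses $\|K''_\eta\|_\infty\lesssim\eta^{-3}$. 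The linear term's conditional mean is $O(h_n)$ after compensation, and the quadratic term is $O(h_n\eta^{-3})$; multiplying by the jump probability $h_n$ gives $h_n^2\eta^{-3}$. To obtain the sharper $\eta^{-5/2}$ I would instead integrate over $z$ first and use $\|K^{(k)}_\eta\|_{L^2}\lesssim\eta^{-1/2-k}$, interpolating between first and second order, since the $L^2(B)$ norm is what counts. This gives the $h_n/\eta_n^{5/2}$ term after dividing by $T_{i,n}\sim nh_n$.

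\emph{Step 4: bias and assembly.} Summing the Step 3 expansion over $j$ and dividing by $T_{i,n}$, using $A_{i,j}$ versus $\mathbf 1_{\{Z_{j-1}=i\}}$ from Lemma~\ref{lem:endpoint_exposure_main}, leaves $K_{\eta_n}*s_i^\star-s_i^\star$ on $B$. Assumptions~\ref{ass:levy_density_local} and \ref{ass:kernel_bandwidth}(i), via a Taylor expansion of order $\underline r$ with H\"older remainder, give $O(\eta_n^r)$ uniformly on $B$, since $B^{\eta_0}\subset U_0$. The ergodic theorem \eqref{eq:ergodic_thm} controls $\frac1{T_n}\sum_j h_nR_{j-1}=O_p(1)$. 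Collecting Steps 1 through 4 yields \eqref{eq:levy_density_rate}.
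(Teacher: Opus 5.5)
You are following the paper's route. Your Steps 1, 2 and 4, and the one- and multiple-jump parts of Step 3, match the paper, which packages Step 3 as Lemma~\ref{lem:kernel_small_time_main}. The bookkeeping differences are harmless: the residual count must be weighted by $R_{j-1}$, and replacing $A_{i,j}$ by $\mathbf 1_{\{Z_{j-1}=i\}}$ costs only $O_p(h_n)$ after normalization.

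\textbf{The gap is the zero-large-jump case of Step 3.} You assert that, with no jump of size at least $\delta_B/2$, a moment bound of arbitrary order gives $\Pb_{j-1}(|Y_j^\star|>\delta_B/4)\lesssim h_n^k$.

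\begin{itemize}
\item \emph{The claim is false.} On that event a single jump with size in $(\delta_B/4,\delta_B/2)$ is still allowed. So the probability is of order $h_n$, not smaller, whenever $\nu_i^\star$ charges that annulus.
\item \emph{The mechanism cannot work.} A compensated jump integral with jumps of fixed size has $\E|J|^k$ of order $h_n\int|z|^k\nu_i^\star(dz)$, not $h_n^{k/2}$. Markov's inequality therefore never gives better than $O(h_n)$.
\item \emph{$O(h_n)$ is fatal.} With $B_n=\{y:\operatorname{dist}(y,B)\le c_K\eta_n\}$, the zero-jump term is bounded in $L^2(B)$ by $\|K_{\eta_n}\|_2\,\Pb_{j-1}(Y_j^\star\in B_n,\ \text{no target jump})$. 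A probability of order $h_n$ gives $h_n\eta_n^{-1/2}R_{j-1}$ per step. After summing and dividing by $T_{i,n}$ this is $\eta_n^{-1/2}\to\infty$, not the required $h_n^2\eta_n^{-5/2}$.
\end{itemize}

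The correct statement is the $O(h_n^2)$ bound \eqref{eq:kernel_no_target_supp}. It needs a jump-counting argument plus an exponential inequality, not moments. The paper takes a fixed neighbourhood $U$ of $B$ as the target set and splits non-target jumps at a small level $\varepsilon$:

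\begin{itemize}
\item Two or more non-target jumps larger than $\varepsilon$ cost $O(h_n^2)$.
\item Exactly one such jump forces the remainder to cover the fixed distance $d_0=\operatorname{dist}(\overline U_1,U^c)$. Since the remainder's second moment on that event is $O(h_n^2)$, Markov's inequality gives $O(h_n^2)$.
\item With no such jump, the remainder must cover $r_0=\inf_{y\in U_1}|y|$. For the compensated martingale with jumps bounded by $\varepsilon$ (taking $r_0/(6\varepsilon)>2$), Bennett's inequality gives $O(h_n^2)$. The Brownian and drift parts are handled by fourth moments.
\end{itemize}

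Bennett's inequality uses only the jump bound and the second moment of the small jumps. That is also what makes this step valid in Case IV, where no first absolute small-jump moment exists.

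You can repair your Step 3 by refining your split at $\delta_B/2$ in this way. (Separately, no interpolation is needed for the one-jump term: Minkowski's inequality with $\|K_{\eta_n}''\|_2=\eta_n^{-5/2}\|K''\|_2$ gives $\eta_n^{-5/2}$ directly.)
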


	\begin{rem}[Bandwidth choice]
		\label{rem:levy_bandwidth}
		{
		In either Case FV or Case IV, if
		\(\eta_n\asymp T_n^{-1/(2r+1)}\), then
		\[
		\big\|\widehat s_{i,n}^{\,\ell}-s_i^\star\big\|_{L^2(B)}
		=O_p\!\left(T_n^{-r/(2r+1)}+h_n\,T_n^{\frac{5}{2(2r+1)}}\right).
		\]
		If, in addition, \(h_n\,T_n^{(r+5/2)/(2r+1)}\to0\), then
		$\big\|\widehat s_{i,n}^{\,\ell}-s_i^\star\big\|_{L^2(B)}
		=O_p\!\left(T_n^{-r/(2r+1)}\right)$.}
	\end{rem}

		}
	}

		%%%%%%%%%%%%%%%%%%%%%%%%
	%%%%%%%%%%%%%%%%%%%%%%%%

	{
	\section{Proofs}
	\label{sec:proof_strategy}}

	{
	% BEGIN INLINED FILE: proof_strategy_main.tex
{

\subsection{Proof of Proposition~\ref{prop:tail_functional}}

\begin{proof}
The proof proceeds in three stages. First, in Cases FV and IV separately,
we compare each detected threshold increment with the corresponding ideal
regime-specific large-jump sum and derive conditional first-, second-, and
fourth-moment error bounds. Second, we use the unified detection bounds to
identify the centering and predictable covariance of the detected-jump
martingale array and to verify its conditional Lindeberg condition. Finally,
we apply the martingale central limit theorem and remove the truncation bias
between the thresholded jump mean and \(m^\star\).

For \(i\in S\) and \(j\le n\), put
\[
	H_{i,j,n}:=A_{i,j}\Delta_jX
	\mathbf1_{\{|\Delta_jX|>u_n\}},
	\qquad
	\mu_{i,j,n}:=\E_{j-1}H_{i,j,n},
\]
and introduce the ideal large-jump sum and its detection error,
\[
	L_{i,j,n}:=\int_{t_{j-1}}^{t_j}\!\!\int_{|z|>u_n}
	z\mathbf1_{\{Z_{s-}=i\}}N_i(ds,dz),
	\qquad D_{i,j,n}:=H_{i,j,n}-L_{i,j,n}.
\]

\proofstep{Step 1: one-step detection bounds in Case FV}
Let
\[
	N_{n,j}:=\sum_{k=1}^m\int_{t_{j-1}}^{t_j}\!\!\int_{|z|>u_n}
	\mathbf1_{\{Z_{s-}=k\}}N_k(ds,dz),
	\qquad
	\bar\lambda_n:=\max_{k\in S}\nu_k^\star(|z|>u_n).
\]
Thus, \(N_{n,j}\) counts all jumps larger than \(u_n\) on
\((t_{j-1},t_j]\). The identity
\begin{equation}
	\label{eq:main_fv_detected_decomposition}
\begin{aligned}
	D_{i,j,n}
	=H_{i,j,n}\mathbf1_{\{N_{n,j}=0\}}
	+(H_{i,j,n}-L_{i,j,n})\mathbf1_{\{N_{n,j}=1\}}
	+(H_{i,j,n}-L_{i,j,n})\mathbf1_{\{N_{n,j}\ge2\}}
\end{aligned}
\end{equation}
holds. On \(\{N_{n,j}=0\}\), the elementary
truncation inequality
\(x^r\mathbf1_{\{x>u_n\}}\le u_n^{r-q_0}x^{q_0}\), together with
the conditional \(q_0\)-moment bound, gives, for \(r=1,2\),
\begin{equation}
	\label{eq:main_fv_false_detection}
	\begin{aligned}
	\E_{j-1}[|H_{i,j,n}|^r;N_{n,j}=0]
	&\le u_n^{r-q_0}
	\E_{j-1}[|\Delta_jX|^{q_0};N_{n,j}=0]\\
	&\lesssim h_n^{q_0/2}u_n^{r-q_0}R_{j-1}.
	\end{aligned}
\end{equation}

To make the one-jump decomposition explicit, put
\[
\begin{aligned}
	J_{n,j}^{>u_n}
	&:=\sum_{k=1}^m\int_{t_{j-1}}^{t_j}\!\!\int_{|z|>u_n}
	z\mathbf1_{\{Z_{s-}=k\}}N_k(ds,dz),\\
	S_{n,j}
	&:=\int_{t_{j-1}}^{t_j}b(X_s,Z_s,\alpha^\star)\,ds
	+\int_{t_{j-1}}^{t_j}a(X_s,Z_s,\gamma^\star)\,dW_s
	+\sum_{k=1}^m\int_{t_{j-1}}^{t_j}\!\!\int_{|z|\le u_n}
	z\mathbf1_{\{Z_{s-}=k\}}\widetilde N_k(ds,dz)\\
	&\quad-\sum_{k=1}^m\int_{t_{j-1}}^{t_j}
	\mathbf1_{\{Z_{s-}=k\}}\,ds
	\int_{|z|>u_n}z\nu_k^\star(dz).
\end{aligned}
\]
Then \(\Delta_jX=J_{n,j}^{>u_n}+S_{n,j}\). On
\(\{N_{n,j}=1\}\), if \(\xi_{n,j}\) denotes the unique jump larger
than \(u_n\), then
\[
	J_{n,j}^{>u_n}=\xi_{n,j},
	\qquad \Delta_jX=\xi_{n,j}+S_{n,j}.
\]
Apply the conditional Burkholder--Davis--Gundy inequality and Kunita's
inequality. Splitting according to \(|S_{n,j}|\le u_n/2\) and its
complement gives
\begin{equation}
	\label{eq:main_fv_one_jump_noise}
	|\E_{j-1}[S_{n,j};N_{n,j}=1]|
	+\E_{j-1}[|S_{n,j}|^2;N_{n,j}=1]
	\lesssim h_n^2\bar\lambda_nR_{j-1}.
\end{equation}
On \(\{N_{n,j}=1\}\), the implication
\[
	|\xi_{n,j}+S_{n,j}|\le u_n
	\quad\Longrightarrow\quad
	\bigl\{u_n<|\xi_{n,j}|\le 3u_n/2\bigr\}
	\ \text{or}\ 
	\bigl\{|S_{n,j}|>u_n/2\bigr\}
\]
follows from the triangle inequality. On the second event in this union,
\[
	|\xi_{n,j}|
	\le u_n+|S_{n,j}|
	<3|S_{n,j}|.
\]
Consequently, for \(r=1,2\),
\[
\begin{aligned}
	&\E_{j-1}[|L_{i,j,n}|^r;
	 A_{i,j}|\Delta_jX|\le u_n,\ N_{n,j}=1]\\
	&\quad\lesssim
	\E_{j-1}[|\xi_{n,j}|^r;
	 u_n<|\xi_{n,j}|\le3u_n/2,\ N_{n,j}=1]+\E_{j-1}[|S_{n,j}|^r;
	 |S_{n,j}|>u_n/2,\ N_{n,j}=1]\\
	&\quad\lesssim h_nu_n^{r-\beta}R_{j-1}
	+u_n^{r-q_0}\E_{j-1}[|S_{n,j}|^{q_0};N_{n,j}=1]\\
	&\quad\lesssim \{h_nu_n^{r-\beta}
	+h_n^{q_0/2}u_n^{r-q_0}\}R_{j-1}.
\end{aligned}
\]
The conditional compound-Poisson moment formula also gives
\[
	\E_{j-1}[|L_{i,j,n}|^r;N_{n,j}\ge2]
	\lesssim h_n^2\bar\lambda_nR_{j-1},
	\qquad r=1,2.
\]
Combining the one-count and multiple-count estimates yields
\begin{equation}
	\label{eq:main_fv_missed_jump}
\begin{aligned}
	\E_{j-1}[|L_{i,j,n}|^r;
	 A_{i,j}|\Delta_jX|\le u_n,\ N_{n,j}\ge1]
	&\lesssim h_n\{u_n^{r-\beta}+h_n\bar\lambda_n
	+h_n^{q_0/2-1}u_n^{r-q_0}\}R_{j-1}.
\end{aligned}
\end{equation}
For the regime mismatch, let
\[
	N_{j,n}^Z
	:=\#\{s\in(t_{j-1},t_j]:Z_s\ne Z_{s-}\},
	\qquad
	\mathcal H_j
	:=\mathcal F_{t_{j-1}}
	\vee\sigma(Z_s:t_{j-1}\le s\le t_j).
\]
Since an active regime-\(i\) jump on \(\{A_{i,j}=0\}\) requires at
least one transition,
\[
	\{A_{i,j}=0,\ L_{i,j,n}\ne0\}
	\subset\{N_{j,n}^Z\ge1,\ N_{n,j}\ge1\}.
\]
Conditioning first on \(\mathcal H_j\) gives
\[
\begin{aligned}
	\Pb_{j-1}(N_{j,n}^Z\ge1,N_{n,j}\ge1)
	&=\E_{j-1}\!\left[
	\mathbf1_{\{N_{j,n}^Z\ge1\}}
	\Pb(N_{n,j}\ge1\mid\mathcal H_j)\right]\\
	&\lesssim h_n\bar\lambda_n
	\Pb_{j-1}(N_{j,n}^Z\ge1)
	\lesssim h_n^2\bar\lambda_n.
\end{aligned}
\]
Therefore, for \(r=1,2\), the conditional compound-Poisson moment formula
and the finite \(r\)-th jump moments imply
\[
	\E_{j-1}[|L_{i,j,n}|^r;A_{i,j}=0]
	\lesssim h_n^2\bar\lambda_nR_{j-1}.
\]
Moreover, where \(\Gamma_{i,j}\) is the no-switch indicator defined in
Lemma~\ref{lem:endpoint_exposure_main},
\[
	\{A_{i,j}=1,\ \Gamma_{i,j}=0\}
	\subset\{N_{j,n}^Z\ge2\},
	\qquad
	\Pb_{j-1}(N_{j,n}^Z\ge2)\lesssim h_n^2,
\]
and the conditional increment moment bounds give
\[
	\big|\E_{j-1}[D_{i,j,n};A_{i,j}=1,\Gamma_{i,j}=0]\big|
	+\E_{j-1}[D_{i,j,n}^2;A_{i,j}=1,\Gamma_{i,j}=0]
	\lesssim h_n^2R_{j-1}.
\]
Finally,
\(\Pb_{j-1}(N_{n,j}\ge2)\lesssim h_n^2\bar\lambda_n^2\).
The conditional compound-Poisson moment formula then yields
\[
\begin{aligned}
	&\big|\E_{j-1}[D_{i,j,n};N_{n,j}\ge2]\big|
	+\E_{j-1}[D_{i,j,n}^2;N_{n,j}\ge2]\\
	&\quad\le
	\E_{j-1}[|D_{i,j,n}|+D_{i,j,n}^2;N_{n,j}\ge2]\\
	&\quad\lesssim \E_{j-1}[|H_{i,j,n}|+H_{i,j,n}^2
	+|L_{i,j,n}|+L_{i,j,n}^2;N_{n,j}\ge2]\\
	&\quad\lesssim h_n^2(\bar\lambda_n+1)R_{j-1}
	\lesssim h_n(h_n\bar\lambda_n+h_n)R_{j-1}.
\end{aligned}
\]
We can therefore collect the four sources of error explicitly. Define
\begin{align*}
	r_{1,n}^{\rm fv}
	&:=h_n^{q_0/2-1}u_n^{1-q_0}
	+u_n^{1-\beta}+h_n\bar\lambda_n+h_n,\\
	r_{2,n}^{\rm fv}
	&:=h_n^{q_0/2-1}u_n^{2-q_0}
	+u_n^{2-\beta}+h_n\bar\lambda_n+h_n.
\end{align*}
Using \eqref{eq:main_fv_detected_decomposition} and the preceding bounds,
\begin{equation}
	\label{eq:main_fv_detection_error}
	\begin{aligned}
	|\E_{j-1}D_{i,j,n}|
	&\le |\E_{j-1}[H_{i,j,n};N_{n,j}=0]|
	+|\E_{j-1}[D_{i,j,n};N_{n,j}=1]|\\
	&\quad+|\E_{j-1}[D_{i,j,n};N_{n,j}\ge2]|\\
	&\lesssim h_n^{q_0/2}u_n^{1-q_0}R_{j-1}\\
	&\quad+\{h_nu_n^{1-\beta}+h_n^2\bar\lambda_n
	+h_n^{q_0/2}u_n^{1-q_0}+h_n^2\}R_{j-1}\\
	&\quad+h_n(h_n\bar\lambda_n+h_n)R_{j-1}\\
	&\lesssim h_nr_{1,n}^{\rm fv}R_{j-1},\\[3pt]
	\E_{j-1}D_{i,j,n}^2
	&=\E_{j-1}[H_{i,j,n}^2;N_{n,j}=0]
	+\E_{j-1}[D_{i,j,n}^2;N_{n,j}=1]\\
	&\quad+\E_{j-1}[D_{i,j,n}^2;N_{n,j}\ge2]\\
	&\lesssim h_n^{q_0/2}u_n^{2-q_0}R_{j-1}\\
	&\quad+\{h_nu_n^{2-\beta}+h_n^2\bar\lambda_n
	+h_n^{q_0/2}u_n^{2-q_0}+h_n^2\}R_{j-1}\\
	&\quad+h_n(h_n\bar\lambda_n+h_n)R_{j-1}\\
	&\lesssim h_nr_{2,n}^{\rm fv}R_{j-1}.
	\end{aligned}
\end{equation}
To check the rates, write the two exponents in \eqref{eq:fv_span} as
\[
	\delta_1:=\frac{q_0-3}{2}-\rho(q_0-2),
	\qquad
	\delta_2:=\rho(2-\beta)-\frac12,
	\qquad
	\delta_{\rm FV}=\delta_1\wedge\delta_2.
\]
Since \(u_n=c_{\rm FV}h_n^\rho\) and
\(\bar\lambda_n\lesssim u_n^{-\beta}\),
\[
\begin{aligned}
	h_n^{q_0/2-1}u_n^{1-q_0}
	&\asymp h_n^{\delta_1+1/2-\rho},\\
	u_n^{1-\beta}
	&\asymp h_n^{\delta_2+1/2-\rho},\\
	h_n\bar\lambda_n
	&\lesssim h_n^{1-\rho\beta}.
\end{aligned}
\]
The three displayed exponents, as well as the exponent \(1\) of the
remaining term \(h_n\), are strictly larger than
\(\delta_{\rm FV}\). Hence \eqref{eq:fv_span} gives
\(\sqrt{T_n}r_{1,n}^{\rm fv}\to0\).
For the second-moment rate,
\[
	h_n^{q_0/2-1}u_n^{2-q_0}
	\asymp h_n^{(q_0-2)(1/2-\rho)}\to0,
	\qquad
	u_n^{2-\beta}\to0,
	\qquad
	h_n\bar\lambda_n\lesssim h_n^{1-\rho\beta}\to0,
\]
so \(r_{2,n}^{\rm fv}\to0\).
The same zero-count truncation estimate, now with \(r=4\), and the
conditional compound-Poisson fourth-moment formula give
\begin{equation}
	\label{eq:main_fv_detected_fourth}
	\begin{aligned}
	\E_{j-1}|H_{i,j,n}|^4
	&\lesssim \E_{j-1}[|H_{i,j,n}|^4;N_{n,j}=0]+\E_{j-1}[|J_{n,j}^{>u_n}|^4
	+|S_{n,j}|^4;N_{n,j}\ge1]\\
	&\lesssim \left\{h_n^{q_0/2}u_n^{4-q_0}
	+h_n\max_{k\in S}\int_{|z|>u_n}z^4\nu_k^\star(dz)
	+h_n^2\bar\lambda_n+h_n^2\right\}R_{j-1}\\
	&\lesssim h_nR_{j-1}.
	\end{aligned}
\end{equation}

\proofstep{Step 2: one-step detection bounds in Case IV}
For \(0<r\le1\), define
\[
	q_i^{\rm sym}(r):=\min\{q_i(r),q_i(-r)\},
\]
\[
	\nu_i^0(dz):=c_iq_i^{\rm sym}(|z|)|z|^{-1-\beta}
	\mathbf1_{\{0<|z|\le1\}}dz,
	\qquad \bar\nu_i:=\nu_i^\star-\nu_i^0.
\]
The local expansion of \(q_i\) implies
\begin{equation}
	\label{eq:main_iv_nonnegative_split}
	0\le\nu_i^0\le\nu_i^\star,
	\qquad
	0\le\frac{d\bar\nu_i}{dz}\lesssim |z|^{-\beta}
	\quad(0<|z|\le1).
\end{equation}
Thus both terms are genuine L\'evy measures and \(\nu_i^0\) is symmetric.
We can split $N_i$ by independent Poisson random measures
\(N_i^0\) and \(\bar N_i\), with compensators
\(ds\,\nu_i^0(dz)\) and \(ds\,\bar\nu_i(dz)\), such that
	$N_i=N_i^0+\bar N_i$.
We
write \(\widetilde N_i^0\) and \(\widetilde{\bar N}_i\) for the corresponding
compensated Poisson random measures.

For \(0<v\le1\), \eqref{eq:main_iv_nonnegative_split} gives
\begin{equation}
	\label{eq:main_iv_split_bounds}
\begin{aligned}
	\bar\nu_i(|z|>v)
	&\lesssim \int_v^1 r^{-\beta}\,dr+\bar\nu_i(|z|>1)
	\lesssim v^{1-\beta},\\
	\int_{|z|\le v}|z|\bar\nu_i(dz)
	&\lesssim \int_0^v r^{1-\beta}\,dr
	\lesssim v^{2-\beta},\\
	\nu_i^\star(|z|>v)
	&\lesssim \int_v^1 r^{-1-\beta}\,dr+\nu_i^\star(|z|>1)
	\lesssim v^{-\beta}.
\end{aligned}
\end{equation}

Recall from Lemma~\ref{lem:endpoint_exposure_main} that
\(\Gamma_{i,j}\) is the event that the regime remains equal to \(i\) on
\([t_{j-1},t_j]\). On this event, define
\[
\begin{aligned}
	S_{i,j,n}^0
	&:=a_i(\gamma^\star)\Delta_jW
	+\int_{t_{j-1}}^{t_j}\!\!\int_{|z|\le u_n/2}
	z\,\widetilde N_i^0(ds,dz),\\
	J_{i,j,n}^{>u_n/2}
	&:=\int_{t_{j-1}}^{t_j}\!\!\int_{|z|>u_n/2}z\,N_i(ds,dz),\\
	R_{i,j,n}^0
	&:=\int_{t_{j-1}}^{t_j}b(X_s,i,\alpha^\star)\,ds
	+\int_{t_{j-1}}^{t_j}\!\!\int_{|z|\le u_n/2}
	z\,\widetilde{\bar N}_i(ds,dz)\\
	&\quad-h_n\int_{|z|>u_n/2}z\,\nu_i^\star(dz).
\end{aligned}
\]
Thus \(S_{i,j,n}^0\) consists of the Brownian increment and the
compensated symmetric \(\nu_i^0\)-small-jump increment, whereas
\(R_{i,j,n}^0\) consists of the drift, the compensated
\(\bar\nu_i\)-small-jump increment, and the
\(\nu_i^\star\)-large-jump compensator.
Then, on \(\Gamma_{i,j}=1\),
	$\Delta_jX=S_{i,j,n}^0+J_{i,j,n}^{>u_n/2}+R_{i,j,n}^0$.
By the symmetry of the Brownian increment and of \(\nu_i^0\), and by
their independence, \(S_{i,j,n}^0\stackrel d=-S_{i,j,n}^0\).
The Cauchy--Schwarz inequality and Kunita's
inequality give
\begin{equation}
	\label{eq:main_iv_symmetric_moments}
\begin{aligned}
	\E|S_{i,j,n}^0|&\lesssim \sqrt {h_n},\\
	\E(S_{i,j,n}^0)^2
	&=h_na_i^2(\gamma^\star)
	+h_n\int_{|z|\le u_n/2}z^2\nu_i^0(dz)
	\lesssim h_n,\\
	\E|S_{i,j,n}^0|^3
	&\lesssim (h_n^{3/2}+h_nu_n^{3-\beta}).
\end{aligned}
\end{equation}
Moreover, \eqref{eq:main_iv_split_bounds} and the bounded signed tail
compensator imply
\[
	\E_{j-1}[|R_{i,j,n}^0|;\Gamma_{i,j}=1]
	\lesssim h_n(1+u_n^{2-\beta})R_{j-1},
	\qquad
	\E_{j-1}[(R_{i,j,n}^0)^2;\Gamma_{i,j}=1]
	\lesssim (h_nu_n^{3-\beta}+h_n^2)R_{j-1}.
\]
For \(\psi_{u_n}(y)=y\mathbf1_{\{|y|>u_n\}}\), put
	$\Phi_{1,i,n}(z)
	:=\E\psi_{u_n}(z+S_{i,j,n}^0)
	-z\mathbf1_{\{|z|>u_n\}}$.
Since \(S_{i,j,n}^0\stackrel d=-S_{i,j,n}^0\) and \(\psi_{u_n}\) is
odd, \(\Phi_{1,i,n}(-z)=-\Phi_{1,i,n}(z)\), and hence
\begin{equation}
	\label{eq:main_iv_boundary_cancel}
	\int_{|z|>u_n/2}\Phi_{1,i,n}(z)\nu_i^0(dz)=0.
\end{equation}
For deterministic \(z,s\in\mathbb R\),
\[
\begin{aligned}
	\psi_{u_n}(z+s)-z\mathbf1_{\{|z|>u_n\}}
	&=s\mathbf1_{\{|z+s|>u_n\}}+z\left(
	\mathbf1_{\{|z+s|>u_n\}}-
	\mathbf1_{\{|z|>u_n\}}\right).
\end{aligned}
\]
By the reverse triangle inequality,
	$\left|
	\mathbf1_{\{|z+s|>u_n\}}-
	\mathbf1_{\{|z|>u_n\}}\right|
	\le
	\mathbf1_{\{\,\lvert |z|-u_n\rvert\le|s|\,\}}$.
Thus the threshold indicator changes only on the displayed random strip.
With \(s=S_{i,j,n}^0\), on its complement one has
\[
	\psi_{u_n}(z+S_{i,j,n}^0)
	-z\mathbf1_{\{|z|>u_n\}}
	=S_{i,j,n}^0\mathbf1_{\{|z|>u_n\}}
	\in\{0,S_{i,j,n}^0\}.
\]
Integrating the strip width against \eqref{eq:main_iv_split_bounds}, and
then using \eqref{eq:main_iv_symmetric_moments}, gives
\begin{equation}
	\label{eq:main_iv_boundary_integrals}
	\begin{aligned}
	\int_{|z|>u_n/2}|\Phi_{1,i,n}(z)|\bar\nu_i(dz)
	&\lesssim (u_n^{2-\beta}+h_nu_n^{-\beta}+h_n),\\
	\int_{|z|>u_n/2}\E|\psi_{u_n}(z+S_{i,j,n}^0)
	-z\mathbf1_{\{|z|>u_n\}}|^2
	\nu_i^\star(dz)
	&\lesssim (u_n^{2-\beta}+h_nu_n^{2-2\beta}
	+h_nu_n^{-\beta}+h_n).
	\end{aligned}
\end{equation}

Define the number of active regime-\(i\) jumps larger than \(u_n/2\) by
\[
	K_{i,j,n}:=\int_{t_{j-1}}^{t_j}\!\!\int_{|z|>u_n/2}
	\mathbf1_{\{Z_{s-}=i\}}N_i(ds,dz).
\]
Conditional on the regime path over the interval, this is Poisson with
parameter
\[
	\Lambda_{i,j,n}
	:=\nu_i^\star(|z|>u_n/2)
	\int_{t_{j-1}}^{t_j}\mathbf1_{\{Z_{s-}=i\}}\,ds
	\lesssim h_nu_n^{-\beta}=o(1).
\]
On \(\{\Gamma_{i,j}=1,K_{i,j,n}=0\}\),
\[
	J_{i,j,n}^{>u_n/2}=L_{i,j,n}=0,
	\qquad
	D_{i,j,n}=H_{i,j,n}
	=\psi_{u_n}(S_{i,j,n}^0+R_{i,j,n}^0).
\]
The symmetry of \(S_{i,j,n}^0\), together with its conditional
independence of the regime path and the large-jump count, gives
\[
	\E_{j-1}[\psi_{u_n}(S_{i,j,n}^0);
	\Gamma_{i,j}=1,K_{i,j,n}=0]=0.
\]
Hence
\[
\begin{aligned}
	&|\E_{j-1}[D_{i,j,n};\Gamma_{i,j}=1,K_{i,j,n}=0]|\\
	&\quad\le
	\E_{j-1}[|\psi_{u_n}(S_{i,j,n}^0+R_{i,j,n}^0)
	-\psi_{u_n}(S_{i,j,n}^0)|;
	\Gamma_{i,j}=1,K_{i,j,n}=0],\\
	&\E_{j-1}[D_{i,j,n}^2;\Gamma_{i,j}=1,K_{i,j,n}=0]\\
	&\quad\le2\E_{j-1}[|\psi_{u_n}(S_{i,j,n}^0)|^2;
	\Gamma_{i,j}=1,K_{i,j,n}=0]
	+2\E_{j-1}[|\psi_{u_n}(S_{i,j,n}^0+R_{i,j,n}^0)
	-\psi_{u_n}(S_{i,j,n}^0)|^2;\Gamma_{i,j}=1].
\end{aligned}
\]
For deterministic \(x,r\in\mathbb R\), the pointwise bounds
\[
\begin{aligned}
	|\psi_{u_n}(x+r)-\psi_{u_n}(x)|
	&\le |r|+|x|
	\mathbf1_{\{\,\lvert|x|-u_n\rvert\le|r|\,\}},\\
	|\psi_{u_n}(x+r)-\psi_{u_n}(x)|^2
	&\le2r^2+2x^2
	\mathbf1_{\{\,\lvert|x|-u_n\rvert\le|r|\,\}}
\end{aligned}
\]
show that the indicator can change only on
	$\{\,\lvert|S_{i,j,n}^0|-u_n\rvert
	\le |R_{i,j,n}^0|\,\}$.
The moment estimates for \(S_{i,j,n}^0\) and \(R_{i,j,n}^0\), together
with the random-boundary estimate of
Proposition~\ref{prop:iv_off_diagonal_density} on
\(\{\lvert|S_{i,j,n}^0|-u_n\rvert\le|R_{i,j,n}^0|\}\), yield
\[
\begin{aligned}
	&|\E_{j-1}[D_{i,j,n};\Gamma_{i,j}=1,K_{i,j,n}=0]|
	+\E_{j-1}[D_{i,j,n}^2;\Gamma_{i,j}=1,K_{i,j,n}=0]\\
	&\qquad\lesssim h_n(u_n^{2-\beta}+h_nu_n^{-\beta}+h_n)R_{j-1}.
\end{aligned}
\]

On \(\{\Gamma_{i,j}=1,K_{i,j,n}=1\}\), let \(\xi_{i,j,n}\) be the
unique jump. When the one-point Campbell--Mecke formula replaces
\(\xi_{i,j,n}\) by the integration variable \(z\), the drift integral
must be evaluated along the path containing this jump. Denote the
resulting remainder by \(R_{i,j,n}^0(z)\); on the one-jump event,
\[
	R_{i,j,n}^0(\xi_{i,j,n})=R_{i,j,n}^0.
\]
For \(|z|>u_n/2\), put
\[
\begin{aligned}
	G_{i,j,n}^0(z)
	&:=\psi_{u_n}(z+S_{i,j,n}^0)
	-z\mathbf1_{\{|z|>u_n\}},\\
	G_{i,j,n}(z)
	&:=\psi_{u_n}(z+S_{i,j,n}^0+R_{i,j,n}^0(z))
	-z\mathbf1_{\{|z|>u_n\}}.
\end{aligned}
\]
Then
\[
	D_{i,j,n}=G_{i,j,n}(\xi_{i,j,n}).
\]
Conditional on the regime path and \(S_{i,j,n}^0\), use the following
one-point Poisson identity: if a Poisson random measure on \(E\) has
finite intensity \(\mu\), \(K\) is its total count, and \(\xi\) is its
unique point on \(\{K=1\}\), then, for every \(\mu\)-integrable \(f\),
\[
	\E[f(\xi);K=1]=e^{-\mu(E)}\int_E f(z)\,\mu(dz).
\]
This is the one-point case of the Campbell--Mecke formula. Applying it
with \(E=\{|z|>u_n/2\}\) and, on \(\{\Gamma_{i,j}=1\}\),
\(\mu(dz)=h_n\nu_i^\star(dz)\), and then using
\eqref{eq:main_iv_boundary_cancel}, gives
\[
\begin{aligned}
	&\E_{j-1}[G_{i,j,n}^0(\xi_{i,j,n});
	\Gamma_{i,j}=1,K_{i,j,n}=1]\\
	&\quad=h_n\E_{j-1}[\Gamma_{i,j}e^{-\Lambda_{i,j,n}}]
	\int_{|z|>u_n/2}\Phi_{1,i,n}(z)\nu_i^\star(dz)\\
	&\quad=h_n\E_{j-1}[\Gamma_{i,j}e^{-\Lambda_{i,j,n}}]
	\int_{|z|>u_n/2}\Phi_{1,i,n}(z)\bar\nu_i(dz),
\end{aligned}
\]
while
\[
\begin{aligned}
	&\E_{j-1}[|G_{i,j,n}^0(\xi_{i,j,n})|^2;
	\Gamma_{i,j}=1,K_{i,j,n}=1]\\
	&\quad\le h_n\int_{|z|>u_n/2}
	\E|G_{i,j,n}^0(z)|^2\nu_i^\star(dz).
\end{aligned}
\]
Thus \eqref{eq:main_iv_boundary_integrals} bounds the one-jump mean and
second moment associated with \(G_{i,j,n}^0\). To control the
perturbation caused by \(R_{i,j,n}^0(z)\), put
\[
\begin{aligned}
	\Delta G_{i,j,n}(z)
	&:=G_{i,j,n}(z)-G_{i,j,n}^0(z)\\
	&=\psi_{u_n}(z+S_{i,j,n}^0+R_{i,j,n}^0(z))
	-\psi_{u_n}(z+S_{i,j,n}^0),\\
	\mathcal B_{i,j,n}(z)
	&:=\left\{\left||z+S_{i,j,n}^0|-u_n\right|
	\le |R_{i,j,n}^0(z)|\right\}.
\end{aligned}
\]
Apply the two deterministic bounds for
\(\psi_{u_n}(x+r)-\psi_{u_n}(x)\) displayed before
\eqref{eq:main_iv_boundary_integrals}, with
\[
	x=z+S_{i,j,n}^0,
	\qquad r=R_{i,j,n}^0(z).
\]
On \(\mathcal B_{i,j,n}(z)^c\), the two threshold indicators coincide,
and hence
\[
	\Delta G_{i,j,n}(z)
	=R_{i,j,n}^0(z)
	\mathbf1_{\{|z+S_{i,j,n}^0|>u_n\}}.
\]
On \(\mathcal B_{i,j,n}(z)\),
\[
	|z+S_{i,j,n}^0|
	\le u_n+|R_{i,j,n}^0(z)|.
\]
Consequently, for \(k\in\{1,2\}\),
\begin{equation}
\begin{aligned}
	|\Delta G_{i,j,n}(z)|^k
	\lesssim |R_{i,j,n}^0(z)|^k
	+\{u_n^k+|R_{i,j,n}^0(z)|^k\}
	\mathbf1_{\mathcal B_{i,j,n}(z)}.
\end{aligned}
\label{eq:main_iv_one_jump_perturbation_pointwise}
\end{equation}

Let
\[
	r_n:=h_nu_n^{1-\beta}.
\]
By \eqref{eq:main_iv_split_bounds} and
Assumption~\ref{ass:iv_sampling},
\[
	\frac{r_n}{u_n}=h_nu_n^{-\beta}\longrightarrow0,
\]
so \(Cr_n\le u_n/2\) for every fixed \(C>0\) and all sufficiently
large \(n\). On
\(\mathcal B_{i,j,n}(z)\cap\{|R_{i,j,n}^0(z)|\le r_n\}\), the reverse
triangle inequality gives
\[
	\left|\left|z+S_{i,j,n}^0+R_{i,j,n}^0(z)\right|-u_n\right|
	\le2r_n.
\]
Adding the bounded shift \(h_n\kappa_{i,n}\) only enlarges this
deterministic width by \(O(h_n)\), which is absorbed by \(r_n\).
Therefore the boundary estimate in
Proposition~\ref{prop:iv_off_diagonal_density}, with \(v=u_n\) and
width \(Cr_n\), and the one-point Campbell--Mecke formula give
\begin{align}
	&h_n\int_{|z|>u_n/2}
	\E_{j-1}\!\left[
	\{u_n^k+|R_{i,j,n}^0(z)|^k\}
	\mathbf1_{\mathcal B_{i,j,n}(z)}
	\mathbf1_{\{|R_{i,j,n}^0(z)|\le r_n\}};
	\Gamma_{i,j}=1\right]\nu_i^\star(dz)\nonumber\\
	&\qquad\lesssim
	(u_n^k+r_n^k)R_{j-1}
	\left\{h_nr_nu_n^{-1-\beta}
	+r_nh_n^{-1/2}e^{-u_n^2/(Ch_n)}\right\}\nonumber\\
	&\qquad\lesssim h_n^2u_n^{k-2\beta}R_{j-1}.
	\label{eq:main_iv_one_jump_boundary_small_remainder}
\end{align}
Here the Gaussian term is smaller than every polynomial rate because
\(u_n/\sqrt{h_n}\to\infty\).

It remains to control \(|R_{i,j,n}^0(z)|>r_n\). Split the remainder
into its drift--compensator part and its compensated
\(\bar\nu_i\)-small-jump part. The compensation formula and
\eqref{eq:main_iv_split_bounds} give, for \(k\in\{1,2\}\),
\[
	\nu_i^\star(|z|>u_n/2)\lesssim u_n^{-\beta},
	\qquad
	\bar\nu_i(r_n<|w|\le u_n/2)\lesssim r_n^{1-\beta},
\]
\[
	\int_{r_n<|w|\le u_n/2}|w|^k\bar\nu_i(dw)
	\lesssim u_n^{k+1-\beta}.
\]
The first two factors produce
\(u_n^{k-\beta}r_n^{1-\beta}\), while the first and third produce
\(u_n^{k+1-2\beta}\). The drift and compensator contributions are
bounded by the remaining \(u_n^{-\beta}+1\) terms. Consequently,
\begin{align}
	&h_n\int_{|z|>u_n/2}
	\E_{j-1}\!\left[|R_{i,j,n}^0(z)|^k;
	\Gamma_{i,j}=1\right]\nu_i^\star(dz)
	\lesssim h_n^2
	\{u_n^{k+1-2\beta}+u_n^{-\beta}+1\}R_{j-1},
	\label{eq:main_iv_one_jump_remainder_integral}\\
	&h_n\int_{|z|>u_n/2}
	\E_{j-1}\!\left[
	\{u_n^k+|R_{i,j,n}^0(z)|^k\}
	\mathbf1_{\{|R_{i,j,n}^0(z)|>r_n\}};
	\Gamma_{i,j}=1\right]\nu_i^\star(dz)\nonumber\\
	&\qquad\lesssim h_n^2
	\{u_n^{k-\beta}r_n^{1-\beta}
	+u_n^{k+1-2\beta}+u_n^{-\beta}+1\}R_{j-1}\nonumber\\
	&\qquad\lesssim h_n^2
	\{u_n^{k-2\beta}+u_n^{-\beta}+1\}R_{j-1}.
	\label{eq:main_iv_one_jump_boundary_large_remainder}
\end{align}
For the last inequality, \(u_n<1\) and the lower bound
\(\omega>(4-\beta)^{-1}\) in \eqref{eq:iv_threshold} imply
\[
	u_n^\beta r_n^{1-\beta}
	=h_n^{1-\beta+\omega\{\beta+(1-\beta)^2\}}\lesssim1,
\]
because
\[
	\frac1{4-\beta}>
	\frac{\beta-1}{\beta+(1-\beta)^2},
	\qquad 1<\beta<2.
\]
Finally, the one-point Campbell--Mecke formula, followed by
\eqref{eq:main_iv_one_jump_perturbation_pointwise}--%
\eqref{eq:main_iv_one_jump_boundary_large_remainder}, gives
\[
\begin{aligned}
	&\E_{j-1}[|G_{i,j,n}(\xi_{i,j,n})
	-G_{i,j,n}^0(\xi_{i,j,n})|;
	\Gamma_{i,j}=1,K_{i,j,n}=1]\\
	&\qquad\lesssim h_n^2(u_n^{1-2\beta}+u_n^{-\beta}+1)R_{j-1},\\
	&\E_{j-1}[|G_{i,j,n}(\xi_{i,j,n})
	-G_{i,j,n}^0(\xi_{i,j,n})|^2;
	\Gamma_{i,j}=1,K_{i,j,n}=1]\\
	&\qquad\lesssim h_n^2(u_n^{2-2\beta}+u_n^{-\beta}+1)R_{j-1}.
\end{aligned}
\]
Combining the bounds for \(G_{i,j,n}^0(\xi_{i,j,n})\) and
\(G_{i,j,n}(\xi_{i,j,n})-G_{i,j,n}^0(\xi_{i,j,n})\) yields
\[
\begin{aligned}
	|\E_{j-1}[D_{i,j,n};\Gamma_{i,j}=1,K_{i,j,n}=1]|
	&\lesssim h_n(u_n^{2-\beta}+h_nu_n^{1-2\beta}
	+h_nu_n^{-\beta}+h_n)R_{j-1},\\
	\E_{j-1}[D_{i,j,n}^2;\Gamma_{i,j}=1,K_{i,j,n}=1]
	&\lesssim h_n(u_n^{2-\beta}+h_nu_n^{2-2\beta}
	+h_nu_n^{-\beta}+h_n)R_{j-1}.
\end{aligned}
\]

For the multiple-count event, put
\[
	\mathcal J_{i,j,n}
	:=\int_{t_{j-1}}^{t_j}\!\!\int_{|z|>u_n/2}|z|N_i(ds,dz).
\]
On \(\Gamma_{i,j}=1\),
\[
	|D_{i,j,n}|
	\le2\mathcal J_{i,j,n}
	+|S_{i,j,n}^0+R_{i,j,n}^0|,
	\qquad
	D_{i,j,n}^2
	\lesssim \{\mathcal J_{i,j,n}^2
	+(S_{i,j,n}^0+R_{i,j,n}^0)^2\}.
\]
The IV L\'evy bounds imply
\[
	\nu_i^\star(|z|>u_n/2)\lesssim u_n^{-\beta},
	\qquad
	\int_{|z|>u_n/2}|z|\nu_i^\star(dz)\lesssim u_n^{1-\beta},
	\qquad
	\int_{|z|>u_n/2}z^2\nu_i^\star(dz)\lesssim 1.
\]
Since
\(\E[K_{i,j,n}(K_{i,j,n}-1)\mid Z]=\Lambda_{i,j,n}^2\), the
conditional compound-Poisson moment formula gives
\[
\begin{aligned}
	&\E_{j-1}[\mathcal J_{i,j,n};
	\Gamma_{i,j}=1,K_{i,j,n}\ge2]\\
	&\quad\lesssim h_n^2\nu_i^\star(|z|>u_n/2)
	\int_{|z|>u_n/2}|z|\nu_i^\star(dz)
	\lesssim h_n^2u_n^{1-2\beta},\\
	&\E_{j-1}[\mathcal J_{i,j,n}^2;
	\Gamma_{i,j}=1,K_{i,j,n}\ge2]\\
	&\quad\lesssim h_n^2\left\{
	\left(\int_{|z|>u_n/2}|z|\nu_i^\star(dz)\right)^2
	+\nu_i^\star(|z|>u_n/2)
	\int_{|z|>u_n/2}z^2\nu_i^\star(dz)\right\}\\
	&\quad\lesssim h_n^2(u_n^{2-2\beta}+u_n^{-\beta}).
\end{aligned}
\]
The moment bounds for \(S_{i,j,n}^0\) and \(R_{i,j,n}^0\), together with
\(\sqrt{h_n}/u_n\to0\), also give
\[
\begin{aligned}
	&\E_{j-1}[|S_{i,j,n}^0+R_{i,j,n}^0|
	+(S_{i,j,n}^0+R_{i,j,n}^0)^2;
	\Gamma_{i,j}=1,K_{i,j,n}\ge2]\\
	&\qquad\lesssim h_n^2
	(u_n^{1-2\beta}+u_n^{2-2\beta}+u_n^{-\beta}+1)R_{j-1}.
\end{aligned}
\]
These formulas yield
\begin{equation}
	\label{eq:main_iv_count_bounds}
\begin{aligned}
	&|\E_{j-1}[D_{i,j,n};\Gamma_{i,j}=1,K_{i,j,n}=0]|
	+\E_{j-1}[D_{i,j,n}^2;\Gamma_{i,j}=1,K_{i,j,n}=0]\\
	&\qquad\lesssim h_n(u_n^{2-\beta}+h_nu_n^{-\beta}+h_n)R_{j-1},\\
	&|\E_{j-1}[D_{i,j,n};\Gamma_{i,j}=1,K_{i,j,n}=1]|\\
	&\qquad\lesssim h_n(u_n^{2-\beta}+h_nu_n^{1-2\beta}
	+h_nu_n^{-\beta}+h_n)R_{j-1},\\
	&\E_{j-1}[D_{i,j,n}^2;\Gamma_{i,j}=1,K_{i,j,n}=1]\\
	&\qquad\lesssim h_n(u_n^{2-\beta}+h_nu_n^{2-2\beta}
	+h_nu_n^{-\beta}+h_n)R_{j-1},\\
	&\E_{j-1}[|D_{i,j,n}|+D_{i,j,n}^2;
	\Gamma_{i,j}=1,K_{i,j,n}\ge2]\\
	&\qquad\lesssim h_n(h_nu_n^{1-2\beta}+h_nu_n^{2-2\beta}
	+h_nu_n^{-\beta}+h_n)R_{j-1}.
\end{aligned}
\end{equation}
For intervals with a regime mismatch,
\[
	\{\Gamma_{i,j}=0,\ D_{i,j,n}\ne0\}
	\subset
	\{N_{j,n}^Z\ge1,K_{i,j,n}\ge1\}
	\cup\{N_{j,n}^Z\ge2\}.
\]
The first event contains both a transition and an active jump, whereas the
second is the leave-and-return event. Consequently,
\[
	\Pb_{j-1}(N_{j,n}^Z\ge1,K_{i,j,n}\ge1)
	\lesssim h_n^2u_n^{-\beta},
	\qquad
	\Pb_{j-1}(N_{j,n}^Z\ge2)\lesssim h_n^2.
\]
Using the bounded signed first jump characteristic and the finite second
jump moment, the corresponding detection error satisfies
\[
	|\E_{j-1}[D_{i,j,n};\Gamma_{i,j}=0]|
	+\E_{j-1}[D_{i,j,n}^2;\Gamma_{i,j}=0]
	\lesssim h_n^2R_{j-1}.
\]
Therefore, with
\[
	b_n^{\rm det}:=u_n^{2-\beta}+h_nu_n^{1-2\beta}
	+h_nu_n^{-\beta}+h_n,
	\qquad
	v_n^{\rm det}:=u_n^{2-\beta}+h_nu_n^{2-2\beta}
	+h_nu_n^{-\beta}+h_n,
\]
\begin{equation}
	\label{eq:main_iv_detection_error}
	\begin{aligned}
	|\E_{j-1}D_{i,j,n}|
	&\le\sum_{k=0}^1
	|\E_{j-1}[D_{i,j,n};\Gamma_{i,j}=1,K_{i,j,n}=k]|\\
	&\quad+|\E_{j-1}[D_{i,j,n};
	\Gamma_{i,j}=1,K_{i,j,n}\ge2]|
	+|\E_{j-1}[D_{i,j,n};\Gamma_{i,j}=0]|\\
	&\lesssim h_nb_n^{\rm det}R_{j-1},\\[3pt]
	\E_{j-1}D_{i,j,n}^2
	&=\sum_{k=0}^1
	\E_{j-1}[D_{i,j,n}^2;\Gamma_{i,j}=1,K_{i,j,n}=k]\\
	&\quad+\E_{j-1}[D_{i,j,n}^2;
	\Gamma_{i,j}=1,K_{i,j,n}\ge2]
	+\E_{j-1}[D_{i,j,n}^2;\Gamma_{i,j}=0]\\
	&\lesssim h_nv_n^{\rm det}R_{j-1}.
	\end{aligned}
\end{equation}
The IV exponent restrictions give
\(\sqrt{T_n}b_n^{\rm det}\to0\) and \(v_n^{\rm det}\to0\).
The same count decomposition, now using the fourth-moment assumption for
large jumps, gives
\(\E_{j-1}|H_{i,j,n}|^4\lesssim h_nR_{j-1}\).

\proofstep{Step 3: centering and predictable covariance}
In either case the preceding estimates can be written as
\begin{equation}
	\label{eq:main_unified_detection_error}
	|\E_{j-1}D_{i,j,n}|\lesssim h_n\varepsilon_{1,n}R_{j-1},
	\qquad
	\E_{j-1}D_{i,j,n}^2\lesssim h_n\varepsilon_{2,n}R_{j-1}.
\end{equation}
Here \(\sqrt{T_n}\varepsilon_{1,n}\to0\) and
\(\varepsilon_{2,n}\to0\). The compensator formula yields
\[
	\E_{j-1}L_{i,j,n}
	=\kappa_{i,n}\E_{j-1}\!\int_{t_{j-1}}^{t_j}
	\mathbf1_{\{Z_{s-}=i\}}ds.
\]
Lemma~\ref{lem:endpoint_exposure_main}, the uniform boundedness of
\(\kappa_{i,n}\), \eqref{eq:main_unified_detection_error}, and the
sampled ergodic bound in Lemma~\ref{lem:moment_ergodic_main} imply
\begin{equation}
	\label{eq:proof_tail_centering}
	\frac1{\sqrt{T_n}}
	\left\{\sum_{j=1}^n\mu_{i,j,n}-T_{i,n}\kappa_{i,n}\right\}
	=o_p(1).
\end{equation}

Put \(m_{2,i}(v)=\int_{|z|>v}z^2\nu_i^\star(dz)\). The
compound-Poisson formula gives
\[
	\E_{j-1}L_{i,j,n}^2
	=m_{2,i}(u_n)\E_{j-1}\!\int_{t_{j-1}}^{t_j}
	\mathbf1_{\{Z_{s-}=i\}}ds+O(h_n^2)R_{j-1}.
\]
By the conditional Cauchy--Schwarz inequality and
\eqref{eq:main_unified_detection_error}, replacing \(L_{i,j,n}\) by
\(H_{i,j,n}\) contributes \(o_p(T_n)\) after summation. Hence
\begin{equation}
	\label{eq:proof_tail_variance}
	\frac1{T_n}\sum_{j=1}^n
	\E_{j-1}(H_{i,j,n}-\mu_{i,j,n})^2
	\cip\pi_i\int_{\mathbb R}z^2\nu_i^\star(dz).
\end{equation}
The occupation statement used here is the explicit predictable limit
\[
	\frac1{T_n}\sum_{j=1}^n
	\E_{j-1}\!\int_{t_{j-1}}^{t_j}
	\mathbf1_{\{Z_{s-}=i\}}\,ds
	=
	\frac1n\sum_{j=1}^n\mathbf1_{\{Z_{j-1}=i\}}+O(h_n)
	\cip\pi_i.
\]
The equality follows from the one-step transition bound for the finite-state
Markov chain, and the convergence follows from
Lemma~\ref{lem:moment_ergodic_main} applied to
\(f(x,k)=\mathbf1_{\{k=i\}}\). Also,
\(m_{2,i}(u_n)\uparrow\int_{\mathbb R}z^2\nu_i^\star(dz)\) by the
monotone convergence theorem as \(u_n\downarrow0\). These two facts give
\eqref{eq:proof_tail_variance}.
For \(i\ne k\), \(H_{i,j,n}H_{k,j,n}=0\), while
\(|\mu_{i,j,n}\mu_{k,j,n}|\lesssim h_n^2R_{j-1}\); consequently the
normalized predictable cross covariance tends to zero. Moreover,
\begin{equation}
	\label{eq:proof_tail_fourth}
	\frac1{T_n^2}\sum_{i\in S}\sum_{j=1}^n
	\E_{j-1}|H_{i,j,n}-\mu_{i,j,n}|^4\cip0.
\end{equation}

The same bounds also record the global square-error estimate needed
below. Indeed, the tower property, Markov's inequality, and
\eqref{eq:main_unified_detection_error} give
\begin{equation}
	\label{eq:main_global_detected_jump}
	\max_{i\in S}\frac1{T_n}\sum_{j=1}^nD_{i,j,n}^2\cip0.
\end{equation}

\proofstep{Step 4: martingale central limit theorem}
Define
\[
	M_{n,j}:=\frac1{\sqrt{T_n}}
	(H_{i,j,n}-\mu_{i,j,n}:i\in S)^\top,
	\qquad M_n:=\sum_{j=1}^nM_{n,j}.
\]
Equations \eqref{eq:proof_tail_variance} and
\eqref{eq:proof_tail_fourth} are respectively the predictable covariance
and conditional Lindeberg conditions. The Cram\'er--Wold martingale
central limit theorem therefore gives
\[
	M_n\cil N\!\left(0,
	\operatorname{diag}\left(
	\pi_i\int_{\mathbb R}z^2\nu_i^\star(dz):i\in S\right)\right).
\]
Since \(T_{i,n}/T_n\cip\pi_i\), \eqref{eq:proof_tail_centering} yields
\[
	\sqrt{T_n}(\widehat\kappa_n-\kappa_n)
	\cil N(0,\Sigma_\kappa).
\]

\proofstep{Step 5: replacement of \(\kappa_n\) by \(m^\star\)}
In Case FV, \eqref{eq:tailmean_bias_rate} gives
\(\sqrt{T_n}|\kappa_{i,n}-m_i^\star|\to0\). In Case IV,
Assumption~\ref{ass:iv} gives
\(|\kappa_{i,n}-m_i^\star|\lesssim u_n^{2-\beta}\), and
the rate comparison in Remark~\ref{rem:iv_span_interpretation} gives
\(\sqrt{T_n}u_n^{2-\beta}\to0\). Slutsky's theorem proves the stated
central limit theorem around \(m^\star\); consistency follows.
\end{proof}
\subsection{Proof of Theorem~\ref{thm:parametric_inference}}

\begin{proof}
The proof has four parts. First, we compare the retained oracle score
increments with their Brownian linear and centered quadratic counterparts,
and use these reductions to prove consistency separately in Cases FV and IV.
Second, we embed the Brownian, quadratic, and detected-tail components into a
common augmented martingale array and derive the joint score limit. Third, we
establish the local Hessian laws under the two parameter normalizations.
Finally, a Taylor expansion of the first-order conditions combines the score
and Hessian limits and yields the stated joint asymptotic distribution.

\proofstep{Step 1: retained-score reductions}
Define
\[
	\mathcal R_{j,n}^{(1)}
	:=I_{n,j}U_j^\star-\E_{j-1}[I_{n,j}U_j^\star],
	\qquad
	\mathcal W_{j,n}^{(1)}
	:=a_{j-1}(\gamma^\star)\Delta_jW.
\]
In Case FV, also put
\[
\begin{aligned}
	\mathcal R_{j,n}^{(2)}
	&:=I_{n,j}\left\{\frac{(U_j^\star)^2}{h_n}
	-a_{j-1}^2(\gamma^\star)\right\}
	-\E_{j-1}\!\left[
	I_{n,j}\left\{\frac{(U_j^\star)^2}{h_n}
	-a_{j-1}^2(\gamma^\star)\right\}\right],\\
	\mathcal W_{j,n}^{(2)}
	&:=a_{j-1}^2(\gamma^\star)
	\left\{\frac{(\Delta_jW)^2}{h_n}-1\right\}.
\end{aligned}
\]
Finally, for \(i\in S\), define
\[
	H_{i,j,n}:=A_{i,j}\Delta_jX
	\mathbf1_{\{|\Delta_jX|>u_n\}},
	\quad
	\mu_{i,j,n}:=\E_{j-1}H_{i,j,n},
	\quad
	\widetilde H_{i,j,n}:=H_{i,j,n}-\mu_{i,j,n}.
\]

We first establish the conditional \(L^2\) comparisons used below.
Write \(a_{j-1}^\star=a_{j-1}(\gamma^\star)\). In Case FV, the
decomposition \eqref{eq:main_fv_detected_decomposition}, the bounds
\eqref{eq:main_fv_false_detection},
\eqref{eq:main_fv_one_jump_noise}, and
\eqref{eq:main_fv_missed_jump}, and
Lemma~\ref{lem:fv_conditional_bridge} give
\begin{align}
	\E_{j-1}\!\left[
	\{\mathcal R_{j,n}^{(1)}-\mathcal W_{j,n}^{(1)}\}^2\right]
	&\lesssim h_n\epsilon_{1,n}^{\rm fv}R_{j-1},
	\label{eq:main_retained_l2_linear_fv}\\
	\E_{j-1}\!\left[
	\{\mathcal R_{j,n}^{(2)}-\mathcal W_{j,n}^{(2)}\}^2\right]
	&\lesssim \epsilon_{2,n}^{\rm fv}R_{j-1},
	\label{eq:main_retained_l2_quadratic_fv}
\end{align}
with deterministic sequences satisfying
\[
\begin{aligned}
	\epsilon_{1,n}^{\rm fv}
	&\lesssim
	\max_i\int_{|z|\le2u_n}z^2\nu_i^\star(dz)
	+h_n\max_i\nu_i^\star(|z|>u_n/2)
	+\sqrt{h_n}+r_{\mathrm{fv},1,n}^2,\\
	\epsilon_{2,n}^{\rm fv}
	&\lesssim
	\max_i\int_{|z|\le2u_n}z^2\nu_i^\star(dz)
	+\frac1{h_n}\max_i\int_{|z|\le2u_n}z^4\nu_i^\star(dz)\\
	&\quad
	+h_n\max_i\nu_i^\star(|z|>u_n/2)
	+\sqrt{h_n}+r_{\mathrm{fv},4,n}.
\end{aligned}
\]
Both sequences converge to zero by
Assumptions~\ref{ass:fv} and~\ref{ass:trunc}. To identify the terms in
these rates, work first on a no-transition interval with
\(Z_{j-1}=i\), and put
\[
	B_{j,n}^{W}:=a_{j-1}^\star\Delta_jW,
	\qquad
	J_{j,n}^{\rm s}
	:=\int_{t_{j-1}}^{t_j}\!\!\int_{|z|\le2u_n}
	z\,\widetilde N_i(ds,dz),
	\qquad Y_{j,n}:=B_{j,n}^{W}+J_{j,n}^{\rm s}.
\]
Then
\[
	Y_{j,n}-B_{j,n}^{W}=J_{j,n}^{\rm s},
	\qquad
	Y_{j,n}^2-(B_{j,n}^{W})^2
	=2B_{j,n}^{W}J_{j,n}^{\rm s}+(J_{j,n}^{\rm s})^2.
\]
The conditional Burkholder--Davis--Gundy inequality, Kunita's
inequality, and Brownian--Poisson orthogonality give
\[
\begin{aligned}
	\E_{j-1}|J_{j,n}^{\rm s}|^2
	&\lesssim h_n\int_{|z|\le2u_n}z^2\nu_i^\star(dz),\\
	\E_{j-1}|2B_{j,n}^{W}J_{j,n}^{\rm s}|^2
	&\lesssim h_n^2\int_{|z|\le2u_n}z^2\nu_i^\star(dz),\\
	\E_{j-1}|J_{j,n}^{\rm s}|^4
	&\lesssim h_n\int_{|z|\le2u_n}z^4\nu_i^\star(dz)
	+h_n^2\left\{\int_{|z|\le2u_n}z^2
	\nu_i^\star(dz)\right\}^2.
\end{aligned}
\]
Consequently,
\[
\begin{aligned}
	\frac1{h_n}\E_{j-1}|Y_{j,n}-B_{j,n}^{W}|^2
	&\lesssim\int_{|z|\le2u_n}z^2\nu_i^\star(dz),\\
	\frac1{h_n^2}\E_{j-1}
	|Y_{j,n}^2-(B_{j,n}^{W})^2|^2
	&\lesssim\int_{|z|\le2u_n}z^2\nu_i^\star(dz)
	+\frac1{h_n}\int_{|z|\le2u_n}z^4\nu_i^\star(dz).
\end{aligned}
\]
Here the squared second-moment integral is absorbed into the first term,
which converges to zero. Define
\[
	N_{n,j}^{>u_n/2}
	:=\sum_{k=1}^m\int_{t_{j-1}}^{t_j}\!\!\int_{|z|>u_n/2}
	\mathbf1_{\{Z_{s-}=k\}}N_k(ds,dz),
	\qquad
	\mathcal A_{j,n}^{>}:=\{N_{n,j}^{>u_n/2}\ge1\}.
\]
The conditional
Poisson bound and Brownian independence give
\[
\begin{aligned}
	\Pb_{j-1}(\mathcal A_{j,n}^{>}\cap\{\Gamma_{i,j}=1\})
	&\lesssim h_n\nu_i^\star(|z|>u_n/2),\\
	\E_{j-1}\!\left[(B_{j,n}^{W})^2;
	\mathcal A_{j,n}^{>}\cap\{\Gamma_{i,j}=1\}\right]
	&\lesssim h_n^2\nu_i^\star(|z|>u_n/2),\\
	\E_{j-1}\!\left[
	\{(B_{j,n}^{W})^2-h_n(a_{j-1}^\star)^2\}^2;
	\mathcal A_{j,n}^{>}\cap\{\Gamma_{i,j}=1\}\right]
	&\lesssim h_n^3\nu_i^\star(|z|>u_n/2).
\end{aligned}
\]
Thus division by \(h_n\) in the linear comparison and by \(h_n^2\)
in the quadratic comparison produces the common rate
\(h_n\nu_i^\star(|z|>u_n/2)\).
The drift, coefficient-freezing, and transition estimates underlying
Lemma~\ref{lem:fv_conditional_bridge} give
\[
	h_n\{\sqrt{h_n}+r_{\mathrm{fv},1,n}^2\}R_{j-1}
	\quad\text{and}\quad
	h_n^2\{\sqrt{h_n}+r_{\mathrm{fv},4,n}\}R_{j-1}
\]
in the linear and unscaled quadratic comparisons, respectively. Finally,
for conditionally square-integrable \(X,Y\),
\[
	\E_{j-1}\!\left[
	\{X-\E_{j-1}X-(Y-\E_{j-1}Y)\}^2\right]
	\le\E_{j-1}(X-Y)^2.
\]
Applying this contraction with \(X=I_{n,j}U_j^\star\),
\(Y=B_{j,n}^{W}\), and then with
\(X=I_{n,j}\{(U_j^\star)^2/h_n-(a_{j-1}^\star)^2\}\),
\(Y=(B_{j,n}^{W})^2/h_n-(a_{j-1}^\star)^2\), proves
\eqref{eq:main_retained_l2_linear_fv}--%
\eqref{eq:main_retained_l2_quadratic_fv}.

The Case IV argument has the same Brownian-replacement structure, but
the retained increment must be split according to the number of jumps
larger than \(u_n/2\). Since
\(\E_{j-1}\mathcal W_{j,n}^{(1)}=0\),
\[
\begin{aligned}
	\mathcal R_{j,n}^{(1)}-\mathcal W_{j,n}^{(1)}
	={}&I_{n,j}U_j^\star-\mathcal W_{j,n}^{(1)}
	-\E_{j-1}[I_{n,j}U_j^\star],\\
	I_{n,j}U_j^\star-\mathcal W_{j,n}^{(1)}
	={}&I_{n,j}\{U_j^\star-\mathcal W_{j,n}^{(1)}\}
	-(1-I_{n,j})\mathcal W_{j,n}^{(1)}.
\end{aligned}
\]
Consequently,
\[
\begin{aligned}
	&\E_{j-1}\!\left[
	\{\mathcal R_{j,n}^{(1)}
	-\mathcal W_{j,n}^{(1)}\}^2\right]\\
	&\quad\lesssim
	\E_{j-1}\!\left[
	I_{n,j}|U_j^\star-\mathcal W_{j,n}^{(1)}|^2\right]
	+\E_{j-1}\!\left[
	(\mathcal W_{j,n}^{(1)})^2;|\Delta_jX|>u_n\right]
	+|\E_{j-1}[I_{n,j}U_j^\star]|^2.
\end{aligned}
\]
On a no-transition interval with \(Z_{j-1}=i\), the zero-, one-, and
multiple-count calculations underlying
\eqref{eq:main_iv_count_bounds} apply after omitting the endpoint
selector \(A_{i,j}\). Omitting that selector adds only transition
intervals; Brownian independence and the conditional moment bounds in
Lemma~\ref{lem:iv_drift_moments_main} give an additional
\(O(h_n^2R_{j-1})\) contribution. The same count calculations and the
random-boundary estimate in
Proposition~\ref{prop:iv_off_diagonal_density} therefore give
\[
\begin{aligned}
	\E_{j-1}\!\left[
	I_{n,j}|U_j^\star-\mathcal W_{j,n}^{(1)}|^2\right]
	&\lesssim h_n\left\{
	u_n^{2-\beta}+h_nu_n^{-\beta}+h_n
	+e^{-u_n^2/(Ch_n)}\right\}R_{j-1},\\
	\E_{j-1}\!\left[
	(\mathcal W_{j,n}^{(1)})^2;|\Delta_jX|>u_n\right]
	&\lesssim h_n\left\{
	h_nu_n^{-\beta}+e^{-u_n^2/(Ch_n)}\right\}R_{j-1}.
\end{aligned}
\]
Moreover, the first retained-moment bound in
Lemma~\ref{lem:iv_drift_moments_main} yields
\[
	|\E_{j-1}[I_{n,j}U_j^\star]|^2
	\lesssim h_n^2(b_n^{\rm det})^2R_{j-1}
	\lesssim h_n(b_n^{\rm det})^2R_{j-1},
\]
where the polynomial envelope is enlarged if necessary. Combining
these estimates proves
\begin{equation}
	\E_{j-1}\!\left[
	\{\mathcal R_{j,n}^{(1)}-\mathcal W_{j,n}^{(1)}\}^2\right]
	\lesssim h_n\epsilon_{1,n}^{\rm iv}R_{j-1},
	\label{eq:main_retained_l2_linear_iv}
\end{equation}
where
\[
	\epsilon_{1,n}^{\rm iv}
	\lesssim u_n^{2-\beta}+h_nu_n^{-\beta}
	+h_n+e^{-u_n^2/(Ch_n)}+(b_n^{\rm det})^2\to0.
\]
After the outer factor \(h_n\), the term \(u_n^{2-\beta}\) is the
retained small-jump variance; \(h_nu_n^{-\beta}\) comes from an
interval containing a jump larger than \(u_n/2\), including deletion
of its Brownian increment; the exponential term is the Gaussian part
of the random boundary strip; \(h_n\) collects the drift and
transition remainders; and \((b_n^{\rm det})^2\) is the conditional
centering contribution.

Let \(w_{j-1}\) and \(v_{j-1}\) be predictable scalar weights bounded
by a common polynomial envelope. Put
\(d_{j,n}^{(1)}:=\mathcal R_{j,n}^{(1)}
-\mathcal W_{j,n}^{(1)}\), and, in Case FV, put
\(d_{j,n}^{(2)}:=\mathcal R_{j,n}^{(2)}
-\mathcal W_{j,n}^{(2)}\). These are martingale differences.
Conditional isometry and
\eqref{eq:main_retained_l2_linear_fv} or
\eqref{eq:main_retained_l2_linear_iv} give
\[
\begin{aligned}
	&\E\left|
	\frac1{\sqrt{T_n}}\sum_{j=1}^n w_{j-1}d_{j,n}^{(1)}
	\right|^2
	=\frac1{T_n}\sum_{j=1}^n
	\E\!\left[w_{j-1}^2\E_{j-1}(d_{j,n}^{(1)})^2\right]\\
	&\quad\lesssim\epsilon_{1,n}\frac1n
	\sum_{j=1}^n\E R_{j-1}
	\lesssim\epsilon_{1,n}\to0.
\end{aligned}
\]
The last bound follows from the uniform polynomial moment bound in
Lemma~\ref{lem:moment_ergodic_main}. Markov's inequality therefore yields
\begin{equation}
	\frac1{\sqrt{T_n}}\sum_{j=1}^n
	w_{j-1}\{\mathcal R_{j,n}^{(1)}
	-\mathcal W_{j,n}^{(1)}\}=o_p(1).
	\label{eq:main_retained_linear_reduction}
\end{equation}
Likewise, \eqref{eq:main_retained_l2_quadratic_fv} gives, in Case FV,
\[
\begin{aligned}
	&\E\left|
	\frac1{\sqrt n}\sum_{j=1}^n v_{j-1}d_{j,n}^{(2)}
	\right|^2
	=\frac1n\sum_{j=1}^n
	\E\!\left[v_{j-1}^2\E_{j-1}(d_{j,n}^{(2)})^2\right]
	\lesssim\epsilon_{2,n}^{\rm fv}\to0.
\end{aligned}
\]
Hence, by Markov's inequality,
\begin{equation}
	\frac1{\sqrt n}\sum_{j=1}^n
	v_{j-1}\{\mathcal R_{j,n}^{(2)}
	-\mathcal W_{j,n}^{(2)}\}=o_p(1).
	\label{eq:main_retained_quadratic_reduction}
\end{equation}

For the linear--quadratic covariance, the Gaussian third moment gives
\[
	\E_{j-1}[\mathcal W_{j,n}^{(1)}\mathcal W_{j,n}^{(2)}]
	=(a_{j-1}^\star)^3
	\E\!\left[\Delta_jW
	\left\{\frac{(\Delta_jW)^2}{h_n}-1\right\}\right]=0.
\]
Adding and subtracting the Brownian counterparts and applying the
conditional Cauchy--Schwarz inequality yields
\[
	|\E_{j-1}[\mathcal R_{j,n}^{(1)}
	\mathcal R_{j,n}^{(2)}]|
	\lesssim\sqrt{h_n}R_{j-1}
	\{(\epsilon_{1,n}^{\rm fv})^{1/2}
	+(\epsilon_{2,n}^{\rm fv})^{1/2}\}.
\]
After multiplication by the polynomial weights, summation, and
division by \(\sqrt{nT_n}=n\sqrt{h_n}\),
Lemma~\ref{lem:moment_ergodic_main} gives
\begin{equation}
	\frac1{\sqrt{nT_n}}\sum_{j=1}^n
	\E_{j-1}\!\left[w_{j-1}\mathcal R_{j,n}^{(1)}
	v_{j-1}\mathcal R_{j,n}^{(2)}\right]=o_p(1).
	\label{eq:main_retained_linear_quadratic}
\end{equation}

Since \(I_{n,j}H_{i,j,n}=0\), expanding the two conditional
centerings gives
\begin{align*}
	\E_{j-1}[\mathcal R_{j,n}^{(1)}\widetilde H_{i,j,n}]
	&=-\E_{j-1}[I_{n,j}U_j^\star]\mu_{i,j,n},\\
	\E_{j-1}[\mathcal R_{j,n}^{(2)}\widetilde H_{i,j,n}]
	&=-\E_{j-1}\!\left[
	I_{n,j}\left\{\frac{(U_j^\star)^2}{h_n}
	-(a_{j-1}^\star)^2\right\}\right]\mu_{i,j,n}.
\end{align*}
Since \(D_{i,j,n}=H_{i,j,n}-L_{i,j,n}\),
\eqref{eq:main_unified_detection_error} and the compensator formula
used immediately before \eqref{eq:proof_tail_centering} give
\[
\begin{aligned}
	|\mu_{i,j,n}|
	&\le|\E_{j-1}D_{i,j,n}|+|\E_{j-1}L_{i,j,n}|\\
	&\lesssim h_n\varepsilon_{1,n}R_{j-1}
	+|\kappa_{i,n}|\,
	\E_{j-1}\!\int_{t_{j-1}}^{t_j}
	\mathbf1_{\{Z_{s-}=i\}}\,ds
	\lesssim h_nR_{j-1},
\end{aligned}
\]
uniformly in both cases. Lemma~\ref{lem:fv_conditional_bridge} in Case
FV and Lemma~\ref{lem:iv_drift_moments_main} in Case IV give
\[
	|\E_{j-1}[I_{n,j}U_j^\star]|
	\lesssim h_n\delta_{1,n}R_{j-1},
	\qquad
	\delta_{1,n}:=
	\begin{cases}
		r_{\mathrm{fv},1,n},&\text{in Case FV},\\
		b_n^{\rm det},&\text{in Case IV},
	\end{cases}
	\qquad \delta_{1,n}\to0.
\]
Hence
\begin{equation}
	\max_{i\in S}\left|
	\frac1{T_n}\sum_{j=1}^n
	\E_{j-1}\!\left[w_{j-1}\mathcal R_{j,n}^{(1)}
	\widetilde H_{i,j,n}\right]\right|=o_p(1).
	\label{eq:main_retained_linear_tail}
\end{equation}
In Case FV, the second conditional moment expansion in
Lemma~\ref{lem:fv_conditional_bridge} gives
\[
	\left|\E_{j-1}\!\left[
	I_{n,j}\left\{\frac{(U_j^\star)^2}{h_n}
	-(a_{j-1}^\star)^2\right\}\right]\right|
	\lesssim r_{\mathrm{fv},2,n}R_{j-1},
	\qquad r_{\mathrm{fv},2,n}\to0.
\]
Consequently,
\begin{equation}
	\max_{i\in S}\left|
	\frac1{\sqrt{nT_n}}\sum_{j=1}^n
	\E_{j-1}\!\left[v_{j-1}\mathcal R_{j,n}^{(2)}
	\widetilde H_{i,j,n}\right]\right|=o_p(1).
	\label{eq:main_retained_quadratic_tail}
\end{equation}

\proofstep{Step 2: consistency in Case FV}
Proposition~\ref{prop:tail_functional} and
\eqref{eq:tailmean_bias_rate} give
\[
	\max_{i\in S}|\widehat\kappa_{i,n}-\kappa_{i,n}|
	=O_p(T_n^{-1/2}).
\]
The exact residual identity,
Lemma~\ref{lem:fv_conditional_bridge}, and
\eqref{eq:main_retained_linear_reduction}--%
\eqref{eq:main_retained_quadratic_reduction} show that replacing the
oracle tail mean by its estimator is \(o_p(1)\) under the two
normalizations. The deterministic integrands in the three uniform
limits below contain \(b\), \(a\), and inverse powers of \(a\), but no
parameter derivatives of the coefficients. Assumption~\ref{ass:smooth}(iii)
therefore supplies their first coordinate derivatives, their required
\(x\)-derivatives, and a common polynomial envelope. Consequently,
Proposition~\ref{prop:uniform_ergodic_main}, including
\eqref{eq:uniform_retained_ergodic}, gives,
uniformly over \((\alpha,\gamma)\in\Theta\),
\[
	\frac{\mathbb H_n^{\rm fv}(\alpha,\gamma)
	-\mathbb H_n^{\rm fv}(\alpha,\gamma^\star)}{n}
	=Q_\gamma(\gamma)+o_p(1),
\]
where
\[
	Q_\gamma(\gamma)
	=-\frac12\int\left[
	\log\frac{a^2(x,i,\gamma)}{a^2(x,i,\gamma^\star)}
	+\frac{a^2(x,i,\gamma^\star)}{a^2(x,i,\gamma)}-1
	\right],\mu(dx,di).
\]
Joint maximality and Assumption~\ref{ass:ident} imply
\(\bar\gamma_n^{\rm fv}\cip\gamma^\star\).

For every deterministic \(\delta_n\downarrow0\),
Lemma~\ref{lem:fv_conditional_bridge} and
Proposition~\ref{prop:uniform_ergodic_main}, together with
\eqref{eq:uniform_retained_ergodic}, yield
\[
	\sup_{\substack{\alpha\in\Theta_\alpha\\
	|\gamma-\gamma^\star|\le\delta_n}}
	\left|
	\frac{\mathbb H_n^{\rm fv}(\alpha,\gamma)
	-\mathbb H_n^{\rm fv}(\alpha^\star,\gamma)}{T_n}
	-Q_\alpha(\alpha)\right|\cip0,
\]
where
\[
	Q_\alpha(\alpha)
	=-\frac12\int
	\frac{\{b(x,i,\alpha)-b(x,i,\alpha^\star)\}^2}
	{a^2(x,i,\gamma^\star)},\mu(dx,di).
\]
Choose \(\delta_n\downarrow0\) so that
\(\Pb(|\bar\gamma_n^{\rm fv}-\gamma^\star|\le\delta_n)\to1\).
On this event, joint maximality and the separation of \(Q_\alpha\)
give \(\bar\theta_n^{\rm fv}\cip\theta^\star\).

\proofstep{Step 3: consistency in Case IV}
Lemma~\ref{lem:iv_debiasing_main} gives
\[
	\max_{i\in S}
	|\widehat a_{i,n}^{\,2,\rm db}-a_i^2(\gamma^\star)|\cip0.
\]
Since \(T_{i,n}/T_n\cip\pi_i\), uniformly on \(\Theta_\gamma\),
\[
	\mathbb H_{\gamma,n}^{\,\rm iv}(\gamma)
	-\mathbb H_{\gamma,n}^{\,\rm iv}(\gamma^\star)
	=Q_\gamma^{\rm iv}(\gamma)+o_p(1),
\]
where
\[
	Q_\gamma^{\rm iv}(\gamma)
	=-\frac12\sum_{i=1}^m\pi_i\left[
	\log\frac{a_i^2(\gamma)}{a_i^2(\gamma^\star)}
	+\frac{a_i^2(\gamma^\star)}{a_i^2(\gamma)}-1
	\right].
\]
Thus \(\bar\gamma_n^{\rm iv}\cip\gamma^\star\). Moreover,
Lemma~\ref{lem:iv_drift_moments_main},
\eqref{eq:main_retained_linear_reduction}, and
Proposition~\ref{prop:uniform_ergodic_main}, including
\eqref{eq:uniform_retained_ergodic}, imply
\[
	\sup_{\alpha\in\Theta_\alpha}\left|
	\frac{\mathbb H_{\alpha,n}^{\,\rm iv}
	(\alpha,\bar\gamma_n^{\rm iv})
	-\mathbb H_{\alpha,n}^{\,\rm iv}
	(\alpha^\star,\bar\gamma_n^{\rm iv})}{T_n}
	-Q_\alpha^{\rm iv}(\alpha)\right|\cip0,
\]
where
\[
	Q_\alpha^{\rm iv}(\alpha)
	=-\frac12\int
	\frac{\{b(x,i,\alpha)-b(x,i,\alpha^\star)\}^2}
	{a_i^2(\gamma^\star)}\mu(dx,di).
\]
The tail-mean replacement is uniform because
\(\max_i|\widehat\kappa_{i,n}-\kappa_{i,n}|=O_p(T_n^{-1/2})\).
Assumption~\ref{ass:ident} and the argmax theorem now give
\(\bar\theta_n^{\rm iv}\cip\theta^\star\).

\proofstep{Step 4: augmented martingale arrays}
Define the common Brownian and detected-tail components by
\begin{align*}
	\xi_{n,j}^{W}
	&:=\frac1{\sqrt{T_n}}
	\frac{\partial_\alpha b(X_{t_{j-1}},Z_{j-1},\alpha^\star)}
	{a_{Z_{j-1}}(\gamma^\star)}\,\Delta_jW,\\
	\xi_{n,j}^{N,i}
	&:=\frac1{\sqrt{T_n}\pi_i}
	\int_{t_{j-1}}^{t_j}\!\!\int_{|z|>u_n}
	z\mathbf1_{\{Z_{s-}=i\}}\widetilde N_i(ds,dz),
	\qquad i\in S.
\end{align*}
In Case FV, put
\[
	s_{j-1}^\star
	:=\partial_\gamma\log a^2
	(X_{t_{j-1}},Z_{j-1},\gamma^\star),
	\qquad
	\xi_{n,j}^{Q}
	:=\frac{s_{j-1}^\star}{2\sqrt n}
	\left\{\frac{(\Delta_jW)^2}{h_n}-1\right\},
\]
\[
	\Xi_{n,j}^{\rm fv}
	:=\bigl((\xi_{n,j}^{W})^\top,
	(\xi_{n,j}^{Q})^\top,
	(\xi_{n,j}^{N,i}:i\in S)\bigr)^\top.
\]
In Case IV, put
\[
	\xi_{n,j}^{Q,i}
	:=\frac{\Gamma_{i,j}a_i^2(\gamma^\star)}
	{\sqrt n\,\pi_i}
	\left\{\frac{(\Delta_jW)^2}{h_n}-1\right\},
	\qquad i\in S,
\]
\[
	\Xi_{n,j}^{\rm iv}
	:=\bigl((\xi_{n,j}^{W})^\top,
	(\xi_{n,j}^{Q,i}:i\in S),
	(\xi_{n,j}^{N,i}:i\in S)\bigr)^\top.
\]
Set
\[
\begin{aligned}
	\Sigma_{\rm fv}
	&:=\operatorname{diag}\{
	I_\alpha(\theta^\star),I_\gamma(\theta^\star),\Sigma_\kappa\},\\
	\Sigma_{\rm iv}
	&:=\operatorname{diag}\left\{
	I_\alpha(\theta^\star),
	\operatorname{diag}\left(
	\frac{2a_i^4(\gamma^\star)}{\pi_i}:i\in S\right),
	\Sigma_\kappa\right\}.
\end{aligned}
\]

Every component of \(\Xi_{n,j}^\ell\) is an
\(\mathcal F_{t_j}\)-measurable martingale difference. For the Case IV
quadratic component, this follows from independence of the complete
regime path and the Brownian increment. Conditional Gaussian moments,
the compensated-Poisson isometry, and
Lemma~\ref{lem:moment_ergodic_main} give
\begin{align*}
	\sum_{j=1}^n\E_{j-1}
	[\xi_{n,j}^{W}(\xi_{n,j}^{W})^\top]
	&=\frac1n\sum_{j=1}^n
	\frac{\{\partial_\alpha b_{j-1}(\alpha^\star)\}^{\otimes2}}
	{a_{j-1}^2(\gamma^\star)}
	\cip I_\alpha(\theta^\star),\\
	\sum_{j=1}^n\E_{j-1}
	[\xi_{n,j}^{Q}(\xi_{n,j}^{Q})^\top]
	&=\frac1{2n}\sum_{j=1}^n
	(s_{j-1}^\star)^{\otimes2}
	\cip I_\gamma(\theta^\star)
	\quad\text{in Case FV},\\
	\sum_{j=1}^n\E_{j-1}
	[\xi_{n,j}^{Q,i}\xi_{n,j}^{Q,k}]
	&=\frac{2a_i^4(\gamma^\star)}{n\pi_i^2}
	\mathbf1_{\{i=k\}}\sum_{j=1}^n\E_{j-1}\Gamma_{i,j}
	\cip\mathbf1_{\{i=k\}}
	\frac{2a_i^4(\gamma^\star)}{\pi_i}
	\quad\text{in Case IV},\\
	\sum_{j=1}^n\E_{j-1}
	[\xi_{n,j}^{N,i}\xi_{n,j}^{N,k}]
	&=\mathbf1_{\{i=k\}}
	\frac{\int_{|z|>u_n}z^2\nu_i^\star(dz)}
	{T_n\pi_i^2}
	\sum_{j=1}^n\E_{j-1}
	\int_{t_{j-1}}^{t_j}\mathbf1_{\{Z_{s-}=i\}}\,ds\\
	&\cip\mathbf1_{\{i=k\}}
	\frac1{\pi_i}\int_{\mathbb R}z^2\nu_i^\star(dz).
\end{align*}
For the last display, the one-step transition bound gives
\[
	\E_{j-1}\!\int_{t_{j-1}}^{t_j}
	\mathbf1_{\{Z_{s-}=i\}}\,ds
	=h_n\mathbf1_{\{Z_{j-1}=i\}}+O(h_n^2).
\]
For the Case IV quadratic block, it gives
\[
	\E_{j-1}\Gamma_{i,j}
	=\mathbf1_{\{Z_{j-1}=i\}}\{1+O(h_n)\}.
\]
The sampled limits then follow from
Lemma~\ref{lem:moment_ergodic_main}; the replacement of endpoint
exposure by no-switch exposure is justified by
Lemma~\ref{lem:endpoint_exposure_main}.

All off-diagonal covariance blocks vanish. First, the Gaussian third
moment gives
\[
\begin{aligned}
	\E_{j-1}[\xi_{n,j}^{W}(\xi_{n,j}^{Q})^\top]
	&=0
	&&\text{in Case FV},\\
	\E_{j-1}[\xi_{n,j}^{W}\xi_{n,j}^{Q,i}]
	&=0,\qquad i\in S,
	&&\text{in Case IV},
\end{aligned}
\]
because both expectations contain the factor
\[
	\E\!\left[
	\Delta_jW\left\{\frac{(\Delta_jW)^2}{h_n}-1\right\}\right]=0.
\]
Moreover, \(\Gamma_{i,j}\Gamma_{k,j}=0\) for \(i\ne k\), so the
off-diagonal Case IV quadratic blocks are zero.

For the remaining blocks, let
\(\mathcal Z_n:=\sigma(Z_s:0\le s\le T_n)\) and initially enlarge the
filtration to \(\mathcal G_t:=\mathcal F_t\vee\mathcal Z_n\). By the
independence of the regime path from \(W\) and the Poisson random
measures, the Brownian and compensated-Poisson integrals remain
square-integrable \(\mathcal G_t\)-martingales. Denote the interval
martingales whose terminal increments form the Brownian linear,
centered Brownian quadratic, and regime-\(i\) compensated-Poisson
components by \(M_j^W,M_j^Q,M_j^{N,i}\), respectively. Continuity of
\(M_j^W,M_j^Q\), pure discontinuity of \(M_j^{N,i}\), and independent
scattering for \(i\ne k\) give
\[
\begin{aligned}
	\langle M_j^W,M_j^{N,i}\rangle
	&=0,&
	\langle M_j^Q,M_j^{N,i}\rangle
	&=0,\\
	\langle M_j^{N,i},M_j^{N,k}\rangle
	&=0,\qquad i\ne k.
\end{aligned}
\]
The integration-by-parts formula therefore yields, for each displayed
pair \((a,b)\),
\[
\begin{aligned}
	\E\!\left[
	\Delta M_j^a\Delta M_j^b\mid\mathcal G_{t_{j-1}}\right]
	&=\E\!\left[
	\Delta\langle M_j^a,M_j^b\rangle
	\mid\mathcal G_{t_{j-1}}\right]=0.
\end{aligned}
\]
Taking conditional expectations again with respect to
\(\mathcal F_{t_{j-1}}\) proves that all Brownian--Poisson and
cross-regime Poisson covariance blocks are zero.

For the Lindeberg condition, conditional Gaussian moments and the
fourth-moment formula for compensated Poisson integrals yield
\[
	\sum_j\E_{j-1}\|\xi_{n,j}^{W}\|^4=O_p(n^{-1}),
	\qquad
	\sum_j\E_{j-1}\|\xi_{n,j}^{Q}\|^4=O_p(n^{-1}),
\]
\[
	\sum_{i,j}\E_{j-1}|\xi_{n,j}^{Q,i}|^4=O_p(n^{-1}),
	\qquad
	\sum_j\E_{j-1}|\xi_{n,j}^{N,i}|^4
	=O_p(T_n^{-1}+n^{-1}).
\]
Indeed, if \(\Delta M_{i,j}\) is the unnormalized Poisson integral,
then
\[
	\E_{j-1}|\Delta M_{i,j}|^4
	\lesssim\left\{
	h_nR_{j-1}\int_{|z|>u_n}z^4\nu_i^\star(dz)
	+h_n^2R_{j-1}
	\left(\int_{|z|>u_n}z^2\nu_i^\star(dz)\right)^2\right\}.
\]
The vector dimension is fixed, so the preceding bounds imply the
conditional vector Lindeberg condition. The Cram\'er--Wold martingale
central limit theorem gives
\begin{equation}
	\sum_{j=1}^n\Xi_{n,j}^{\ell}
	\cil N(0,\Sigma_\ell),
	\qquad \ell\in\{\mathrm{fv},\mathrm{iv}\}.
	\label{eq:main_joint_base_clt}
\end{equation}

\proofstep{Step 5: score representations}
Define
\[
	C_\gamma(x_i:i\in S)
	:=\frac12\sum_{i=1}^m
	\pi_i\frac{\partial_\gamma\log a_i^2(\gamma^\star)}
	{a_i^2(\gamma^\star)}x_i,
\]
\[
	A_{\rm fv}:=
	\begin{pmatrix}I_{p_\alpha}&0&J(\theta^\star)\\
	0&I_{p_\gamma}&0\end{pmatrix},
	\qquad
	A_{\rm iv}:=
	\begin{pmatrix}I_{p_\alpha}&0&J(\theta^\star)\\
	0&C_\gamma&0\end{pmatrix}.
\]
Also set
\[
	S_n^{\rm fv}
	:=\begin{pmatrix}
	T_n^{-1/2}\partial_\alpha
	\mathbb H_n^{\rm fv}(\theta^\star)\\
	n^{-1/2}\partial_\gamma
	\mathbb H_n^{\rm fv}(\theta^\star)
	\end{pmatrix},
\]
\[
	S_n^{\rm iv}
	:=\begin{pmatrix}
	T_n^{-1/2}\partial_\alpha
	\mathbb H_{\alpha,n}^{\,\rm iv}
	(\alpha^\star,\gamma^\star)\\
	\sqrt n\,\partial_\gamma
	\mathbb H_{\gamma,n}^{\,\rm iv}(\gamma^\star)
	\end{pmatrix}.
\]

Equations~\eqref{eq:proof_tail_centering} and
\eqref{eq:main_global_detected_jump}, proved in
Proposition~\ref{prop:tail_functional}, together with
\(T_{i,n}/T_n\cip\pi_i\) from
Lemma~\ref{lem:endpoint_exposure_main}, establish, in both cases,
\begin{equation}
	\sqrt{T_n}(\widehat\kappa_{i,n}-\kappa_{i,n})
	=\sum_{j=1}^n\xi_{n,j}^{N,i}+o_p(1).
	\label{eq:main_score_kappa_representation}
\end{equation}
Indeed, centering the detected increments produces the compensated
Poisson integral, the sum of their conditional-mean remainders is
\(o_p(\sqrt{T_n})\), and \(T_{i,n}/T_n\cip\pi_i\).

Put \(e_{i,n}=\widehat\kappa_{i,n}-\kappa_{i,n}\). At the true
parameter, the feasible residual satisfies the exact identity
\[
	\widehat U_j=U_j^\star+h_ne_{Z_{j-1},n}.
\]
For the FV drift score, conditional centering and
\eqref{eq:main_retained_linear_reduction} give
\[
\begin{aligned}
	\frac1{\sqrt{T_n}}\partial_\alpha
	\mathbb H_n^{\rm fv}(\theta^\star)
	=&\sum_{j=1}^n\xi_{n,j}^{W}
	+\sum_{i=1}^m
	\left\{\frac1n\sum_{j=1}^n
	I_{n,j}\mathbf1_{\{Z_{j-1}=i\}}
	\frac{\partial_\alpha b_{j-1}(\alpha^\star)}
	{a_{j-1}^2(\gamma^\star)}\right\}
	\sqrt{T_n}e_{i,n}+o_p(1).
\end{aligned}
\]
By Lemma~\ref{lem:moment_ergodic_main}, the retained-increment estimate
in Lemma~\ref{lem:fv_conditional_bridge}, and
\eqref{eq:main_retained_linear_reduction}, the quantity in braces
converges to \(J_i(\theta^\star)\). Combining this with
\eqref{eq:main_score_kappa_representation} proves the first row of the
FV score representation.

For the FV diffusion score, expand \(\widehat U_j^2\). After
conditional centering, \eqref{eq:main_retained_quadratic_reduction}
transforms the term based on \((U_j^\star)^2\) into
\(\sum_j\xi_{n,j}^{Q}\), and Lemma~\ref{lem:fv_conditional_bridge}
makes its predictable remainder \(o_p(1)\). The two plug-in terms
\[
	2h_ne_{Z_{j-1},n}U_j^\star,
	\qquad h_n^2e_{Z_{j-1},n}^2
\]
are \(o_p(\sqrt n)\) after summation. The first follows from
conditional isometry and
\(\max_i|e_{i,n}|=O_p(T_n^{-1/2})\); for the second,
\[
	\frac1{\sqrt n}\sum_jh_ne_{Z_{j-1},n}^2
	=O_p(\sqrt n\,h_n/T_n)=O_p(n^{-1/2}).
\]
This proves the second row of the FV score representation.

For the IV drift score, differentiation, conditional centering,
\eqref{eq:main_retained_linear_reduction}, and
\(\sqrt{T_n}b_n^{\rm det}\to0\) yield
\[
\begin{aligned}
	\frac1{\sqrt{T_n}}\partial_\alpha
	\mathbb H_{\alpha,n}^{\,\rm iv}
	(\alpha^\star,\gamma^\star)
	=\sum_{j=1}^n\xi_{n,j}^{W}
	+\sum_{i=1}^m
	\left\{\frac1n\sum_{j=1}^n
	I_{n,j}\mathbf1_{\{Z_{j-1}=i\}}
	\frac{\partial_\alpha b_{j-1}(\alpha^\star)}
	{a_{j-1}^2(\gamma^\star)}\right\}
	\sqrt{T_n}e_{i,n}+o_p(1).
\end{aligned}
\]
By Lemmas~\ref{lem:moment_ergodic_main} and
\ref{lem:iv_drift_moments_main}, together with
\eqref{eq:main_retained_linear_reduction}, the braces converge to
\(J_i(\theta^\star)\). Equation
\eqref{eq:main_score_kappa_representation} then proves the first row of
the IV score representation.

Direct differentiation of the IV diffusion contrast gives
\[
	\sqrt n\,\partial_\gamma
	\mathbb H_{\gamma,n}^{\,\rm iv}(\gamma^\star)
	=\frac12\sum_{i=1}^m\widehat\pi_{i,n}
	\frac{\partial_\gamma\log a_i^2(\gamma^\star)}
	{a_i^2(\gamma^\star)}
	\sqrt n\{\widehat a_{i,n}^{\,2,\rm db}
	-a_i^2(\gamma^\star)\}.
\]
Lemma~\ref{lem:iv_debiasing_main} gives
\[
	\sqrt n\{\widehat a_{i,n}^{\,2,\rm db}
	-a_i^2(\gamma^\star)\}
	=\sum_{j=1}^n\xi_{n,j}^{Q,i}+o_p(1),
\]
and \(\widehat\pi_{i,n}\cip\pi_i\). This proves the second row of the
IV score representation. Hence, for
\(\ell\in\{\mathrm{fv},\mathrm{iv}\}\),
\begin{equation}
	S_n^\ell
	=A_\ell\sum_{j=1}^n\Xi_{n,j}^\ell+o_p(1).
	\label{eq:main_joint_score_representation}
\end{equation}
Since
\(A_\ell\Sigma_\ell A_\ell^\top=\Omega(\theta^\star)\),
\eqref{eq:main_joint_base_clt},
\eqref{eq:main_joint_score_representation}, and the continuous mapping
theorem give
\[
	S_n^\ell\cil N\{0,\Omega(\theta^\star)\}.
\]

\proofstep{Step 6: local Hessian laws}
For Case FV, write
\[
	s_{j-1}(\gamma)=\partial_\gamma\log a_{j-1}^2(\gamma),
	\qquad
	t_{j-1}(\gamma)=\partial_\gamma^2\log a_{j-1}^2(\gamma),
\]
and let \(\widehat U_j(\alpha)\) be the feasible residual. Direct
differentiation gives
\begin{align}
	-\frac1{T_n}\partial_\alpha^2\mathbb H_n^{\rm fv}(\theta)
	={}&\frac1n\sum_jI_{n,j}
	\frac{\{\partial_\alpha b_{j-1}(\alpha)\}^{\otimes2}}
	{a_{j-1}^2(\gamma)}
	-\frac1{T_n}\sum_jI_{n,j}
	\frac{\widehat U_j(\alpha)\partial_\alpha^2b_{j-1}(\alpha)}
	{a_{j-1}^2(\gamma)},
	\label{eq:main_hessian_fv_alpha}\\
	-\frac1n\partial_\gamma^2\mathbb H_n^{\rm fv}(\theta)
	={}&\frac1{2n}\sum_jI_{n,j}
	\frac{\widehat U_j(\alpha)^2}{h_na_{j-1}^2(\gamma)}
	s_{j-1}(\gamma)^{\otimes2}
	-\frac1{2n}\sum_jI_{n,j}
	\left\{\frac{\widehat U_j(\alpha)^2}
	{h_na_{j-1}^2(\gamma)}-1\right\}t_{j-1}(\gamma),
	\label{eq:main_hessian_fv_gamma}\\
	\frac1{\sqrt{nT_n}}\partial_{\alpha\gamma}^2
	\mathbb H_n^{\rm fv}(\theta)
	={}&-\frac1{\sqrt{nT_n}}\sum_jI_{n,j}
	\frac{\widehat U_j(\alpha)\partial_\alpha b_{j-1}(\alpha)}
	{a_{j-1}^2(\gamma)}s_{j-1}(\gamma)^\top.
	\label{eq:main_hessian_fv_cross}
\end{align}
At \(\theta^\star\), Proposition~\ref{prop:uniform_ergodic_main},
\eqref{eq:uniform_retained_ergodic},
Lemma~\ref{lem:fv_conditional_bridge}, and
\eqref{eq:main_retained_linear_reduction} show that the first term in
\eqref{eq:main_hessian_fv_alpha} converges to \(I_\alpha\), while its
second term is \(o_p(1)\). In
\eqref{eq:main_hessian_fv_gamma}, add and subtract
\((a_{j-1}^\star)^2\) inside the retained square. The first resulting
average converges to \(I_\gamma\), whereas the two centered-square
averages are \(o_p(1)\) by
\eqref{eq:main_retained_quadratic_reduction} and conditional isometry.
Equation~\eqref{eq:main_hessian_fv_cross} is \(o_p(1)\) by
\eqref{eq:main_retained_linear_reduction}; its martingale standard
deviation is \(O(n^{-1/2})\), and its predictable part is smaller.

For Case IV, the drift contrast has the same derivatives as
\eqref{eq:main_hessian_fv_alpha} and
\eqref{eq:main_hessian_fv_cross}. Lemma~\ref{lem:iv_drift_moments_main}
and \eqref{eq:main_retained_linear_reduction} therefore give the
\(I_\alpha\) and mixed-block limits. Direct differentiation of the IV
diffusion contrast gives
\[
\begin{aligned}
	-\partial_\gamma^2\mathbb H_{\gamma,n}^{\,\rm iv}(\gamma)
	=\frac12\sum_i\widehat\pi_{i,n}
	\frac{\widehat a_{i,n}^{\,2,\rm db}}{a_i^2(\gamma)}
	\{\partial_\gamma\log a_i^2(\gamma)\}^{\otimes2}
	-\frac12\sum_i\widehat\pi_{i,n}
	\left\{\frac{\widehat a_{i,n}^{\,2,\rm db}}
	{a_i^2(\gamma)}-1\right\}
	\partial_\gamma^2\log a_i^2(\gamma).
\end{aligned}
\]
Uniformly for \(|\gamma-\gamma^\star|\le\delta_n\),
\(\widehat a_{i,n}^{\,2,\rm db}\cip a_i^2(\gamma^\star)\) and
\(\widehat\pi_{i,n}\cip\pi_i\). The first line converges to
\(I_\gamma(\theta^\star)\), and the second converges to zero.

The deterministic coefficient integrands in
\eqref{eq:main_hessian_fv_alpha}--%
\eqref{eq:main_hessian_fv_cross} contain parameter derivatives of
\(b\) or \(a\) through order at most two. The first coordinate
derivatives required by Proposition~\ref{prop:uniform_ergodic_main}
therefore involve coefficient derivatives through order at most three,
exactly as provided by Assumption~\ref{ass:smooth}(iii), together with
the required \(x\)-derivatives and a common polynomial envelope. Thus,
for \(|\theta-\theta^\star|\le\delta_n\), the mean-value theorem,
Proposition~\ref{prop:uniform_ergodic_main}, and
\eqref{eq:uniform_retained_ergodic} control every deterministic
coefficient average, while
\eqref{eq:main_retained_l2_linear_fv},
\eqref{eq:main_retained_l2_quadratic_fv}, and
\eqref{eq:main_retained_l2_linear_iv} control the centered residual
terms through conditional isometry. Thus, for every deterministic
\(\delta_n\downarrow0\), the preceding diagonal limits hold locally
uniformly and the normalized mixed Hessians are \(o_p(1)\).

\proofstep{Step 7: Taylor expansion}
Consistency and the local Hessian laws imply that the maximizers are
interior with probability tending to one and satisfy their first-order
conditions. For Case IV, expanding the diffusion condition first gives
\[
	\sqrt n(\bar\gamma_n^{\rm iv}-\gamma^\star)=O_p(1).
\]
The mixed-Hessian law then yields
\[
	\frac1{\sqrt{T_n}}\left\{
	\partial_\alpha\mathbb H_{\alpha,n}^{\,\rm iv}
	(\alpha^\star,\bar\gamma_n^{\rm iv})
	-\partial_\alpha\mathbb H_{\alpha,n}^{\,\rm iv}
	(\alpha^\star,\gamma^\star)\right\}=o_p(1).
\]
The joint Case FV expansion and the two sequential Case IV expansions
give
\[
	D_n(\bar\theta_n^\ell-\theta^\star)
	=I(\theta^\star)^{-1}S_n^\ell+o_p(1).
\]
Combining this identity with the score limit proves
\[
	D_n(\bar\theta_n^\ell-\theta^\star)
	\cil N\{0,V(\theta^\star)\},
	\qquad \ell\in\{\mathrm{fv},\mathrm{iv}\}.
\]
\end{proof}
\subsection{Proof of Corollary~\ref{cor:studentization}}

\begin{proof}
The proof has two main parts. First, we show that each feasible truncated
second-moment estimator is asymptotically equivalent to the quadratic
large-jump functional \(\mathcal Q_{i,n}/T_{i,n}\): we pass successively from
detected to ideal jumps, remove the oracle drift correction, control the
feasible residual replacement, and apply a Poisson law of large numbers.
Second, we combine this limit with the consistency of the empirical
information, exposure, and tail-coupling terms to obtain covariance
consistency and then apply Slutsky's theorem to the joint limit in
Theorem~\ref{thm:parametric_inference}.

Fix \(\ell\in\{\mathrm{fv},\mathrm{iv}\}\) and \(i\in S\), and work on
\(\{T_{i,n}\ge\pi_iT_n/2\}\). Its probability tends to one by
\eqref{eq:Tin_convergence}.

\proofstep{Step 1: detected jumps versus ideal large jumps}
Retain \(H_{i,j,n}\), \(L_{i,j,n}\), and
\(D_{i,j,n}=H_{i,j,n}-L_{i,j,n}\) from the proof of
Proposition~\ref{prop:tail_functional}, and define
\[
	Q_{i,j,n}
	:=\int_{t_{j-1}}^{t_j}\!\!\int_{|z|>u_n}
	z^2\mathbf1_{\{Z_{s-}=i\}}N_i(ds,dz).
\]
By additivity of Poisson integrals and
\eqref{eq:studentization_Q_definition},
\[
	\mathcal Q_{i,n}=\sum_{j=1}^nQ_{i,j,n},
	\qquad
	A_{i,j}g_n(\Delta_jX)=H_{i,j,n}^2.
\]
The detection calculation in the proof of
Proposition~\ref{prop:tail_functional}, specifically
\eqref{eq:main_global_detected_jump}, gives
\begin{equation}
	\label{eq:main_tail_second_D_sum}
	\frac1{T_n}\sum_{j=1}^nD_{i,j,n}^2\cip0.
\end{equation}

For \(r=1,2,4\), let
\(m_{r,i}(v)=\int_{|z|>v}|z|^r\nu_i^\star(dz)\). Uniformly in
\(i,n\), \(m_{2,i}(u_n)+m_{4,i}(u_n)\le C\); moreover,
\(m_{1,i}(u_n)\le C\) in Case FV and
\(m_{1,i}(u_n)\le C(1+u_n^{1-\beta})\) in Case IV. Put
\[
	\tau_{i,j,n}:=\int_{t_{j-1}}^{t_j}
	\mathbf1_{\{Z_{s-}=i\}}\,ds.
\]
Conditional on the complete regime path, enumerate the regime-\(i\)
jumps larger than \(u_n\) on \((t_{j-1},t_j]\) as
\(\zeta_{i,j,n,1},\ldots,\zeta_{i,j,n,K_{i,j,n}^{(u_n)}}\), where
\(K_{i,j,n}^{(u_n)}\) denotes their number. Pointwise,
\[
\begin{aligned}
	L_{i,j,n}
	&=\sum_{k=1}^{K_{i,j,n}^{(u_n)}}\zeta_{i,j,n,k},
	&Q_{i,j,n}
	&=\sum_{k=1}^{K_{i,j,n}^{(u_n)}}\zeta_{i,j,n,k}^2,\\
	L_{i,j,n}^2-Q_{i,j,n}
	&=2\sum_{1\le k<r\le K_{i,j,n}^{(u_n)}}
	\zeta_{i,j,n,k}\zeta_{i,j,n,r}.
\end{aligned}
\]
The conditional compound-Poisson second-moment formula and the
conditional two-point Campbell formula therefore give
\begin{equation}
	\label{eq:main_tail_second_multiple}
\begin{aligned}
	\E_{j-1}L_{i,j,n}^2
	&=\E_{j-1}\!\left[
	\tau_{i,j,n}m_{2,i}(u_n)
	+\tau_{i,j,n}^2\kappa_{i,n}^2\right]
	\lesssim h_n,\\
	\E_{j-1}|L_{i,j,n}^2-Q_{i,j,n}|
	&\le\E_{j-1}[\tau_{i,j,n}^2]m_{1,i}(u_n)^2
	\lesssim h_n^2m_{1,i}(u_n)^2.
\end{aligned}
\end{equation}
Thus the pair-product identity is used precisely to obtain the second
bound in \eqref{eq:main_tail_second_multiple}. In Case IV,
\[
	h_nu_n^{2-2\beta}=h_n^{1-2\omega(\beta-1)}O(1)\to0
\]
by \(\beta<8/5\) and \(\omega<4/(8+\beta)\); the Case FV conclusion is
immediate. Hence
\begin{equation}
	\label{eq:main_tail_second_L_sum}
	\frac1{T_n}\sum_jL_{i,j,n}^2=O_p(1),
	\qquad
	\frac1{T_n}\sum_j|L_{i,j,n}^2-Q_{i,j,n}|\cip0.
\end{equation}
Since \(H_{i,j,n}=L_{i,j,n}+D_{i,j,n}\), the
Cauchy--Schwarz inequality and
\eqref{eq:main_tail_second_D_sum}--\eqref{eq:main_tail_second_L_sum}
give
\begin{equation}
	\label{eq:main_tail_second_raw}
\begin{aligned}
	\frac1{T_n}\left|\sum_jH_{i,j,n}^2-\mathcal Q_{i,n}\right|
	&\le2\left(\frac1{T_n}\sum_jL_{i,j,n}^2\right)^{1/2}
	\left(\frac1{T_n}\sum_jD_{i,j,n}^2\right)^{1/2}\\
	&\quad+\frac1{T_n}\sum_jD_{i,j,n}^2
	+\frac1{T_n}\sum_j|L_{i,j,n}^2-Q_{i,j,n}|\cip0.
\end{aligned}
\end{equation}

\proofstep{Step 2: oracle drift correction}
For \(|y|\le u_n/2\), direct expansion of
\(g_n(x+y)-g_n(x)\) yields
\begin{equation}
	\label{eq:main_tail_second_shift}
\begin{aligned}
	|g_n(x+y)-g_n(x)|
	&\le C(|x||y|+y^2)\mathbf1_{\{|x|>u_n/2\}}
	+Cu_n^2\mathbf1_{\{u_n/2<|x|\le2u_n\}}.
\end{aligned}
\end{equation}
The zero-, one-, and multiple-jump decomposition from
\eqref{eq:main_fv_detected_decomposition} in Case FV and
\eqref{eq:main_iv_count_bounds} in Case IV, applied at \(u_n/2\) and
\(2u_n\), gives
\begin{equation}
	\label{eq:main_tail_second_boundary_counts}
\begin{aligned}
	\frac1{T_n}\sum_jA_{i,j}|\Delta_jX|
	\mathbf1_{\{|\Delta_jX|>u_n/2\}}
	&=\begin{cases}O_p(1),&\text{Case FV},\\
	O_p(1+u_n^{1-\beta}),&\text{Case IV},\end{cases}\\
	\frac{u_n^2}{T_n}\sum_jA_{i,j}
	\mathbf1_{\{u_n/2<|\Delta_jX|\le2u_n\}}&\cip0.
\end{aligned}
\end{equation}
For clarity, the one-jump, FV zero-jump, and IV zero-jump terms are
bounded, respectively, by
\[
 Ch_nm_{1,i}(u_n/4),\qquad
 Ch_n^{q_0/2}u_n^{1-q_0}R_{j-1},\qquad
 Ch_n\{u_n^{1-\beta}+h_nu_n^{-1-\beta}+h_n/u_n\}R_{j-1}.
\]
After multiplication by \(u_n^2\), the two annulus bounds are
\[
 u_n^{2-\beta}+h_n^{q_0/2-1}u_n^{2-q_0},
 \qquad u_n^{2-\beta}+h_nu_n^{-\beta}+h_n,
\]
which vanish. Transition and multiple-jump terms are of smaller order
under the corresponding sampling assumptions.

Let
\[
	c_{i,j,n}^\star:=-h_n\{b(X_{t_{j-1}},i,\alpha^\star)
	-\kappa_{i,n}\},
	\qquad c_{i,n}^{\max}:=\max_{j\le n}|c_{i,j,n}^\star|.
\]
For any fixed \(q>4\), the uniform moment bound, a union bound, and
Markov's inequality give
\[
 \max_{j\le n}R_{j-1}=O_p(n^{1/q}).
\]
Since \(T_n=nh_n\), \(nh_n^2\to0\), and \(\sqrt {h_n}/u_n\to0\),
\begin{equation}
	\label{eq:main_tail_second_oracle_shift}
	\frac{c_{i,n}^{\max}}{\sqrt {h_n}}=o_p(1),
	\qquad \frac{c_{i,n}^{\max}}{u_n}=o_p(1).
\end{equation}
Apply \eqref{eq:main_tail_second_shift} with
\(x=\Delta_jX\), \(y=c_{i,j,n}^\star\), and use
\eqref{eq:main_tail_second_boundary_counts}--%
\eqref{eq:main_tail_second_oracle_shift}. This gives
\begin{equation}
	\label{eq:main_tail_second_oracle_replacement}
	\frac1{T_n}\sum_jA_{i,j}
	|g_n(U_j^\star)-g_n(\Delta_jX)|\cip0.
\end{equation}
Together with \eqref{eq:main_tail_second_raw} and
\(T_{i,n}/T_n\cip\pi_i\), this proves
\begin{equation}
	\label{eq:main_tail_second_oracle}
	M_{2,i,n}^\star-\frac{\mathcal Q_{i,n}}{T_{i,n}}\cip0.
\end{equation}

\proofstep{Step 3: feasible residual}
On \(\{A_{i,j}=1\}\), put
\[
\begin{aligned}
	\delta_{i,j,n}^{\,\ell}
	:=\widehat Y_{i,j}^{\,\ell}-U_j^\star
	=-h_n\{b(X_{t_{j-1}},i,\bar\alpha_n^\ell)
	-b(X_{t_{j-1}},i,\alpha^\star)\}
	+h_n(\widehat\kappa_{i,n}-\kappa_{i,n}).
\end{aligned}
\]
Theorem~\ref{thm:parametric_inference} and
Proposition~\ref{prop:tail_functional} imply
\[
	L_n^\ell:=|\bar\alpha_n^\ell-\alpha^\star|
	+\max_i|\widehat\kappa_{i,n}-\kappa_{i,n}|
	=O_p(T_n^{-1/2}).
\]
By the mean-value theorem,
\(|\delta_{i,j,n}^{\,\ell}|\le Ch_nL_n^\ell R_{j-1}\). Taking the same
\(q>4\) as above gives
\begin{equation}
	\label{eq:main_max_plugin_displacement}
	\max_{i,j}\frac{|\delta_{i,j,n}^{\,\ell}|}{\sqrt {h_n}}\cip0,
	\qquad
	\max_{i,j}\frac{|\delta_{i,j,n}^{\,\ell}|}{u_n}\cip0.
\end{equation}
On the event where the two maxima in \eqref{eq:main_max_plugin_displacement}
are at most \(1/2\), apply \eqref{eq:main_tail_second_shift} with
\(x=U_j^\star\) and \(y=\delta_{i,j,n}^{\,\ell}\). Equations
\eqref{eq:main_tail_second_boundary_counts},
\eqref{eq:main_tail_second_oracle_shift}, and
\eqref{eq:main_max_plugin_displacement} show that each term on the
right is \(o_p(T_n)\) after summation. Thus
\begin{equation}
	\label{eq:main_tail_second_feasible}
	\widehat M_{2,i,n}^{\,\ell}-M_{2,i,n}^\star\cip0.
\end{equation}

\proofstep{Step 4: Poisson law of large numbers}
Let
\[
	\Lambda_{i,n}:=\int_0^{T_n}\mathbf1_{\{Z_{s-}=i\}}ds,
	\qquad
	\widetilde{\mathcal Q}_{i,n}:=
	\int_0^{T_n}\!\!\int_{|z|>u_n}z^2
	\mathbf1_{\{Z_{s-}=i\}}\widetilde N_i(ds,dz).
\]
Then
\[
	\mathcal Q_{i,n}=\widetilde{\mathcal Q}_{i,n}
	+\Lambda_{i,n}m_{2,i}(u_n),
	\qquad
	\E|\widetilde{\mathcal Q}_{i,n}|^2
	\le CT_n.
\]
Hence \(\widetilde{\mathcal Q}_{i,n}/T_{i,n}\cip0\). Applying
\eqref{eq:ergodic_thm} to
\(f(x,k)=\mathbf1_{\{k=i\}}\), and using
\eqref{eq:Tin_convergence}, gives
\[
	\frac{\Lambda_{i,n}}{T_n}\cip\pi_i,
	\qquad
	\frac{T_{i,n}}{T_n}\cip\pi_i,
	\qquad
	\frac{\Lambda_{i,n}}{T_{i,n}}\cip1.
\]
The last convergence follows from Slutsky's theorem.
Moreover, since \(u_n\downarrow0\), the monotone convergence theorem
gives
\(m_{2,i}(u_n)\to\int_{\mathbb R}z^2\nu_i^\star(dz)\),
and therefore
\begin{equation}
	\label{eq:main_tail_second_poisson}
	\frac{\mathcal Q_{i,n}}{T_{i,n}}
	\cip\int_{\mathbb R}z^2\nu_i^\star(dz).
\end{equation}
The regime set is finite, so
\eqref{eq:main_tail_second_oracle},
\eqref{eq:main_tail_second_feasible}, and
\eqref{eq:main_tail_second_poisson} hold uniformly over \(i\). Hence
\begin{equation}
	\label{eq:main_tail_second_conclusion}
	\max_{i\in S}\left|
	\widehat M_{2,i,n}^{\,\ell}
	-\int_{\mathbb R}z^2\nu_i^\star(dz)\right|\cip0.
\end{equation}

\proofstep{Step 5: covariance}
The integrands defining \(\widehat I_{\alpha,n}^{\,\ell}\),
\(\widehat I_{\gamma,n}^{\,\ell}\), and \(\widehat J_n^\ell\) contain
coefficient parameter derivatives through order at most one. Their
first coordinate derivatives therefore use derivatives through order at
most two, and Assumption~\ref{ass:smooth}(iii) gives the common
polynomial envelope required by
Proposition~\ref{prop:uniform_ergodic_main}. In particular,
\eqref{eq:uniform_retention_removal} gives
\begin{align*}
	&\sup_{(\alpha,\gamma)\in\Theta}
	\left\|\frac1n\sum_{j=1}^n(1-I_{n,j})
	\frac{\{\partial_\alpha b_{j-1}(\alpha)\}^{\otimes2}}
	{a_{j-1}^2(\gamma)}\right\|
	\lesssim\frac1n\sum_{j=1}^n(1-I_{n,j})R_{j-1}\cip0,\\
	&\sup_{\gamma\in\Theta_\gamma}
	\left\|\frac1{2n}\sum_{j=1}^n(1-I_{n,j})
	\{\partial_\gamma\log a_{j-1}^2(\gamma)\}^{\otimes2}\right\|
	\lesssim\frac1n\sum_{j=1}^n(1-I_{n,j})R_{j-1}\cip0,\\
	&\max_{i\in S}\sup_{(\alpha,\gamma)\in\Theta}
	\left\|\frac1n\sum_{j=1}^n(1-I_{n,j})
	\mathbf1_{\{Z_{j-1}=i\}}
	\frac{\partial_\alpha b_{j-1}(\alpha)}{a_{j-1}^2(\gamma)}\right\|
	\lesssim\frac1n\sum_{j=1}^n(1-I_{n,j})R_{j-1}\cip0.
\end{align*}
Thus the retention indicators do not change any of the three uniform
limits. Applying the unretained part of
Proposition~\ref{prop:uniform_ergodic_main} to their scalar entries,
and then using Theorem~\ref{thm:parametric_inference} and the continuous
mapping theorem, gives
\[
	\widehat I_{\alpha,n}^{\,\ell}\cip I_\alpha(\theta^\star),
	\quad
	\widehat I_{\gamma,n}^{\,\ell}\cip I_\gamma(\theta^\star),
	\quad
	\widehat J_n^\ell\cip J(\theta^\star),
	\quad
	\widehat\pi_{i,n}\cip\pi_i.
\]
Equation \eqref{eq:main_tail_second_conclusion} now implies
\[
	\widehat\Sigma_{\kappa,n}^{\,\ell}\cip\Sigma_\kappa,
	\qquad
	\widehat V_n^{\,\ell}\cip V(\theta^\star).
\]
Assumption~\ref{ass:info_nondeg} makes the empirical information
matrices nonsingular with probability tending to one. Theorem
\ref{thm:parametric_inference} and Slutsky's theorem prove the displayed
studentized limit; the coordinatewise intervals follow.
\end{proof}
\subsection{Proof of Theorem~\ref{thm:levy_density_rate}}

\begin{proof}
The proof first controls the oracle kernel estimator and then replaces its
residual by the feasible residual. In Case IV, the conditional kernel input
is supplied by Lemma~\ref{lem:kernel_small_time_main}, whose local argument
uses Proposition~\ref{prop:iv_off_diagonal_density}; the feasible replacement
uses Theorem~\ref{thm:parametric_inference} and
Proposition~\ref{prop:tail_functional}. Hence the proof applies to the actual
{IV coefficient estimator}.
Fix \(i\in S\). Choose \(c_K>0\) such that
\(\operatorname{supp}(K)\subset[-c_K,c_K]\), and put
\(\delta_B=\inf_{z\in B}|z|>0\). For all sufficiently large \(n\),
\[
	c_K\eta_n<\delta_B/2,\qquad u_n<\delta_B/2.
\]
If \(z\in B\) and \(K_{\eta_n}(z-y)\ne0\), then
\(|y|\ge\delta_B-c_K\eta_n>u_n\). Therefore the detection indicator in
the oracle and feasible kernel estimators is identically one on every
nonzero summand.

Define
\[
	\widetilde s_{i,n}(z)
	:=\frac1{T_{i,n}}\sum_{j=1}^n
	A_{i,j}K_{\eta_n}(z-Y_j^\star),\qquad
	\bar s_{i,n}:=K_{\eta_n}*s_i^\star.
\]
The oracle error admits the decomposition
\[
	\widetilde s_{i,n}-s_i^\star
	=M_{i,n}+R_{i,n}+D_{i,n}+B_{i,n},
\]
where
\[
\begin{aligned}
	M_{i,n}
	&:=\frac1{T_{i,n}}\sum_{j=1}^n
	\left[A_{i,j}K_{\eta_n}(\cdot-Y_j^\star)
	-\E_{j-1}\{A_{i,j}K_{\eta_n}(\cdot-Y_j^\star)\}\right],\\
	R_{i,n}
	&:=\frac1{T_{i,n}}\sum_{j=1}^n
	\left[\E_{j-1}\{A_{i,j}K_{\eta_n}(\cdot-Y_j^\star)\}
	-h_n\mathbf1_{\{Z_{j-1}=i\}}\bar s_{i,n}\right],\\
	D_{i,n}
	&:=\left(
	\frac{h_n\sum_{j=1}^n\mathbf1_{\{Z_{j-1}=i\}}}{T_{i,n}}-1
	\right)\bar s_{i,n},\qquad
	B_{i,n}:=\bar s_{i,n}-s_i^\star.
\end{aligned}
\]

By the first inequality in
Lemma~\ref{lem:kernel_small_time_main},
Lemma~\ref{lem:moment_ergodic_main}, and
Lemma~\ref{lem:endpoint_exposure_main},
\[
	\|R_{i,n}\|_{L^2(B)}
	\le\frac{h_n}{\eta_n^{5/2}}\,
	\frac{h_n\sum_{j=1}^nR_{j-1}}{T_{i,n}}
	=O_p\!\left(\frac{h_n}{\eta_n^{5/2}}\right).
\]
Furthermore,
\[
	0\le
	h_n\sum_{j=1}^n\mathbf1_{\{Z_{j-1}=i\}}-T_{i,n}
	=h_n\sum_{j=1}^n
	\mathbf1_{\{Z_{j-1}=i,Z_j\ne i\}}.
\]
The conditional transition probability is \(O(h_n)\); hence
\(\|D_{i,n}\|_{L^2(B)}=O_p(h_n)\).
The order-\(r\) kernel condition and local H\"older regularity give
\(\|B_{i,n}\|_{L^2(B)}=O(\eta_n^r)\).

Let
\(\Omega_{i,n}:=\{T_{i,n}\ge\pi_iT_n/2\}\).
The martingale-difference property, the second inequality in
Lemma~\ref{lem:kernel_small_time_main}, and the tower property imply
\[
	\E\!\left[
	\|M_{i,n}\|_{L^2(B)}^2\mathbf1_{\Omega_{i,n}}\right]
	\le\frac{C}{T_n^2}\sum_{j=1}^n\frac{h_n}{\eta_n}
	\E[R_{j-1}]
	\le\frac{C}{T_n\eta_n}.
\]
Since \(\mathbb P(\Omega_{i,n})\to1\), Markov's inequality yields
\[
	\|M_{i,n}\|_{L^2(B)}
	=O_p\{(T_{i,n}\eta_n)^{-1/2}\}.
\]
Combining the preceding four bounds gives
\[
	\|\widetilde s_{i,n}-s_i^\star\|_{L^2(B)}
	=O_p\!\left(
	\eta_n^r+\frac1{\sqrt{T_{i,n}\eta_n}}
	+\frac{h_n}{\eta_n^{5/2}}\right).
\]

It remains to replace the oracle residual. On \(\{A_{i,j}=1\}\), put
\[
	\delta_{j,n}
	:=\widehat Y_{i,j}^{\,\ell}-Y_j^\star,\qquad
	L_n:=|\bar\alpha_n^\ell-\alpha^\star|
	+\max_{k\in S}|\widehat\kappa_{k,n}-\kappa_{k,n}|.
\]
The mean-value theorem and Assumption~\ref{ass:smooth} give
\(|\delta_{j,n}|\le h_nR_{j-1}L_n\), while
Theorem~\ref{thm:parametric_inference} and
Proposition~\ref{prop:tail_functional} give
\(\sqrt{T_n}L_n=O_p(1)\). Also,
\[
	\|K_{\eta_n}(\cdot-y-\delta)
	-K_{\eta_n}(\cdot-y)\|_{L^2(\mathbb R)}
	\le |\delta|\,\|K_{\eta_n}'\|_{L^2(\mathbb R)}
	\le C|\delta|\eta_n^{-3/2}.
\]
By the last assertion of
Lemma~\ref{lem:kernel_small_time_main}, the number of indices whose
oracle or feasible residual lies in the fixed neighborhood \(B^{\eta_0}\)
is \(O_p(T_n)\), with the same conclusion after multiplication by
\(R_{j-1}\). Therefore
\[
	\|\widehat s_{i,n}^{\,\ell}-\widetilde s_{i,n}\|_{L^2(B)}
	=O_p\!\left(\frac{h_nL_n}{\eta_n^{3/2}}\right)
	=O_p\!\left(
	\frac{h_n}{\sqrt{T_n}\eta_n^{3/2}}\right).
\]
This is negligible relative to the asserted rate because
\(h_n/\eta_n\to0\) and \(\eta_n/\sqrt{T_n}\to0\).
The triangle inequality and \(T_{i,n}/T_n\cip\pi_i\) prove
\eqref{eq:levy_density_rate}.
\end{proof}
}
% END INLINED FILE: proof_strategy_main.tex
}

		\section{Simulation study}
		\label{sec:sim}
		\raggedbottom

		{
		% BEGIN INLINED FILE: simulation/revised_section.tex
\subsection{Common design}
\label{sec:sim_models}

We simulate a switching Ornstein--Uhlenbeck process:
\[
	dX_t
	=-\alpha_{Z_t}X_t\,dt
	+\gamma_{Z_t}\,dW_t
	+\sum_{i=1}^2\mathbf 1_{\{Z_{t-}=i\}}
	\int_{\mathbb R}z\,\widetilde N_i(dt,dz).
\]
The common structural parameters are
\[
	Q=\begin{pmatrix}-0.8&0.8\\0.5&-0.5\end{pmatrix},
	\qquad
	(\alpha_1,\alpha_2)=(1.2,0.7),
	\qquad
	(\gamma_1,\gamma_2)=(0.35,0.55).
\]
The density loss is evaluated on
\(B=[-3,-0.35]\cup[0.35,3]\).

\subsection{Case FV: finite-variation benchmark}

\subsubsection{Design and rate}

Example~1 is a compound-Poisson model with Gaussian-mixture jump sizes,
\[
	s_i^\star(z)
	=\lambda_i\sum_{r=1}^2p_{ir}\varphi_{\mu_{ir},\tau_{ir}}(z),
\]
where
\[
\begin{array}{c|c|c|c|c}
	i&\lambda_i&p_i&\mu_i&\tau_i\\ \hline
	1&0.8&(0.55,0.45)&(-0.8,0.8)&(0.25,0.25)\\
	2&1.2&(0.55,0.45)&(-0.7,1.05)&(0.25,0.35).
\end{array}
\]
The sampling sequence is
\[
	h_n=0.005\left(\frac{n}{20{,}000}\right)^{-0.85},
	\qquad
	u_n=1.9h_n^{0.491},
	\qquad
	\eta_n=0.4T_n^{-0.2}.
\]
Table~\ref{tab:sim_settings} reports the three
implemented settings.

\begin{table}[H]
	\centering
	\caption{Case FV sampling designs and Monte Carlo replication $M$.}
	\label{tab:sim_settings}
	\small
	% BEGIN INLINED FILE: simulation/results/tables/sampling_settings.tex
\begin{tabular}{@{}lrrrrrr@{}}\toprule
Setting & \(M\) & \(n\) & \(h\) & \(T=nh\) & \(u\) & \(\eta\)\\
\midrule
S1 & 200 & 20000 & 0.005000 & 100.0 & 0.1409 & 0.1592\\
S2 & 1000 & 80000 & 0.001539 & 123.1 & 0.0790 & 0.1528\\
S3 & 200 & 320000 & 0.000474 & 151.6 & 0.0443 & 0.1465\\
\bottomrule
\end{tabular}

% END INLINED FILE: simulation/results/tables/sampling_settings.tex
\end{table}

For the rate audit we take the conservative activity bound \(\beta=1/2\)
and the moment index \(q_0=80\).
{Table~\ref{tab:sim_rates} records their
finite-sample values for the three simulated settings.}

\begin{table}[H]
	\centering
	\caption{Finite-sample rate and geometry diagnostics.  Here
		\(\delta_B=0.35\).}
	\label{tab:sim_rates}
	\small
	\begin{adjustbox}{max width=\textwidth}
		% BEGIN INLINED FILE: simulation/results/tables/rate_diagnostics.tex
\begin{tabular}{@{}lrrrrrr@{}}\toprule
Setting & \(nh^2\) & \(u/\sqrt h\) & \(\sqrt n u^{3/2}\) & \(T\eta\) & \(h/\eta^{5/2}\) & \(\delta_B-u-\eta\)\\
\midrule
S1 & 0.5000 & 1.99 & 7.481 & 15.92 & 0.4941 & 0.0498\\
S2 & 0.1895 & 2.01 & 6.281 & 18.81 & 0.1687 & 0.1182\\
S3 & 0.0718 & 2.04 & 5.275 & 22.21 & 0.0576 & 0.1592\\
\bottomrule
\end{tabular}

% END INLINED FILE: simulation/results/tables/rate_diagnostics.tex
	\end{adjustbox}
\end{table}

\subsubsection{Parametric and tail-functional results}
\label{sec:sim_param}
For each replication, put
\[
\begin{aligned}
	\widehat\pi_i^{\,0}
	&:=\frac1n\sum_{j=1}^n\mathbf 1_{\{Z_{j-1}=i\}},\\
	\widehat I_{\alpha_i}
	&:=\frac1n\sum_{j=1}^n
	\mathbf 1_{\{Z_{j-1}=i\}}
	\frac{X_{t_{j-1}}^2}{\widehat\gamma_i^2},&
	\widehat J_i
	&:=-\frac1n\sum_{j=1}^n
	\mathbf 1_{\{Z_{j-1}=i\}}
	\frac{X_{t_{j-1}}}{\widehat\gamma_i^2},\\
	\widehat M_{2,i}
	&:=\frac1{T_{i,n}}\sum_{j=1}^n
	A_{i,j}\widehat Y_{i,j}^2
	\mathbf 1_{\{|\widehat Y_{i,j}|>u_n\}},&
	\widehat Y_{i,j}
	&:=\Delta_jX+h_n\widehat\alpha_iX_{t_{j-1}}
	+h_n\widehat\kappa_i .
\end{aligned}
\]
Thus the implemented diagonal covariance entries and standard errors are
\[
\begin{aligned}
	\widehat V_{\alpha_i}
	&:=\widehat I_{\alpha_i}^{-2}
	\left\{\widehat I_{\alpha_i}+
	\widehat J_i^2\frac{\widehat M_{2,i}}{\widehat\pi_i^{\,0}}\right\},
	&
	\operatorname{se}(\widehat\alpha_i)
	&:=\sqrt{\widehat V_{\alpha_i}/T_n},\\
	\widehat V_{\gamma_i}
	&:=\frac{\widehat\gamma_i^2}{2\widehat\pi_i^{\,0}},
	&
	\operatorname{se}(\widehat\gamma_i)
	&:=\sqrt{\widehat V_{\gamma_i}/n}.
\end{aligned}
\]
Studentization uses
\((\bar\theta_{n,k}^{\rm fv}-\theta_k^\star)/
\operatorname{se}(\bar\theta_{n,k}^{\rm fv})\), and the reported confidence
interval is \(\bar\theta_{n,k}^{\rm fv}\pm
1.96\,\operatorname{se}(\bar\theta_{n,k}^{\rm fv})\).

\begin{table}[H]
	\centering
	\caption{Primary parametric results under S2
		(\(M=1{,}000\)).}
	\label{tab:param_results}
	\footnotesize
	\begin{adjustbox}{max width=\textwidth}
		% BEGIN INLINED FILE: simulation/results/tables/parametric_primary_S2.tex
\begin{tabular}{@{}llrrrrrrr@{}}\toprule
Example & Parameter & Bias & SD & Mean SE & RMSE & Stud. mean & Stud. SD & Coverage\\
\midrule
Example 1 & \(\alpha_1\) & -0.0155 & 0.0912 & 0.0841 & 0.0924 & -0.194 & 1.072 & 0.924\\
Example 1 & \(\alpha_2\) & -0.0119 & 0.0774 & 0.0739 & 0.0783 & -0.186 & 1.029 & 0.945\\
Example 1 & \(\gamma_1\) & 0.0001 & 0.0014 & 0.0014 & 0.0014 & 0.092 & 1.002 & 0.939\\
Example 1 & \(\gamma_2\) & -0.0010 & 0.0017 & 0.0018 & 0.0020 & -0.579 & 0.978 & 0.918\\
\bottomrule
\end{tabular}

% END INLINED FILE: simulation/results/tables/parametric_primary_S2.tex
	\end{adjustbox}
\end{table}

Table~\ref{tab:param_results} shows studentized standard deviations between
\(0.978\) and \(1.072\) and coverage between \(0.918\) and \(0.945\). The
main deviation from the limit approximation is the studentized mean
\(-0.579\) for \(\widehat\gamma_2\). Thus the feasible standard errors
capture dispersion, while a finite-sample diffusion bias remains. The
S1--S3 histograms in Figure~\ref{fig:parametric_hist_gaussian} display its
evolution under mesh refinement.

\begin{figure}[H]
	\centering
	\includegraphics[width=\textwidth]{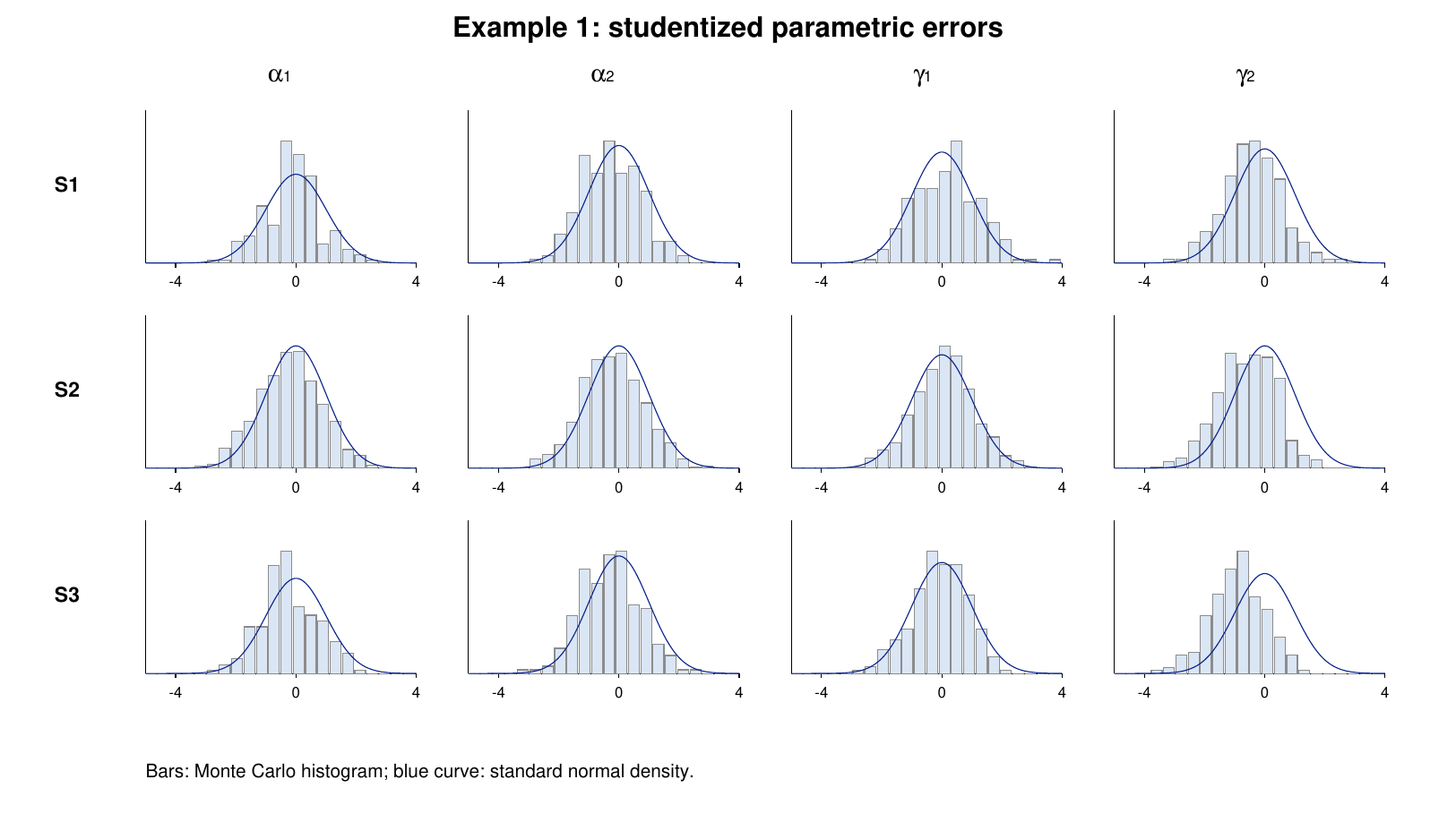}
	\caption{Studentized parametric errors for Example~1.  The standard-normal
		density is overlaid.}
	\label{fig:parametric_hist_gaussian}
\end{figure}

Table~\ref{tab:tail_mean_results} evaluates the {tail-mean estimator} against
its threshold-dependent target \(\kappa_{i,n}\), rather than the full jump
mean.

\begin{table}[H]
	\centering
	\caption{Signed tail-mean estimation under S2
		(\(M=1{,}000\)).}
	\label{tab:tail_mean_results}
	\small
	% BEGIN INLINED FILE: simulation/results/tables/tail_mean_primary_S2.tex
\begin{tabular}{@{}llrrrr@{}}\toprule
Example & Regime & Target \(\kappa_{i,n}\) & Mean & Bias & RMSE\\
\midrule
Example 1 & 1 & -0.0640 & -0.0623 & 0.0017 & 0.1133\\
Example 1 & 2 & 0.1051 & 0.1019 & -0.0031 & 0.1180\\
\bottomrule
\end{tabular}

% END INLINED FILE: simulation/results/tables/tail_mean_primary_S2.tex
\end{table}

\subsubsection{L\'evy-density estimation and sensitivity}
\label{sec:sim_density}

Table~\ref{tab:density_results} compares four estimators.  The plug-in
estimator uses \(\bar\alpha_n^{\rm fv}\) and \(\widehat\kappa_n\); the parameter
oracle uses their true values; the mark oracle uses the simulated jump marks
and exact regime occupation times; and the pooled estimator ignores regime
labels.  Each entry is the mean \(L^2(B)\)-error, with its Monte Carlo
standard deviation in parentheses.

\begin{table}[H]
	\centering
	\caption{Primary density results under S2
		(\(M=1{,}000\)).}
	\label{tab:density_results}
	\footnotesize
	\begin{adjustbox}{max width=\textwidth}
		% BEGIN INLINED FILE: simulation/results/tables/density_primary_S2.tex
\begin{tabular}{@{}llrrrr@{}}\toprule
Example & Regime & Plug-in & Parameter oracle & Mark oracle & Pooled-to-\(s_i\)\\
\midrule
Example 1 & 1 & 0.2926 (0.0619) & 0.2926 (0.0619) & 0.2931 (0.0623) & 0.3099 (0.0577)\\
Example 1 & 2 & 0.2807 (0.0556) & 0.2807 (0.0556) & 0.2803 (0.0554) & 0.2564 (0.0471)\\
\bottomrule
\end{tabular}

% END INLINED FILE: simulation/results/tables/density_primary_S2.tex
	\end{adjustbox}
\end{table}

The plug-in, parameter-oracle, and mark-oracle losses in
Table~\ref{tab:density_results} are nearly identical, so parameter estimation
and jump detection contribute little to the integrated error at S2. The
pooled estimator occasionally has a smaller finite-sample loss because the
two densities are similar on part of \(B\), but it targets an
exposure-weighted mixture rather than either regime-specific density.
Figure~\ref{fig:density_gaussian} shows the corresponding estimates over
S1--S3; the unestimated neighborhood \((-0.35,0.35)\) is omitted.

\begin{figure}[H]
	\centering
	\includegraphics[width=\textwidth]{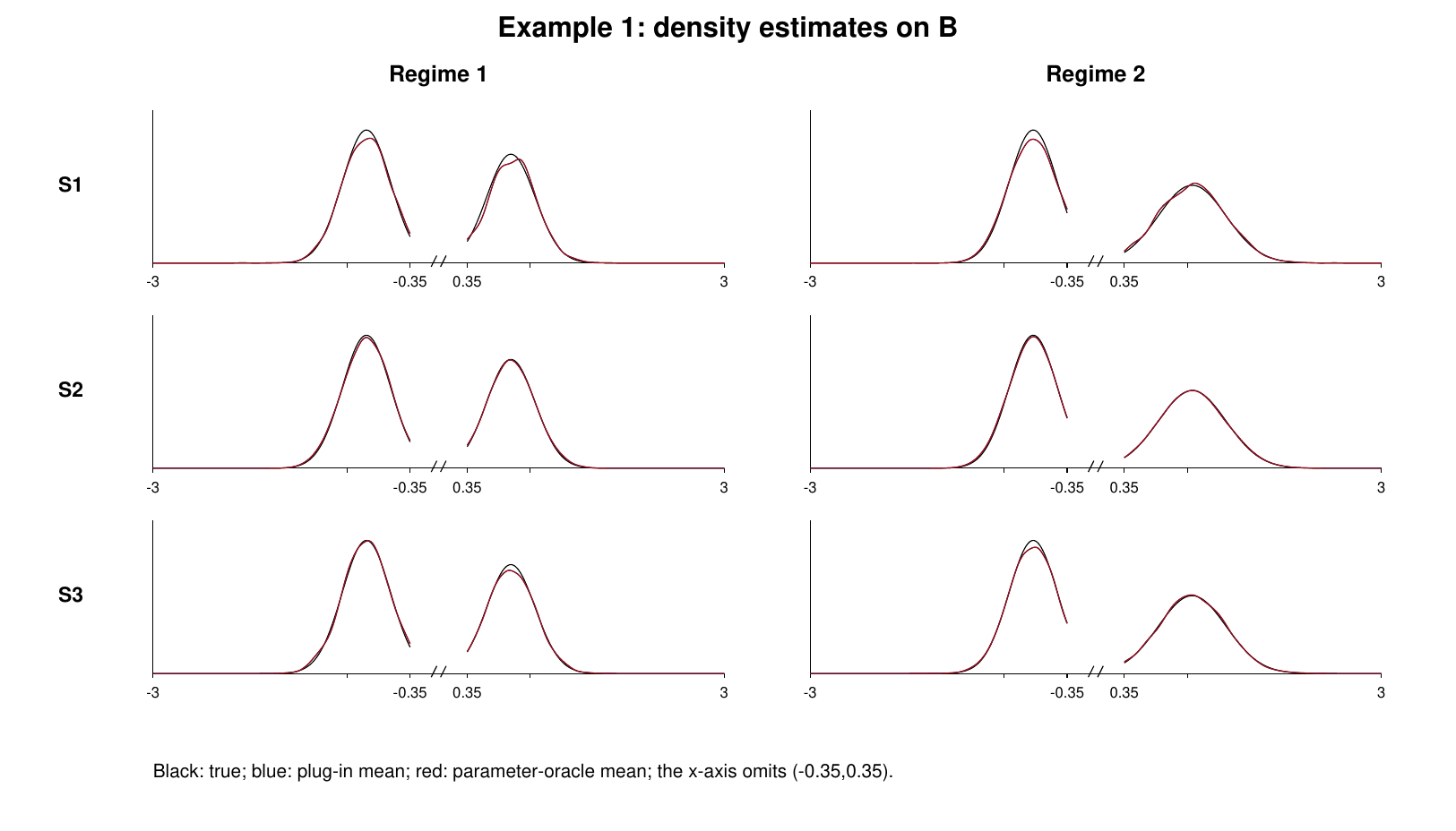}
	\caption{Regime-wise L\'evy-density estimates for Example~1.}
	\label{fig:density_gaussian}
\end{figure}

\FloatBarrier
\paragraph{Threshold and bandwidth sensitivity.}

Using common random numbers and \(M=200\) S2 paths, we change one tuning
constant at a time.

\begin{table}[H]
	\centering
	\caption{Sensitivity to the truncation level and bandwidth under S2.}
	\label{tab:sim_sensitivity}
	\footnotesize
	\setlength{\tabcolsep}{3.6pt}
	% BEGIN INLINED FILE: simulation/results/revised_sensitivity_S2/sensitivity_table.tex
\begin{tabular}{@{}lrrrr@{}}\toprule
& \multicolumn{4}{c}{Example~1 (Gaussian-mixture jumps)}\\
\cmidrule(l){2-5}
Tuning $(u,\eta)$ & $\mathrm{RMSE}_{\alpha}$ & $\mathrm{RMSE}_{\gamma}$ & $\bar e_1$ & $\bar e_2$\\
\midrule
$0.8u_0,\eta_0$ & 0.1307 & 0.0092 & 0.2899 & 0.2766\\
$u_0,\eta_0$ & 0.1276 & 0.0024 & 0.2899 & 0.2766\\
$1.1u_0,\eta_0$ & 0.1278 & 0.0022 & 0.2899 & 0.2766\\
$u_0,0.8\eta_0$ & 0.1276 & 0.0024 & 0.3269 & 0.3105\\
$u_0,1.2\eta_0$ & 0.1276 & 0.0024 & 0.2627 & 0.2517\\
\bottomrule
\end{tabular}

% END INLINED FILE: simulation/results/revised_sensitivity_S2/sensitivity_table.tex
\end{table}

Table~\ref{tab:sim_sensitivity} shows that lowering the threshold to
\(0.8u_0\) increases diffusion RMSE, whereas \(1.1u_0\) has little effect.
Threshold changes do not alter the density loss because the kernel window
remains separated from all three thresholds. Increasing the bandwidth by
\(20\%\) lowers the density loss without changing the
{coefficient estimates}.

% BEGIN INLINED FILE: simulation/case_iv_main_results.tex
\subsection{Case IV: infinite-variation stress test}
\label{sec:sim_case_iv}

\subsubsection{Design and rate audit}

Example~2 tests the Case IV procedure. We retain the switching
Ornstein--Uhlenbeck model, \(Q\), \((\alpha_1,\alpha_2)\),
\((\gamma_1,\gamma_2)\), and the density window \(B\) from Example~1, but
replace the compound-Poisson component by the two-sided tempered-stable
L\'evy densities
\[
	s_i^\star(z)
	=\frac{c_i^+e^{-\lambda_i^+z}\mathbf1_{\{z>0\}}
	+c_i^-e^{-\lambda_i^-|z|}\mathbf1_{\{z<0\}}}
	{|z|^{1+\beta}},
	\qquad \beta=\frac54,
\]
with
\[
\begin{array}{c|ccc}
	i&(c_i^+,c_i^-)&\lambda_i^+&\lambda_i^-\\ \hline
	1&(0.10,0.10)&4&6\\
	2&(0.08,0.08)&3&5.
\end{array}
\]
Jumps with
\(|z|>\epsilon_n\), where
\(\epsilon_n=0.05\sqrt{h_n}\), are generated individually; the compensated
aggregate below \(\epsilon_n\) is replaced by a centered Gaussian variable
with its exact omitted second moment. The chain is initialized from its
stationary distribution, and a burn-in of 10 time units is discarded.

We use
\[
	h_n=1.2\times10^{-5}
	\left(\frac{n}{2{,}500{,}000}\right)^{-0.85},
	\qquad
	u_n=1.3h_n^{32/77},
	\qquad
	\eta_n=0.3T_n^{-0.2}.
\]
The implemented settings are given in Table~\ref{tab:sim_iv_settings}.
Because S2 and S3 contain \(2.5\) and \(10\) million increments per path,
respectively, S2 is based on \(M=50\) replications and the mesh diagnostic S3
on \(M=10\). Because the replication counts are limited, S3 and the reported
coverage proportions are descriptive.

\begin{table}[H]
	\centering
	\caption{Case IV sampling designs and replication count \(M\).}
	\label{tab:sim_iv_settings}
	\small
	% BEGIN INLINED FILE: simulation/results/tables/case_iv_sampling_settings.tex
\begin{tabular}{@{}lrrrrrr@{}}\toprule
Setting & $M$ & $n$ & $h_n$ & $T_n$ & $u_n$ & $\eta_n$\\
\midrule
S1 & 50 & 625000 & 0.000039 & 24.4 & 0.0191 & 0.1584\\
S2 & 50 & 2500000 & 0.000012 & 30.0 & 0.0117 & 0.1519\\
S3 & 10 & 10000000 & 0.000004 & 36.9 & 0.0072 & 0.1458\\
\bottomrule
\end{tabular}
% END INLINED FILE: simulation/results/tables/case_iv_sampling_settings.tex
\end{table}

For \(\beta=5/4\), the choice \(\omega=32/77\) maximizes the sharp direct-span
exponent in Remark~\ref{rem:iv_span_interpretation}:
\[
	\delta_{\rm iv}^{\sharp}=\frac{23}{154}.
\]
Since \(h_n\asymp n^{-17/20}\) and \(T_n=nh_n\),
\[
	\sqrt{T_n}h_n^{\delta_{\rm iv}^{\sharp}}
	\asymp n^{-4/77}\longrightarrow0,
\]
and \(17/20>77/100\), as required by the corresponding polynomial-mesh
restriction. Table~\ref{tab:sim_iv_rates} reports both this power proxy and
the exact finite-sample direct bound \(\sqrt n\,\rho_n\) from
Assumption~\ref{ass:iv_sampling}. The proxy decreases from \(1.084\) to
\(0.938\), whereas \(\sqrt n\,\rho_n\) decreases from \(3.468\) to \(2.918\).
Both diagnostics decrease across S1--S3, but the conservative direct bound
is still not numerically small.

\begin{table}[H]
	\centering
	\caption{Case IV finite-sample rate diagnostics.}
	\label{tab:sim_iv_rates}
	\scriptsize
	\begin{adjustbox}{max width=\textwidth}
		% BEGIN INLINED FILE: simulation/results/tables/case_iv_rate_diagnostics.tex
\begin{tabular}{@{}lrrrrrrr@{}}\toprule
Setting & $nh_n^2$ & $u_n/\sqrt{h_n}$ & $u_n^{4-\beta}/h_n$ & $u_n^{(4+\beta)/2}/h_n$ & $\sqrt{T_n}u_n^{2-\beta}$ & $\sqrt{T_n}h_n^{\delta_{\rm iv}^{\sharp}}$ & $\sqrt n\,\rho_n$\\
\midrule
S1 & 0.0010 & 3.06 & 0.482 & 0.791 & 0.254 & 1.084 & 3.468\\
S2 & 0.0004 & 3.38 & 0.408 & 0.711 & 0.195 & 1.008 & 3.177\\
S3 & 0.0001 & 3.74 & 0.345 & 0.639 & 0.150 & 0.938 & 2.918\\
\bottomrule
\end{tabular}
% END INLINED FILE: simulation/results/tables/case_iv_rate_diagnostics.tex
	\end{adjustbox}
\end{table}

\subsubsection{Primary parametric results}

For every replication, we evaluate the raw truncated variation
\eqref{eq:iv_C_raw} on the nine cutoffs
\(\{\zeta_1^k\zeta_2^\ell u_n:k,\ell\in\{0,1,2\}\}\), with
\(\zeta_1=\zeta_2=1.2\), and apply the operator
\eqref{eq:iv_debias_operator} twice as in \eqref{eq:iv_C_debiased}.
Both denominator indicators use the cutoff \(h_n\), and the diffusion
contrast is maximized over \(\Theta_\gamma=[0.1,1]^2\). Thus the reported
estimator is exactly \eqref{eq:iv_C_debiased}--\eqref{eq:iv_alpha_estimator}.

\begin{table}[H]
	\centering
	\caption{Case IV primary parametric results under S2 (\(M=50\)).}
	\label{tab:sim_iv_parametric}
	\footnotesize
	\begin{adjustbox}{max width=\textwidth}
		% BEGIN INLINED FILE: simulation/results/tables/case_iv_parametric_S2.tex
\begin{tabular}{@{}llrrrrrrr@{}}\toprule
Setting & Parameter & Bias & SD & Mean SE & RMSE & Stud. mean & Stud. SD & Coverage\\
\midrule
S2 & \(\alpha_1\) & -0.0156 & 0.3665 & 0.3431 & 0.3631 & -0.150 & 1.057 & 0.960\\
S2 & \(\alpha_2\) & 0.0040 & 0.2949 & 0.2785 & 0.2920 & -0.114 & 1.047 & 0.960\\
S2 & \(\gamma_1\) & 0.0122 & 0.0441 & 0.0003 & 0.0453 & 22.512 & 136.938 & 0.000\\
S2 & \(\gamma_2\) & -0.0011 & 0.0202 & 0.0003 & 0.0200 & -6.299 & 66.403 & 0.040\\
\bottomrule
\end{tabular}
% END INLINED FILE: simulation/results/tables/case_iv_parametric_S2.tex
	\end{adjustbox}
\end{table}

In Table~\ref{tab:sim_iv_parametric}, the drift biases are small relative to
their across-replication standard deviations. Their studentized standard
deviations are \(1.057\) and \(1.047\), and both empirical coverage
proportions are \(0.960\); these figures are descriptive because \(M=50\).
The sharp-rate mesh gives two-step diffusion biases of \(0.0122\) and
\(-0.0011\). Nevertheless, the diffusion standard deviations, \(0.0441\)
and \(0.0202\), remain far larger than the mean feasible standard errors,
which round to \(0.0003\). Consequently, diffusion coverage is \(0.000\) and
\(0.040\). The sharper rate gives good centering but does not yet make the
feasible diffusion Gaussian approximation accurate.

\paragraph{Computational limitation.}
The poor diffusion studentization has a direct explanation in the rate
condition. Assumption~\ref{ass:iv_sampling} requires
\(\sqrt n\,\rho_n\to0\), whereas this quantity equals \(3.177\) under S2.
Likewise, the associated power proxy
\(\sqrt{T_n}h_n^{\delta_{\rm iv}^{\sharp}}\) equals \(1.008\), rather than
being close to zero. Hence the remainder terms that are asymptotically
negligible in the diffusion limit remain important at the simulated sample
size.

This difficulty cannot be resolved by a moderate increase in \(n\). Along
the chosen mesh, the power proxy decreases only at rate \(n^{-4/77}\).
At this leading rate, reducing it by one half requires approximately
	$2^{77/4}\approx6.2\times10^5$
times as many observations, which would increase the S2 path length from
\(2.5\times10^6\) to roughly \(1.6\times10^{12}\) increments. Simulating
many replications at such a scale is computationally infeasible. The poor diffusion results therefore
reflect an unattained finite-sample rate condition, not a contradiction of
the asymptotic theory.

Figure~\ref{fig:parametric_hist_iv} displays the central window \([-4,4]\)
of the studentized statistics and reports the omitted counts in each panel.
The normalization uses every replication.

\begin{figure}[H]
	\centering
	\includegraphics[width=\textwidth]{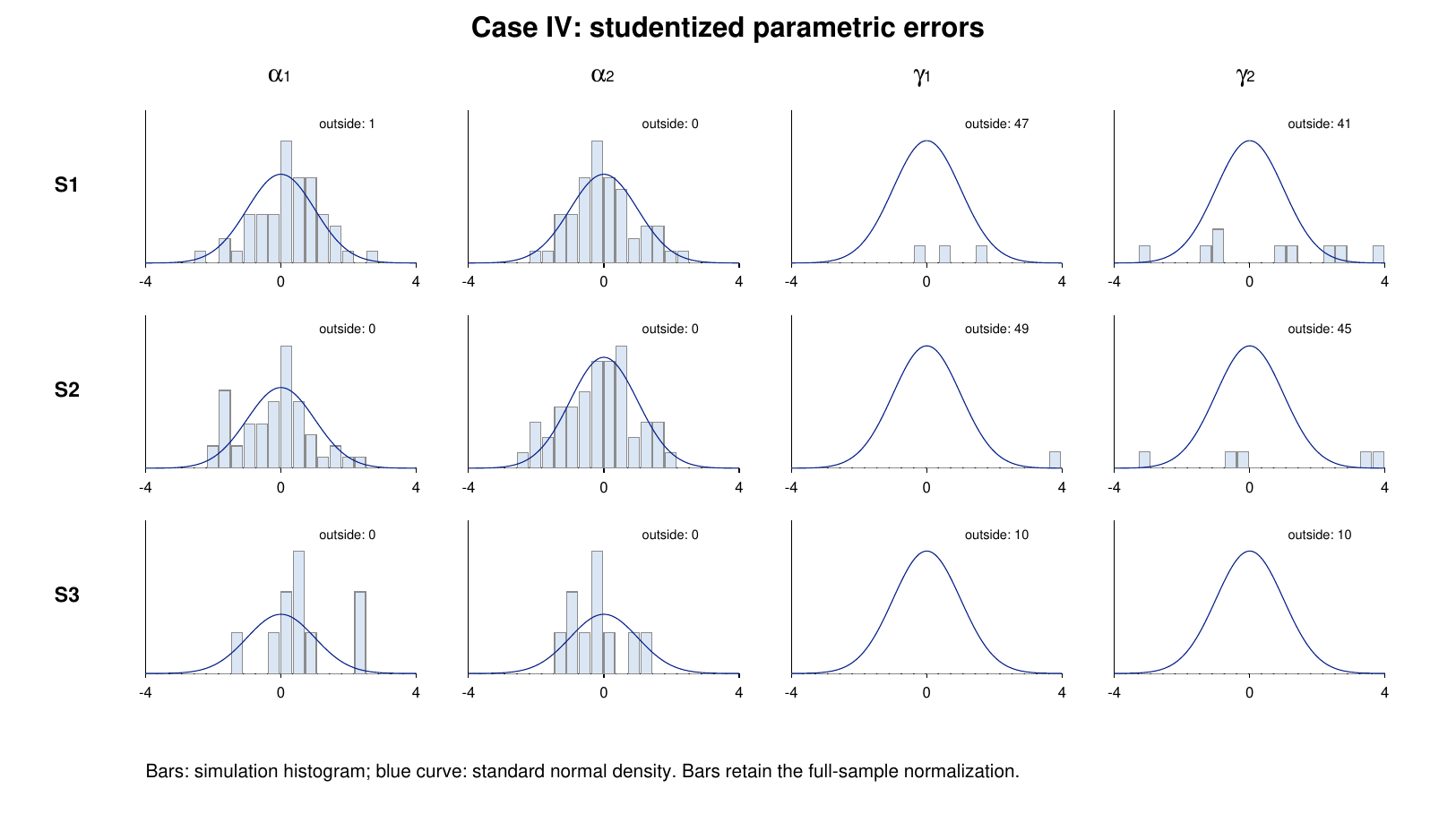}
	\caption{Studentized parametric errors for Example~2. The
		standard-normal density is overlaid; \emph{outside} gives the number of
		observations outside \([-4,4]\).}
	\label{fig:parametric_hist_iv}
\end{figure}

\subsubsection{Tail and density results}

The signed tail means in Table~\ref{tab:sim_iv_tail} are centered reasonably
relative to their across-replication dispersion: their biases are
\(-0.0041\) and \(0.0132\), compared with RMSEs \(0.0723\) and \(0.0682\).

\begin{table}[H]
	\centering
	\caption{Case IV signed tail-mean estimation under S2 (\(M=50\)).}
	\label{tab:sim_iv_tail}
	\small
	% BEGIN INLINED FILE: simulation/results/tables/case_iv_tail_mean_S2.tex
\begin{tabular}{@{}lrrrr@{}}\toprule
Regime & Target $\kappa_{i,n}$ & Mean & Bias & RMSE\\
\midrule
1 & 0.0647 & 0.0606 & -0.0041 & 0.0723\\
2 & 0.0629 & 0.0761 & 0.0132 & 0.0682\\
\bottomrule
\end{tabular}
% END INLINED FILE: simulation/results/tables/case_iv_tail_mean_S2.tex
\end{table}

Table~\ref{tab:sim_iv_density} and Figure~\ref{fig:density_iv} show nearly
identical plug-in, parameter-oracle, and mark-oracle losses. Thus the
diffusion error does not materially propagate to density estimation on \(B\),
which is separated from the shrinking threshold neighborhood.

\begin{table}[H]
	\centering
	\caption{Case IV density results under S2 (\(M=50\)); entries are mean
		\(L^2(B)\)-errors with across-replication standard deviations in
		parentheses.}
	\label{tab:sim_iv_density}
	\footnotesize
	\begin{adjustbox}{max width=\textwidth}
		% BEGIN INLINED FILE: simulation/results/tables/case_iv_density_S2.tex
\begin{tabular}{@{}lrrrr@{}}\toprule
Regime & Plug-in & Parameter oracle & Mark oracle & Pooled-to-$s_i$\\
\midrule
1 & 0.1068 (0.0721) & 0.1068 (0.0721) & 0.1068 (0.0721) & 0.0874 (0.0497)\\
2 & 0.1168 (0.0712) & 0.1168 (0.0712) & 0.1170 (0.0709) & 0.0902 (0.0453)\\
\bottomrule
\end{tabular}
% END INLINED FILE: simulation/results/tables/case_iv_density_S2.tex
	\end{adjustbox}
\end{table}

\begin{figure}[H]
	\centering
	\includegraphics[width=\textwidth]{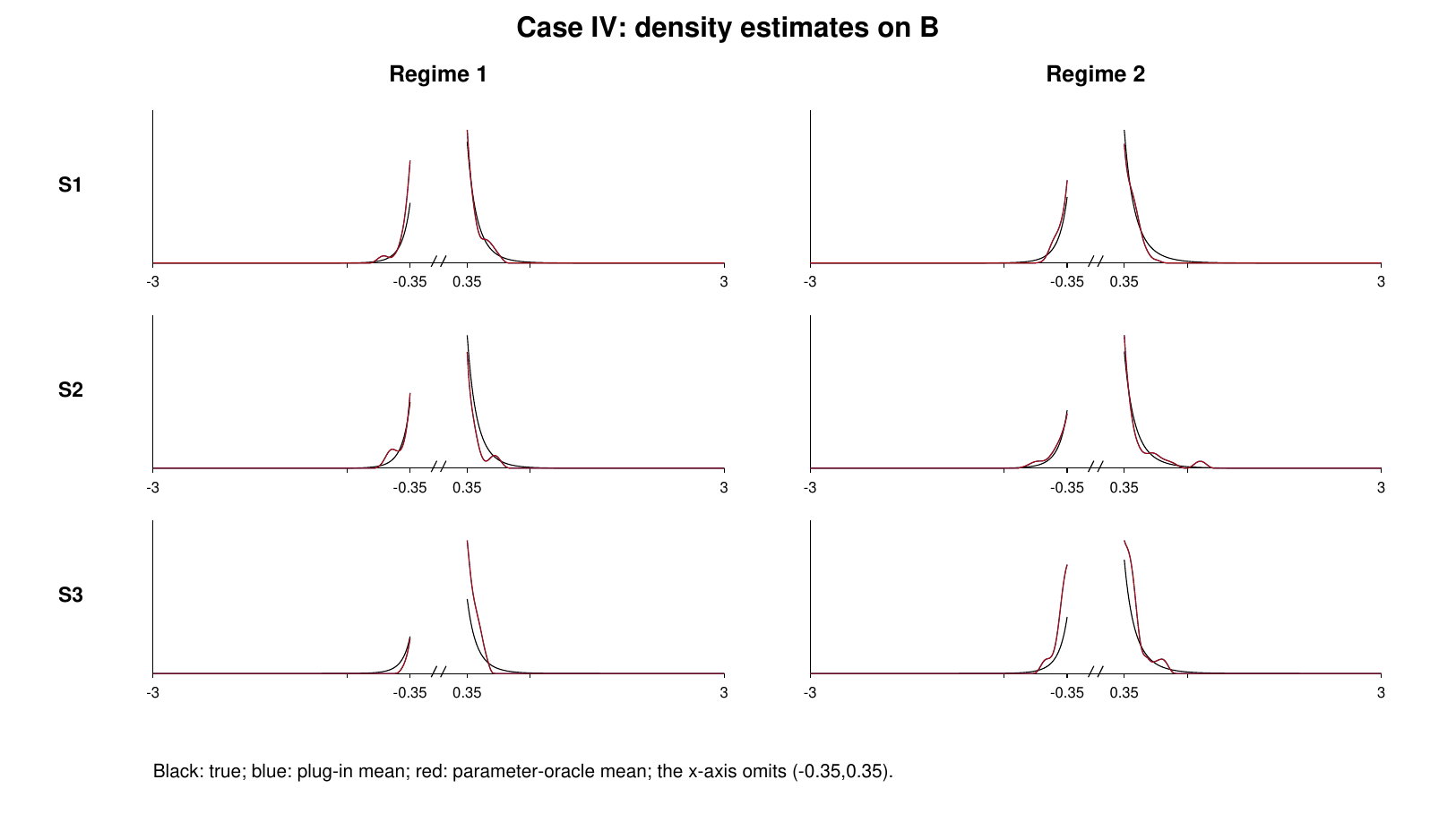}
	\caption{Regime-wise L\'evy-density estimates for Example~2.}
	\label{fig:density_iv}
\end{figure}

Table~\ref{tab:sim_iv_sensitivity} changes one tuning constant at a time on
\(M=20\) common-random-number S2 paths. In this paired diagnostic, increasing
\(u_n\) to \(1.1u_0\) lowers diffusion RMSE, whereas the threshold changes
leave density loss unchanged because all thresholds remain separated from
\(B\). Increasing the bandwidth to the largest admissible value considered,
\(1.1\eta_0\), modestly lowers density loss in both regimes. The small
replication count again precludes fine tuning from these values.

\begin{table}[H]
	\centering
	\caption{Case IV threshold and bandwidth sensitivity under S2 (\(M=20\)).}
	\label{tab:sim_iv_sensitivity}
	\small
	% BEGIN INLINED FILE: simulation/results/tables/case_iv_sensitivity_S2.tex
\begin{tabular}{@{}lrrrr@{}}\toprule
Tuning $(u,\eta)$ & $\mathrm{RMSE}_{\alpha}$ & $\mathrm{RMSE}_{\gamma}$ & $\bar e_1$ & $\bar e_2$\\
\midrule
$0.8u_0,\eta_0$ & 0.4488 & 0.0631 & 0.0895 & 0.1233\\
$u_0,\eta_0$ & 0.4451 & 0.0691 & 0.0895 & 0.1233\\
$1.1u_0,\eta_0$ & 0.4440 & 0.0369 & 0.0895 & 0.1233\\
$u_0,0.8\eta_0$ & 0.4451 & 0.0691 & 0.0929 & 0.1359\\
$u_0,1.1\eta_0$ & 0.4451 & 0.0691 & 0.0880 & 0.1186\\
\bottomrule
\end{tabular}
% END INLINED FILE: simulation/results/tables/case_iv_sensitivity_S2.tex
\end{table}

\begin{rem}
	The limitation observed in Case IV is not merely the cost of obtaining
	many simulation replications. In an empirical application, only one path
	must be processed, but the available sampling frequency and time span may
	still leave \(\sqrt n\,\rho_n\) far from zero.  Developing
	rate-aware diagnostics that remain accurate under computationally and empirically
	attainable observation schemes is therefore an problem for future
	research.
\end{rem}

\FloatBarrier
% END INLINED FILE: simulation/case_iv_main_results.tex

% END INLINED FILE: simulation/revised_section.tex
}
		\flushbottom

	\appendix
	{
	% BEGIN INLINED FILE: appendix_proof_lemmas.tex
\section{Preliminary lemmas for the main proofs}
\label{sec:appendix_preliminary}

This appendix states the preliminary results used in the proofs of the
main theorems and corollaries. Their proofs are omitted here because they
are lengthy and repeatedly use the same conditional-moment, jump-count,
boundary-localization, and ergodic arguments; detailed proofs are given,
in the same logical order, in the \textit{supplementary material}. 

\subsection{Moment, ergodic, and exposure bounds}

\begin{lem}[Moment and sampled ergodic bounds]
	\label{lem:moment_ergodic_main}
	In either Case FV or Case IV, suppose
	Assumptions~\ref{ass:smooth}, \ref{ass:levy}, and
	\ref{ass:ergodic} hold. Then, for every \(p\ge2\),
	\[
		\sup_{t\ge0}\E|X_t|^p<\infty,
	\]
	and, for \(0\le s<t\),
	\[
		\E[|X_t-X_s|^p\mid\mathcal F_s]
		\le C_p\{(t-s)^p+(t-s)\}(1+|X_s|^p).
	\]
	If \(f:\mathbb R\times S\to\mathbb R\) is continuously
	differentiable in \(x\), and \(f\) and \(\partial_xf\) have a common
	polynomial envelope, then
	\[
		\frac1n\sum_{j=1}^n
		f(X_{t_{j-1}},Z_{j-1})
		\cip\int_{\mathbb R\times S}f(x,i)\,\mu(dx,di).
	\]
\end{lem}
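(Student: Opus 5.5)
The lemma has three parts, and I would prove them in the order stated, since each uses the one before.

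\emph{Uniform moments.} By Assumption~\ref{ass:ergodic}(i), for every \(q\), \(|P_tg_q(x,i)-\mu(g_q)|\le\|P_t((x,i),\cdot)-\mu\|_{g_q}\le C_qg_q(x,i)\). This gives \(\E|X_t|^q\le \mu(g_q)+C_q\E g_q(X_0,Z_0)\), uniformly in \(t\). Finiteness of \(\mu(g_q)\) is implicit in the weighted norm bound: apply it with \(t\to\infty\) along a fixed starting point. Finiteness of \(\E g_q(X_0,Z_0)\) is Assumption~\ref{ass:ergodic}(ii).

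\emph{Conditional increment bound.} I would write
\[
X_t-X_s=\int_s^t b\,du+\int_s^t a\,dW+\sum_i\int_s^t\!\!\int z\mathbf 1_{\{Z_{u-}=i\}}\widetilde N_i(du,dz).
\]
Each piece is bounded separately:
\begin{itemize}
\item the drift by H\"older's inequality, which gives \((t-s)^{p-1}\int_s^t|b|^p\,du\);
\item the Brownian integral by BDG, which gives \((t-s)^{p/2-1}\int_s^t|a|^p\,du\);
\item the jump part by Kunita's inequality, which gives \(\int_s^t\int|z|^p\nu\,du+(\int_s^t\int z^2\nu\,du)^{p/2}\).
\end{itemize}
For \(p\ge2\) the moments \(\int|z|^p\nu_i^\star\) and \(\int z^2\nu_i^\star\) are finite in both cases, by Assumption~\ref{ass:levy} and the near-origin structure (Assumption~\ref{ass:fv}, resp.\ Assumption~\ref{ass:iv} with \(\beta<2\)). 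The linear growth \eqref{eq:lin_growth} then yields
\[
\E_s|X_t-X_s|^p\lesssim\{(t-s)^{p/2}+(t-s)\}\Bigl(1+\sup_{u\in[s,t]}\E_s|X_u|^p\Bigr),
\]
and \((t-s)^{p/2}\) is dominated by \((t-s)^p+(t-s)\). A Gronwall argument on \(u\mapsto\E_s\sup_{s\le v\le u}|X_v|^p\) gives \(\E_s\sup_{[s,t]}|X|^p\le C_pe^{C_p(t-s)}(1+|X_s|^p)\). For \(t-s\le1\) this closes the bound directly. For \(t-s>1\) I would instead use the crude bound \(\E_s|X_t|^p\lesssim 1+|X_s|^p\), which follows from the ergodic estimate applied at \((X_s,Z_s)\) via the Markov property; this is bounded by \((t-s)(1+|X_s|^p)\).

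\emph{Sampled ergodic limit.} I would decompose
\[
\frac1n\sum_{j=1}^nf(X_{t_{j-1}},Z_{j-1})-\frac1{T_n}\int_0^{T_n}f(X_s,Z_s)\,ds
=\frac1{T_n}\sum_{j=1}^n\int_{t_{j-1}}^{t_j}\{f(X_{t_{j-1}},Z_{j-1})-f(X_s,Z_s)\}\,ds .
\]
The time average converges by \eqref{eq:ergodic_thm}, so it suffices to show the \(L^1\) norm of the right side vanishes. Split each integrand according to whether \(Z\) jumps on \([t_{j-1},s]\).
\begin{itemize}
\item \emph{Regime switch.} The probability is \(O(h_n)\), since the switching rates are bounded. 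Together with Cauchy--Schwarz and the polynomial envelope plus uniform moments, this contributes \(O(h_n^{1/2})\).
\item \emph{No switch.} The mean value theorem gives \(|f(X_{t_{j-1}},i)-f(X_s,i)|\le(1+|X_{t_{j-1}}|^C+|X_s|^C)|X_s-X_{t_{j-1}}|\). Cauchy--Schwarz, the increment bound with \(p=2\), and uniform moments give \(O(h_n^{1/2})\).
\end{itemize}
Averaging, the \(L^1\) error is \(O(h_n^{1/2})\to0\), and Markov's inequality concludes.

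The main obstacle I expect is the uniform-in-time moment bound. Exponential ergodicity controls \(P_tg_q\) only up to the constant \(C_q\), so one must carefully confirm \(\mu(g_q)<\infty\) and avoid circularity with the Gronwall step. Everything else is standard.
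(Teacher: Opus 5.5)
Your proposal is correct and follows essentially the same route as the paper: uniform moments from the weighted ergodic bound plus the initial moments, a drift/BDG/Kunita decomposition for the conditional increment, and an \(L^1\) comparison of the sampled average with the occupation average that separates the state displacement from regime switches. The only real difference is your Gronwall step for short horizons, which is unnecessary. The Markov property gives \(\E[1+|X_u|^p\mid\mathcal F_s]=P_{u-s}g_p(X_s,Z_s)\le C_p(1+|X_s|^p)\) for every \(u\ge s\); this is the same bound you already invoke for \(t-s>1\), and the paper inserts it directly into the H\"older/BDG estimates at all horizons.
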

The proof adapts the switching-diffusion moment and sampled-ergodic
arguments in \citet[Chapter~2]{yin2009hybrid} and
\citet[Lemma~6.1]{Yuzhong2025}. It adds the compensated jump martingale,
controlled by Kunita's inequality, and compares sampled averages with
continuous-time occupation averages. The resulting moment and increment
bounds control the polynomial envelopes and one-step remainders used
throughout the likelihood, tail-functional, studentization, and density
proofs.

Let \(\mathcal O_\ell\subset\mathbb R^{d_\ell}\), \(1\le\ell\le L\),
be bounded open domains with Lipschitz boundaries, and put
\[
	\mathcal O_\vartheta:=\prod_{\ell=1}^L\mathcal O_\ell,
	\qquad
	\Theta_\vartheta:=\overline{\mathcal O_\vartheta}
	=\prod_{\ell=1}^L\overline{\mathcal O_\ell},
	\qquad
	d_\vartheta:=\sum_{\ell=1}^Ld_\ell.
\]

\begin{prop}[Uniform discrete ergodic law]
	\label{prop:uniform_ergodic_main}
	In either Case FV or Case IV, suppose
	Assumptions~\ref{ass:smooth}, \ref{ass:levy}, and
	\ref{ass:ergodic} hold. Let
	\(f:\mathbb R\times S\times\Theta_\vartheta\to\mathbb R\).
	Enumerate the scalar coordinates of \(\vartheta\) and write
	\(\partial_k\) for the derivative in coordinate \(k\). Suppose that
	\(f\) is continuous on
	\(\mathbb R\times S\times\Theta_\vartheta\), that its first coordinate
	derivatives \(\partial_1f,\ldots,\partial_{d_\vartheta}f\) exist on
	\(\mathbb R\times S\times\mathcal O_\vartheta\) and extend continuously
	to \(\mathbb R\times S\times\Theta_\vartheta\), that the \(x\)-derivatives
	through order two of \(f\) and these extensions exist and are continuous,
	and that all these functions have a common polynomial envelope. Then
	\[
		\sup_{\vartheta\in\Theta_\vartheta}
		\left|
		\frac1n\sum_{j=1}^n
		f(X_{t_{j-1}},Z_{j-1},\vartheta)
		-\int_{\mathbb R\times S}
		f(x,i,\vartheta)\,\mu(dx,di)
		\right|\cip0.
	\]
	The same conclusion holds coordinatewise for finite-dimensional
	vector- and matrix-valued \(f\).
	If, in addition, Assumption~\ref{ass:trunc} holds in Case FV or
	Assumption~\ref{ass:iv_sampling} holds in Case IV, then every predictable
	polynomial envelope \(R_{j-1}\) satisfies
	\begin{equation}
		\frac1n\sum_{j=1}^n(1-I_{n,j})R_{j-1}\cip0.
		\label{eq:uniform_retention_removal}
	\end{equation}
	Consequently,
	\begin{equation}
		\sup_{\vartheta\in\Theta_\vartheta}
		\left|
		\frac1n\sum_{j=1}^n I_{n,j}
		f(X_{t_{j-1}},Z_{j-1},\vartheta)
		-
		\frac1n\sum_{j=1}^n
		f(X_{t_{j-1}},Z_{j-1},\vartheta)
		\right|\cip0.
		\label{eq:uniform_retained_ergodic}
	\end{equation}
	\end{prop}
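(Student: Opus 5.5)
The first conclusion is a uniform law of large numbers. I would derive it by combining pointwise convergence with stochastic equicontinuity, then pass to the retained version through the tail bound \eqref{eq:uniform_retention_removal}.

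\emph{Pointwise convergence.} For each fixed \(\vartheta\), the map \(x\mapsto f(x,i,\vartheta)\) is \(C^1\) and has a polynomial envelope. Lemma~\ref{lem:moment_ergodic_main} therefore gives
\[
	\frac1n\sum_j f(X_{t_{j-1}},Z_{j-1},\vartheta)\cip\int f(\cdot,\vartheta)\,d\mu .
\]

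\emph{Equicontinuity.} Since each \(\mathcal O_\ell\) is a bounded Lipschitz domain, \(\Theta_\vartheta\) is compact. It is also quasi-convex: any two points are joined inside \(\mathcal O_\vartheta\) by a path of length at most \(C|\vartheta-\vartheta'|\). Integrating the continuously extended derivatives \(\partial_kf\) along such a path gives
\[
	|f(x,i,\vartheta)-f(x,i,\vartheta')|\le C|\vartheta-\vartheta'|(1+|x|^c).
\]
By Lemma~\ref{lem:moment_ergodic_main}, \(\frac1n\sum_jR_{j-1}=O_p(1)\), so the sampled averages are stochastically Lipschitz in \(\vartheta\). The limit \(\int f\,d\mu\) is Lipschitz because \(\mu\) has all polynomial moments.

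\emph{Uniformity.} Take a finite \(\epsilon\)-net of \(\Theta_\vartheta\) and apply the pointwise law at the finitely many net points. The standard \(\epsilon/3\) argument then yields uniform convergence. The vector- and matrix-valued cases follow coordinatewise. The \(x\)-derivatives through order two enter only through the hypotheses of Lemma~\ref{lem:moment_ergodic_main}, where one compares the sampled average with the occupation integral via an Itô expansion.

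\emph{The retention bound \eqref{eq:uniform_retention_removal}.} Write \(1-I_{n,j}=\mathbf 1_{\{|\Delta_jX|>u_n\}}\). It suffices to show
\[
	\E\Big[\frac1n\sum_j R_{j-1}\Pb_{j-1}(|\Delta_jX|>u_n)\Big]\to0 .
\]
Once this holds, Markov's inequality applied to the nonnegative sum gives the claim; a martingale correction is unnecessary because the expectation of the sum equals the expectation of its compensator. To bound \(\Pb_{j-1}(|\Delta_jX|>u_n)\), split according to whether a jump larger than \(u_n/2\) occurs in \((t_{j-1},t_j]\):
\begin{enumerate}[label=(\roman*), leftmargin=1.8em]
	\item \emph{A jump larger than \(u_n/2\) occurs.} The probability is at most \(h_n\sup_i\nu_i^\star(|z|>u_n/2)\lesssim h_nu_n^{-\beta}\).
	\item \emph{No such jump occurs.} The increment consists of drift, Brownian motion and small compensated jumps. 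Markov's inequality with the \(q_0\)-th moment bounds the probability by \(h_n^{q_0/2}u_n^{-q_0}R_{j-1}\) plus a small-jump term \(h_nu_n^{-q}\int_{|z|\le u_n}|z|^q\nu(dz)\).
\end{enumerate}
In Case FV these terms vanish by Assumption~\ref{ass:trunc} and \eqref{eq:smalljump_un}. In Case IV, \(h_nu_n^{-\beta}\to0\) because \(\omega<4/(8+\beta)<1/\beta\). There the no-large-jump part is bounded with second moments, giving \((h_n+h_nu_n^{2-\beta})u_n^{-2}\to0\), since \(\sqrt{h_n}/u_n\to0\).

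Multiplying by \(R_{j-1}\) and applying Cauchy--Schwarz with the uniform moments yields \(o(1)\). Finally, \eqref{eq:uniform_retained_ergodic} follows because
\[
	\sup_\vartheta|f(\cdot,\vartheta)|\le R_{j-1},
\]
so the difference between the retained and full averages is bounded by the left side of \eqref{eq:uniform_retention_removal}.

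The main obstacle is the path-Lipschitz step. Uniformity up to the boundary requires that derivatives exist only on the open set yet control increments across \(\Theta_\vartheta\), and this is exactly where the Lipschitz-boundary and quasi-convexity structure of the product domain is used.
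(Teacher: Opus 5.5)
Your proof is correct, but it reaches uniformity by a different route from the paper. You obtain a deterministic Lipschitz bound $|f(x,i,\vartheta)-f(x,i,\vartheta')|\le C|\vartheta-\vartheta'|(1+|x|^c)$ from quasi-convexity of $\mathcal O_\vartheta$. This is immediate in the paper's main setting, where the parameter domains are convex. For general bounded Lipschitz domains it is the standard uniform-domain property, with boundary points handled by continuity of $f$. You then combine this bound with $n^{-1}\sum_jR_{j-1}=O_p(1)$ and close with a finite net.

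The paper instead proves pointwise convergence of both $A_n(\vartheta)$ and the derivative averages $B_{n,k}(\vartheta)$. It upgrades these to $L^q$ convergence through uniform $2q$-moment bounds and integrates over $\mathcal O_\vartheta$ to get $\E\|A_n\|_{W^{1,q}(\mathcal O_\vartheta)}^q\to0$ for some $q>d_\vartheta$. It then concludes by the Sobolev embedding into $C(\Theta_\vartheta)$.

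Your route is more elementary and uses less. You never need the ergodic law for the extended derivatives $\partial_kf$, and hence never their $x$-regularity; only their polynomial envelope enters. The paper's route gives the stronger conclusion $\E\sup_\vartheta|A_n(\vartheta)|^q\to0$. It uses the Lipschitz boundary for the embedding rather than for path-connectedness.

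For \eqref{eq:uniform_retention_removal}, your large/small-jump split is valid but unnecessary. One small point: the no-large-jump increment also carries the compensator $-h_n\int_{|z|>u_n/2}z\,\nu_i^\star(dz)$. This is $O(h_n)$ in both cases (bounded signed tail), so it is harmless. The paper applies Chebyshev directly to the whole increment. Lemma~\ref{lem:moment_ergodic_main} gives $\E|\Delta_jX|^2\lesssim h_n$, so $\Pb(|\Delta_jX|>u_n)\lesssim h_n/u_n^2\to0$ in both cases because $\rho,\omega<1/2$. Cauchy--Schwarz then yields an $L^1$ bound of order $(h_n/u_n^2)^{1/2}$. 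This avoids all case-specific L\'evy rates, and in particular the $q_0$-moment and $h_nu_n^{-\beta}$ bookkeeping in your argument.
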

This proposition is proved from Lemma~\ref{lem:moment_ergodic_main},
rather than imported from a separate uniform ergodic theorem. It applies
the sampled ergodic law only to the integrand and its first coordinate
derivatives, and then uses the ordinary first-order Sobolev embedding on
the open product domain \(\mathcal O_\vartheta\), followed by the continuous
representative on its closure \(\Theta_\vartheta\). Thus its smoothness requirement does not grow
with the parameter dimension. It supplies the parameter-uniform limits of
the contrast, Hessian, information, and studentization matrices. Under the
case-specific threshold assumptions, it also removes the retention
indicator by an explicit polynomial-envelope bound.

Define
\[
	\Gamma_{i,j}
	:=\mathbf1_{\{Z_s=i\ \mathrm{for\ all}\ s\in[t_{j-1},t_j]\}},
	\qquad
	T_{i,n}^{\circ}:=h_n\sum_{j=1}^n\Gamma_{i,j}.
\]

\begin{lem}[Endpoint/no-switch equivalence and exposure law]
	\label{lem:endpoint_exposure_main}
	Uniformly over \(i\in S\) and \(j\le n\),
	\[
		\Pb_{j-1}(A_{i,j}\ne\Gamma_{i,j})\le Ch_n^2.
	\]
	Consequently, for
	\(\Omega_n^{\rm ns}:=\{A_{i,j}=\Gamma_{i,j}\text{ for all }
	i\in S,\ j\le n\}\),
	\[
		\Pb\{(\Omega_n^{\rm ns})^c\}\le Cnh_n^2=o(1),
	\]
	and, on \(\Omega_n^{\rm ns}\),
	\[
		A_{i,j}=\Gamma_{i,j}\quad(i\in S,j\le n),
		\qquad T_{i,n}=T_{i,n}^{\circ}\quad(i\in S).
	\]
	If Assumption~\ref{ass:ergodic} also holds, then, uniformly over
	\(i\in S\),
	\[
		\frac{T_{i,n}}{T_n}\cip\pi_i,
		\qquad
		\frac{T_{i,n}^{\circ}}{T_n}\cip\pi_i.
	\]
\end{lem}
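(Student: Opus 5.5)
The statement to prove is Lemma~\ref{lem:endpoint_exposure_main}. I would first reduce everything to properties of the finite-state chain \(Z\) alone, since \(A_{i,j}\) and \(\Gamma_{i,j}\) depend only on the regime path. The key observation is
\[
\{A_{i,j}\ne\Gamma_{i,j}\}=\{Z_{j-1}=Z_j=i,\ Z\text{ leaves }i\text{ inside }(t_{j-1},t_j)\}\subset\{N_{j,n}^Z\ge2\}.
\]
The inclusion holds because \(\Gamma_{i,j}=1\) forces \(A_{i,j}=1\), so the two indicators can differ only when \(A_{i,j}=1\) and \(\Gamma_{i,j}=0\). In that case the chain leaves \(i\) and returns within the interval, which takes at least two transitions.

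Because \(Z\) is independent of \(X_0,W,N_1,\dots,N_m\) and is Markov, the conditional law of \((Z_s)_{s\ge t_{j-1}}\) given \(\mathcal F_{t_{j-1}}\) depends only on \(Z_{j-1}\). Set \(\bar q:=\max_k|q_{kk}|<\infty\). Holding times are exponential with rates at most \(\bar q\), so by the strong Markov property at the first jump time,
\[
\Pb_{j-1}(N_{j,n}^Z\ge2)\le(1-e^{-\bar qh_n})^2\le\bar q^2h_n^2 .
\]
This bound is uniform in \(i\), \(j\), and \(n\), which gives the first claim.

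For the second claim, take a union bound over the \(m\) regimes and \(n\) intervals and apply the tower property:
\[
\Pb\{(\Omega_n^{\rm ns})^c\}\le\sum_{j\le n}\Pb(N_{j,n}^Z\ge2)\le Cnh_n^2 .
\]
This is \(o(1)\) because \(nh_n^2\to0\). On \(\Omega_n^{\rm ns}\) the identities \(A_{i,j}=\Gamma_{i,j}\) and \(T_{i,n}=T_{i,n}^\circ\) then hold by definition.

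For the exposure law, I would compare both quantities with the occupation time \(\Lambda_{i,n}:=\int_0^{T_n}\mathbf 1_{\{Z_s=i\}}\,ds\). Apply \eqref{eq:ergodic_thm} to \(f(x,k)=\mathbf 1_{\{k=i\}}\), which has a trivial polynomial envelope. This gives \(\Lambda_{i,n}/T_n\cip\mu(\mathbb R\times\{i\})=\pi_i\). Next, on each interval,
\[
\bigl|h_n\Gamma_{i,j}-\textstyle\int_{t_{j-1}}^{t_j}\mathbf 1_{\{Z_s=i\}}ds\bigr|\le h_n\mathbf 1_{\{N_{j,n}^Z\ge1\}} .
\]
Since \(\Pb(N_{j,n}^Z\ge1)\le\bar qh_n\), taking expectations and summing over \(j\) gives
\[
\E|T_{i,n}^\circ-\Lambda_{i,n}|\le\bar q\,nh_n^2=o(1).
\]
In particular this is \(o(T_n)\), so \(T_{i,n}^\circ/T_n\cip\pi_i\). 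Finally, \(T_{i,n}=T_{i,n}^\circ\) on \(\Omega_n^{\rm ns}\), whose probability tends to one, so \(T_{i,n}/T_n\cip\pi_i\) as well. Uniformity over \(i\) is automatic because \(S\) is finite.

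I expect no step to be a real obstacle. The only point that needs care is justifying that conditioning on \(\mathcal F_{t_{j-1}}\) reduces to conditioning on \(Z_{j-1}\). This follows from the mutual independence of \(W,Z,X_0,N_1,\dots,N_m\): \(\mathcal F_{t_{j-1}}\) is generated by the past of \(Z\) together with objects independent of \(Z\), and \(Z\) is Markov. The two-transition estimate also uses the boundedness of the rates, which holds because \(S\) is finite.
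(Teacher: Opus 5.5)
Your proposal is correct and follows essentially the same route as the paper. Both arguments use the leave-and-return inclusion \(\{A_{i,j}\ne\Gamma_{i,j}\}\subset\{N_{j,n}^Z\ge2\}\), an \(O(h_n^2)\) bound on two transitions, and a union bound with \(nh_n^2\to0\); both then compare the exposure with the occupation time \(\Lambda_{i,n}\) and apply the continuous-time ergodic theorem to \(f(x,k)=\mathbf 1_{\{k=i\}}\).

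The differences are cosmetic. You bound two transitions by the strong Markov property at the first jump, while the paper dominates the transition count by a Poisson variable of mean \(\bar qh_n\) (uniformization). You compare \(T_{i,n}^{\circ}\) with \(\Lambda_{i,n}\) and transfer to \(T_{i,n}\) on \(\Omega_n^{\rm ns}\), whereas the paper compares \(T_{i,n}\) directly and transfers in the opposite direction.
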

The proof is a self-contained application of finite-state Markov-chain
uniformization and the continuous occupation law. Endpoint equality
without no-switching requires a leave-and-return path and hence at least
two transitions, giving the order-\(h_n^2\) bound; the occupation law
then identifies the limiting regime exposures. This permits every
endpoint selector and random regime denominator in the main proofs to be
replaced by its no-switch counterpart.

\subsection{Conditional moment and detected-tail bounds}

\begin{lem}[FV conditional moment expansion]
	\label{lem:fv_conditional_bridge}
	In Case FV, suppose Assumptions~\ref{ass:smooth},
	\ref{ass:levy}, \ref{ass:fv}, \ref{ass:trunc}, and
	\ref{ass:ergodic} hold. There are deterministic sequences
	\(r_{\mathrm{fv},k,n}\to0\), \(k=1,2,3,4\), satisfying
	\(\sqrt{T_n}r_{\mathrm{fv},1,n}\to0\), such that, uniformly over
	\(j\le n\),
	\begin{align*}
		|\E_{j-1}[I_{n,j}U_j^\star]|
		&\le h_nr_{\mathrm{fv},1,n}R_{j-1},\\
		\left|\E_{j-1}[I_{n,j}(U_j^\star)^2]
		-h_na_{j-1}^2(\gamma^\star)\right|
		&\le h_nr_{\mathrm{fv},2,n}R_{j-1},\\
		\left|\E_{j-1}[I_{n,j}(U_j^\star)^3]
		-h_n\int_{|z|\le u_n}z^3\nu_{Z_{j-1}}^\star(dz)\right|
		&\le h_n^{3/2}r_{\mathrm{fv},3,n}R_{j-1},\\
		\left|\E_{j-1}[I_{n,j}(U_j^\star)^4]
		-3h_n^2a_{j-1}^4(\gamma^\star)\right|
		&\le h_n^2r_{\mathrm{fv},4,n}R_{j-1}.
	\end{align*}
\end{lem}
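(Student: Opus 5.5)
We prove the four expansions by reducing to a single no-switch interval and then splitting according to the number of jumps larger than \(u_n\). By Lemma~\ref{lem:endpoint_exposure_main}, on \(\{N_{j,n}^Z\ge1\}\) the conditional probability is \(O(h_n)\). Combined with the increment bound of Lemma~\ref{lem:moment_ergodic_main} and Hölder's inequality, the transition intervals contribute at most \(h_n^{2-\epsilon}R_{j-1}\) to each moment. For \(k=1\) we need this to be \(o(h_nT_n^{-1/2})\); this follows from \(nh_n^2\to0\), with \(\epsilon\) taken small using moments of all orders. It therefore suffices to work on \(\Gamma_{i,j}=1\) with \(Z_{j-1}=i\).

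On that interval write \(U_j^\star=S_{n,j}^{\circ}+J_{n,j}^{>u_n}\), where \(J_{n,j}^{>u_n}\) is the large-jump sum from the proof of Proposition~\ref{prop:tail_functional}. The remainder is
\[
S_{n,j}^{\circ}=a_{j-1}^\star\Delta_jW+\int_{t_{j-1}}^{t_j}\!\!\int_{|z|\le u_n}z\,\widetilde N_i(ds,dz)+\rho_{j},
\]
where \(\rho_j\) collects the drift-freezing error \(\int\{b(X_s,i,\alpha^\star)-b_{j-1}(\alpha^\star)\}ds\) and the coefficient-freezing error \(\int\{a(X_s,i,\gamma^\star)-a_{j-1}^\star\}dW_s\). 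The \(\kappa_{i,n}\)-centering in \(U_j^\star\) cancels the large-jump compensator exactly. Itô's formula and Lemma~\ref{lem:moment_ergodic_main} give \(|\E_{j-1}\rho_j|\lesssim h_n^2R_{j-1}\) for the drift part; the stochastic-integral part has mean zero and \(\E_{j-1}|\cdot|^2\lesssim h_n^2R_{j-1}\).

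\emph{Zero large jumps.} On \(\{N_{n,j}=0\}\) we have \(U_j^\star=S_{n,j}^\circ\). We remove the indicator using \(\E_{j-1}[|S^\circ|^k;|S^\circ|>u_n]\le u_n^{k-q_0}\E_{j-1}|S^\circ|^{q_0}\lesssim h_n^{q_0/2}u_n^{k-q_0}R_{j-1}\), which is the \(p_n^{\rm FV}\)-type term. Kunita's inequality, applied to the compensated small-jump integral, then yields the moments:
\begin{itemize}
\item mean zero up to \(\rho_j\);
\item second moment \(h_n\{(a^\star_{j-1})^2+q_{i,n}\}\) plus freezing errors \(O(h_n^{3/2})\);
\item third moment \(h_n\int_{|z|\le u_n}z^3\nu_i^\star(dz)\) plus cross terms of order \(h_n^{3/2}\cdot o(1)\);
\item fourth moment \(3h_n^2(a^\star)^4\) plus \(h_n\int_{|z|\le u_n}z^4\nu_i^\star\) plus \(h_n^2q_{i,n}^2\) plus \(h_n^{5/2}\).
\end{itemize}
The probability of \(\{N_{n,j}\ge1\}\) is \(h_n\bar\lambda_n\), which accounts for the excluded event. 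These terms are exactly those in \(r^{\rm FV}_{4,n}\), while \(q_{2,n}^{\rm FV}\to0\) gives \(r_{\mathrm{fv},2,n}\).

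\emph{One or more large jumps.} On \(\{N_{n,j}\ge1\}\), retention \(I_{n,j}=1\) forces either \(u_n<|\xi|\le 3u_n/2\) or \(|S^\circ|>u_n/2\), exactly as in the one-jump argument of Proposition~\ref{prop:tail_functional}. The contribution to the \(k\)-th moment is therefore bounded by
\[
h_nu_n^{k-\beta}+h_n^{q_0/2}u_n^{k-q_0}+h_n^2\bar\lambda_n.
\]
For \(k=4\) this becomes the \(h_n^{-1}u_n^4\nu(|z|>u_n)\) term after dividing by \(h_n^2\). For \(k=2,3\) it is \(o(h_n)\) and \(o(h_n^{3/2})\) respectively, using \(\rho(2-\beta)>1/2\), which gives \(u_n^{3-\beta}=o(h_n^{1/2})\).

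\emph{First moment: the main obstacle.} Here we must achieve \(\sqrt{T_n}r_{\mathrm{fv},1,n}\to0\), not merely \(o(1)\). The sources to collect are:
\begin{itemize}
\item the zero-jump truncation term \(h_n^{q_0/2}u_n^{1-q_0}\);
\item the boundary-band jump term \(h_nu_n^{1-\beta}\);
\item the freezing term \(h_n^2\);
\item the transition term.
\end{itemize}
Each of these is precisely one of the summands of \(r_{1,n}^{\rm fv}\) in \eqref{eq:main_fv_detection_error}. The Step~1 computation in Proposition~\ref{prop:tail_functional}, applied to \(I_{n,j}U_j^\star\) in place of \(H_{i,j,n}\), shows that \(\sqrt{T_n}r_{1,n}^{\rm fv}\to0\) under \eqref{eq:fv_span}. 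The delicate point is that the signed small-jump compensated integral has zero mean only before truncation. Truncating costs \(\E_{j-1}[|S^\circ|;|S^\circ|>u_n]\), which must again be bounded by the \(q_0\)-moment trick rather than by Chebyshev's inequality. The rates \(r_{\mathrm{fv},k,n}\) for \(k=1,\dots,4\) are then defined as the sums of the collected terms.
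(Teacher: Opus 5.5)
Your decomposition is essentially the paper's: zero versus positive large-jump counts, the $q_0$-moment truncation bound, and the first-moment remainders identified as the summands of $r_{1,n}^{\rm fv}$. However, the transition-interval reduction in your first paragraph fails for $k=1$, which is exactly where $\sqrt{T_n}r_{\mathrm{fv},1,n}\to0$ is required.

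On $\{N_{j,n}^Z\ge1\}$ the retained increment still carries $a_{j-1}^\star\Delta_jW$, so $\E_{j-1}[|U_j^\star|I_{n,j};N_{j,n}^Z\ge1]$ is genuinely of order $h_n^{3/2}$. No bound built from absolute moments can give $h_n^{2-\epsilon}$. The tools you cite also fall short of that:
\begin{itemize}
\item The increment bound of Lemma~\ref{lem:moment_ergodic_main} only gives $\E_{j-1}|\Delta_jX|^p\lesssim(h_n^p+h_n)R_{j-1}$, which is of order $h_n$ because of the jumps. H\"older's inequality with it therefore returns $O(h_n)$.
\item The retained bound $\E_{j-1}[|U_j^\star|^qI_{n,j}]\lesssim h_n^{q/2}R_{j-1}$ is something you only obtain later, from the large-jump analysis. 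Even with it, unconditional H\"older gives $h_n^{3/2-1/q}R_{j-1}$. Then $\sqrt{T_n}\,h_n^{1/2-1/q}=\sqrt{nh_n^2}\,h_n^{-1/q}$, which need not vanish under $nh_n^2\to0$.
\end{itemize}
For $k=2,3,4$ the problem is harmless: $|U_j^\star|I_{n,j}\le u_n+h_nR_{j-1}$, multiplied by the $O(h_n)$ transition probability, is already $o(h_n^{k/2})$.

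There are two ways to repair the first moment.
\begin{itemize}
\item Condition on the complete regime path, which is independent of $W$ and the $N_i$. Apply Cauchy--Schwarz conditionally, using the conditional retained second moment $\lesssim h_nR_{j-1}$. This gives exactly $h_n^{3/2}R_{j-1}$, hence a term $\sqrt{h_n}$ in $r_{\mathrm{fv},1,n}$, and $\sqrt{T_nh_n}=\sqrt{nh_n^2}\to0$.
\item Do not split off transitions, as in the paper. By the enlarged-filtration argument of Lemma~\ref{lem:fv_conditional_orthogonality}, $C_j+J_j^{\rm s}$ is centered given $\mathfrak G_{n,j}$ (past, full regime path, and large-jump measure). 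A transition then affects the mean only through the compensator mismatch $\int_{t_{j-1}}^{t_j}\kappa_{Z_{s-},n}\,ds-h_n\kappa_{Z_{j-1},n}$. This is $O(h_n)$ on an event of conditional probability $O(h_n)$, hence contributes $O(h_n^2R_{j-1})$.
\end{itemize}
Unconditional H\"older with $q=q_0$ can also be rescued, but only through (FV-span), since $\delta_{\rm FV}<1/2-2/q_0$; it cannot be rescued through $nh_n^2\to0$.

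The same conditional centering is what you tacitly use in the zero-jump case when you write ``mean zero up to $\rho_j$.'' The integral $\int\{a(X_s,i,\gamma^\star)-a_{j-1}^\star\}\,dW_s$ depends on the large jumps through $X_s$. Its mean restricted to $\{N_{n,j}=0\}$ vanishes only because $W$ remains a Brownian motion in the filtration enlarged by the large-jump measure.
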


The proof starts from the zero-, one-, and multiple-large-jump
decomposition used in threshold quasi-likelihood analysis by
\citet{shimizu2006estimation} and \citet{ogihara2011quasi}. It adapts
that decomposition to regime-dependent infinite-activity FV jumps,
restores the large-jump compensator, and expands the retained first
through fourth moments. In particular, the supplement applies
It\^o's formula to the continuous martingale increment and derives
the coefficient \(3\) in its conditional fourth moment before the
jump and threshold remainders are added. These bounds provide the score
centering,
conditional variances, covariance limits, and fourth-moment control used
in the FV parametric and studentization proofs.

For the IV case, define
\[
	\mathcal V_n
	:=\{\zeta_1^k\zeta_2^\ell u_n:k,\ell\in\{0,1,2\}\},
	\qquad
	\mathcal W_n:=\{u_n,\zeta_2u_n,\zeta_2^2u_n\},
\]
\[
	G_v(y):=y^2\mathbf1_{\{|y|\le v\}},
	\qquad
	\Delta_\zeta F(v):=F(\zeta v)-F(v),
	\qquad
	\Delta_\zeta^2F(v):=F(\zeta^2v)-2F(\zeta v)+F(v),
\]
\[
	\eta_s(\zeta):=\zeta^s-1,
	\qquad
	\lambda_{s,t}(\zeta)
	:=\left\{\frac{\eta_s(\zeta)-\eta_t(\zeta)}
	{\eta_s(\zeta)}\right\}^2,
\]
\[
	Y_{i,j,n}^{\circ}:=\Delta_jX+h_n\kappa_{i,n},
	\qquad
	d_{i,1}:=\frac{2c_i}{2-\beta},
	\qquad
	d_{i,2}:=-\frac{c_i(\beta+1)(\beta+2)}{\beta}
	a_i^2(\gamma^\star).
\]
Fix \(\epsilon_0=(3\beta-2)/4\) and put
\[
	\rho_n
	:=h_n^2u_n^{-\beta-2}
	+h_n^{1/2}u_n^{1-\beta/2}
	+h_nu_n^{2-2\beta}
	+u_n^{2-\epsilon_0}+h_n,
\]
\[
	\mathcal B_{i,j,n}(v)
	:=\Gamma_{i,j}\left\{G_v(Y_{i,j,n}^{\circ})
	-a_i^2(\gamma^\star)(\Delta_jW)^2\right\}.
\]

For \(i\in S\) and \(j\le n\), put
\[
	H_{i,j,n}:=A_{i,j}\Delta_jX
	\mathbf1_{\{|\Delta_jX|>u_n\}},
	\qquad
	\mu_{i,j,n}:=\E_{j-1}H_{i,j,n}.
\]

Define
\[
\chi_n^{\rm dr}
	:=h_n\left(h_n^2u_n^{2-\beta}
	+h_nu_n^{4-\beta}+h_n^3\right)^{1/2}
	+h_n^2u_n^{3-\beta}+h_n^{5/2}+h_n^3.
\]

The fixed-regime truncated expansion is adapted from
\citet{BonieceFigueroaLopezHan2024}. Because their assumptions exclude zero one-sided first-order coefficients, the
supplement first proves that the required likelihood and remainder bounds
remain uniform when \(\ell_i^+=0\) or \(\ell_i^-=0\). It then localizes the
fixed-regime expansion by matching the L\'evy density near zero, treating the
remaining finite measure as an independent compound-Poisson component, and
transferring the frozen expansion through the state-dependent drift. The
conditional bridge supplies the no-switching reduction, uniformity over the
finite threshold grid, and summability over the growing horizon. These steps
yield the retained first, second, and fourth moments required for the IV
drift-score and studentization arguments.

\begin{lem}[Conditional truncated moments for the IV drift score]
	\label{lem:iv_drift_moments_main}
	In Case IV, suppose Assumptions~\ref{ass:smooth},
	\ref{ass:levy}, \ref{ass:iv}, \ref{ass:iv_coeff}, and
	\ref{ass:iv_sampling} hold. Uniformly over \(j\le n\),
	\begin{align*}
		|\E_{j-1}[I_{n,j}U_j^\star]|
		&\le Ch_nb_n^{\rm det}R_{j-1},\\
		\left|\E_{j-1}[I_{n,j}(U_j^\star)^2]
		-h_na_{Z_{j-1}}^2(\gamma^\star)\right|
		&\le Ch_n(u_n^{2-\beta}+h_nu_n^{-\beta})R_{j-1},\\
		\left|\E_{j-1}[I_{n,j}(U_j^\star)^4]
		-3h_n^2a_{Z_{j-1}}^4(\gamma^\star)\right|
		&\le C\chi_n^{\rm dr}R_{j-1}.
	\end{align*}
	Moreover, \(\chi_n^{\rm dr}/h_n^2\to0\).
\end{lem}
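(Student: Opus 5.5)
The plan is to work on the no-switch interval \(\{\Gamma_{i,j}=1,Z_{j-1}=i\}\) and treat transition intervals separately. By Lemma~\ref{lem:endpoint_exposure_main}, a transition has probability \(O(h_n)\). Combined with the moment bounds of Lemma~\ref{lem:moment_ergodic_main} and Cauchy--Schwarz against \(\Pb_{j-1}(N^Z_{j,n}\ge1)\lesssim h_n\), this contributes \(O(h_n^2R_{j-1})\) to the first and second moments and \(O(h_n^{5/2}R_{j-1})\) to the fourth. These are absorbed by \(h_nb_n^{\rm det}\), by \(h_n\cdot h_nu_n^{-\beta}\), and by \(\chi_n^{\rm dr}\), respectively.

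On \(\Gamma_{i,j}=1\), I will reuse the decomposition from Step~2 of the proof of Proposition~\ref{prop:tail_functional}:
\[
U_j^\star=S^0_{i,j,n}+J^{>u_n/2}_{i,j,n}+R^0_{i,j,n}+h_n\{\kappa_{i,n}-b_{j-1}(\alpha^\star)\}.
\]
The compensator shift \(h_n\kappa_{i,n}\) cancels the large-jump compensator up to the band \(u_n/2<|z|\le u_n\), which is \(O(h_nu_n^{1-\beta})\). I then split on the count \(K_{i,j,n}\in\{0,1,\ge2\}\).

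\emph{First moment.} Write \(I_{n,j}U_j^\star=U_j^\star-\psi_{u_n}(U_j^\star)\). The exact conditional mean of \(U_j^\star\) is \(h_n\{\kappa_{i,n}\}\) plus a drift-freezing error \(O(h_n^{3/2}R_{j-1})\), and here the centering \(-h_nb_{j-1}\) matters. Indeed, \(\E_{j-1}\Delta_jX=h_nb_{j-1}+O(h_n^{3/2})R_{j-1}\) because the compensated integrals are martingales. Hence
\[
\E_{j-1}[I_{n,j}U_j^\star]=h_n\kappa_{i,n}-\E_{j-1}[\psi_{u_n}(U_j^\star)]+O(h_n^{3/2}R_{j-1}).
\]
The detected part equals \(\mu_{i,j,n}\) up to the shift by \(h_n(\kappa_{i,n}-b_{j-1})\). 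That shift changes \(\psi_{u_n}\) only on a boundary strip of width \(O(h_nR_{j-1})\), which the random-boundary estimate of Proposition~\ref{prop:iv_off_diagonal_density} controls. Then \eqref{eq:main_iv_detection_error} together with \(\E_{j-1}L_{i,j,n}=h_n\kappa_{i,n}+O(h_n^2)\) gives
\[
|\E_{j-1}\psi_{u_n}(U_j^\star)-h_n\kappa_{i,n}|\lesssim h_nb_n^{\rm det}R_{j-1}.
\]
The \(h_n^{3/2}\) drift-freezing term must also be shown to be \(\le h_nb_n^{\rm det}\). I expect this to be the subtle point, since \(b_n^{\rm det}\) contains only \(h_n\) and not \(h_n^{1/2}\). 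Symmetry of \(S^0\) will not help here. Instead I will use Itô's formula on \(b(X_s,i,\alpha^\star)\): its martingale part has zero mean, so \(\E_{j-1}\int(b_s-b_{j-1})ds=O(h_n^2R_{j-1})\). This uses Assumption~\ref{ass:smooth}(iii), with the jump part of the generator controlled by the second-order Taylor expansion against \(\int z^2\nu\).

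\emph{Second moment.} On \(K=0\), \(\E[(S^0)^2]=h_na_i^2+h_n\int_{|z|\le u_n/2}z^2\nu^0_i\), and the latter integral is \(\lesssim h_nu_n^{2-\beta}\). The truncation \(I_{n,j}\) removes only \(\E[(S^0)^2;|S^0|>u_n]\), which is bounded by the same order via Kunita's inequality. The \(R^0\) cross terms are handled by Cauchy--Schwarz using the bounds after \eqref{eq:main_iv_symmetric_moments}. On \(K\ge1\), which has probability \(\lesssim h_nu_n^{-\beta}\), the retained square is at most \(u_n^2\), giving \(h_nu_n^{2-\beta}\). The missing \(h_na_i^2\Pb(K\ge1)\) gives \(h_n\cdot h_nu_n^{-\beta}\).

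\emph{Fourth moment.} I will compute \(\E[(a_i\Delta W+\text{small jumps})^4]=3h_n^2a_i^4+O(h_n u_n^{4-\beta}+h_n^2u_n^{2-\beta})\) using the Poisson cumulant formula, and bound the cross terms with \(R^0\) by Hölder. The outcome matches the terms of \(\chi_n^{\rm dr}\), namely \(h_n(h_n^2u_n^{2-\beta}+h_nu_n^{4-\beta}+h_n^3)^{1/2}\) from Cauchy--Schwarz with a third-power factor, plus \(h_n^2u_n^{3-\beta}\) and \(h_n^{5/2}\). Since \(u_n^{4-\beta}/h_n\to0\) by \(\omega>1/(4-\beta)\), this yields \(\chi_n^{\rm dr}/h_n^2\to0\).

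Throughout, the main obstacle is the first-moment bound at the sharp order \(h_nb_n^{\rm det}\). This is where the symmetric cancellation \eqref{eq:main_iv_boundary_cancel} and the boundary density estimate are indispensable.
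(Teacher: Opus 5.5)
Your proposal is correct in substance. For the first moment it is essentially the paper's argument. You take the full conditional mean of \(U_j^\star\), subtract the detected mean \(\E_{j-1}[\Delta_jX\mathbf 1_{\{|\Delta_jX|>u_n\}}]\), control that mean by rerunning the count decomposition behind \eqref{eq:main_iv_detection_error} without the selector, and absorb the shift.

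Three details need repair.

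(i) Since \(I_{n,j}\) thresholds \(\Delta_jX\), the expression \(U_j^\star-\psi_{u_n}(U_j^\star)\) equals \(\mathbf 1_{\{|U_j^\star|\le u_n\}}U_j^\star\), not \(I_{n,j}U_j^\star\). The exact identity the paper uses is \(I_{n,j}U_j^\star=U_j^\star-\psi_{u_n}(\Delta_jX)+d_{j-1,n}^{\rm dr}\mathbf 1_{\{|\Delta_jX|>u_n\}}\), with \(d_{j-1,n}^{\rm dr}=h_n\{b_{j-1}(\alpha^\star)-\kappa_{Z_{j-1},n}\}\). With it, no comparison between \(\psi_{u_n}(U_j^\star)\) and \(\psi_{u_n}(\Delta_jX)\) is needed, and the last term has conditional mean \(O(h_n^2u_n^{-\beta}R_{j-1})\). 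That last term also shows why your claim that the shift changes \(\psi_{u_n}\) ``only on a boundary strip'' is inaccurate: off the strip the difference is the shift times the detection indicator.

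(ii) The drift-freezing term is not the delicate point. The rate \(b_n^{\rm det}\) contains \(u_n^{2-\beta}\), and \(\omega(2-\beta)<4(2-\beta)/(8+\beta)<1/2\) for \(\beta>1\). Hence \(h_n^{3/2}=o(h_nu_n^{2-\beta})\), and the paper absorbs it in one line. Your Dynkin argument is valid under Assumption~\ref{ass:smooth}(iii) but unnecessary.

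(iii) On transition intervals, plain Cauchy--Schwarz against \(\Pb_{j-1}(N_{j,n}^Z\ge1)\lesssim h_n\) gives only \(O(h_n)\) for the first moment, not \(O(h_n^2)\). Use instead \(|I_{n,j}U_j^\star|\le u_n+Ch_nR_{j-1}\). This gives \(O(h_nu_nR_{j-1})\), which lies within \(h_nu_n^{2-\beta}R_{j-1}\). Alternatively, condition on the regime path.

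For the second and fourth moments your route genuinely differs from the paper's. The paper works with \(\mathbf 1_{\{|U_j^\star|\le u_n\}}\) and imports the debiasing-grade expansion of Lemma~\ref{lem:iv_conditional_bridge} at \(v=u_n\), so the \(h_nu_n^{-\beta}\) term comes from \(d_{i,2}\) and \(\rho_n\). It then shifts by \(h_nb_{j-1}(\alpha^\star)\). It obtains the fourth moment from the cumulant identity together with the Brownian-replacement bound of Proposition~\ref{prop:supp_localized_iv_moments} and conditional Cauchy--Schwarz. Finally it transfers back to \(I_{n,j}\) by the predictable-width strip estimate.

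You instead split on \(K_{i,j,n}\), compute the moments of \(S_{i,j,n}^0\) directly, and get \(h_n^2u_n^{-\beta}\) from the Brownian mass lost on \(\{K_{i,j,n}\ge1\}\). Because both statements are only upper bounds, this elementary route suffices and needs neither the imported expansion nor any strip estimate. On \(\{K_{i,j,n}=0\}\) all jumps are at most \(u_n/2\). So the mass removed by \(I_{n,j}\) is \(\lesssim u_n^{k-p}(h_n^{p/2}+h_nu_n^{p-\beta})R_{j-1}\) for \(k=2,4\) and any large \(p\). Note this mass is over \(\{|\Delta_jX|>u_n\}\), not over \(\{|S_{i,j,n}^0|>u_n\}\) as you wrote.

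What remains is checking each extra term against \(\chi_n^{\rm dr}\):
\begin{itemize}
\item \(h_nu_n^{4-\beta}=o(h_n^{3/2}u_n^{2-\beta/2})\) by \(\omega>1/(4-\beta)\);
\item \(h_n^3u_n^{-\beta}=o(h_n^2u_n^{1-\beta/2})\) by \(\omega<4/(8+\beta)\);
\item by independence, the product of \((S_{i,j,n}^0)^2\) with the square of the compensated \(\bar\nu_i\)-integral in \(R_{i,j,n}^0\) is of order \(h_n^2u_n^{3-\beta}\), the corresponding term of \(\chi_n^{\rm dr}\).
\end{itemize}
The paper's route is shorter only because Lemma~\ref{lem:iv_conditional_bridge} is needed anyway for the debiased variance.
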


\subsection{Uniform IV density and localized moments}

In Case IV, let \(X^{i,x}\) be the nonswitching regime-\(i\) process
started from \(x\), driven by \(W\) and \(N_i\). Put
\[
	L_t^i:=\int_0^t\int_{\mathbb R}z\,\widetilde N_i(ds,dz),
\]
and define
\[
	\xi_{h_n}^{i,x}
	:=a_i(\gamma^\star)W_{h_n}+L_{h_n}^i
	+h_n\{b(x,i,\alpha^\star)+\kappa_{i,n}\},
	\qquad
	D_{h_n}^{i,x}
	:=\int_0^{h_n}
	\{b(X_s^{i,x},i,\alpha^\star)-b(x,i,\alpha^\star)\}\,ds,
\]
\[
	Y_{h_n}^{i,x}:=X_{h_n}^{i,x}-x+h_n\kappa_{i,n}
	=\xi_{h_n}^{i,x}+D_{h_n}^{i,x}.
\]
Let \(p_{i,h_n}^{\mathrm{fr},x}\) and
\(p_{i,h_n}^{\mathrm{sd},x}\) denote the densities of
\(\xi_{h_n}^{i,x}\) and \(Y_{h_n}^{i,x}\), respectively.

\begin{prop}[Uniform IV off-diagonal densities and boundary estimates]
	\label{prop:iv_off_diagonal_density}
	In Case IV, suppose Assumptions~\ref{ass:smooth}, \ref{ass:levy},
	\ref{ass:iv}, \ref{ass:iv_coeff}, and
	\ref{ass:iv_sampling} hold. The densities
	\(p_{i,h_n}^{\mathrm{fr},x}\) and
	\(p_{i,h_n}^{\mathrm{sd},x}\) belong to \(C^1(\mathbb R)\).
	Uniformly over \(i\in S\), \(x\in\mathbb R\),
	\(v\in\mathcal V_n\), and \(|y|\in[v/2,3v/2]\),
	\begin{align*}
		p_{i,h_n}^{\mathrm{fr},x}(y)
		+p_{i,h_n}^{\mathrm{sd},x}(y)
		&\le CR(x)\left\{
		h_n|y|^{-1-\beta}
		+h_n^{-1/2}e^{-y^2/(Ch_n)}\right\},\\
		|\partial_y p_{i,h_n}^{\mathrm{fr},x}(y)|
		+|\partial_y p_{i,h_n}^{\mathrm{sd},x}(y)|
		&\le CR(x)\left\{
		h_n^{1/2}|y|^{-1-\beta}
		+h_n^{-1}e^{-y^2/(Ch_n)}\right\}.
	\end{align*}
	For \(0<r\le v/2\),
	\[
		\Pb\!\left(\left||\xi_{h_n}^{i,x}|-v\right|\le r\right)
		+\Pb_x^i\!\left(\left||Y_{h_n}^{i,x}|-v\right|\le r\right)
		\le CR(x)\left\{
		h_nrv^{-1-\beta}
		+r h_n^{-1/2}e^{-v^2/(Ch_n)}\right\}.
	\]
\end{prop}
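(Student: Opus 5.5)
The plan is to treat the frozen variable \(\xi_{h_n}^{i,x}\) first and then transfer the bounds to \(Y_{h_n}^{i,x}=\xi_{h_n}^{i,x}+D_{h_n}^{i,x}\). For \(\xi\), the summands \(a_i(\gamma^\star)W_{h_n}\), the Lévy part \(L_{h_n}^i\), and the deterministic shift are independent. Hence \(p^{\mathrm{fr},x}_{i,h_n}\) is a Gaussian density of variance \(a_i^2h_n\), convolved with the law of \(L_{h_n}^i\), evaluated at \(y-h_n\{b(x,i,\alpha^\star)+\kappa_{i,n}\}\). Uniform ellipticity then gives \(C^1\) regularity immediately.

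I would split \(\nu_i^\star\) into its part on \(\{|z|\le \epsilon v\}\) and the finite-activity remainder, with \(\epsilon\) a small fixed constant. I would then condition on the number \(K\) of remainder jumps in \([0,h_n]\).

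On \(\{K=0\}\), Gaussian convolved with compensated small jumps, I would use a Chernoff/exponential-moment bound for the truncated small-jump variable at the scale \(|y|\ge v/2\). With jumps bounded by \(\epsilon v\), the truncated variable has tail of order
\[
\exp\{-c(|y|/\epsilon v)\log(|y|/h_nv^{-\beta}\cdots)\},
\]
which is polynomially small. Combined with the Gaussian factor, this yields \(h_n^{-1/2}e^{-y^2/(Ch_n)}\) plus a term dominated by \(h_n|y|^{-1-\beta}\). The key observation is that \(\sqrt{h_n}\ll v\) and \(h_nv^{-\beta}\to0\) by \eqref{eq:iv_threshold}.

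On \(\{K=1\}\), the one-point Campbell--Mecke formula gives a convolution of the remainder density \(h_n s_i^\star(z)\) with the small-part law. Since \(s_i^\star(z)\lesssim |z|^{-1-\beta}\) near the relevant region and the small part concentrates at scale much smaller than \(v\), this gives \(h_n|y|^{-1-\beta}\). On \(\{K\ge2\}\), the probability is \(O(h_n^2v^{-2\beta})\) and is absorbed.

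For the derivative bound I would differentiate the Gaussian kernel. This costs \(h_n^{-1/2}\) in the one-jump term, giving \(h_n\cdot h_n^{-1/2}|y|^{-1-\beta}\), and \(h_n^{-1}e^{-y^2/(Ch_n)}\) in the Gaussian term. The boundary probability estimate follows by integrating the density bound over the two strips \(\{||y|-v|\le r\}\subset\{|y|\in[v/2,3v/2]\}\), since \(r\le v/2\).

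For the state-dependent variable \(Y\), the drift perturbation satisfies \(|D_{h_n}^{i,x}|\lesssim h_n\sup_{s\le h_n}|X_s^{i,x}-x|\), with moments \(O(h_n^{1+1/p})R(x)\) by Lemma~\ref{lem:moment_ergodic_main}. To get densities rather than just probabilities, I would use a Malliavin/Girsanov-type argument. Specifically, I would write \(X^{i,x}\) via a Lamperti-free representation, since \(a_i\) is constant. The continuous part is then \(a_iW\) plus a Lipschitz drift functional, so a Girsanov change of measure removing \(b(X_s,i,\alpha^\star)-b(x,i,\alpha^\star)\) has density \(1+O_p(h_n)\) in every \(L^p\). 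I would transfer the frozen density bounds by Hölder's inequality, writing densities as \(\E[\mathcal E_{h_n}\,\delta_y(\xi)]\) smoothed through the Gaussian component via conditioning on the jump path. The derivative bound is handled by Gaussian integration by parts in \(W\) conditional on the jumps, which produces the same \(h_n^{-1/2}\) factor plus weight terms controlled by \(R(x)\).

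The main obstacle I expect is this transfer for \(Y\). I must preserve both the exponential Gaussian factor and the \(h_n|y|^{-1-\beta}\) term uniformly in \(x\) with only polynomial \(R(x)\). The Girsanov weight for a linearly growing drift needs localization on \(\{\sup_{s\le h_n}|X_s-x|\le1\}\), whose complement has probability \(O(h_n^{q})R(x)\) for every \(q\). That complement is not a density bound, though, so I would handle it by conditioning on \(\mathcal F\) of the jumps and using the Gaussian smoothing of \(W_{h_n}\) given the rest. This is the step whose details are least certain.
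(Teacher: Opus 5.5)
Your argument for the frozen variable $\xi_{h_n}^{i,x}$ is essentially the paper's. The shared steps are: a split at a small multiple of $v$; an exponential bound giving $\delta_{h_n,v}^N$, with $\delta_{h_n,v}=h_nv^{-\beta}$, for the small-jump aggregate; Campbell--Mecke on one large jump; absorption of $\ge2$ jumps via $\omega(\beta-1)<1/2$; differentiation of the Gaussian kernel; and integration over the strips.

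The gap is the transfer to $Y_{h_n}^{i,x}$, and it is substantive rather than a matter of detail.
\begin{enumerate}
\item Applied to the unconditional laws, a Girsanov weight plus H\"older cannot reach a pointwise density, since $\delta_y$ lies in no $L^q$. On probabilities it loses a power: $\Pb(Y\in A)\le\|d\Pb/d\mathbb Q\|_{L^p(\mathbb Q)}\,\mathbb Q(\xi\in A)^{1-1/p}$ turns $h_nrv^{-1-\beta}$ into $(h_nrv^{-1-\beta})^{1-1/p}$, which is not the stated bound.
\item The asserted $L^p$ control of the weight is not available. Its moments are governed by $\E\exp\{c\,h_n\sup_{s\le h_n}|X_s^{i,x}-x|^2\}$, which needs exponential-square moments of the jumps; Assumption~\ref{ass:levy} gives only polynomial ones.
\item Your localization claim fails: the complement $\{\sup_{s\le h_n}|X_s^{i,x}-x|>1\}$ is in general of order $h_n$ (one jump larger than $2$ suffices), not $O(h_n^q)$.
\item If instead you condition on the jump path $\ell$ and on $\xi=y$, the Brownian-bridge expectation of the weight is not uniformly bounded in $\ell$. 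After a jump $z$, the state-dependent conditional law is centered at the flow $\Phi^\ell$, where $\Phi_t=x+\ell_t+\int_0^tb_i(\Phi_s)\,ds$ and $b_i:=b(\cdot,i,\alpha^\star)$. This center differs from the frozen one by order $h_n|z|$, and the bridge weight exactly encodes that recentering, so no comparison around the frozen center is uniform in $\ell$.
\end{enumerate}

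The missing idea is to condition on the whole path $\ell$ of $L^i$ and never compare with the frozen process. Then $U=X^{i,x}-\ell$ solves $dU_t=b_i(U_t+\ell_t)\,dt+a_i(\gamma^\star)\,dW_t$. Its drift is Lipschitz in $u$ with a constant $L$ independent of $\ell$ and $x$. Translating by the ODE flow removes the drift at the origin, and the Menozzi--Pesce--Zhang bounds (which use only the Lipschitz constant) give
\[
p^{\mathrm{sd}}(y\mid\ell)\le Ch_n^{-1/2}e^{-|y-m^{\mathrm{sd}}(\ell)|^2/(Ch_n)},\qquad |\partial_yp^{\mathrm{sd}}(y\mid\ell)|\le Ch_n^{-1}e^{-|y-m^{\mathrm{sd}}(\ell)|^2/(Ch_n)},
\]
with $m^{\mathrm{sd}}(\ell)=\Phi^\ell_{h_n}-x+h_n\kappa_{i,n}$. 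These hold uniformly in $\ell$ and need no exponential moments of the jumps.

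Your count decomposition then goes through with these centers. Gronwall keeps $|m^{\mathrm{sd}}|\le v/4$ on the good zero-jump event. On the one-jump event, the map $z\mapsto F_{\tau,\ell}(z)=\Phi^{\ell+z\mathbf 1_{[\tau,h_n]}}_{h_n}-x+h_n\kappa_{i,n}$ has derivative $\exp\{\int_\tau^{h_n}\partial_xb_i(\Phi_s)\,ds\}\in[e^{-Lh_n},e^{Lh_n}]$. Hence the substitution $w=F_{\tau,\ell}(z)$ preserves the $|z|^{-1-\beta}$ factor. The same dominating bounds also justify the $C^1$ claim for $p^{\mathrm{sd}}$.

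One further point also affects your frozen step. The zero-jump bound $h_n^{-1/2}e^{-y^2/(Ch_n)}$ needs $h_n|b_i(x)|\ll v$; otherwise the Gaussian center lies in the band. The paper works under $h_n(1+|x|)\le v/(64C)$. Otherwise it uses the global bounds $h_n^{-1/2}$ and $h_n^{-1}$, absorbed into a power of $1+|x|$, since then $1+|x|\gtrsim v/h_n$.
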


The proof combines the Gaussian density and gradient estimates of
\citet{MenozziPesceZhang2021} with the small-/large-jump decompositions
used by \citet{figueroalopezhoudre2009small} and
\citet{FigueroaLopezLuoOuyang2014}. The model-specific step makes these
estimates uniform on the shrinking off-diagonal band and converts them
into a random boundary-strip bound. This bound controls threshold
perturbations in jump detection, IV drift moments, feasible--oracle
replacement, and density estimation.

\subsection{Two-step IV debiasing and denominator stability}

For Case IV, set \(p=2-\beta\) and \(q=-\beta\).

The two rational bias corrections are adapted from
\citet{BonieceFigueroaLopezHan2024}. The proof adds a deterministic
perturbation expansion for the two random denominators, establishes the
feasible--oracle replacement under observed switching regimes, and
applies the exposure law and a vector martingale central limit theorem.
It proves denominator stability and the joint Gaussian representation
used for the IV diffusion estimator in the parametric proof.

\begin{lem}[Two-step IV debiasing and its denominators]
	\label{lem:iv_debiasing_main}
	In Case IV, suppose Assumptions~\ref{ass:smooth}, \ref{ass:levy},
	\ref{ass:iv}, \ref{ass:iv_coeff},
	\ref{ass:iv_sampling}, and \ref{ass:ergodic} hold. Uniformly over
	\(i\in S\),
	\begin{align*}
		\Delta_{\zeta_1}^2\widehat C_{i,n}(v)
		&=d_{i,1}(\zeta_1^{2-\beta}-1)^2
		v^{2-\beta}\{1+o_p(1)\},
		\qquad v\in\mathcal W_n,\\
		\Delta_{\zeta_2}^2\widehat C_{i,n}^{(1)}(u_n)
		&=\lambda_{p,q}(\zeta_1)d_{i,2}
		(\zeta_2^{-\beta}-1)^2
		h_nu_n^{-\beta}\{1+o_p(1)\}.
	\end{align*}
	\[
		\Pb\!\left(
		\min\left\{
		\min_{i\in S}\min_{v\in\mathcal W_n}
		|\Delta_{\zeta_1}^2\widehat C_{i,n}(v)|,
		\min_{i\in S}|\Delta_{\zeta_2}^2
		\widehat C_{i,n}^{(1)}(u_n)|\right\}\ge h_n
		\right)\to1.
	\]
	There are random variables \((\mathcal Z_{i,n}:i\in S)\) such that
	\[
		\widehat a_{i,n}^{\,2,\rm db}
		=a_i^2(\gamma^\star)
		+\frac{\mathcal Z_{i,n}}{\sqrt n}+o_p(n^{-1/2}),
	\]
	\[
		\mathcal Z_{i,n}
		=\frac1{\sqrt n\,\pi_i}\sum_{j=1}^n
		\Gamma_{i,j}a_i^2(\gamma^\star)
		\left\{\frac{(\Delta_jW)^2}{h_n}-1\right\}+o_p(1),
	\]
	and
	\[
		(\mathcal Z_{i,n}:i\in S)^\top
		\cil N\!\left(
		0,\operatorname{diag}\left(
		\frac{2a_i^4(\gamma^\star)}{\pi_i}:i\in S\right)\right).
	\]
\end{lem}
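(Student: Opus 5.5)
The plan is to reduce the feasible statistic to an oracle one, expand it as a deterministic power law in \(v\) plus a \(v\)-free martingale and a small remainder, and then check that the two rational corrections in \eqref{eq:iv_debias_operator} act on this expansion essentially as they act on an exact power law.

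\emph{Reductions.} I would first replace \(\widehat C_{i,n}(v)\) by an oracle version, uniformly over the finite grid \(v\in\mathcal V_n\). Three replacements are needed.
\begin{itemize}
\item On \(\Omega_n^{\rm ns}\), Lemma~\ref{lem:endpoint_exposure_main} lets me replace \(A_{i,j}\) by \(\Gamma_{i,j}\) and \(T_{i,n}\) by \(T_{i,n}^\circ\).
\item I then replace \(\overline\Delta_{j,n}X\) by \(Y_{i,j,n}^\circ\). The shift is \(h_n(\widehat\kappa_{i,n}-\kappa_{i,n})=O_p(h_nT_n^{-1/2})\) by Proposition~\ref{prop:tail_functional}. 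Away from the level \(v\), \(G_v\) changes by at most \(2|y||\delta|+\delta^2\). Near \(v\), the change is at most \(v^2\) on the strip \(\{||Y^\circ|-v|\le|\delta|\}\), which I bound with the boundary estimate of Proposition~\ref{prop:iv_off_diagonal_density}. Both errors should be \(o_p(n^{-1/2})\) and also \(o_p\) of the denominators \(v^{2-\beta}\) and \(h_nu_n^{-\beta}\).
\item After these two steps, \(\widehat C_{i,n}(v)\) equals \((T_{i,n}^\circ)^{-1}\sum_j\mathcal B_{i,j,n}(v)\) plus \((T_{i,n}^\circ)^{-1}a_i^2\sum_j\Gamma_{i,j}(\Delta_jW)^2\).
\end{itemize}

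\emph{Expansion of the raw statistic.} On no-switch intervals I would prove the fixed-regime truncated expansion adapted from \citet{BonieceFigueroaLopezHan2024}:
\[
\E_{j-1}[\mathcal B_{i,j,n}(v)]
=h_n\Gamma\text{-weight}\times\bigl\{d_{i,1}v^{2-\beta}+d_{i,2}h_nv^{-\beta}\bigr\}
+O(h_n\rho_nR_{j-1}).
\]
It holds uniformly in \(v\in\mathcal V_n\) and in the starting state. The proof goes as follows.
\begin{itemize}
\item Split the L\'evy measure into the symmetric locally stable part \(\nu_i^0\) and the remainder \(\bar\nu_i\), exactly as in the proof of Proposition~\ref{prop:tail_functional}.
\item Treat the jumps above a fixed level as an independent compound-Poisson component.
\item Freeze the drift using Lemma~\ref{lem:moment_ergodic_main}.
\item Use the density bounds of Proposition~\ref{prop:iv_off_diagonal_density} to control the contributions near the threshold.
\item Handle separately the cases \(\ell_i^\pm=0\).
\end{itemize}
The result is the representation
\[
\widehat C_{i,n}(v)=a_i^2+\mathcal M_{i,n}+d_{i,1}v^{2-\beta}+d_{i,2}h_nv^{-\beta}+\mathcal N_{i,n}(v)+O_p(\rho_n).
\]
Here \(\mathcal M_{i,n}=(T_{i,n}^\circ)^{-1}a_i^2\sum_j\Gamma_{i,j}\{(\Delta_jW)^2-h_n\}\) does not depend on \(v\). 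The term \(\mathcal N_{i,n}(v)\) is the centered martingale part of \(\mathcal B\). Its conditional variance is of order \(h_n v^{4-\beta}+h_n^2\) per step, so \(\mathcal N_{i,n}(v)=O_p((v^{4-\beta}/T_n)^{1/2}+n^{-1/2}\text{-small})\). Condition \eqref{eq:iv_span} gives \(\sqrt n\rho_n\to0\). I expect the range of \(\omega\) in \eqref{eq:iv_threshold} to give \(\mathcal N_{i,n}=o_p(n^{-1/2})\) and also to make \(\mathcal N_{i,n}\) negligible relative to \(v^{2-\beta}\) and to \(h_nu_n^{-\beta}\).

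\emph{Algebra of the debiasing operator.} Write \(F(v)=A+Bv^p+Cv^q+E(v)\) with \(p=2-\beta\), \(q=-\beta\), and a small perturbation \(E\). The constant \(A\), which includes \(\mathcal M_{i,n}\), cancels in every difference. Then
\[
\Delta^2_{\zeta}F(v)=Bv^p\eta_p(\zeta)^2+Cv^q\eta_q(\zeta)^2+\Delta^2_\zeta E(v).
\]
This gives the first denominator asymptotic. Since \(v^{2-\beta}\gg h_n\), the indicator in \eqref{eq:iv_debias_operator} equals one with probability tending to one.

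A direct expansion of the first correction gives \(\mathcal D_{\zeta_1,n}F(v)=A+\lambda_{p,q}(\zeta_1)Cv^q+\widetilde E(v)\) with \(\widetilde E\) of order
\[
E+(Cv^q)^2/(Bv^p)=E+O(h_n^2u_n^{-\beta-2}),
\]
which is one of the terms in \(\rho_n\). Applying the same computation with \(\zeta_2\) yields the second denominator law \(\lambda_{p,q}(\zeta_1)d_{i,2}(\zeta_2^{-\beta}-1)^2h_nu_n^{-\beta}\). This is again \(\gg h_n\), which gives denominator stability. It also removes the \(v^q\) term up to \(O_p(\rho_n)\).

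Hence \(\widehat a^{2,\rm db}_{i,n}=a_i^2+\mathcal M_{i,n}+o_p(n^{-1/2})\). Using \(T_{i,n}^\circ/T_n\to\pi_i\) gives the stated form of \(\mathcal Z_{i,n}\). The joint limit follows from the martingale CLT: \(\Gamma_{i,j}\Gamma_{k,j}=0\) for \(i\ne k\), \(\E_{j-1}\Gamma_{i,j}=\mathbf 1_{\{Z_{j-1}=i\}}(1+O(h_n))\), and the Gaussian fourth moment gives the Lindeberg condition.

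\emph{Main obstacle.} The hard step is the perturbation analysis of the nested ratios. The random errors \(\Delta_\zeta E\) must be shown to be \(o_p(n^{-1/2})\) after being multiplied by the ratio of first differences to second differences. That ratio is of order \(v^p\) in the first step. In the second step, however, the numerator squares \(\Delta_{\zeta_2}\widehat C^{(1)}\), whose size is \(h_nu_n^{-\beta}\), which is very small. There I would first bound \(\widetilde E\) and the \(v\)-dependent martingale \(\mathcal N\) at relative size \(o_p(1)\) compared with \(h_nu_n^{-\beta}\), and then linearize \(x^2/y\) around the deterministic leading values. The margin for this comes only from the lower bound \(\omega>2/(4+\beta)\).
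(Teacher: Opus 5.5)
Your overall route matches the paper's: no-switch and exposure replacement, a localized fixed-regime expansion with the $v$-free Brownian martingale placed in the constant, and a rational-map perturbation in which the remainder needs $o_p(n^{-1/2})$ levels and grid differences that are $o_p(u_n^{2-\beta})$ and $o_p(h_nu_n^{-\beta})$. The genuine gap is in the tail-mean replacement, which fails as you propose it.

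\emph{Tail-mean replacement.} Put $\delta_{i,n}:=h_n(\widehat\kappa_{i,n}-\kappa_{i,n})$.
\begin{itemize}
\item Your off-strip bound $2|y||\delta_{i,n}|$ sums to $2|\delta_{i,n}|(T_{i,n}^{\circ})^{-1}\sum_j\Gamma_{i,j}|Y_{i,j,n}^{\circ}|\mathbf 1_{\{|Y_{i,j,n}^{\circ}|\le v\}}$.
\item On $\{Z_{j-1}=i\}$, each summand has conditional mean of exact order $\sqrt{h_n}$. Also, $\sqrt{T_n}(\widehat\kappa_{i,n}-\kappa_{i,n})$ has a nondegenerate normal limit.
\item So this bound is of exact order $h_nT_n^{-1/2}\cdot h_n^{-1/2}=n^{-1/2}$, not $o_p(n^{-1/2})$. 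In general it is not even $o_p(h_nu_n^{-\beta})$, which the second denominator needs.
\end{itemize}

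The missing idea is to keep the sign. Because $\delta_{i,n}$ does not depend on $j$, the linear term is exactly $2\delta_{i,n}S_{i,n}(v)$, where $S_{i,n}(v):=(T_{i,n}^\circ)^{-1}\sum_j\Gamma_{i,j}Y_{i,j,n}^\circ\mathbf 1_{\{|Y_{i,j,n}^\circ|\le v\}}$. You must then prove $\max_{i,v}|S_{i,n}(v)|=O_p(1)$.
\begin{itemize}
\item This needs the signed one-step bound $|\E_{j-1}[\Gamma_{i,j}Y_{i,j,n}^\circ\mathbf 1_{\{|Y_{i,j,n}^\circ|\le v\}}]|\lesssim h_nR_{j-1}$.
\item You get it by subtracting the detected-tail mean $h_n\int_{|z|>v}z\,\nu_i^\star(dz)$ from the full mean $h_n\{b(X_{t_{j-1}},i,\alpha^\star)+\kappa_{i,n}\}+O(h_n^{3/2}R_{j-1})$. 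The detected-tail mean comes from rerunning the count calculation of Proposition~\ref{prop:tail_functional} at cutoff $v$.
\item This works only because the symmetric leading stable part in Assumption~\ref{ass:iv} gives $\sup_{0<v\le1}|\int_{|z|>v}z\,\nu_i^\star(dz)|<\infty$.
\item The martingale part of $S_{i,n}$ is then $O_p(T_n^{-1/2})$, and the linear term is $O_p(h_nT_n^{-1/2})=o_p(n^{-1/2})$.
\end{itemize}

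There is a related issue with the boundary strip. Its width $|\delta_{i,n}|$ depends on the whole sample, so Proposition~\ref{prop:iv_off_diagonal_density} cannot be applied to it conditionally on $\mathcal F_{t_{j-1}}$. Localize on $\{\max_i|\widehat\kappa_{i,n}-\kappa_{i,n}|\le MT_n^{-1/2}\}$, dominate by the deterministic width $Mh_nT_n^{-1/2}$, and bound the empirical strip count by its predictable sum plus a martingale fluctuation.

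\emph{Centered remainder $\mathcal N_{i,n}(v)$.} The same loss appears here. A per-step conditional variance of order $h_n^2$, as you state, would again make $\mathcal N_{i,n}$ of exact order $n^{-1/2}$.
\begin{itemize}
\item What you need, and what is true, is the Brownian-replacement bound $\E_{j-1}[\{\mathcal B_{i,j,n}(v)-\E_{j-1}\mathcal B_{i,j,n}(v)\}^2]\lesssim(h_n^2v^{2-\beta}+h_nv^{4-\beta}+h_n^3)R_{j-1}$.
\item It comes from two estimates. The mixed moment of $W_{h_n}^2$ times the squared non-Brownian part on the retained event is $O(h_n^2v^{2-\beta}+h_n^3)$. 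The fourth Brownian moment on the exceedance event is $O(h_n^3v^{-\beta})$ up to a Gaussian term.
\item This gives $\sqrt n\,\mathcal N_{i,n}(v)=O_p\{(v^{2-\beta}+v^{4-\beta}/h_n+h_n)^{1/2}\}=o_p(1)$, using $\omega>1/(4-\beta)$.
\item For grid differences, use the annular bound $\E_{j-1}[\Gamma_{i,j}\{G_w(Y_{i,j,n}^\circ)-G_v(Y_{i,j,n}^\circ)\}^2]\lesssim h_nv^{4-\beta}R_{j-1}$, in which the Brownian square cancels. It gives $O_p(u_n^{(4-\beta)/2}T_n^{-1/2})=o_p(h_nu_n^{-\beta})$ by $\omega>2/(4+\beta)$.
\end{itemize}
Only these differences, not levels, need to be small relative to $h_nu_n^{-\beta}$. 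The linearized rational maps act on levels only through a fixed three-point combination, so $o_p(n^{-1/2})$ levels suffice there.
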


The two rational bias corrections are adapted from
\citet{BonieceFigueroaLopezHan2024}. The proof adds a deterministic
perturbation expansion for the two random denominators, establishes the
feasible--oracle replacement under observed switching regimes, and
applies the exposure law and a vector martingale central limit theorem.
It proves denominator stability and the joint Gaussian representation
used for the IV diffusion estimator in the parametric proof.

\subsection{Studentization and conditional kernel expansion}

For \(i\in S\), \(j\le n\), and
\(\ell\in\{\mathrm{fv},\mathrm{iv}\}\), define
\[
	\delta_{j,n}^{\,\ell}
	:=\widehat Y_{i,j}^{\,\ell}-Y_j^\star.
\]

\begin{lem}[Conditional kernel expansion and plug-in count]
	\label{lem:kernel_small_time_main}
	{In Case FV, let \(\ell=\mathrm{fv}\) and suppose
	\(\mathcal A_{\rm FV}\) and Assumptions~\ref{ass:levy_density_local},
	\ref{ass:ident}, \ref{ass:info_nondeg}, and
	\ref{ass:kernel_bandwidth} hold. In Case IV, let
	\(\ell=\mathrm{iv}\) and suppose \(\mathcal A_{\rm IV}\) and
	Assumptions~\ref{ass:levy_density_local}, \ref{ass:ident},
	\ref{ass:info_nondeg}, and \ref{ass:kernel_bandwidth} hold.}
	In either case, uniformly over \(i\in S\) and \(j\le n\),
	\begin{align*}
		\left\|
		\E_{j-1}\!\left[
		A_{i,j}K_{\eta_n}(\cdot-Y_j^\star)\right]
		-h_n\mathbf1_{\{Z_{j-1}=i\}}
		(K_{\eta_n}*s_i^\star)
		\right\|_{L^2(B)}
		&\le R_{j-1}\frac{h_n^2}{\eta_n^{5/2}},\\
		\E_{j-1}\!\left[
		A_{i,j}\|K_{\eta_n}(\cdot-Y_j^\star)\|_{L^2(B)}^2
		\right]
		&\le R_{j-1}\frac{h_n}{\eta_n}.
	\end{align*}
	For the fixed neighborhood \(B^{\eta_0}\) in
	Assumption~\ref{ass:levy_density_local},
	\[
		\E_{j-1}\!\left[
		A_{i,j}\mathbf1_{\{Y_j^\star\in B^{\eta_0}\}}\right]
		\le h_nR_{j-1},
	\]
	\[
		\sum_{j=1}^nA_{i,j}
		\mathbf1_{\{Y_j^\star\in B^{\eta_0}\}}R_{j-1}
		=O_p(T_n),
		\qquad
		\max_{j\le n}|\delta_{j,n}^{\,\ell}|=o_p(u_n),
	\]
	and
	\[
		\sum_{j=1}^nA_{i,j}
		\mathbf1_{\{
		Y_j^\star\in B^{\eta_0}\ \mathrm{or}\
		\widehat Y_{i,j}^{\,\ell}\in B^{\eta_0}\}}R_{j-1}
		=O_p(T_n).
	\]
\end{lem}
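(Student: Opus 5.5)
We prove Lemma~\ref{lem:kernel_small_time_main} by conditioning on \(\mathcal F_{t_{j-1}}\) and working on the event \(\{Z_{j-1}=i\}\); the selector \(A_{i,j}\) vanishes otherwise. We then replace \(A_{i,j}\) by the no-switch indicator \(\Gamma_{i,j}\). By Lemma~\ref{lem:endpoint_exposure_main}, \(\Pb_{j-1}(A_{i,j}\ne\Gamma_{i,j})\lesssim h_n^2\), and since \(\|K_{\eta_n}(\cdot-y)\|_{L^2(B)}\lesssim\eta_n^{-1/2}\), this replacement costs \(O(h_n^2\eta_n^{-1/2})\), which is within the target. The indicator \(\mathbf 1_{\{Z_{j-1}=i\}}-\E_{j-1}\Gamma_{i,j}\) is \(O(h_n)\), and it multiplies \(h_n\|K_{\eta_n}*s_i^\star\|_{L^2(B)}=O(h_n)\), so it too is harmless. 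On \(\Gamma_{i,j}\) the increment has the law of the nonswitching regime-\(i\) process started at \(x=X_{t_{j-1}}\), shifted by the deterministic \(h_n\{-b(x,i,\alpha^\star)+\kappa_{i,n}\}\) up to the drift-freezing error \(D_{h_n}^{i,x}\). It therefore suffices to prove, uniformly in \(x\) with a polynomial envelope \(R(x)\),
\[
\Bigl\|\E K_{\eta_n}(\cdot-Y^{x})-h_n K_{\eta_n}*s_i^\star\Bigr\|_{L^2(B)}\lesssim R(x)h_n^2\eta_n^{-5/2}.
\]

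For this small-time expansion we set \(f_z(y)=K_{\eta_n}(z-y)\), for \(z\in B\). It is \(C^2\), supported in \(\{|y|\ge\delta_B/2\}\), and satisfies \(\|f_z^{(k)}\|_\infty\lesssim\eta_n^{-1-k}\). We apply It\^o's formula to \(f_z(Y_t)\) on \([0,h_n]\). The generator splits into three parts: the drift and diffusion part, which involves \(f_z',f_z''\); the jump part \(\int\{f_z(y+w)-f_z(y)-wf_z'(y)\}\nu_i^\star(dw)\); and the compensator shift. Taking expectations gives
\[
\E f_z(Y_{h_n})=\int_0^{h_n}\E\,\mathcal L f_z(Y_s)\,ds .
\]
Because \(f_z\) vanishes near \(0\) and \(Y_0=0\), we get \(\mathcal Lf_z(0)=\int f_z(w)\nu_i^\star(dw)=(K_{\eta_n}*s_i^\star)(z)\). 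A second application of It\^o, or equivalently a first-order Taylor bound of \(\E\mathcal Lf_z(Y_s)-\mathcal Lf_z(0)\), gives an error of size \(s\cdot\sup\|\mathcal L^2 f_z\|\). In \(L^2(B)\) in \(z\), this costs \(\eta_n^{-2}\cdot\eta_n^{-1/2}\), which yields the \(h_n^2\eta_n^{-5/2}\) bound.

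In Case FV the jump part is bounded directly using \(\int(1\wedge|w|)\,\nu(dw)<\infty\). In Case IV the first absolute moment near zero is infinite, so we must not expand \(f_z(y+w)-f_z(y)\) to first order on the event with no target jump. Instead we use the symmetric split \(\nu_i^0+\bar\nu_i\) from the proof of Proposition~\ref{prop:tail_functional}. The symmetric part is controlled by the second-order term \(\int_{|w|\le1} w^2\,\nu(dw)\,\|f''\|\), and the residual part by \(\int|w|\,\bar\nu_i(dw)<\infty\). Where a density bound is needed, for instance to control \(\Pb(Y_s\in\operatorname{supp}f_z)\) and \(\E\,\mathcal L f_z(Y_s)\) for \(s\le h_n\), we use Proposition~\ref{prop:iv_off_diagonal_density}. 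The drift-freezing term \(D_{h_n}^{i,x}\) is \(O(h_n^{3/2})R\) in \(L^2\) by Lemma~\ref{lem:moment_ergodic_main}, and it enters through a Lipschitz bound on \(K_{\eta_n}\) in \(L^2\) (\(\eta_n^{-3/2}\)); localizing it to the jump event adds a factor \(h_n\), so this term is also negligible. This uniform-in-\(x\) Case IV expansion is the main obstacle.

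The second moment bound follows from the same computation with \(f_z^2\) in place of \(f_z\). We use \(\int_B K_{\eta_n}(z-y)^2\,dz\le\eta_n^{-1}\|K\|_2^2\) together with \(\E_{j-1}[\Gamma_{i,j}\mathbf 1_{\{|Y|\ge\delta_B/2\}}]\lesssim h_nR_{j-1}\). The latter comes from the Markov inequality at order \(q\) applied to the small-jump part, plus \(h_n\nu_i^\star(|w|>\delta_B/4)\) for the large-jump part. The same estimate with \(B^{\eta_0}\) in place of the window gives the bound \(\E_{j-1}[A_{i,j}\mathbf 1_{\{Y_j^\star\in B^{\eta_0}\}}]\le h_nR_{j-1}\). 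Summing over \(j\) and applying Lemma~\ref{lem:moment_ergodic_main} and Markov's inequality then gives the \(O_p(T_n)\) count.

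For the plug-in statement, the mean-value theorem gives \(|\delta^\ell_{j,n}|\le h_nR_{j-1}L_n\), where \(\sqrt{T_n}L_n=O_p(1)\) by Theorem~\ref{thm:parametric_inference} and Proposition~\ref{prop:tail_functional}. The moment bound \(\max_jR_{j-1}=O_p(n^{1/q})\) then gives \(\max_j|\delta^\ell_{j,n}|=o_p(\sqrt{h_n})=o_p(u_n)\). Consequently, on an event of probability tending to one, \(\widehat Y_{i,j}^{\ell}\in B^{\eta_0}\) implies \(Y_j^\star\in B^{2\eta_0}\). The final count therefore follows by applying the previous count bound with \(2\eta_0\) in place of \(\eta_0\), after shrinking \(\eta_0\) so that \(B^{2\eta_0}\Subset U_0\).
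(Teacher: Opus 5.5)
Your central estimate has a gap. You bound \(\E\,\mathcal Lf_z(Y_s)-\mathcal Lf_z(0)\) by \(s\sup_y\|\mathcal L^2f_z(y)\|_{L^2(B)}\) and count this as \(\eta_n^{-2}\cdot\eta_n^{-1/2}\); that step fails for two reasons.

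First, Assumption~\ref{ass:kernel_bandwidth} only gives \(K\in C_c^2\). Thus \(\mathcal Lf_z\) already contains \(f_z''\), and \(\mathcal L^2f_z\) would need \(f_z''''\), which does not exist. Second, even for a smoother kernel, the diffusion part of \(\mathcal L^2f_z\) contains \(\tfrac14a^4f_z''''\), whose \(L^2(dz)\)-norm is of order \(\eta_n^{-9/2}\). A uniform bound therefore yields \(h_n^2\eta_n^{-9/2}\). After summation in the proof of Theorem~\ref{thm:levy_density_rate}, this becomes \(h_n\eta_n^{-9/2}\), which need not vanish.

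The missing idea is localization. Fix \(\delta_0>0\) with \(\delta_0+c_K\eta_n<\eta_0\) and \(\delta_0<\delta_B/4\).
\begin{itemize}
\item For \(|y|\le\delta_0\), the functions \(f_z,f_z',f_z''\) vanish at \(y\), so \(\mathcal Lf_z(y)=\int K_{\eta_n}(z-y-w)s_i^\star(w)\,dw\). The relevant \(w\) lie in \(B^{\eta_0}\subset U_0\), where \(s_i^\star\) is bounded. Letting the \(y\)-derivatives fall on \(K_{\eta_n}\) gives first and second derivatives with \(L^2(B)\)-norms \(O(\eta_n^{-1})\) and \(O(\eta_n^{-2})\). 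Since \(|\E Y_s|+\E Y_s^2\lesssim sR(x)\), a second-order Taylor expansion contributes \(O(sR(x)\eta_n^{-2})\).
\item For \(|y|>\delta_0\), use only the crude bound \(\|\mathcal Lf_z(y)\|_{L^2(dz)}\lesssim(1+|x|+|X_s|)^c\eta_n^{-5/2}\). The drift term gives \(\eta_n^{-3/2}\), the diffusion term gives \(\eta_n^{-5/2}\), and the compensated jump operator is controlled by \(\int_{|w|\le1}w^2\nu_i^\star(dw)\) and \(\int_{|w|>1}|w|\nu_i^\star(dw)\). Multiply this by \(\E[(1+|x|+|X_s|)^c\mathbf1_{\{|Y_s|>\delta_0\}}]\lesssim sR(x)\). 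That factor must come from Chebyshev together with the conditional increment moments of Lemma~\ref{lem:moment_ergodic_main}; H\"older would lose a power of \(s\).
\end{itemize}
Integrating \(O(sR(x)\eta_n^{-5/2})\) over \([0,h_n]\) then gives the claimed \(h_n^2\eta_n^{-5/2}\). Note that Proposition~\ref{prop:iv_off_diagonal_density} is not the right tool here: it concerns the shrinking band \(|y|\asymp u_n\), not the fixed window.

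With this repair, your Dynkin/generator route is a genuine alternative to the paper's proof. The paper conditions on zero, one, or several jumps in a fixed neighbourhood \(U\) of the window. It therefore needs an \(O(h_n^2)\) bound for reaching the window without a target jump, obtained from Bennett's inequality for the truncated martingale. It also needs conditional moments of the remainder given the single target jump. Your expansion needs only an \(O(s)\) hitting probability, because the time integral supplies the second factor of \(s\). It works with the state-dependent drift directly, so no freezing term \(D_{h_n}^{i,x}\) arises. It also treats Case IV exactly like Case FV, since the compensated jump operator only sees \(\int_{|w|\le1}w^2\nu_i^\star(dw)\); your symmetric split \(\nu_i^0+\bar\nu_i\) is unnecessary.

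Your selector replacement, second-moment bound, Chebyshev count, and plug-in maximum are fine; the Chebyshev count is simpler than the paper's repetition of the jump decomposition. One correction: \(\eta_0\) is fixed by the statement, so you cannot shrink it. Instead, as the paper does, take \(\eta_1>\eta_0\) with \(B^{\eta_1}\Subset U_0\) and work on \(\{\max_j|\delta_{j,n}^{\,\ell}|<\eta_1-\eta_0\}\). Your count only needs \(B^{\eta_1}\) to stay away from \(0\).
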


The one-target-jump decomposition follows the small-time L\'evy-density
arguments of \citet{figueroalopezhoudre2009small} and
\citet{FigueroaLopezLuoOuyang2014}. The proof conditions on zero, one,
or multiple target jumps, replaces the endpoint selector by the
no-switch selector, and transfers the oracle count to feasible residuals
through the maximal plug-in error. The resulting conditional
\(L^2(B)\) expansion and affected-index count are the inputs to the
L\'evy-density proof.

% END INLINED FILE: appendix_proof_lemmas.tex
	}

	\subsection*{Acknowledgments}
	The author has no acknowledgments to declare.

	\subsection*{Data Availability Statement}
	The simulation code and generated numerical
	outputs are available from the author upon reasonable request.

	%%%%%
	%%%%%
	\bigskip

	% BEGIN INLINED FILE: main.bbl

	% END INLINED FILE: main.bbl

	%%%%%
	%%%%%

\clearpage
\phantomsection
\pdfbookmark[0]{Supplementary Material}{supplement-title}
\setcounter{section}{0}
\setcounter{subsection}{0}
\setcounter{subsubsection}{0}
\setcounter{equation}{0}
\setcounter{thm}{0}
\renewcommand{\thesection}{S\arabic{section}}
\renewcommand{\theHsection}{supp.\arabic{section}}
\renewcommand{\theHsubsection}{\theHsection.\arabic{subsection}}
\renewcommand{\theHsubsubsection}{\theHsubsection.\arabic{subsubsection}}
\raggedbottom
\setlength{\emergencystretch}{1em}

\begin{center}
{\Large\bfseries Supplementary Material for\\[0.4em]
\textit{Tail-corrected semiparametric inference for regime-switching jump
diffusions}}\\[0.8em]
{\large Yuzhong Cheng}\\[0.25em]
{\small Institute of Mathematics for Industry, Kyushu University, 744 Motooka, Nishi-ku, Fukuoka 819-0395, Japan}\\
{\small \texttt{cheng.yuzhong.451@m.kyushu-u.ac.jp}}
\end{center}

\begin{quote}
\small\textbf{Abstract.}
This supplement contains detailed proofs of the preliminary lemmas stated in
the appendix of the main manuscript.
\end{quote}
\medskip

\section{Conventions}
\label{sec:supp_notation}
\begingroup
\small

All model assumptions, estimators, theorem statements, and theorem proofs
are in the main paper. This supplement recalls only proof-specific notation
and gives proofs of the preliminary lemmas in their logical
order.

Throughout, write \(Z_{j-1}:=Z_{t_{j-1}}\). Also write
\[
 \E_{j-1}[\cdot]:=\E[\cdot\mid\mathcal F_{t_{j-1}}],
 \qquad
 \Gamma_{i,j}:=\mathbf1_{\{Z_s=i\text{ for all }s\in[t_{j-1},t_j]\}},
 \qquad
 T_{i,n}^{\circ}:=h_n\sum_{j=1}^n\Gamma_{i,j},
\]
\[
\begin{aligned}
 I_{n,j}&:=\mathbf1_{\{|\Delta_jX|\le u_n\}},
 &Y_{i,j,n}^{\circ}&:=\Delta_jX+h_n\kappa_{i,n},\\
 \mathcal V_n&:=\{\zeta_1^k\zeta_2^\ell u_n:k,\ell\in\{0,1,2\}\},
 &\mathcal W_n&:=\{u_n,\zeta_2u_n,\zeta_2^2u_n\}.
\end{aligned}
\]
For an integrable random variable \(H\) and an event \(A\), use the
semicolon notation
\[
	\E_{j-1}[H;A]
	:=\E_{j-1}[H\mathbf1_A].
\]
For a function \(F:(0,\infty)\to\mathbb R\), put
\[
 \Delta_\zeta F(v):=F(\zeta v)-F(v),
 \qquad
 \Delta_\zeta^2F(v):=F(\zeta^2v)-2F(\zeta v)+F(v),
\]
\[
 \eta_s(\zeta):=\zeta^s-1,
 \qquad
 \lambda_{s,t}(\zeta)
 :=\left\{\frac{\eta_s(\zeta)-\eta_t(\zeta)}
 {\eta_s(\zeta)}\right\}^2.
\]
The symbol \(R_{j-1}\) denotes a generic predictable polynomial
envelope \(C(1+|X_{t_{j-1}}|^c)\), and \(R(x)\) denotes the corresponding
envelope for a deterministic initial state.  Its value may increase
from line to line.  All constants are uniform over the finite regime
set and the displayed finite threshold grids.

Assumption, theorem, and appendix-lemma numbers cited from the main paper
are imported directly from its auxiliary file and therefore remain
synchronized with the main paper.

\endgroup

\section{Preliminary and rate lemmas}

\begin{lem}[Quadratic-truncation perturbation]
	\label{lem:supp_truncation_perturbation}
	For \(v>0\), put
	\[
		G_v(y):=y^2\mathbf1_{\{|y|\le v\}},
		\qquad
		g_v(y):=y^2\mathbf1_{\{|y|>v\}}.
	\]
	There is a universal constant \(C\) such that, for all \(y,d\in
	\mathbb R\) with \(|d|\le v/2\),
	\begin{align}
		|G_v(y+d)-G_v(y)-2dy\mathbf1_{\{|y|\le v\}}|
		&\le Cd^2+Cv^2
		\mathbf1_{\{\lvert |y|-v\rvert\le |d|\}},
		\label{eq:supp_G_perturbation}\\
		|g_v(y+d)-g_v(y)-2dy\mathbf1_{\{|y|>v\}}|
		&\le Cd^2+Cv^2
		\mathbf1_{\{\lvert |y|-v\rvert\le |d|\}}.
		\label{eq:supp_g_perturbation}
	\end{align}
\end{lem}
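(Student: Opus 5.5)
The plan is a deterministic case analysis on where \(y\) and \(y+d\) sit relative to the cutoff \(v\). Since \(G_v(y)+g_v(y)=y^2\) for every \(y\), and
\[
	(y+d)^2-y^2-2dy=d^2,
\]
the two displays are equivalent up to the additive term \(d^2\). Subtracting \eqref{eq:supp_G_perturbation} from this identity gives \eqref{eq:supp_g_perturbation}. So I prove \eqref{eq:supp_G_perturbation} only, with a constant \(C\) that then works for both.

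First I treat the case in which the indicator does not change. If \(\lvert |y|-v\rvert>|d|\), the reverse triangle inequality gives
\[
	\bigl||y+d|-|y|\bigr|\le|d|<\lvert |y|-v\rvert,
\]
so \(|y+d|\) and \(|y|\) lie on the same side of \(v\), and hence \(\mathbf1_{\{|y+d|\le v\}}=\mathbf1_{\{|y|\le v\}}\). If both are inside, the left side equals \(|(y+d)^2-y^2-2dy|=d^2\). If both are outside, it equals \(0\). In either sub-case the bound \(Cd^2\) holds with \(C=1\).

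Next comes the boundary strip \(\lvert |y|-v\rvert\le|d|\). Here \(|y|\le v+|d|\le 3v/2\) and \(|y+d|\le 2v\), using \(|d|\le v/2\). Each of the three terms is bounded separately: \(G_v(y+d)\le 4v^2\), \(G_v(y)\le v^2\), and \(|2dy\mathbf1_{\{|y|\le v\}}|\le 2\cdot(v/2)\cdot v=v^2\). The left side is therefore at most \(6v^2\), which is exactly the indicator term with \(C=6\). Taking \(C=6\) overall proves \eqref{eq:supp_G_perturbation}.

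No step is a real obstacle; the proof is purely deterministic, and the only point needing care is justifying that the indicators agree off the strip. The restriction \(|d|\le v/2\) is used only in the strip case, to keep \(|y|\) and \(|y+d|\) of order \(v\) so that the crude \(v^2\) bound applies.
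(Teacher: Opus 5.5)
Your proof is correct and follows essentially the same route as the paper: a case split on whether the threshold indicator changes, crude \(v^2\) bounds on the strip using \(|y|\le 3v/2\) and \(|y+d|\le 2v\), and then the \(g_v\) bound read off from \(g_v(y)=y^2-G_v(y)\). The only difference is bookkeeping: you split directly on the strip \(\lvert|y|-v\rvert\le|d|\), whereas the paper splits on whether \(y\) and \(y+d\) lie on the same side of \(v\). One small point: passing from \(G_v\) to \(g_v\) adds an extra \(d^2\), so in general the \(g_v\) constant is the \(G_v\) constant plus one. Your explicit bound \(d^2+6v^2\mathbf1_{\{\lvert|y|-v\rvert\le|d|\}}\) has enough slack that \(C=6\) still works for both displays.
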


\begin{proof}[Proof of Lemma~\ref{lem:supp_truncation_perturbation}]
	If \(y\) and \(y+d\) lie on the same side of the cutoff, the relevant
	indicator does not change and
	\((y+d)^2-y^2-2dy=d^2\).  If they lie on different sides, then
	\(\lvert |y|-v\rvert\le |d|\), while \(|y|\le3v/2\) and
	\(|y+d|\le2v\).  The absolute value of every quadratic term is then
	bounded by \(Cv^2\).  This proves \eqref{eq:supp_G_perturbation}.
	Since \(g_v(y)=y^2-G_v(y)\), subtracting
	\eqref{eq:supp_G_perturbation} from
	\((y+d)^2-y^2=2dy+d^2\) proves
	\eqref{eq:supp_g_perturbation}.
\end{proof}

\begin{lem}[FV rate audit]
	\label{lem:supp_fv_rate_audit}
	For \(0\le\beta<1\) and \(q_0>3\), put
	\[
		A:=\frac{q_0-2}{2(q_0-\beta)},\qquad
		B:=\frac1{2(2-\beta)},\qquad
		C:=\frac{q_0-3}{2(q_0-2)}.
	\]
	Then
	\begin{equation}
		\max\{A,B\}<C
		\quad\Longleftrightarrow\quad
		q_0(1-\beta)>4-3\beta
		\quad\Longleftrightarrow\quad
		q_0>\frac{4-3\beta}{1-\beta}
		\quad\Longleftrightarrow\quad
		\beta<\frac{q_0-4}{q_0-3}.
		\label{main-eq:fv_rho_feasibility}
	\end{equation}
	Under Assumptions~\ref{ass:fv}--\ref{ass:trunc},
	\begin{align}
		h_n^{1-q/2}\sup_i\int_{|z|\le u_n}|z|^q\nu_i^\star(dz)&\to0,
		&&3\le q\le q_0,
		\label{main-eq:smalljump_un}\\
		h_n^{1-q/2}u_n^q\sup_i\nu_i^\star(|z|>u_n)&\to0,
		&&4\le q\le q_0,
		\label{main-eq:trunc-levy-tail}\\
		\sqrt n\sup_i\int_{|z|\le u_n}z^2\nu_i^\star(dz)&\to0,
		\label{main-eq:AN_extra_rates}\\
		\sqrt{T_n}\sup_i\int_{|z|\le u_n}|z|\nu_i^\star(dz)&\to0.
		\label{main-eq:tailmean_bias_rate}
	\end{align}
\end{lem}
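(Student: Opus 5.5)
The plan is pure exponent bookkeeping with \(u_n=c_{\rm FV}h_n^\rho\) and \(\nu_i^\star(|z|>r)\le Cr^{-\beta}\); everything is reduced to comparing powers of \(h_n\), plus one use of \eqref{eq:fv_span}.

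\textbf{The feasibility equivalences \eqref{main-eq:fv_rho_feasibility}.} I will compare each lower endpoint with \(C\) separately. For \(A<C\), cross-multiplying the positive denominators gives \((q_0-2)^2<(q_0-3)(q_0-\beta)\). This is equivalent to \(q_0^2-4q_0+4<q_0^2-(3+\beta)q_0+3\beta\), that is, to \(q_0(1-\beta)>4-3\beta\). For \(B<C\), the condition is \(q_0-2<(q_0-3)(2-\beta)\), which again reduces to \(q_0(1-\beta)>4-3\beta\). The remaining two equivalences are rearrangements, valid because \(1-\beta>0\) and \(q_0>3\).

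\textbf{Small-jump moments \eqref{main-eq:smalljump_un}.} Integrating the tail bound by parts gives, for \(q>\beta\),
\[
\int_{|z|\le u}|z|^q\nu_i^\star(dz)\lesssim u^{q-\beta},
\]
uniformly in \(i\). The quantity to control is therefore \(h_n^{1-q/2+\rho(q-\beta)}\), and its exponent is affine in \(q\). It suffices to check positivity at the endpoints \(q=3\) and \(q=q_0\).
\begin{itemize}
\item At \(q=q_0\), positivity reads \(\rho>(q_0-2)/(2(q_0-\beta))=A\).
\item At \(q=3\), positivity reads \(\rho>1/(2(3-\beta))\). This holds because \(1/(2(3-\beta))<B<\rho\).
\end{itemize}

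\textbf{Tail term \eqref{main-eq:trunc-levy-tail}.} Using the tail bound directly, the quantity is bounded by \(h_n^{1-q/2+\rho(q-\beta)}\), the same exponent as before. The range \(q\ge4\) lies inside the range already treated, so the same endpoint check applies.

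\textbf{Second moment \eqref{main-eq:AN_extra_rates}.} Here I need \(\sqrt n\,u_n^{2-\beta}\to0\).
\begin{itemize}
\item From \(nh_n^2\to0\) we get \(\sqrt n\le h_n^{-1}\).
\item This is not enough on its own: it would require \(\rho(2-\beta)\ge1\), which may fail.
\item Instead I write \(\sqrt n=\sqrt{T_n}\,h_n^{-1/2}\), so that \(\sqrt n\,u_n^{2-\beta}\asymp\sqrt{T_n}\,h_n^{\rho(2-\beta)-1/2}=\sqrt{T_n}\,h_n^{\delta_2}\).
\item This tends to zero because \(\delta_2\ge\delta_{\rm FV}\) and \eqref{eq:fv_span} holds.
\end{itemize}

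\textbf{First moment \eqref{main-eq:tailmean_bias_rate}.} Here the quantity is \(\sqrt{T_n}\,u_n^{1-\beta}\asymp\sqrt{T_n}\,h_n^{\rho(1-\beta)}\), and I will reduce it to \eqref{eq:fv_span}. That requires \(\rho(1-\beta)\ge\delta_{\rm FV}\), which follows from \(\rho(1-\beta)\ge\delta_1\).
\begin{itemize}
\item The inequality \(\rho(1-\beta)\ge\delta_1\) is equivalent to \(\rho(q_0-1-\beta)\ge(q_0-3)/2\).
\item This is \emph{not} automatic from \(\rho<C\), so a finer argument is needed.
\item Take \(\rho\) at least \(A\); then the left side exceeds \((q_0-2)(q_0-1-\beta)/(2(q_0-\beta))\).
\item That quantity exceeds \((q_0-3)/2\) if and only if \((q_0-2)(q_0-1-\beta)>(q_0-3)(q_0-\beta)\). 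Expanding, this reduces to \(2>\beta\), which holds.
\end{itemize}

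\textbf{Main obstacle.} I expect the main obstacle to be the \(\sqrt n\) and \(\sqrt{T_n}\) statements. There the naive bound \(nh_n^2\to0\) is insufficient, and the proof must route through \eqref{eq:fv_span} together with the comparison against \(\delta_1\) made possible by the lower bound \(\rho>A\).
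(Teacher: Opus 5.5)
Your proposal is correct and follows the paper's proof. It uses the same sign computation for \(C-A\) and \(C-B\), the same layer-cake bound \(\sup_i\int_{|z|\le u}|z|^q\nu_i^\star(dz)\lesssim u^{q-\beta}\) reducing the first two limits to positivity of the affine exponent \(1-q/2+\rho(q-\beta)\), and the same reduction \(\sqrt n\,u_n^{2-\beta}\asymp\sqrt{T_n}h_n^{\delta_2}\) to (FV-span). The differences are minor. First, the paper gets positivity on \([3,q_0]\) from monotonicity, since \(\rho<C<1/2\), instead of checking \(q=3\) separately. Second, for the first-moment limit the paper factors \(\sqrt{T_n}u_n^{1-\beta}\asymp(\sqrt{T_n}h_n^{\delta_2})\,h_n^{1/2-\rho}\); this is shorter than your comparison of \(\rho(1-\beta)\) with \(\delta_1\) through \(\rho>A\), though that comparison is also valid.
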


\begin{proof}
Direct subtraction gives
\begin{align*}
	C-A
	&=\frac{q_0(1-\beta)-(4-3\beta)}
	{2(q_0-2)(q_0-\beta)},\\
	C-B
	&=\frac{q_0(1-\beta)-(4-3\beta)}
	{2(q_0-2)(2-\beta)}.
\end{align*}
All denominators are positive. This proves
\eqref{main-eq:fv_rho_feasibility}.

For every \(q>\beta\), layer-cake integration and
Assumption~\ref{ass:fv} yield
\begin{equation}
	\sup_i\int_{|z|\le u}|z|^q\nu_i^\star(dz)
	\le q\int_0^u r^{q-1}
	\sup_i\nu_i^\star(|z|>r)\,dr
	\lesssim u^{q-\beta}.
	\label{main-eq:fv_small_moment_audit}
\end{equation}
Set
\[
	e_q:=1-\frac q2+\rho(q-\beta),\qquad
	\delta_1:=\frac{q_0-3}{2}-\rho(q_0-2),\qquad
	\delta_2:=\rho(2-\beta)-\frac12.
\]
Since \(C<1/2\), the function \(q\mapsto e_q\) is decreasing on
\([3,q_0]\), and \(\rho>A\) gives \(e_{q_0}>0\). The tail bound in
Assumption~\ref{ass:fv} gives the same power \(h_n^{e_q}\) for
the second expression in the statement. The remaining exponents are
collected below; multiplicative powers of \(c_{\rm FV}\) are omitted.

\begin{center}
\small
\begin{adjustbox}{max width=\textwidth}
\begin{tabular}{@{}lll@{}}
\toprule
Quantity & Rewriting after \(u_n=c_{\rm FV}h_n^\rho\)
& Controlling restriction \\
\midrule
\(h_n^{1-q/2}u_n^{q-\beta}\)
& \(h_n^{e_q}\), \(e_q\ge e_{q_0}>0\)
& \(\rho>A\) \\
\(h_n^{q_0/2}u_n^{-q_0}\)
& \(h_n^{q_0(1/2-\rho)}\)
& \(\rho<C<1/2\) \\
\(\sqrt n\,h_n^{q_0/2-1}u_n^{2-q_0}\)
& \(\sqrt{T_n}h_n^{\delta_1}\)
& \(\rho<C\) \\
\(\sqrt n\,u_n^{2-\beta}\)
& \(\sqrt{T_n}h_n^{\delta_2}\)
& \(\rho>B\) \\
\(\sqrt{T_n}u_n^{1-\beta}\)
& \((\sqrt{T_n}h_n^{\delta_2})h_n^{1/2-\rho}\)
& \(\rho<C\), (FV-span) \\
\(h_n^{-1}u_n^{4-\beta}\)
& \(h_n^{\rho(4-\beta)-1}\)
& \(\rho>B\ge1/(4-\beta)\) \\
\bottomrule
\end{tabular}
\end{adjustbox}
\end{center}

The definition of \(\delta_{\rm FV}=\delta_1\wedge\delta_2\) and
(FV-span) imply
\(\sqrt{T_n}h_n^{\delta_k}\to0\), \(k=1,2\). Applying
\eqref{main-eq:fv_small_moment_audit} with \(q=2\), and then with
\(q=1\), proves the last two limits. The first two limits follow from
\eqref{main-eq:fv_small_moment_audit} and the estimate
\(h_n^{e_q}\to0\).
\end{proof}
Define
\begin{align}
	q_{2,n}^{\rm FV}
	&:=\sup_i\int_{|z|\le u_n}z^2\nu_i^\star(dz),\qquad
	p_n^{\rm FV}:=h_n^{q_0/2}u_n^{-q_0},
	\label{main-eq:fv_deterministic_remainders_1}\\
	r_{4,n}^{\rm FV}
	&:=\Bigg[h_n^{1/2}
	+h_n^{-1}u_n^4\sup_i\nu_i^\star(|z|>u_n)
	+\left(\frac{h_n}{u_n^2}\right)^2
	+h_n^{-1}\sup_i\int_{|z|\le u_n}|z|^4\nu_i^\star(dz)
	+(q_{2,n}^{\rm FV})^2\Bigg]^{1/4}.
	\label{main-eq:fv_deterministic_remainders_2}
\end{align}
Moreover,
\[
	q_{2,n}^{\rm FV}\to0,\qquad
	p_n^{\rm FV}\to0,\qquad
	r_{4,n}^{\rm FV}\to0.
\]
Indeed, \(q_{2,n}^{\rm FV}\lesssim u_n^{2-\beta}\), while the powers
for \(p_n^{\rm FV}\) and the quartic terms in
\(r_{4,n}^{\rm FV}\) are recorded in the exponent audit. Finally,
\((h_n/u_n^2)^2\asymp h_n^{2-4\rho}\to0\) because
\(\rho<C<1/2\).

For the Case IV exponent audit, set
\begin{equation}
	\epsilon_0:=\frac{3\beta-2}{4},
	\qquad
	\rho_n:=h_n^2u_n^{-\beta-2}
	+h_n^{1/2}u_n^{1-\beta/2}
	+h_nu_n^{2-2\beta}
	+u_n^{2-\epsilon_0}+h_n,
	\label{eq:supp_iv_rho_definition}
\end{equation}
and put
\begin{equation}
	\delta_{\rm IV}^{\sharp}:=\min\left\{
	\frac32-\omega(\beta+2),
	\ \omega\left(1-\frac\beta2\right),
	\ \frac12-2\omega(\beta-1),
	\ -\frac12+\frac{\omega(10-3\beta)}4,
	\ \frac12\right\}.
	\label{eq:supp_iv_sharp_delta}
\end{equation}
For the remainder \(\rho_n\) in
\eqref{eq:supp_iv_rho_definition}, substitution of
\(u_n=c_{\rm IV}h_n^\omega\) gives
\begin{equation}
\begin{aligned}
	\sqrt n\,\rho_n\asymp\sqrt{T_n}\big\{&
	h_n^{3/2-\omega(\beta+2)}
	+h_n^{\omega(1-\beta/2)}
	+h_n^{1/2-2\omega(\beta-1)}\\
	&+h_n^{-1/2+\omega(10-3\beta)/4}
	+h_n^{1/2}\big\}
	\asymp\sqrt{T_n}h_n^{\delta_{\rm IV}^{\sharp}}.
\end{aligned}
\label{eq:supp_iv_rho_audit}
\end{equation}
All five exponents in \eqref{eq:supp_iv_sharp_delta} are strictly
positive. The upper bound \(\omega<4/(8+\beta)\), together with
\(\beta<8/5\), controls the first and third exponents; the second is
positive because \(\beta<2\). The lower bound
\(\omega>(4-\beta)^{-1}\) controls the fourth exponent, and the last is
positive. Consequently, the direct IV-span condition
\(\sqrt n\,\rho_n\to0\) is equivalent to
\(\sqrt{T_n}h_n^{\delta_{\rm IV}^{\sharp}}\to0\).
It also implies the two long-span comparisons used in the main proof.
Indeed, the four normalized powers in
\(\sqrt{T_n}b_n^{\rm det}\) strictly dominate powers already present in
\eqref{eq:supp_iv_rho_audit}, because
\begin{align*}
	\omega(2-\beta)-\omega(1-\beta/2)
	&=\omega(1-\beta/2)>0,\\
	1+\omega(1-2\beta)-\{1/2-2\omega(\beta-1)\}
	&=1/2-\omega>0,\\
	1-\omega\beta-\omega(1-\beta/2)
	&=1-\omega(1+\beta/2)>0,\\
	1-1/2&>0.
\end{align*}
The third inequality follows from
\(\omega<4/(8+\beta)<2/(2+\beta)\). Hence
\(\sqrt{T_n}b_n^{\rm det}\to0\), and its first summand also gives
\(\sqrt{T_n}u_n^{2-\beta}\to0\).

The remaining consequences of the IV interval are collected in one
place below.  Subsequent IV proofs cite this audit instead of repeating
the substitution $u_n=c_{\rm IV}h_n^\omega$.
\begin{table}[H]
\centering
\small
\renewcommand{\arraystretch}{1.2}
\begin{adjustbox}{max width=\textwidth}
\begin{tabular}{@{}lll@{}}
\toprule
Required quantity & Power after substitution & Restriction used \\
\midrule
\(\delta_{\rm IV}^{\sharp}>0\)
& equation~\eqref{eq:supp_iv_sharp_delta}
& IV threshold interval and \(\beta<8/5\) \\
\(h_n/u_n^2\to0\)
& \(h_n^{1-2\omega}\)
& \(\omega<4/(8+\beta)<1/2\) \\
\(u_n^{4-\beta}/h_n\to0\)
& \(h_n^{\omega(4-\beta)-1}\)
& \(\omega>(4-\beta)^{-1}\) \\
\(u_n^{(4+\beta)/2}/h_n\to0\)
& \(h_n^{\omega(4+\beta)/2-1}\)
& \(\omega>2/(4+\beta)\) \\
\(\sqrt{h_n}/u_n\to0\)
& \(h_n^{1/2-\omega}\)
& \(\omega<1/2\) \\
\(h_nu_n^{-\beta}=o(u_n^{2-\beta})\)
& ratio \(h_n/u_n^2\)
& \(\omega<1/2\) \\
\(\sqrt{T_n}u_n^{2-\beta}\to0\)
& exponent \(\omega(2-\beta)>
\omega(1-\beta/2)\)
& direct IV-span \\
\(1-2\omega(\beta-1)>0\)
& \(h_nu_n^{2-2\beta}\to0\)
& \(\beta<8/5\), \(\omega<4/(8+\beta)\) \\
\(v_n^{\rm det}\to0\)
& \(O\{h_n^{\omega(2-\beta)}+h_n^{1+2\omega(1-\beta)}
+h_n^{1-\omega\beta}+h_n\}\)
& preceding IV interval consequences \\
\(\sqrt{T_n}b_n^{\rm det}\to0\)
& normalized exponents
\(\omega(2-\beta)\), \(1+\omega(1-2\beta)\),
\(1-\omega\beta\), and \(1\)
& direct IV-span and the exponent comparisons above \\
\(\rho_n/u_n^{2-\beta}\to0\)
& \(O\{h_n^{2-4\omega}
+h_n^{\{1-\omega(2-\beta)\}/2}
+h_n^{1-\omega\beta}
+h_n^{\omega(\beta-\epsilon_0)}
+h_n^{1-\omega(2-\beta)}\}\)
& IV interval and \(\epsilon_0=(3\beta-2)/4\) \\
\(\rho_n/(h_nu_n^{-\beta})\to0\)
& \(O\{h_n^{1-2\omega}
+h_n^{\{\omega(2+\beta)-1\}/2}
+h_n^{\omega(2-\beta)}
+h_n^{\omega(2+\beta-\epsilon_0)-1}
+h_n^{\omega\beta}\}\)
& IV interval and \(\epsilon_0=(3\beta-2)/4\) \\
\(\dfrac{u_n^{(4-\beta)/2}}{\sqrt{T_n}}
=o\{u_n^{2-\beta}\}\) and
\(o\{h_nu_n^{-\beta}\}\)
& ratios \(u_n^{\beta/2}/\sqrt{T_n}\) and
\(u_n^{(4+\beta)/2}/(h_n\sqrt{T_n})\)
& \(T_n\to\infty\) and \(u_n^{(4+\beta)/2}/h_n\to0\) \\
\(\sqrt n\,\rho_n\to0\)
& equation~\eqref{eq:supp_iv_rho_audit}
& direct IV-span \\
\bottomrule
\end{tabular}
\end{adjustbox}
\caption{Consequences of the Case IV threshold and direct span conditions.}
\label{tab:supp_iv_rate_audit}
\end{table}

\section{Moment, ergodic, and exposure bounds}
\label{sec:supp_moment_ergodic_exposure}

This section proves Appendix Lemma~\ref{lem:moment_ergodic_main},
Proposition~\ref{prop:uniform_ergodic_main}, and
Lemma~\ref{lem:endpoint_exposure_main}. The first result extends
standard switching-diffusion moment and sampled-ergodic arguments to the
compensated jump martingale, the second upgrades pointwise convergence
to parameter-uniform convergence by ordinary first-order Sobolev
embedding on the open product parameter domain and continuous extension
to its closure, and the
third combines finite-state-chain uniformization with the occupation
law. Together they provide the integrability, uniform deterministic
limits, retention-indicator removal, and regime-exposure replacements
used throughout the main proofs.

\begin{lem}[Moment and ergodic bounds]
	\label{lem:supp_moment_bounds}
	In either Case FV or Case IV, suppose
	Assumptions~\ref{ass:smooth}, \ref{ass:levy}, and
	\ref{ass:ergodic} hold. Then:
	\begin{enumerate}[label=(\roman*),leftmargin=1.8em]
		\item for every \(p\ge2\),
		\(\sup_{t\ge0}\E|X_t|^p<\infty\);
		\item for every \(p\ge2\) and \(0\le s<t\),
		\[
			\E|X_t-X_s|^p
			\le C_p\{(t-s)^p+(t-s)\};
		\]
		\item for every \(p\ge2\) and \(0\le s<t\),
		\[
			\E[|X_t-X_s|^p\mid\mathcal F_s]
			\le C_p\{(t-s)^p+(t-s)\}(1+|X_s|^p);
		\]
		\item if \(f:\mathbb R\times S\to\mathbb R\) is continuously
		differentiable in \(x\), and \(f\) and \(\partial_xf\) have a
		common polynomial envelope, then
		\[
			\frac1n\sum_{j=1}^n
			f(X_{t_{j-1}},Z_{j-1})
			\cip\int_{\mathbb R\times S}f(x,i)\,\mu(dx,di).
		\]
	\end{enumerate}
\end{lem}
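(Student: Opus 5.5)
The plan is to prove (i)--(iv) in order, since each part feeds the next. Write the equation as $X_t = X_0 + \int_0^t b\,ds + \int_0^t a\,dW + M^J_t$, where $M^J_t:=\sum_i\int_0^t\int z\mathbf 1_{\{Z_{s-}=i\}}\widetilde N_i(ds,dz)$.

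For (i), the uniform-in-time bound does not follow from Gronwall, which only gives exponentially growing bounds. Instead I will use Assumption~\ref{ass:ergodic} directly. Take $f=g_p$ in the $g_p$-norm bound:
\[
|\E[g_p(X_t,Z_t)\mid X_0,Z_0]-\mu(g_p)|\le C_p e^{-\varkappa_p t}g_p(X_0,Z_0).
\]
Since $\mu(g_p)$ is finite (take $f=g_p$ against $\mu$, or note that $\mu(g_p)\le \liminf_t P_t g_p$ by the same bound), integrating over the law of $X_0$ gives $\sup_t\E|X_t|^p\le \mu(g_p)+C_p\E g_p(X_0,Z_0)<\infty$, using Assumption~\ref{ass:ergodic}(ii).

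For (iii), fix $s$ and put $\tau=t-s\le 1$; larger $\tau$ follows from (i)-type bounds applied to the conditional law started at $X_s$, together with the Markov property and the linear growth of the coefficients. The three pieces of $X_t-X_s$ are handled separately:
\begin{itemize}
\item Drift: Hölder gives $\E_s|\int_s^t b|^p\le \tau^{p-1}\int_s^t\E_s(1+|X_r|^p)dr$.
\item Brownian part: BDG gives $\lesssim \tau^{p/2-1}\int_s^t\E_s(1+|X_r|^p)dr$.
\item Jump martingale: Kunita's inequality gives $\lesssim (\tau\int z^2\nu)^{p/2}+\tau\int|z|^p\nu$. The required moments are finite: near zero $\int|z|^p\nu<\infty$ for $p\ge2>\beta$, and at infinity by Assumption~\ref{ass:levy}. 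The regime is finite, so the constants are uniform.
\end{itemize}
Applying Gronwall to $\E_s|X_r|^p\le C(1+|X_s|^p)$ on $[s,t]$ then yields $\{\tau^p+\tau^{p/2}+\tau\}(1+|X_s|^p)$. The $\tau^{p/2}$ term is absorbed because $\tau^{p/2}\le\tau$ for $\tau\le1$. Part (ii) follows by taking expectations and using (i).

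For (iv), the strategy is to compare the Riemann sum with the time integral. Write
\[
\frac1n\sum_j f(X_{t_{j-1}},Z_{j-1})-\frac1{T_n}\int_0^{T_n}f(X_s,Z_s)ds=\frac1{T_n}\sum_j\int_{t_{j-1}}^{t_j}\{f(X_{t_{j-1}},Z_{t_{j-1}})-f(X_s,Z_s)\}ds.
\]
Split the integrand on the event $\{Z_s=Z_{t_{j-1}}\}$ and on its complement.
\begin{itemize}
\item On $\{Z_s=Z_{t_{j-1}}\}$: the mean value theorem with the polynomial envelope of $\partial_xf$, Cauchy--Schwarz, and (i)--(iii) bound the $L^1$ norm by $C(h_n+h_n)^{1/2}=O(h_n^{1/2})$.
\item On $\{Z_s\ne Z_{t_{j-1}}\}$: the probability is at most $Ch_n$ by bounded switching rates, and the integrand is bounded by the polynomial envelope. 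Hölder gives an $L^1$ bound of $O(h_n^{1-\epsilon})$.
\end{itemize}
Hence the difference is $o_p(1)$. Then \eqref{eq:ergodic_thm} gives the limit $\mu(f)$.

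I expect the main obstacle to be the uniform moment bound (i). It depends on correctly extracting $\mu(g_p)<\infty$ and the uniform-in-$t$ control from the weighted ergodicity assumption rather than from a Lyapunov argument. The continuous-time ergodic law \eqref{eq:ergodic_thm} is taken as given.
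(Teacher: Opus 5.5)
Your proposal is correct and follows the paper's proof essentially step by step. Part (i) comes from the weighted ergodic bound applied to $g_p$ and integrated against the initial law; (iii) from the drift/Brownian/jump decomposition with H\"older, Burkholder--Davis--Gundy and Kunita; (ii) by taking expectations; and (iv) by comparing the Riemann sum with the occupation integral and invoking \eqref{eq:ergodic_thm}. The differences are minor. The paper obtains the conditional moment bound $\E[1+|X_u|^p\mid\mathcal F_s]=P_{u-s}g_p(X_s,Z_s)\le C_p(1+|X_s|^p)$ for every lag directly from the Markov property, so it needs neither your Gronwall step nor the split at $\tau=1$. It also bounds the regime-switch term by conditioning on $\mathcal F_{t_{j-1}}$ to get $O(h_n)$, rather than your H\"older bound $O(h_n^{1-\epsilon})$.
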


	\begin{proof}[Proof of Lemma~\ref{lem:supp_moment_bounds}]
		The argument is analogous to the switching-diffusion moment and
		sampling-ergodic arguments in
		\citet[Chapter~2]{yin2009hybrid} and
		\citet[Lemma~6.1]{Yuzhong2025}. We give the details because the present
		model contains the additional compensated jump martingale.
		Parts~(i), (iii), and (iv) prove
		Lemma~\ref{lem:moment_ergodic_main}; part~(ii) is recorded
		because it is repeatedly used to pass from conditional to
		unconditional one-step bounds.

		\proofstep{Step 1: uniform state moments}
		Let \(g_p(x,i):=1+|x|^p\). Assumption~\ref{ass:ergodic}(i) entails
		\(\mu(g_p)<\infty\) and
		\[
			P_tg_p(x,i)
			\le \mu(g_p)+C_pe^{-\varkappa_pt}g_p(x,i)
			\le C_pg_p(x,i),\qquad t\ge0.
		\]
		Integrating this inequality with respect to the law of
		\((X_0,Z_0)\) and using Assumption~\ref{ass:ergodic}(ii) proves (i).
		Moreover, by the Markov property, for \(u\ge s\),
		\begin{equation}
			\label{eq:conditional_moment_lemma}
			\E\!\left[1+|X_u|^p\mid\mathcal F_s\right]
			=P_{u-s}g_p(X_s,Z_s)
			\le C_p(1+|X_s|^p).
		\end{equation}

		\proofstep{Step 2: conditional increment moments}
		Put \(\delta:=t-s\) and decompose
		\[
			X_t-X_s=B_{s,t}+M_{s,t}+J_{s,t},
		\]
		where
		\[
			B_{s,t}:=\int_s^t b(X_u,Z_u,\alpha^\star)\,du,\qquad
			M_{s,t}:=\int_s^t a(X_u,Z_u,\gamma^\star)\,dW_u,
		\]
		and
		\[
			J_{s,t}:=\sum_{i=1}^m\int_s^t\!\!\int_{\mathbb R}
			z\mathbf 1_{\{Z_{u-}=i\}}\,\widetilde N_i(du,dz).
		\]
		By Hölder's inequality, the linear-growth condition in
		Assumption~\ref{ass:smooth}, and
		\eqref{eq:conditional_moment_lemma},
		\[
			\E\!\left[|B_{s,t}|^p\mid\mathcal F_s\right]
			\le \delta^{p-1}\int_s^t
			\E\!\left[|b(X_u,Z_u,\alpha^\star)|^p\mid\mathcal F_s\right]du
			\le C_p\delta^p(1+|X_s|^p).
		\]
		The Burkholder--Davis--Gundy inequality and
		\eqref{eq:conditional_moment_lemma} give
		\[
			\E\!\left[|M_{s,t}|^p\mid\mathcal F_s\right]
			\le C_p\E\!\left[
			\left(\int_s^t|a(X_u,Z_u,\gamma^\star)|^2du\right)^{p/2}
			\biggm|\mathcal F_s\right]
			\le C_p\delta^{p/2}(1+|X_s|^p).
		\]
		For every \(i\in S\) and \(p\ge2\),
		Assumption~\ref{ass:levy}(i) and the defining property of a L\'evy
		measure imply
			$\int_{\mathbb R}|z|^p\nu_i^\star(dz)<\infty$.
		Hence Kunita's inequality yields
		\[
			\E\!\left[|J_{s,t}|^p\mid\mathcal F_s\right]
			\le C_p\left(\delta^{p/2}+\delta\right).
		\]
		Since \(\delta^{p/2}\le\delta+\delta^p\), the elementary inequality
		\(|x+y+z|^p\le C_p(|x|^p+|y|^p+|z|^p)\) gives
		\[
			\E\!\left[|X_t-X_s|^p\mid\mathcal F_s\right]
			\le C_p\{\delta^p+\delta\}(1+|X_s|^p),
		\]
		which proves (iii). Taking expectations and applying (i) proves (ii).

		\proofstep{Step 3: sampled ergodic approximation}
		Set
		\[
			A_n:=\frac1n\sum_{j=1}^n
			f(X_{t_{j-1}},Z_{j-1}),\qquad
			\bar A_n:=\frac1{T_n}\int_0^{T_n}f(X_u,Z_u)\,du.
		\]
		For \(u\in[t_{j-1},t_j]\), the mean-value formula, Hölder's inequality,
		(i), and (ii) imply
		\[
			\E\!\left[
			\left|f(X_u,Z_u)-f(X_{t_{j-1}},Z_u)\right|
			\right]
			\le C|u-t_{j-1}|^{1/2}.
		\]
		Since \(S\) is finite and \(f\) has polynomial growth,
		\[
			\left|f(X_{t_{j-1}},Z_u)
			-f(X_{t_{j-1}},Z_{j-1})\right|
			\le C(1+|X_{t_{j-1}}|^C)
			\mathbf 1_{\{Z_u\ne Z_{j-1}\}}.
		\]
		The finite-state Markov-chain small-time transition expansion gives,
		uniformly in \(i\in S\),
		\[
			\mathbb P\!\left(Z_u\ne i\mid Z_{j-1}=i\right)
			\le C|u-t_{j-1}|.
		\]
		Consequently, by (i),
		\[
			\E\!\left[
			\left|f(X_{t_{j-1}},Z_u)
			-f(X_{t_{j-1}},Z_{j-1})\right|
			\right]
			\le C|u-t_{j-1}|.
		\]
		Combining the preceding estimates,
		\[
			\E\!\left[|A_n-\bar A_n|\right]
			\le \frac{C}{T_n}\sum_{j=1}^n
			\int_{t_{j-1}}^{t_j}
			\left\{|u-t_{j-1}|^{1/2}+|u-t_{j-1}|\right\}du
			\le C(h_n^{1/2}+h_n)\to0.
		\]
		Thus \(A_n-\bar A_n\to0\) in \(L^1\). The continuous-time
		ergodic theorem from Assumption~\ref{ass:ergodic} gives
		\(\bar A_n\cip\int_{\mathbb R\times S}f\,d\mu\), and (iv) follows.
	\end{proof}

	Let \(L\ge1\), let
	\(\mathcal O_\ell\subset\mathbb R^{d_\ell}\), \(1\le\ell\le L\),
	be bounded open domains with Lipschitz boundaries, and put
	\[
		\mathcal O_\vartheta:=\prod_{\ell=1}^L\mathcal O_\ell,
		\qquad
		\Theta_\vartheta:=\overline{\mathcal O_\vartheta}
		=\prod_{\ell=1}^L\overline{\mathcal O_\ell},
		\qquad
		d_\vartheta:=\sum_{\ell=1}^Ld_\ell.
	\]

	\begin{prop}[Uniform ergodic law]
		\label{prop:supp_uniform_ergodic}
		In either Case FV or Case IV, suppose
		Assumptions~\ref{ass:smooth}, \ref{ass:levy}, and
		\ref{ass:ergodic} hold. Let
		\(f:\mathbb R\times S\times\Theta_\vartheta\to\mathbb R\).
		Enumerate the scalar coordinates of \(\vartheta\) and write
		\(\partial_k\) for the derivative in coordinate \(k\). Suppose that
		\(f\) is continuous on
		\(\mathbb R\times S\times\Theta_\vartheta\), that its first coordinate
		derivatives \(\partial_1f,\ldots,\partial_{d_\vartheta}f\) exist on
		\(\mathbb R\times S\times\mathcal O_\vartheta\) and extend continuously
		to \(\mathbb R\times S\times\Theta_\vartheta\), that the \(x\)-derivatives
		through order two of \(f\) and these extensions exist and are continuous,
		and that all these functions have a common polynomial envelope. Then
		\[
			\sup_{\vartheta\in\Theta_\vartheta}
			\left|
			\frac1n\sum_{j=1}^n
			f(X_{t_{j-1}},Z_{j-1},\vartheta)
			-\int_{\mathbb R\times S}
			f(x,i,\vartheta)\,\mu(dx,di)
			\right|\cip0.
		\]
		The same conclusion holds coordinatewise for finite-dimensional
		vector- and matrix-valued \(f\).
		If, in addition, Assumption~\ref{ass:trunc} holds in Case FV or
		Assumption~\ref{ass:iv_sampling} holds in Case IV, then every
		predictable polynomial envelope \(R_{j-1}\) satisfies
		\begin{equation}
			\frac1n\sum_{j=1}^n(1-I_{n,j})R_{j-1}\cip0.
			\label{eq:supp_uniform_retention_removal}
		\end{equation}
		Consequently,
		\begin{equation}
			\sup_{\vartheta\in\Theta_\vartheta}
			\left|
			\frac1n\sum_{j=1}^n I_{n,j}
			f(X_{t_{j-1}},Z_{j-1},\vartheta)
			-
			\frac1n\sum_{j=1}^n
			f(X_{t_{j-1}},Z_{j-1},\vartheta)
			\right|\cip0.
			\label{eq:supp_uniform_retained_ergodic}
		\end{equation}
	\end{prop}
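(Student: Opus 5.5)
The plan has three parts: pointwise convergence via the sampled ergodic law, a chaining step in \(\vartheta\) via Sobolev embedding, and retention removal via a Markov-inequality bound on \(\E_{j-1}[1-I_{n,j}]\).

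\emph{Pointwise convergence.} Fix \(\vartheta\in\Theta_\vartheta\). Lemma~\ref{lem:supp_moment_bounds}(iv) applies to \(x\mapsto f(x,i,\vartheta)\), since \(f\) and \(\partial_xf\) have a common polynomial envelope. Put
\[
\mathcal E_n(\vartheta):=\frac1n\sum_{j=1}^nf(X_{t_{j-1}},Z_{j-1},\vartheta)-\int f(x,i,\vartheta)\,\mu(dx,di).
\]
Then \(\mathcal E_n(\vartheta)\cip0\). The same lemma applied to each continuously extended derivative \(\partial_kf(\cdot,\cdot,\vartheta)\) gives pointwise convergence of \(\partial_k\mathcal E_n(\vartheta)\). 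Differentiation under the \(\mu\)-integral is justified by dominated convergence, using the polynomial envelope and \(\mu(g_q)<\infty\) from Assumption~\ref{ass:ergodic}.

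\emph{Uniformity in \(\vartheta\).} Choose \(p>d_\vartheta\). The domain \(\mathcal O_\vartheta\) is a bounded product of Lipschitz domains, hence itself bounded and Lipschitz, so the Sobolev embedding \(W^{1,p}(\mathcal O_\vartheta)\hookrightarrow C(\overline{\mathcal O_\vartheta})\) gives
\[
\sup_{\vartheta\in\Theta_\vartheta}|\mathcal E_n(\vartheta)|\le C\Big\{\int_{\mathcal O_\vartheta}|\mathcal E_n|^p\,d\vartheta+\sum_k\int_{\mathcal O_\vartheta}|\partial_k\mathcal E_n|^p\,d\vartheta\Big\}^{1/p}.
\]
Here \(\mathcal E_n\) is continuous on the closure, so the continuous representative equals \(\mathcal E_n\) itself. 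It therefore suffices to show \(\E\int|\partial^{\le1}\mathcal E_n(\vartheta)|^pd\vartheta\to0\). By Fubini, this reduces to \(\E|\partial^{\le1}\mathcal E_n(\vartheta)|^p\to0\) for each \(\vartheta\), together with a uniform bound \(\sup_{n,\vartheta}\E|\partial^{\le1}\mathcal E_n(\vartheta)|^{2p}<\infty\) to apply dominated convergence in \(\vartheta\). The uniform bound follows from Jensen's inequality, the envelope, and Lemma~\ref{lem:supp_moment_bounds}(i). Convergence in probability plus this uniform integrability upgrades to \(L^p\) convergence. Vector and matrix \(f\) are handled coordinatewise.

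\emph{Retention removal.} To prove \eqref{eq:supp_uniform_retention_removal} I will bound its expectation. By conditional Markov,
\[
\E_{j-1}[1-I_{n,j}]=\Pb_{j-1}(|\Delta_jX|>u_n)\le u_n^{-p}\E_{j-1}|\Delta_jX|^p\lesssim u_n^{-p}\{h_n^p+h_n\}R_{j-1},
\]
using Lemma~\ref{lem:supp_moment_bounds}(iii). With \(p=2\) this is \(\lesssim(h_n/u_n^2)R_{j-1}\), and \(h_n/u_n^2\to0\) in both cases: \(\rho<1/2\) in Case FV and \(\omega<1/2\) in Case IV, per Lemma~\ref{lem:supp_fv_rate_audit} and Table~\ref{tab:supp_iv_rate_audit}. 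By the tower property,
\[
\E\frac1n\sum_j(1-I_{n,j})R_{j-1}\lesssim\frac{h_n}{u_n^2}\sup_j\E R_{j-1}^2\to0,
\]
and Markov's inequality yields \eqref{eq:supp_uniform_retention_removal}. Finally, \eqref{eq:supp_uniform_retained_ergodic} follows because the left side is bounded by \(\frac1n\sum_j(1-I_{n,j})\sup_\vartheta|f(X_{t_{j-1}},Z_{j-1},\vartheta)|\le\frac1n\sum_j(1-I_{n,j})R_{j-1}\).

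The main obstacle is the Sobolev step: \(\mathcal E_n\) and its parameter derivatives must be measurable and jointly integrable in \((\omega,\vartheta)\), and the moment bounds must be uniform over \(\vartheta\in\Theta_\vartheta\). Both follow from continuity of \(f\) and its derivatives and from the common envelope.
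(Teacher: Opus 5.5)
Your proposal is correct and follows essentially the same route as the paper. Both arguments obtain pointwise convergence of the average and of its first parameter derivatives from part~(iv) of the moment lemma, use uniform \(2q\)-moment bounds for uniform integrability and Fubini to get \(\E\|A_n\|_{W^{1,q}}^q\to0\), and then apply the Sobolev embedding together with the identification of the continuous representative; retention removal goes through Markov's inequality in both. The only difference is cosmetic: you bound \(\Pb_{j-1}(|\Delta_jX|>u_n)\) conditionally and use the tower property, which gives rate \(h_n/u_n^2\), whereas the paper bounds the unconditional probability and applies Cauchy--Schwarz, which gives rate \((h_n/u_n^2)^{1/2}\).
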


	\begin{proof}[Proof of Proposition~\ref{prop:supp_uniform_ergodic}]
		Proposition~\ref{prop:uniform_ergodic_main} is obtained by a
		first-order Sobolev upgrade of
		Lemma~\ref{lem:moment_ergodic_main}. Fix \(q>d_\vartheta\) and
		write \(f_k\) for the continuous extension of \(\partial_kf\) to
		\(\mathbb R\times S\times\Theta_\vartheta\), \(1\le k\le d_\vartheta\).
		Put \(f_0:=f\) and
		\[
			\mu f_k(\vartheta)
			:=\int_{\mathbb R\times S}f_k(x,i,\vartheta)\,\mu(dx,di),
			\qquad 0\le k\le d_\vartheta.
		\]
		The common polynomial envelope and
		Assumption~\ref{ass:ergodic} imply that every
		\(\mu f_k(\vartheta)\) is finite. The dominated convergence
		theorem gives
		\[
			\partial_k
			\int_{\mathbb R\times S}f(x,i,\vartheta)\,\mu(dx,di)
			=\mu f_k(\vartheta),
			\qquad \vartheta\in\mathcal O_\vartheta,
			\quad 1\le k\le d_\vartheta.
		\]

		Define
		\[
			A_n(\vartheta)
			:=
			\frac1n\sum_{j=1}^n
			f(X_{t_{j-1}},Z_{j-1},\vartheta)
			-\int_{\mathbb R\times S}
			f(x,i,\vartheta)\,\mu(dx,di).
		\]
		For \(1\le k\le d_\vartheta\), also put
		\[
			B_{n,k}(\vartheta)
			:=\frac1n\sum_{j=1}^n
			f_k(X_{t_{j-1}},Z_{j-1},\vartheta)
			-\mu f_k(\vartheta),
			\qquad \vartheta\in\Theta_\vartheta.
		\]
		For each fixed \(\vartheta\in\Theta_\vartheta\), part~(iv) of
		Lemma~\ref{lem:supp_moment_bounds}, applied successively to
		\(f_0(\cdot,\cdot,\vartheta)\) and
		\(f_k(\cdot,\cdot,\vartheta)\), yields
		\[
			A_n(\vartheta)\cip0,
			\qquad
			B_{n,k}(\vartheta)\cip0,
			\qquad 1\le k\le d_\vartheta.
		\]
		For \(\vartheta\in\mathcal O_\vartheta\), differentiation of the
		finite sum and the preceding dominated-differentiation identity give
		\(\partial_kA_n(\vartheta)=B_{n,k}(\vartheta)\).
		The asserted \(x\)-regularity verifies, in particular, the
		hypotheses of Lemma~\ref{lem:supp_moment_bounds}(iv).

		Let \(G(x):=C(1+|x|^C)\) be a common envelope for the
		\(f_k\)'s, including \(f_0\). Jensen's inequality and
		Lemma~\ref{lem:supp_moment_bounds}(i), used with any moment exponent
		\(r>\max\{2qC,2\}\), give
		\[
			\sup_{n\ge1}\sup_{\vartheta\in\Theta_\vartheta}
			\E|A_n(\vartheta)|^{2q}\lesssim1,
			\qquad
			\sup_{n\ge1}\sup_{\vartheta\in\Theta_\vartheta}
			\E|B_{n,k}(\vartheta)|^{2q}\lesssim1,
			\quad 1\le k\le d_\vartheta.
		\]
		Thus the \(q\)-th powers are uniformly integrable. The preceding
		pointwise convergence in probability implies
		\[
			\E|A_n(\vartheta)|^q\to0,
			\qquad
			\E|B_{n,k}(\vartheta)|^q\to0,
			\quad 1\le k\le d_\vartheta.
		\]
		The uniform \(2q\)-moment bound, Fubini's theorem, and the dominated
		convergence theorem now yield
		\[
			\E\!\left[
			\|A_n\|_{W^{1,q}(\mathcal O_\vartheta)}^q\right]
			=
			\int_{\mathcal O_\vartheta}
			\left\{\E|A_n(\vartheta)|^q
			+\sum_{k=1}^{d_\vartheta}
			\E|B_{n,k}(\vartheta)|^q\right\}d\vartheta
			\to0.
		\]

		The product \(\mathcal O_\vartheta\subset\mathbb R^{d_\vartheta}\)
		is a bounded open Lipschitz domain. Since \(q>d_\vartheta\), the
		Sobolev embedding theorem gives a continuous representative
		\(\widetilde A_n\in C(\Theta_\vartheta)\) satisfying
		\[
			\|\widetilde A_n\|_{C(\Theta_\vartheta)}
			\lesssim
			\|A_n\|_{W^{1,q}(\mathcal O_\vartheta)}.
		\]
		The definition of \(A_n\), the continuity assumptions on \(f\), and
		the dominated convergence theorem show that \(A_n\) is continuous on
		\(\Theta_\vartheta\). On \(\mathcal O_\vartheta\), it agrees almost
		everywhere with \(\widetilde A_n\); since both functions are continuous
		there, they agree everywhere on \(\mathcal O_\vartheta\), and hence on
		\(\Theta_\vartheta=\overline{\mathcal O_\vartheta}\). Consequently,
		taking \(q\)-th moments proves
		\[
			\E\!\left[
			\sup_{\vartheta\in\Theta_\vartheta}|A_n(\vartheta)|^q\right]
			\to0,
		\]
		and hence the asserted uniform convergence in probability.

		It remains to prove the retained-average conclusions. Put
		\(A_{n,j}:=\{|\Delta_jX|>u_n\}\), so that
		\(1-I_{n,j}=\mathbf1_{A_{n,j}}\). By Markov's inequality and
		Lemma~\ref{lem:supp_moment_bounds}(ii),
		\begin{equation}
			\sup_{j\le n}\Pb(A_{n,j})
			\le \frac1{u_n^2}\sup_{j\le n}\E|\Delta_jX|^2
			\lesssim\frac{h_n}{u_n^2}.
			\label{eq:supp_large_increment_probability}
		\end{equation}
		In Case FV, Assumption~\ref{ass:trunc} gives
		\(u_n=c_{\rm FV}h_n^\rho\) with \(\rho<1/2\); in Case IV,
		Assumption~\ref{ass:iv_sampling} gives
		\(u_n=c_{\rm IV}h_n^\omega\) with \(\omega<1/2\). Thus
		\(h_n/u_n^2\to0\) in either case. For any predictable polynomial
		envelope \(R_{j-1}\), the Cauchy--Schwarz inequality,
		\eqref{eq:supp_large_increment_probability}, and
		Lemma~\ref{lem:supp_moment_bounds}(i) yield
		\begin{align}
			\E\!\left[\frac1n\sum_{j=1}^n
			(1-I_{n,j})R_{j-1}\right]
			&\le \frac1n\sum_{j=1}^n
			\Pb(A_{n,j})^{1/2}\{\E R_{j-1}^2\}^{1/2}\nonumber\\
			&\lesssim \left(\frac{h_n}{u_n^2}\right)^{1/2}\longrightarrow0.
			\label{eq:supp_retention_envelope_l1}
		\end{align}
		This proves \eqref{eq:supp_uniform_retention_removal} in \(L^1\).
		Since the common polynomial envelope of \(f\) can be absorbed into
		\(R_{j-1}\),
		\begin{align*}
			&\sup_{\vartheta\in\Theta_\vartheta}
			\left|\frac1n\sum_{j=1}^n(I_{n,j}-1)
			f(X_{t_{j-1}},Z_{j-1},\vartheta)\right|\\
			&\qquad\le\frac1n\sum_{j=1}^n
			(1-I_{n,j})R_{j-1}\cip0,
		\end{align*}
		which proves \eqref{eq:supp_uniform_retained_ergodic}.

		Applying the scalar argument to each coordinate proves the vector- and
		matrix-valued versions. In particular, the Sobolev argument never
		differentiates the integrand in more than one parameter coordinate at
		a time; it requires no higher-order mixed derivative whose order
		depends on \(d_\vartheta\).
	\end{proof}

Define
\[
	\Omega_n^{\rm ns}
	:=\{A_{i,j}=\Gamma_{i,j}\text{ for every }i\in S,\ j\le n\}.
\]

\begin{lem}[Endpoint and exposure law]
	\label{lem:supp_endpoint_exposure}
	Uniformly over \(i\in S\) and \(j\le n\),
	\begin{equation}
		\Pb_{j-1}(A_{i,j}\ne\Gamma_{i,j})\le Ch_n^2.
		\label{eq:supp_endpoint_no_switch_local}
	\end{equation}
	Consequently,
	\begin{equation}
		\Pb\{(\Omega_n^{\rm ns})^c\}\le Cnh_n^2=o(1),
		\label{eq:supp_endpoint_no_switch_global}
	\end{equation}
	and, on \(\Omega_n^{\rm ns}\), simultaneously for every \(i\in S\),
	\begin{equation}
		A_{i,j}=\Gamma_{i,j}\quad(j\le n),
		\qquad T_{i,n}=T_{i,n}^{\circ}.
		\label{eq:supp_endpoint_exact_identification}
	\end{equation}
	If Assumption~\ref{ass:ergodic} also holds, then, uniformly over
	\(i\in S\),
	\begin{equation}
		\frac{T_{i,n}}{T_n}\cip\pi_i,
		\qquad
		\frac{T_{i,n}^{\circ}}{T_n}\cip\pi_i.
		\label{main-eq:Tin_convergence}
	\end{equation}
\end{lem}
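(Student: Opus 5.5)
The local bound \eqref{eq:supp_endpoint_no_switch_local} comes from a path argument for the finite-state chain, the global bound from a union bound, and the exposure law from the continuous-time occupation theorem.

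\emph{Local bound.} Since \(\Gamma_{i,j}=1\) forces \(Z_{j-1}=Z_j=i\), we have \(\Gamma_{i,j}\le A_{i,j}\). Hence
\[
\{A_{i,j}\ne\Gamma_{i,j}\}=\{A_{i,j}=1,\ \Gamma_{i,j}=0\}.
\]
On this event the chain starts at \(i\), leaves \(i\) at some time in \((t_{j-1},t_j]\), and is back at \(i\) at time \(t_j\). So it makes at least two transitions in an interval of length \(h_n\). By the Markov property and independence of \(Z\) from \(W,N_i,X_0\), \(\Pb_{j-1}\) of this event equals \(\Pb(N^Z_{[0,h_n]}\ge2\mid Z_0=i)\).

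To bound this, I would use uniformization. Let \(\lambda:=\max_k|q_{kk}|\). Realize \(Z\) by subordinating a discrete chain to a Poisson clock of rate \(\lambda\). Every genuine transition occurs at a clock ring, so
\[
\Pb(\text{at least two transitions in }[0,h_n])\le\Pb(\mathrm{Poi}(\lambda h_n)\ge2)\le(\lambda h_n)^2/2.
\]
Alternatively, one can use the first-jump decomposition directly: the first holding time has rate \(|q_{ii}|\le\lambda\), and the second exponential holding time must also fall inside the interval. Either route gives \(Ch_n^2\) with \(C\) uniform in \(i,j\), because \(S\) is finite.

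\emph{Global bound and identification.} Taking expectations and summing over the \(mn\) pairs \((i,j)\) gives
\[
\Pb\{(\Omega_n^{\rm ns})^c\}\le Cmnh_n^2\to0,
\]
using \(nh_n^2\to0\). On \(\Omega_n^{\rm ns}\) we have \(A_{i,j}=\Gamma_{i,j}\) for all \(i,j\), and then \(T_{i,n}=T_{i,n}^\circ\) holds by definition. This gives \eqref{eq:supp_endpoint_exact_identification}.

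\emph{Exposure law.} Apply \eqref{eq:ergodic_thm} with \(f(x,k)=\mathbf 1_{\{k=i\}}\), which is bounded. This gives
\[
\Lambda_{i,n}/T_n:=T_n^{-1}\int_0^{T_n}\mathbf 1_{\{Z_s=i\}}\,ds\cip\pi_i.
\]
Next compare \(T^\circ_{i,n}\) with \(\Lambda_{i,n}\). Their difference is bounded by \(h_n\) times the number of intervals containing a transition:
\[
|\Lambda_{i,n}-T^\circ_{i,n}|\le h_n\sum_j\mathbf 1_{\{N^Z_{j,n}\ge1\}}.
\]
The expectation of the right-hand side is at most \(h_n\cdot n\cdot Ch_n=CT_nh_n\). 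Dividing by \(T_n\) gives \(O(h_n)\to0\) in \(L^1\), hence \(T^\circ_{i,n}/T_n\cip\pi_i\). Finally, \(T_{i,n}/T_n\) agrees with \(T^\circ_{i,n}/T_n\) on \(\Omega_n^{\rm ns}\), whose probability tends to one, so the first limit in \eqref{main-eq:Tin_convergence} follows as well. Uniformity over \(i\) is automatic because \(S\) is finite.

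\emph{Main obstacle.} The only delicate point is making rigorous that the conditional probability given \(\mathcal F_{t_{j-1}}\) depends only on \(Z_{j-1}\). This needs the joint Markov property of \((X,Z)\) together with exogeneity of \(Z\), i.e.\ its independence from the driving noises and \(X_0\). With that in hand, the remaining steps are routine.
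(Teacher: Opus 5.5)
Your proposal is correct and follows the paper's proof essentially step for step. It uses the same leave-and-return/uniformization bound, the same union bound using \(nh_n^2\to0\), and the same comparison of an exposure with \(\Lambda_{i,n}\) through \(h_n\sum_j\mathbf 1_{\{N^Z_{j,n}\ge1\}}\). The only cosmetic difference is that you compare \(T^{\circ}_{i,n}\) with \(\Lambda_{i,n}\) and then transfer to \(T_{i,n}\) on \(\Omega_n^{\rm ns}\), whereas the paper compares \(T_{i,n}\) directly and transfers to \(T^{\circ}_{i,n}\); either order works.
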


\begin{proof}[Proof of Lemma~\ref{lem:supp_endpoint_exposure}]
The proof of Lemma~\ref{lem:endpoint_exposure_main} has two
separate components. Finite-state-chain uniformization shows that
endpoint equality differs from no switching only through a
leave-and-return interval, while the continuous occupation law
identifies the limiting regime exposure. The condition \(nh_n^2\to0\)
makes the first replacement simultaneous over the complete sample.

\(\Pb_{j-1}\) denotes conditional probability given
\(\mathcal F_{t_{j-1}}\).  Let
\(\bar q:=\max_{k\in S}(-q_{kk})\).  Conditional on
\(\mathcal F_{t_{j-1}}\), uniformization dominates the number
\(N_{j,n}^Z\) of transitions on \((t_{j-1},t_j]\) by a Poisson
random variable with mean \(\bar qh_n\).  The event
\(\{A_{i,j}\ne\Gamma_{i,j}\}\) requires the chain to start and end in
state \(i\), but to leave that state in between; it therefore requires
at least two transitions.  Hence
\[
	\Pb_{j-1}(A_{i,j}\ne\Gamma_{i,j})
	\le\Pb\{\operatorname{Pois}(\bar qh_n)\ge2\}
	\le Ch_n^2,
\]
which proves \eqref{eq:supp_endpoint_no_switch_local}.  The union bound
over the finite regime set and all observation intervals proves
\eqref{eq:supp_endpoint_no_switch_global};
\eqref{eq:supp_endpoint_exact_identification} follows directly from
the definition of \(\Omega_n^{\rm ns}\).

Fix \(i\in S\) and put
\[
	\Lambda_{i,n}:=\int_0^{T_n}\mathbf1_{\{Z_{s-}=i\}}\,ds.
\]
If \(N_{j,n}^Z=0\), then
\[
	h_nA_{i,j}
	=\int_{t_{j-1}}^{t_j}\mathbf1_{\{Z_{s-}=i\}}\,ds.
\]
On every interval the absolute difference between these two quantities is
at most \(h_n\). Consequently,
\[
	\left|T_{i,n}-\Lambda_{i,n}\right|
	\le h_n\sum_{j=1}^n\mathbf1_{\{N_{j,n}^Z\ge1\}}.
\]
More generally, for \(s\in[t_{j-1},t_j]\), let \(N_{j,s}^Z\) denote
the number of transitions on \((t_{j-1},s]\), so
\(N_{j,t_j}^Z=N_{j,n}^Z\). Uniformization gives
\begin{equation}
	\Pb(N_{j,s}^Z\ge1\mid\mathcal F_{t_{j-1}})
	\le \bar q(s-t_{j-1}).
	\label{eq:supp_one_transition_probability}
\end{equation}
Therefore
\[
	\E\left[\frac{|T_{i,n}-\Lambda_{i,n}|}{T_n}\right]
	\le \frac{h_n}{T_n}\sum_{j=1}^n
	\Pb(N_{j,n}^Z\ge1)
	\le \bar qh_n\to0.
\]
Thus \((T_{i,n}-\Lambda_{i,n})/T_n\to0\) in \(L^1\). Applying the
continuous-time ergodic theorem in
Assumption~\ref{ass:ergodic} to
\(f(x,k)=\mathbf1_{\{k=i\}}\) yields
\(\Lambda_{i,n}/T_n\cip\pi_i\), and hence
\(T_{i,n}/T_n\cip\pi_i\).  Finally,
\eqref{eq:supp_endpoint_no_switch_global} and
\eqref{eq:supp_endpoint_exact_identification} give the same convergence
for \(T_{i,n}^{\circ}\). Since \(S\) is finite, both convergences are
uniform over \(i\in S\).
\end{proof}

\section{Finite-variation local estimates}
\label{sec:supp_fv_local_estimates}

This section proves Appendix
Lemma~\ref{lem:fv_conditional_bridge}. Its starting point is the
zero-, one-, and multiple-large-jump decomposition underlying
threshold quasi-likelihood methods for jump diffusions; see
\citet{shimizu2006estimation} and \citet{ogihara2011quasi}. The
calculations below adapt that device to regime-dependent
infinite-activity finite-variation jumps, retain the compensator needed
for the drift score, and derive first through fourth conditional moments
at the rates required by the main parametric and studentization proofs.

	\begin{lem}
		\label{lem:truncation}
		In Case FV, suppose Assumptions~\ref{ass:smooth},
		\ref{ass:levy}, \ref{ass:fv}, \ref{ass:trunc},
		and~\ref{ass:ergodic} hold.
		Let $G_{j-1}(\theta)$ be $\mathcal{F}_{t_{j-1}}$-measurable random
		variables such that
		$\sup_{\theta\in\Theta}|G_{j-1}(\theta)|
		\le C\big(1+|X_{t_{j-1}}|\big)^{C}$.
		Then, as $n\to\infty$,
		\[
			\sup_{\theta\in\Theta}
			\bigg|
			\frac{1}{n}\sum_{j=1}^n
			\mathbf{1}_{\{|\Delta_j X|>u_n\}}\,G_{j-1}(\theta)
			\bigg|
			\cip 0.
		\]
	\end{lem}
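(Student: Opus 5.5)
The supremum over \(\theta\) is removed at once by the envelope. Since \(\sup_{\theta}|G_{j-1}(\theta)|\le C(1+|X_{t_{j-1}}|)^C=:R_{j-1}\) is a predictable polynomial envelope,
\[
	\sup_{\theta\in\Theta}\bigg|\frac1n\sum_{j=1}^n\mathbf1_{\{|\Delta_jX|>u_n\}}G_{j-1}(\theta)\bigg|
	\le\frac1n\sum_{j=1}^n\mathbf1_{\{|\Delta_jX|>u_n\}}R_{j-1}.
\]
The right-hand side does not depend on \(\theta\). It therefore suffices to show that this nonnegative average tends to zero in \(L^1\), and Markov's inequality then gives convergence in probability. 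This is exactly the retention-removal statement \eqref{eq:supp_uniform_retention_removal} of Proposition~\ref{prop:supp_uniform_ergodic} under Assumption~\ref{ass:trunc}. To keep the argument self-contained, I would reproduce its short proof.

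For the \(L^1\) bound, take expectations and apply the Cauchy--Schwarz inequality termwise:
\[
	\E\bigg[\frac1n\sum_{j=1}^n\mathbf1_{\{|\Delta_jX|>u_n\}}R_{j-1}\bigg]
	\le\frac1n\sum_{j=1}^n\Pb(|\Delta_jX|>u_n)^{1/2}\{\E R_{j-1}^2\}^{1/2}.
\]
Lemma~\ref{lem:supp_moment_bounds}(i) gives \(\sup_j\E R_{j-1}^2<\infty\), because every polynomial moment of \(X_t\) is bounded uniformly in \(t\). Chebyshev's inequality together with Lemma~\ref{lem:supp_moment_bounds}(ii) with \(p=2\) gives
\[
	\Pb(|\Delta_jX|>u_n)\le u_n^{-2}\E|\Delta_jX|^2\lesssim(h_n^2+h_n)u_n^{-2}\lesssim h_n/u_n^2,
\]
uniformly in \(j\).

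The only point to check is that \(h_n/u_n^2\to0\). Since \(u_n=c_{\rm FV}h_n^\rho\), we have \(h_n/u_n^2\asymp h_n^{1-2\rho}\). Assumption~\ref{ass:trunc} gives \(\rho<(q_0-3)/\{2(q_0-2)\}<1/2\), so \(1-2\rho>0\) and this ratio tends to zero. The expectation above is therefore \(O((h_n/u_n^2)^{1/2})=O(h_n^{1/2-\rho})\to0\), which proves the claim.

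There is no substantive obstacle here. The second-moment bound is crude: in Case FV the jump contribution to \(\E|\Delta_jX|^2\) is of order \(h_n\), the same order as the Brownian part. It is nevertheless enough, because only the probability of exceeding the threshold is needed, with no rate. A sharper detection-probability bound such as \(h_n\bar\lambda_n+h_n^{q_0/2}u_n^{-q_0}\) from Step~1 of the tail-functional proof is therefore unnecessary.
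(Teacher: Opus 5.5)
Your proposal is correct and follows the paper's own argument. The paper likewise dominates the supremum by \(n^{-1}\sum_j(1-I_{n,j})R_{j-1}\) and invokes the retention-removal bound \eqref{eq:supp_uniform_retention_removal}. Its \(L^1\) proof is exactly your Chebyshev estimate \(\Pb(|\Delta_jX|>u_n)\lesssim h_n/u_n^2\), combined with Cauchy--Schwarz, the uniform moment bound, and \(h_n/u_n^2\asymp h_n^{1-2\rho}\to0\) from \(\rho<1/2\).
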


	\begin{proof}
		The left-hand side is bounded by a constant multiple of
		\(n^{-1}\sum_j(1-I_{n,j})R_{j-1}\). Therefore the conclusion is
		exactly \eqref{eq:supp_uniform_retention_removal}, whose \(L^1\)
		proof is \eqref{eq:supp_large_increment_probability}--%
		\eqref{eq:supp_retention_envelope_l1}.
	\end{proof}

	%%%%%%%%%%%
	%%%%%%%%%%%

	%%%%%%%%%
	%%%%%%%%%

	For the remaining FV lemmas, abbreviate
	\(\F_{j-1}:=\F_{t_{j-1}}\). We first record the one-step
	decomposition and the estimates used
	repeatedly below.
	Define
	\begin{align*}
		&U_{j}
		:= \Delta_j X - h_n b\big(X_{t_{j-1}},Z_{j-1},\alpha^\star\big),
		\qquad B_j :=\int_{t_{j-1}}^{t_j}
		\Big\{b(X_s,Z_s,\alpha^\star)-b(X_{t_{j-1}},Z_{j-1},\alpha^\star)\Big\}\,ds,
		\\
		&C_j
		:=\int_{t_{j-1}}^{t_j} a(X_s,Z_s,\gamma^\star)\,dW_s,
		\qquad J_j:=\sum_{i=1}^m \int_{t_{j-1}}^{t_j}\int_{\mathbb{R}}
		z\,\mathbf 1_{\{Z_{s-}=i\}}\,\tilde N_i(ds,dz),
	\end{align*}
	and split $J_j=J_j^{\mathrm{s}}+J_j^{\mathrm{l}}$ where the integrals are over
	$|z|\le u_n$ and $|z|>u_n$, respectively. Then
	\begin{equation}\label{eq:U_decomp_used}
		U_j=B_j+C_j+J_j^{\mathrm{s}}+J_j^{\mathrm{l}}.
	\end{equation}

	Set \(A_{n,j}:=\{|\Delta_jX|>u_n\}\), so that
	\(I_{n,j}=\mathbf 1_{A_{n,j}^c}\). Chebyshev's inequality and
	Lemma~\ref{lem:supp_moment_bounds} give
	\begin{equation}\label{eq:P_A_used}
		\Pb\big(A_{n,j}\mid\F_{j-1}\big)
		\le\frac{\E_{j-1}[|\Delta_jX|^2]}{u_n^2}
		\lesssim\frac{h_n}{u_n^2}\,R_{j-1}.
	\end{equation}

	For the large-jump component, let
	\[
	N_{n,j}:=\sum_{i=1}^m\int_{t_{j-1}}^{t_j}\!\!\int_{|z|>u_n}
	\mathbf 1_{\{Z_{s-}=i\}}\,N_i(ds,dz)
	\]
	denote the number of jumps of size \(|z|>u_n\) on \((t_{j-1},t_j]\), put
	\(\bar\nu_n:=\sup_{i\in S}\nu_i^\star(\{|z|>u_n\})\) and
	\(E_{n,j}:=\{I_{n,j}=1,\,N_{n,j}\ge1\}\). With
	\(\mathcal G_j:=\sigma(Z_s:s\in[t_{j-1},t_j])\) and
	\(\mathcal H_j:=\sigma(\F_{j-1},\mathcal G_j)\supset\F_{j-1}\), conditionally on
	\(\mathcal H_j\) the count \(N_{n,j}\) is Poisson with mean
	\(\int_{t_{j-1}}^{t_j}\int_{|z|>u_n}\nu_{Z_{s-}}^\star(dz)\,ds\le h_n\bar\nu_n\);
	hence \(\Pb(N_{n,j}\ge1\mid\mathcal H_j)\lesssim h_n\bar\nu_n\) and
	\(\Pb(N_{n,j}\ge2\mid\mathcal H_j)\lesssim h_n^2\bar\nu_n^2\). Taking
	\(\E_{j-1}\) and applying \eqref{main-eq:trunc-levy-tail} with \(q=q_0\),
	\[
	\Pb(N_{n,j}\ge1\mid\F_{j-1})
	=\E_{j-1}\big[\Pb(N_{n,j}\ge1\mid\mathcal H_j)\big]
	\lesssim h_n\bar\nu_n
	\lesssim h_n^{q_0/2}u_n^{-q_0}.
	\]
	Moreover, for \(K_{n,j}:=\int_{t_{j-1}}^{t_j}\int_{|z|>u_n}z\,\nu_{Z_{s-}}^\star(dz)\,ds\),
	the bound \(|z|\le z^2/u_n\) on \(\{|z|>u_n\}\) and Assumption~\ref{ass:levy}
	yield \(|K_{n,j}|\le Ch_n/u_n\).

	We first certify the exact coefficient in the continuous fourth
	moment. For \(s\in[t_{j-1},t_j]\), put
	\[
		\tau_s:=s-t_{j-1},\qquad
		a_s^\star:=a(X_s,Z_s,\gamma^\star),\qquad
		C_{j,s}:=\int_{t_{j-1}}^s a_r^\star\,dW_r,
	\]
	so that \(C_{j,t_j}=C_j\) and
	\(a_{t_{j-1}}^\star=a_{j-1}^\star\). Let
	\(\mathcal T_{j,s}:=\{Z_r=Z_{j-1}
	\text{ for every }r\in[t_{j-1},s]\}\). The Lipschitz and growth
	conditions in Assumption~\ref{ass:smooth} imply
	\begin{align*}
		\left|a_s^{\star2}-a_{j-1}^{\star2}\right|^2
		\mathbf1_{\mathcal T_{j,s}}
		&\lesssim
		\{1+|X_{t_{j-1}}|^2+|X_s-X_{t_{j-1}}|^2\}
		|X_s-X_{t_{j-1}}|^2,\\
		\left|a_s^{\star2}-a_{j-1}^{\star2}\right|^2
		\mathbf1_{\mathcal T_{j,s}^c}
		&\lesssim
		\{1+|X_{t_{j-1}}|^4+|X_s-X_{t_{j-1}}|^4\}
		\mathbf1_{\mathcal T_{j,s}^c}.
	\end{align*}
	Apply Lemma~\ref{lem:supp_moment_bounds}(iii) to the first line.
	For the second, condition first on the regime path, use the same
	conditional state-moment estimate uniformly over that path, and then
	apply the uniformization estimate
	\eqref{eq:supp_one_transition_probability}. Together with the
	conditional Burkholder--Davis--Gundy inequality, this gives
	\begin{equation}
	\begin{aligned}
		\E_{j-1}\left|a_s^{\star2}-a_{j-1}^{\star2}\right|^2
		&\lesssim \tau_sR_{j-1},\\
		\E_{j-1}|C_{j,s}|^4
		&\lesssim \tau_s^2R_{j-1}.
	\end{aligned}
		\label{eq:supp_fv_continuous_local_bounds}
	\end{equation}
	By the conditional It\^o isometry and the conditional
	Cauchy--Schwarz inequality,
	\begin{align}
		\E_{j-1}[C_{j,s}^2]
		&=\E_{j-1}\int_{t_{j-1}}^s a_r^{\star2}\,dr\nonumber\\
		&=a_{j-1}^{\star2}\tau_s
		+\int_{t_{j-1}}^s
		\E_{j-1}\left(a_r^{\star2}-a_{j-1}^{\star2}\right)dr\nonumber\\
		&=a_{j-1}^{\star2}\tau_s
		+O(\tau_s^{3/2}R_{j-1}),
		\label{eq:supp_fv_continuous_second_local}\\
		\left|\E_{j-1}\left[
		C_{j,s}^2\left(a_s^{\star2}-a_{j-1}^{\star2}\right)
		\right]\right|
		&\le \left\{\E_{j-1}[C_{j,s}^4]\right\}^{1/2}
		\left\{\E_{j-1}\left|a_s^{\star2}
		-a_{j-1}^{\star2}\right|^2\right\}^{1/2}
		\lesssim \tau_s^{3/2}R_{j-1}.
		\label{eq:supp_fv_continuous_cross_local}
	\end{align}
	Consequently,
	\[
		\E_{j-1}[C_{j,s}^2a_s^{\star2}]
		=a_{j-1}^{\star4}\tau_s
		+O(\tau_s^{3/2}R_{j-1}).
	\]
	By It\^o's formula applied after localization,
	\begin{equation}
		C_j^4
		=4\int_{t_{j-1}}^{t_j}C_{j,s}^3a_s^\star\,dW_s
		+6\int_{t_{j-1}}^{t_j}C_{j,s}^2a_s^{\star2}\,ds.
		\label{eq:supp_fv_continuous_ito_identity}
	\end{equation}
	The conditional higher-moment bounds implied by
	Assumption~\ref{ass:smooth} and
	Lemma~\ref{lem:supp_moment_bounds} make the first integral in
	\eqref{eq:supp_fv_continuous_ito_identity} a true martingale and
	justify removal of the localization. Taking \(\E_{j-1}\) in
	\eqref{eq:supp_fv_continuous_ito_identity} and applying conditional
	Fubini's theorem therefore give
	\begin{align}
		\E_{j-1}[C_j^4]
		&=6\int_{t_{j-1}}^{t_j}
		\E_{j-1}[C_{j,s}^2a_s^{\star2}]\,ds\nonumber\\
		&=6a_{j-1}^{\star4}\int_0^{h_n}r\,dr
		+O\left(R_{j-1}\int_0^{h_n}r^{3/2}\,dr\right)\nonumber\\
		&=3h_n^2a_{j-1}^{\star4}
		+O(h_n^{5/2}R_{j-1}).
		\label{eq:supp_fv_continuous_fourth_exact}
	\end{align}
	The decomposition \eqref{eq:U_decomp_used}, the Lipschitz and growth
	conditions on \(b,a\), Lemma~\ref{lem:supp_moment_bounds}, the
	Burkholder--Davis--Gundy inequality, Kunita's inequality, and
	\eqref{eq:supp_one_transition_probability} imply the remaining moment
	estimates used below. In particular, the terms
	\(B_j,C_j,J_j^{\mathrm s}\) of \eqref{eq:U_decomp_used} satisfy
	\[
	\begin{aligned}
		&\E_{j-1}[|B_j|^q]\lesssim h_n^{q+1}R_{j-1}\quad(q\ge2);\\[2pt]
		&\E_{j-1}[|C_j|^q]\lesssim h_n^{q/2}R_{j-1}\quad(q\ge2);\\[2pt]
		&\E_{j-1}[(J_j^{\mathrm s})^2]
		=h_n\!\int_{|z|\le u_n}\!z^2\,\nu_{Z_{j-1}}^\star(dz)+h_n^2R_{j-1},\\
		&\E_{j-1}[|J_j^{\mathrm s}|^q]
		\lesssim h_n\!\int_{|z|\le u_n}\!|z|^q\,\nu_{Z_{j-1}}^\star(dz)
		+h_n^{q/2}\Big(\int_{|z|\le u_n}\!z^2\,\nu_{Z_{j-1}}^\star(dz)\Big)^{q/2}
		\quad(q\ge3).
	\end{aligned}
	\]

	\[
	\begin{aligned}
		\mathfrak G_{n,j}
		:={}&\mathcal F_{t_{j-1}}
		\vee\sigma(Z_s:t_{j-1}\le s\le t_j)\\
			&\vee\sigma\!\left(
			N_i((r,s]\times A):t_{j-1}\le r<s\le t_j,
			A\in\mathcal B(\mathbb R),\ A\subset\{|z|>u_n\},\ i\in S\right),
		\qquad Y_j:=C_j+J_j^{\mathrm s}.
	\end{aligned}
	\]

	\begin{lem}[Case FV: conditional martingale centering and orthogonality]
		\label{lem:fv_conditional_orthogonality}
		In Case FV, suppose Assumptions~\ref{ass:smooth},
		\ref{ass:levy}, \ref{ass:fv}, and
		\ref{ass:trunc} hold. Uniformly over \(j=1,\ldots,n\),
		\begin{equation}
			\E[C_j\mid\mathfrak G_{n,j}]
			=\E[J_j^{\mathrm s}\mid\mathfrak G_{n,j}]
			=\E[C_jJ_j^{\mathrm s}\mid\mathfrak G_{n,j}]=0.
			\label{eq:fv_conditional_centering_orthogonality}
		\end{equation}
		For every \(p\ge2\),
		\begin{align}
			\E[|C_j|^p\mid\mathfrak G_{n,j}]
			&\le C_p\E\!\left[
			\left(\int_{t_{j-1}}^{t_j}
			a^2(X_s,Z_s,\gamma^\star)\,ds\right)^{p/2}
			\biggm|\mathfrak G_{n,j}\right],
			\label{eq:fv_conditional_bdg}\\
			\E[|J_j^{\mathrm s}|^p\mid\mathfrak G_{n,j}]
			&\le C_p\left\{
			\left(\int_{t_{j-1}}^{t_j}\!\!\int_{|z|\le u_n}
			z^2\nu_{Z_{s-}}^\star(dz)\,ds\right)^{p/2}
			+\int_{t_{j-1}}^{t_j}\!\!\int_{|z|\le u_n}
			|z|^p\nu_{Z_{s-}}^\star(dz)\,ds\right\}.
			\label{eq:fv_conditional_kunita}
		\end{align}
		\begin{equation}
			\E_{j-1}[Y_j\mathbf1_{\{N_{n,j}=0\}}]=0,
			\qquad
			\E_{j-1}[Y_jK_{n,j}\mathbf1_{\{N_{n,j}=0\}}]=0.
			\label{eq:fv_absence_large_jump_centering}
		\end{equation}
	\end{lem}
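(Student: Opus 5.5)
The plan is to exploit the independence structure that defines \(\mathfrak G_{n,j}\). The enlarged \(\sigma\)-field contains \(\mathcal F_{t_{j-1}}\), the whole regime path on \([t_{j-1},t_j]\), and the restriction of every \(N_i\) to \(\{|z|>u_n\}\) on that interval. By mutual independence of \(W,Z,N_1,\dots,N_m\), and independence of a Poisson random measure's restrictions to the disjoint sets \(\{|z|\le u_n\}\) and \(\{|z|>u_n\}\), the Brownian increments \((W_s-W_{t_{j-1}})_{s\in[t_{j-1},t_j]}\) and the small-jump measures \(N_i|_{\{|z|\le u_n\}}\) on \((t_{j-1},t_j]\) are independent of \(\mathfrak G_{n,j}\). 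I will therefore work in the initially enlarged filtration
\[
\mathcal G_s:=\mathcal F_s\vee\mathfrak G_{n,j},\qquad s\in[t_{j-1},t_j].
\]
With respect to \((\mathcal G_s)\), the process \(W_s-W_{t_{j-1}}\) remains a Brownian motion. Each \(N_i|_{\{|z|\le u_n\}}\) remains a Poisson random measure with compensator \(\nu_i^\star(dz)\,ds\), and \(\mathbf 1_{\{Z_{s-}=i\}}\) is \(\mathcal G_{t_{j-1}}\)-measurable, hence predictable.

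The key technical point is that \(X_s\) becomes adapted to \((\mathcal G_s)\). Since \(X\) is a strong solution driven by \(W\), \(Z\), and the \(N_i\), the integrand \(a(X_s,Z_s,\gamma^\star)\) is \((\mathcal G_s)\)-predictable. This is the step that needs care: one must check that the strong solution constructed in the original filtration is still a semimartingale solution in the enlarged filtration, and that stochastic integrals computed in the two filtrations agree. I would argue this by noting that the large-jump part enters the equation as a finite-variation \(\mathcal G\)-adapted process. The remaining drivers are \(\mathcal G\)-Brownian or \(\mathcal G\)-Poisson, so the integrals coincide by the usual approximation by simple integrands.

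Given this, \(C_j\) and \(J_j^{\rm s}\) are terminal values of square-integrable \(\mathcal G\)-martingales started at zero at \(t_{j-1}\). Square integrability follows from the polynomial growth of \(a\), the conditional moment bounds of Lemma~\ref{lem:supp_moment_bounds}, which still hold uniformly over the conditioning, and \(\int_{|z|\le u_n}z^2\nu_i^\star(dz)<\infty\). Conditioning on \(\mathcal G_{t_{j-1}}=\mathfrak G_{n,j}\) then gives the first two identities in \eqref{eq:fv_conditional_centering_orthogonality}.

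For the product, I will use integration by parts. The continuous martingale \(C_{j,\cdot}\) and the purely discontinuous \(J^{\rm s}_{j,\cdot}\) have \([C,J^{\rm s}]=0\), so \(C_jJ_j^{\rm s}\) is a sum of two stochastic integrals. These are true martingales by the Burkholder--Davis--Gundy inequality and Hölder's inequality, which gives zero conditional mean. Inequalities \eqref{eq:fv_conditional_bdg} and \eqref{eq:fv_conditional_kunita} are the BDG and Kunita inequalities applied in \((\mathcal G_s)\), conditionally on \(\mathcal G_{t_{j-1}}\). In Kunita's bound the predictable compensator is the regime-path integral \(\int\int z^2\nu^\star_{Z_{s-}}(dz)\,ds\), which is \(\mathfrak G_{n,j}\)-measurable. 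These bounds are also what justify removing localization in the martingale arguments above.

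Finally, for \eqref{eq:fv_absence_large_jump_centering}, note that \(\mathbf 1_{\{N_{n,j}=0\}}\) and \(K_{n,j}\) are \(\mathfrak G_{n,j}\)-measurable. Here \(N_{n,j}\) is a functional of the large-jump measures together with the regime path, and \(K_{n,j}\) is a functional of the regime path. Moreover \(|K_{n,j}|\le Ch_n/u_n\) is bounded. The tower property, with \(\mathcal F_{t_{j-1}}\subset\mathfrak G_{n,j}\), then gives
\[
\E_{j-1}\bigl[Y_jK_{n,j}\mathbf1_{\{N_{n,j}=0\}}\bigr]
=\E_{j-1}\bigl[K_{n,j}\mathbf1_{\{N_{n,j}=0\}}\,\E[Y_j\mid\mathfrak G_{n,j}]\bigr]=0,
\]
and the first identity in \eqref{eq:fv_absence_large_jump_centering} follows in the same way. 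All constants depend only on \(p\), so the bounds are uniform in \(j\).
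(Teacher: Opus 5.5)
Your proposal is correct and follows essentially the same route as the paper. You enlarge by \(\mathfrak G_{n,j}\) so that the Brownian increment stays a Brownian motion and the small-jump compensated measures stay martingale measures with predictable integrands; you then get the two centerings, the product centering from \([C,J^{\rm s}]=0\) and integration by parts, the conditional BDG and Kunita bounds, and the last two identities from the tower property. The only difference is cosmetic: you take \(\mathcal G_s=\mathcal F_s\vee\mathfrak G_{n,j}\), whereas the paper enlarges \(\mathfrak G_{n,j}\) by the Brownian and small-jump increments. With your choice, adaptedness of \(X\) is automatic, so the strong-solution concern you flag reduces to the routine fact that the integrals computed in \((\mathcal F_s)\) and in \((\mathcal G_s)\) coincide.
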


	\begin{proof}
	For \(t\in[t_{j-1},t_j]\), enlarge \(\mathfrak G_{n,j}\) by the
	Brownian increments and the restrictions of the Poisson random measures
	to \((t_{j-1},t]\times\{|z|\le u_n\}\), and take the usual
	augmentation. Independent increments and independent scattering imply
	that the Brownian increment remains a Brownian motion and the restricted
	compensated Poisson measures remain martingale measures in this enlarged
	filtration. The integrands defining \(C_j\) and \(J_j^{\mathrm s}\)
	are predictable in that filtration. Assumptions~\ref{ass:smooth},
	\ref{ass:levy}, and \ref{ass:fv}, together with
	Lemma~\ref{lem:supp_moment_bounds}, give the required integrability.

	Consequently, the corresponding stochastic integrals are true
	martingales starting from zero. This proves their two conditional
	centering identities in
	\eqref{eq:fv_conditional_centering_orthogonality}. The Brownian integral
	is continuous and the compensated Poisson integral is purely
	discontinuous, so their quadratic covariation is identically zero.
	Integration by parts and square integrability show that their product is
	a martingale, proving the third identity. Conditional versions of the
	the Burkholder--Davis--Gundy inequality and Kunita's inequality give
	\eqref{eq:fv_conditional_bdg}--\eqref{eq:fv_conditional_kunita}.
	Finally, \(K_{n,j}\) and \(\mathbf1_{\{N_{n,j}=0\}}\) are
	\(\mathfrak G_{n,j}\)-measurable, so
	\eqref{eq:fv_absence_large_jump_centering} follows from the tower
	property.
	\end{proof}

	%%%%%%%%%
	
	\begin{lem}[False-detection and large-jump mismatch probabilities]
		\label{lem:estim_on_event}
		In Case FV, suppose Assumptions~\ref{ass:smooth},
		\ref{ass:levy}, \ref{ass:fv}, and
		\ref{ass:trunc} hold.
		Then, for all sufficiently large \(n\) and all \(j=1,\ldots,n\),
		\[
		\mathbb P(A_{n,j}\mid\mathcal F_{t_{j-1}})
		\lesssim
		h_n^{q_0/2}u_n^{-q_0} R_{j-1},
		\qquad
		\mathbb P(E_{n,j}\mid\mathcal F_{t_{j-1}})
		\lesssim
		h_n^{q_0/2}u_n^{-q_0} R_{j-1}.
		\]
	\end{lem}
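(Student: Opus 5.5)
The plan is to split according to whether a jump larger than \(u_n\) occurs on \((t_{j-1},t_j]\), and to bound the two pieces separately. Write
\[
A_{n,j}\subset\{N_{n,j}\ge1\}\cup\bigl(A_{n,j}\cap\{N_{n,j}=0\}\bigr),
\qquad
E_{n,j}\subset\{N_{n,j}\ge1\}.
\]
The second inclusion means the bound for \(E_{n,j}\) follows from the bound for \(\{N_{n,j}\ge1\}\) alone.

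\emph{The large-jump piece.} Conditioning on \(\mathcal H_j\), the count \(N_{n,j}\) is Poisson with mean at most \(h_n\bar\nu_n\). Taking \(\E_{j-1}\), as in the display preceding Lemma~\ref{lem:fv_conditional_orthogonality}, gives \(\Pb(N_{n,j}\ge1\mid\F_{j-1})\lesssim h_n\bar\nu_n\). Now apply \eqref{main-eq:trunc-levy-tail} with \(q=q_0\): the quantity \(h_n^{1-q_0/2}u_n^{q_0}\bar\nu_n\) is bounded. Hence \(h_n\bar\nu_n\lesssim h_n^{q_0/2}u_n^{-q_0}\), which proves the claim for \(E_{n,j}\) and for the first piece of \(A_{n,j}\).

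\emph{The no-large-jump piece.} On \(\{N_{n,j}=0\}\) the compensated large-jump integral reduces to its compensator, so \(J_j^{\mathrm l}=-K_{n,j}\). By \eqref{eq:U_decomp_used},
\[
\Delta_jX=h_nb_{j-1}(\alpha^\star)+B_j+C_j+J_j^{\mathrm s}-K_{n,j}.
\]
We have \(|K_{n,j}|\le Ch_n/u_n\), and \(h_n/u_n^2\to0\) because \(\rho<1/2\). So for all large \(n\), uniformly in \(j\), \(|K_{n,j}|\le u_n/2\). Then \(A_{n,j}\cap\{N_{n,j}=0\}\) forces
\[
|h_nb_{j-1}+B_j+C_j+J_j^{\mathrm s}|>u_n/2.
\]
Markov's inequality at order \(q_0\) bounds the conditional probability of this event by \(C u_n^{-q_0}\) times the sum of four conditional \(q_0\)-th moments:
\begin{itemize}
\item \(|h_nb_{j-1}|^{q_0}\le h_n^{q_0}R_{j-1}\), by linear growth;
\item \(\E_{j-1}|B_j|^{q_0}\lesssim h_n^{q_0+1}R_{j-1}\);
\item \(\E_{j-1}|C_j|^{q_0}\lesssim h_n^{q_0/2}R_{j-1}\);
\item \(\E_{j-1}|J_j^{\mathrm s}|^{q_0}\), for which we use the conditional Kunita bound \eqref{eq:fv_conditional_kunita}.
\end{itemize}
The first three are the one-step estimates listed before Lemma~\ref{lem:fv_conditional_orthogonality}, and each is at most \(h_n^{q_0/2}R_{j-1}\).

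\emph{The small-jump moment.} We take \(\E_{j-1}\) in \eqref{eq:fv_conditional_kunita} and bound \(\nu_{Z_{s-}}^\star\) by the supremum over regimes. This gives
\[
\E_{j-1}|J_j^{\mathrm s}|^{q_0}\lesssim h_n\sup_i\int_{|z|\le u_n}|z|^{q_0}\nu_i^\star(dz)+h_n^{q_0/2}\,(q_{2,n}^{\rm FV})^{q_0/2}.
\]
The second term is \(o(h_n^{q_0/2})\) because \(q_{2,n}^{\rm FV}\to0\). For the first term, \eqref{main-eq:smalljump_un} with \(q=q_0\) gives \(h_n\sup_i\int_{|z|\le u_n}|z|^{q_0}\nu_i^\star(dz)=o(h_n^{q_0/2})\). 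Collecting the four terms,
\[
\Pb(A_{n,j}\cap\{N_{n,j}=0\}\mid\F_{j-1})\lesssim h_n^{q_0/2}u_n^{-q_0}R_{j-1},
\]
and adding the large-jump piece proves both displayed bounds.

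The step needing most care is this small-jump moment under regime switching. The intensity \(\nu_{Z_{s-}}^\star\) is random, and the argument must be uniform over the regime path. That is why we use the \(\mathfrak G_{n,j}\)-conditional Kunita inequality together with the regime-uniform moment bounds, rather than a fixed-regime estimate. The remaining points are routine: absorbing \(K_{n,j}\) needs only \(h_n/u_n^2\to0\), and every polynomial factor in \(X_{t_{j-1}}\) is absorbed into \(R_{j-1}\).
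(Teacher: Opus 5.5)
Your proposal is correct and follows essentially the same route as the paper. You split on $\{N_{n,j}\ge1\}$ versus $\{N_{n,j}=0\}$, bound the first piece by $h_n\bar\nu_n$ using \eqref{main-eq:trunc-levy-tail} with $q=q_0$, absorb $K_{n,j}$ on the no-large-jump event, and finish with a $q_0$-th order Markov bound built from the BDG, Kunita, and \eqref{main-eq:smalljump_un} estimates.

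The differences are cosmetic. You apply Markov once to the combined remainder at level $u_n/2$, whereas the paper splits into three events at level $u_n/4$. You also make the regime-uniform small-jump moment explicit via the $\mathfrak G_{n,j}$-conditional Kunita bound \eqref{eq:fv_conditional_kunita}, which the paper uses only implicitly.
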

	\begin{proof}
		Write
		\(\Delta_jX=h_nb_{j-1}^\star+B_j+Y_j+J_j^{\mathrm l}\).
		First, note that
		\[
		\mathbb P(A_{n,j}\mid\mathcal F_{t_{j-1}})
		\le
		\mathbb P(N_{n,j}\ge1\mid\mathcal F_{t_{j-1}})
		+
		\mathbb P(A_{n,j}\cap\{N_{n,j}=0\}\mid\mathcal F_{t_{j-1}}).
		\]
		The first term is bounded by
		\[
		\mathbb P(N_{n,j}\ge1\mid\mathcal F_{t_{j-1}})
		\lesssim
		h_n \bar\nu_n
		\lesssim
		h_n^{q_0/2}u_n^{-q_0}.
		\]

		On \(\{N_{n,j}=0\}\), the raw large-jump sum vanishes and
		\(J_j^{\mathrm l}=-K_{n,j}\). Since
		$|K_{n,j}|
		\lesssim
		\frac{h_n}{u_n}
		=o(u_n)$,
		we have, for all sufficiently large \(n\), that
		\(|K_{n,j}|\le u_n/4\). Therefore,
		\[
		A_{n,j}\cap\{N_{n,j}=0\}
		\subset
		\{|Y_j|>u_n/4\}
		\cup
		\{|B_j|>u_n/4\}
		\cup
		\{h_n|b_{j-1}^\star|>u_n/4\}.
		\]
		By the Burkholder--Davis--Gundy inequality for \(C_j\), Kunita's inequality for
		\(J_j^{\mathrm s}\), and \eqref{main-eq:smalljump_un},
		\[
		\E_{j-1}[|Y_j|^{q_0}]
		\lesssim
		\E_{j-1}[|C_j|^{q_0}]
		+
		\E_{j-1}[|J_j^{\mathrm s}|^{q_0}]
		\lesssim
		h_n^{q_0/2}R_{j-1}.
		\]
		we have
		\[
		\mathbb P(|Y_j|>u_n/4\mid\mathcal F_{t_{j-1}})
		\lesssim
		h_n^{q_0/2}u_n^{-q_0}R_{j-1}.
		\]
		Similarly,
		\[
		\mathbb P(|B_j|>u_n/4\mid\mathcal F_{t_{j-1}})
		\lesssim
		u_n^{-q_0}\E_{j-1}[|B_j|^{q_0}]
		\lesssim
		h_n^{q_0/2}u_n^{-q_0}R_{j-1},
		\]
		because \(B_j\) is of smaller order than the Brownian part. Finally, since
		\(\{h_n|b_{j-1}^\star|>u_n/4\}\) is \(\mathcal F_{t_{j-1}}\)-measurable,
		\[
		\mathbf 1_{\{h_n|b_{j-1}^\star|>u_n/4\}}
		\le
		C h_n^{q_0}u_n^{-q_0}|b_{j-1}^\star|^{q_0}
		\le
		h_n^{q_0/2}u_n^{-q_0}R_{j-1}.
		\]
		Combining these estimates proves $\mathbb P(A_{n,j}\mid\mathcal F_{t_{j-1}})
		\lesssim
		h_n^{q_0/2}u_n^{-q_0} R_{j-1}$.

		Since \(E_{n,j}\subset\{N_{n,j}\ge1\}\), the bound for \(E_{n,j}\) follows.
	\end{proof}

	%%%%%%%%%%%%%%%
	%%%%%%%%%%%%%%%

	%===========================================================
	% Refined truncated second moment lemma
	%===========================================================
	
	\begin{lem}[Second-moment expansion]
		\label{lem:trunc_second_moment_refined}
		In Case FV, under Assumptions~\ref{ass:smooth},
		\ref{ass:levy}, \ref{ass:fv}, and
		\ref{ass:trunc}, there exists \(n_0\ge1\) such that, for all
		\(n\ge n_0\) and \(j=1,\ldots,n\),
		\[
		\E_{j-1}\big[U_j^2I_{n,j}\big]
		=
		h_na_{j-1}^{\star2}
		+
		h_n\int_{|z|\le u_n}z^2\nu_{Z_{j-1}}^\star(dz)
		+
		R_{j-1}
		\left(
		h_n^2+h_n^{q_0/2}u_n^{2-q_0}
		\right).
		\]
	\end{lem}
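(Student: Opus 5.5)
We split according to the large-jump count \(N_{n,j}\) and use the decomposition \eqref{eq:U_decomp_used}, \(U_j=B_j+Y_j+J_j^{\mathrm l}\) with \(Y_j=C_j+J_j^{\mathrm s}\). We write
\[
\E_{j-1}[U_j^2I_{n,j}]
=\E_{j-1}[U_j^2I_{n,j};N_{n,j}=0]+\E_{j-1}[U_j^2;E_{n,j}],
\]
where \(E_{n,j}=\{I_{n,j}=1,N_{n,j}\ge1\}\). On \(E_{n,j}\) we have \(|U_j|\le u_n+h_n|b_{j-1}^\star|\). Lemma~\ref{lem:estim_on_event} then bounds the second term by \((u_n^2+h_n^2R_{j-1})h_n^{q_0/2}u_n^{-q_0}R_{j-1}\lesssim h_n^{q_0/2}u_n^{2-q_0}R_{j-1}\). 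This term is already of the admissible remainder form.

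On \(\{N_{n,j}=0\}\) the raw large-jump sum vanishes, so \(J_j^{\mathrm l}=-K_{n,j}\) with \(|K_{n,j}|\lesssim h_n\). We write
\[
\E_{j-1}[U_j^2I_{n,j};N_{n,j}=0]
=\E_{j-1}[U_j^2;N_{n,j}=0]-\E_{j-1}[U_j^2;A_{n,j}\cap\{N_{n,j}=0\}].
\]
For the subtracted term we use H\"older's inequality in the form \(\E_{j-1}[U_j^2;A]\le\{\E_{j-1}|U_j|^{q_0}\}^{2/q_0}\Pb_{j-1}(A)^{1-2/q_0}\). A cleaner route is the truncation inequality: on \(\{N_{n,j}=0\}\cap A_{n,j}\) one of \(|Y_j|,|B_j|,h_n|b_{j-1}^\star|\) exceeds \(u_n/4\), as in the proof of Lemma~\ref{lem:estim_on_event}. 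This gives the bound \(u_n^{2-q_0}\E_{j-1}[|U_j|^{q_0};N_{n,j}=0]\lesssim h_n^{q_0/2}u_n^{2-q_0}R_{j-1}\), using \(\E_{j-1}|Y_j|^{q_0}\lesssim h_n^{q_0/2}R_{j-1}\) and \eqref{main-eq:smalljump_un}.

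For the main term we expand \(U_j^2=(Y_j+B_j-K_{n,j})^2\). First, \(\E_{j-1}[Y_j^2;N_{n,j}=0]\) is computed by conditioning on \(\mathfrak G_{n,j}\). By Lemma~\ref{lem:fv_conditional_orthogonality} the cross term \(C_jJ_j^{\mathrm s}\) vanishes conditionally. The Itô isometry and the compensator formula give the conditional second moment
\[
\int_{t_{j-1}}^{t_j}\E[a_s^{\star2}\mid\mathfrak G_{n,j}]\,ds+\int_{t_{j-1}}^{t_j}\!\!\int_{|z|\le u_n}z^2\nu^\star_{Z_{s-}}(dz)\,ds.
\]
We then freeze \(a_s^{\star2}\) at \(a_{j-1}^{\star2}\) via \eqref{eq:supp_fv_continuous_local_bounds}, and \(Z_{s-}\) at \(Z_{j-1}\) via \eqref{eq:supp_one_transition_probability}; each replacement costs \(O(h_n^{3/2}R_{j-1})\) or \(O(h_n^2R_{j-1})\). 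Finally we reinsert the factor \(\Pb(N_{n,j}=0\mid\cdot)=1-O(h_n\bar\nu_n)\), which produces the error \(h_n^2\bar\nu_nR_{j-1}\lesssim h_n^{q_0/2}u_n^{-q_0}\cdot h_nR_{j-1}\).

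The remaining pieces are handled as follows. The term \(\E_{j-1}[Y_jK_{n,j};N_{n,j}=0]\) vanishes by \eqref{eq:fv_absence_large_jump_centering}. The terms \(B_j^2\) and \(K_{n,j}^2\) are \(O(h_n^2R_{j-1})\). The cross term \(Y_jB_j\) needs care, because Cauchy--Schwarz only yields \(h_n^{3/2}\). For it we write \(B_j\) through the Lipschitz increment \(b(X_s,\cdot)-b(X_{t_{j-1}},\cdot)\) and use that \(\E_{j-1}[Y_j(X_s-X_{t_{j-1}})]\) is \(O(h_n)\) only on the \(Y\)-correlated part. This gives \(\E_{j-1}[Y_jB_j]=O(h_n^2R_{j-1})\), by an Itô product/Taylor expansion of \(b\) in \(x\) (which is \(C^2\) by Assumption~\ref{ass:smooth}(iii)).

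I expect the main obstacle to be this \(Y_jB_j\) cross term, and more generally obtaining the sharp \(h_n^2\) rather than \(h_n^{3/2}\) from the coefficient freezing in \(\int\E a_s^{\star2}ds\). For the latter, \eqref{eq:supp_fv_continuous_second_local} gives only \(h_n^{3/2}\), so I would refine it with Itô's formula applied to \(a^2(X_s,Z_s,\gamma^\star)\): its drift is \(O(R)\) and its martingale part has zero conditional mean. The conditional-mean difference is then \(O(\tau_sR_{j-1})\) plus a switching term of the same order, which yields \(h_n^2\) after integration. If the jump part of that Itô expansion proves awkward, I would fall back on a first-order Taylor expansion in \(x\) combined with the conditional mean bound \(|\E_{j-1}(X_s-X_{t_{j-1}})|\lesssim\tau_sR_{j-1}\) and the second moment \(\tau_sR_{j-1}\).
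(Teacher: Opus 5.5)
Your proposal is correct and uses essentially the same ingredients as the paper's proof. These are the zero/positive large-jump split with Lemma~\ref{lem:estim_on_event}, the $u_n/4$ truncation bound on $A_{n,j}\cap\{N_{n,j}=0\}$, the centering \eqref{eq:fv_absence_large_jump_centering}, and Dynkin's formula for $a^2(X_s,Z_s,\gamma^\star)$ to sharpen $\E_{j-1}C_j^2$ to $h_na_{j-1}^{\star2}+O(h_n^2R_{j-1})$, which is the paper's Step~1. Splitting on $N_{n,j}$ first and bounding $|U_j|\le u_n+h_n|b_{j-1}^\star|$ on $E_{n,j}$ is only a tidier packaging of the paper's Step~3(b).

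The $Y_jB_j$ cross term you flag is not actually an obstacle. Since $\E_{j-1}B_j^2\lesssim h_n^3R_{j-1}$, Cauchy--Schwarz already gives $|\E_{j-1}[Y_jB_j;N_{n,j}=0]|\lesssim h_n^{1/2}\cdot h_n^{3/2}R_{j-1}=h_n^2R_{j-1}$, exactly as in the paper's Step~3(a), so no It\^o product expansion is needed there.
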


	\begin{proof}
		%-----------------------------------------------------------
		% Step 0: moment bounds used repeatedly
		%-----------------------------------------------------------
		Expand
		\begin{align}
			\E_{j-1}\big[U_j^2 I_{n,j}\big]
			&=
			\E_{j-1}\big[(C_j+J_j^{\mathrm{s}})^2 I_{n,j}\big]
			+\E_{j-1}\big[(B_j+J_j^{\mathrm{l}})^2 I_{n,j}\big]\nonumber\\
			&\quad
			+2\,\E_{j-1}\big[(C_j+J_j^{\mathrm{s}})(B_j+J_j^{\mathrm{l}}) I_{n,j}\big].
			\nonumber
		\end{align}

		%-----------------------------------------------------------
		% Step 1: refine E[C_j^2 | F_{j-1}] to order h_n^2
		%-----------------------------------------------------------
		\proofstep{Step 1: the continuous-martingale square}
		Note that
		$\E_{j-1}\big[C_j^2\big]
		=\E_{j-1}\Big[\int_{t_{j-1}}^{t_j} a^2(X_s,Z_s,\gamma^\star)\,ds\Big]$.

		Let $f(x,i):=a^2(x,i,\gamma^\star)$. By Assumption~\ref{ass:smooth}(iii),
		$f$ is $C^2$ in $x$ with polynomial growth (uniformly in $i$), hence $\mcl f$ is well-defined, and satisfies
		$|(\mcl f)(x,i)|\lesssim 1+|x|^C$.
		Applying Dynkin's formula (see, e.g., \cite{yin2009hybrid}, \cite{mao2006stochastic}) to $f(X_s,Z_s)$ between $t_{j-1}$ and $s\in[t_{j-1},t_j]$,
		we obtain
		\[
		\E_{j-1}\big[f(X_s,Z_s)\big]
		=
		f(X_{t_{j-1}},Z_{j-1})
		+\int_{t_{j-1}}^s \E_{j-1}\big[(\mcl f)(X_r,Z_r)\big]\,dr.
		\]
		Therefore, using Lemma~\ref{lem:supp_moment_bounds},
		\begin{equation*}
			\Big|\E_{j-1}\big[f(X_s,Z_s)\big]-f(X_{t_{j-1}},Z_{j-1})\Big|
			\le \int_{t_{j-1}}^s \E_{j-1}\big[|(\mcl f)(X_r,Z_r)|\big]\,dr
			\lesssim h_nR_{j-1}.
		\end{equation*}
		Therefore, we have
		\begin{equation}
			\label{eq:C2_refined}
			\E_{j-1}\big[C_j^2\big]
			= h_na_{j-1}^{\star2} + h_n^2R_{j-1}.
		\end{equation}

		%-----------------------------------------------------------
		% Step 2: main (C + J^{<=u})^2 part under truncation
		%-----------------------------------------------------------
		\proofstep{Step 2: the continuous and small-jump square}
		By Lemma~\ref{lem:fv_conditional_orthogonality}, the continuous
		martingale \(C_j\) and the purely discontinuous martingale
		\(J_j^{\mathrm s}\) have zero quadratic covariation. Hence
		\(\E_{j-1}[C_jJ_j^{\mathrm{s}}]=0\).
		Therefore,
		\begin{equation}
			\label{eq:CJsmall_square_untruncated}
			\E_{j-1}\big[(C_j+J_j^{\mathrm{s}})^2\big]
			= h_na_{j-1}^{\star2}
			+h_n\!\int_{|z|\le u_n}\! z^2\nu_{Z_{j-1}}^\star(dz)
			+h_n^2R_{j-1}.
		\end{equation}

		Next we have
		\[
		\E_{j-1}\big[(C_j+J_j^{\mathrm{s}})^2 I_{n,j}\big]
		=
		\E_{j-1}\big[(C_j+J_j^{\mathrm{s}})^2\big]
		-\E_{j-1}\big[(C_j+J_j^{\mathrm{s}})^2\mathbf 1_{A_{n,j}}\big].
		\]
		We show that
		\begin{equation}
			\label{eq:Yj_A_high_moment}
			\E_{j-1}\!\left[Y_j^2\mathbf 1_{A_{n,j}}\right]
			\lesssim
			h_n^{q_0/2}u_n^{2-q_0}R_{j-1}.
		\end{equation}

		Since $|K_{n,j}|\lesssim h_n/u_n$, there exist $n_0$ such that, for all
		large \(n\ge n_0\),
		$|K_{n,j}|\le \frac{u_n}{4}$.
		On the event \(\{N_{n,j}=0\}\), we have \(J_j^{\mathrm l}=-K_{n,j}\), and hence
		$\Delta_jX
		=
		h_nb_{j-1}^\star+B_j+Y_j-K_{n,j}$.
		Therefore,
		\[
		A_{n,j}\cap\{N_{n,j}=0\}
		\subset
		\{|Y_j|>u_n/4\}
		\cup
		\{|B_j|>u_n/4\}
		\cup
		\{h_n|b_{j-1}^\star|>u_n/4\}.
		\]
		Consequently,
		\[
		\E_{j-1}\!\left[Y_j^2\mathbf 1_{A_{n,j}}\right]
		\le T_{1,j}+T_{2,j}+T_{3,j}+T_{4,j},
		\]
		where
		\[
		\begin{aligned}
			T_{1,j}&:=\E_{j-1}\!\left[Y_j^2\mathbf 1_{\{|Y_j|>u_n/4\}}\right],\qquad
			T_{2,j}:=\E_{j-1}\!\left[Y_j^2\mathbf 1_{\{|B_j|>u_n/4\}}\right],\\
			T_{3,j}&:=\E_{j-1}\!\left[Y_j^2\mathbf 1_{\{h_n|b_{j-1}^\star|>u_n/4\}}\right],\qquad
			T_{4,j}:=\E_{j-1}\!\left[Y_j^2\mathbf 1_{\{N_{n,j}\ge1\}}\right].
		\end{aligned}
		\]

		For \(T_{1,j}\), since \(q_0\ge2\),
		$Y_j^2\mathbf 1_{\{|Y_j|>u_n/4\}}
		\le
		C u_n^{2-q_0}|Y_j|^{q_0}$.
		We have
		$\E_{j-1}[|Y_j|^{q_0}]
		\lesssim
		h_n^{q_0/2}R_{j-1}$.
		Thus
		$T_{1,j}
		\lesssim
		h_n^{q_0/2}u_n^{2-q_0}R_{j-1}$.

		For \(T_{2,j}\), use
		$\mathbf 1_{\{|B_j|>u_n/4\}}
		\le
		C u_n^{2-q_0}|B_j|^{q_0-2}$.
		Then H\"older's inequality with exponents \(q_0/2\) and \(q_0/(q_0-2)\)
		gives
		\[
		\begin{aligned}
			T_{2,j}
			&\le
			C u_n^{2-q_0}
			\E_{j-1}\!\left[Y_j^2|B_j|^{q_0-2}\right]
			\le
			C u_n^{2-q_0}
			\Big(\E_{j-1}[|Y_j|^{q_0}]\Big)^{2/q_0}
			\Big(\E_{j-1}[|B_j|^{q_0}]\Big)^{(q_0-2)/q_0}.
		\end{aligned}
		\]
		Using
		\[
		\E_{j-1}[|Y_j|^{q_0}]\lesssim h_n^{q_0/2}R_{j-1},
		\qquad
		\E_{j-1}[|B_j|^{q_0}]\lesssim h_n^{q_0+1}R_{j-1},
		\]
		we obtain
		\[
		T_{2,j}
		\lesssim
		h_n^{q_0-2/q_0}u_n^{2-q_0}R_{j-1}
		\lesssim
		h_n^{q_0/2}u_n^{2-q_0}R_{j-1}.
		\]

		For \(T_{3,j}\), the event is \(\mathcal F_{t_{j-1}}\)-measurable. Hence,
		using
		$\mathbf 1_{\{h_n|b_{j-1}^\star|>u_n/4\}}
		\le
		C h_n^{q_0-2}u_n^{2-q_0}|b_{j-1}^\star|^{q_0-2}$,
		and \(\E_{j-1}[Y_j^2]\lesssim h_nR_{j-1}\), we get
		\[
		T_{3,j}
		\lesssim
		h_n^{q_0-1}u_n^{2-q_0}R_{j-1}
		\lesssim
		h_n^{q_0/2}u_n^{2-q_0}R_{j-1}.
		\]

		It remains to treat \(T_{4,j}\). By H\"older's inequality,
		\[
		T_{4,j}
		\le
		\Big(\E_{j-1}[|Y_j|^{q_0}]\Big)^{2/q_0}
		\Pb(N_{n,j}\ge1\mid\mathcal F_{t_{j-1}})^{1-2/q_0}
		\lesssim
		h_nR_{j-1}\,(h_n\bar\nu_n)^{1-2/q_0}.
		\]
		By \eqref{main-eq:trunc-levy-tail},
		$h_n\bar\nu_n
		=
		o\!\left(h_n^{q_0/2}u_n^{-q_0}\right)$.
		Thus
		\[
		h_n(h_n\bar\nu_n)^{1-2/q_0}
		=
		o\!\left(
		h_n
		\big(h_n^{q_0/2}u_n^{-q_0}\big)^{1-2/q_0}
		\right)
		=
		o\!\left(
		h_n^{q_0/2}u_n^{2-q_0}
		\right).
		\]
		Therefore
		$T_{4,j}
		\lesssim
		h_n^{q_0/2}u_n^{2-q_0}R_{j-1}$.

		Combining the bounds for \(T_{1,j},T_{2,j},T_{3,j}\) and \(T_{4,j}\)
		proves \eqref{eq:Yj_A_high_moment}.
		Hence, from \eqref{eq:CJsmall_square_untruncated},
		\begin{equation}
			\label{eq:CJsmall_square_truncated}
			\E_{j-1}\big[(C_j+J_j^{\mathrm{s}})^2 I_{n,j}\big]
			=
			h_na_{j-1}^{\star2}
			+h_n\!\int_{|z|\le u_n}\! z^2\nu_{Z_{j-1}}^\star(dz)
			+R_{j-1}\Big(h_n^2+h_n^{q_0/2}u_n^{2-q_0}\Big).
		\end{equation}

		%-----------------------------------------------------------
		% Step 3: bound all terms involving B_j and J^{>u} under I_{n,j}
		%-----------------------------------------------------------
		\proofstep{Step 3: drift and large-jump remainders}

		\smallskip
		\noindent{(a) Terms involving $B_j$ only.}
		Using the Cauchy--Schwarz inequality,
		\[
		\E_{j-1}\big[B_j^2 I_{n,j}\big]\le \E_{j-1}[B_j^2]\lesssim h_n^3R_{j-1},
		\]
		\[
		\big|\E_{j-1}[B_j C_j I_{n,j}]\big|
		\le \big(\E_{j-1}[B_j^2]\big)^{1/2}\big(\E_{j-1}[C_j^2]\big)^{1/2}
		\lesssim h_n^2R_{j-1},
		\]
		\[
		\big|\E_{j-1}[B_j J_j^{\mathrm{s}} I_{n,j}]\big|
		\le \big(\E_{j-1}[B_j^2]\big)^{1/2}\big(\E_{j-1}[(J_j^{\mathrm{s}})^2]\big)^{1/2}
		\lesssim h_n^2R_{j-1}.
		\]

		\smallskip
		\noindent{(b) Terms involving \(J_j^{\mathrm l}\).}
		Let
		$R_j^{(0)}:=h_nb_{j-1}^\star+B_j+Y_j$.

		By the Cauchy--Schwarz inequality,
		\[
		|K_{n,j}|^2
		\le
		\left(
		\sum_{i=1}^m
		\int_{t_{j-1}}^{t_j}\int_{|z|>u_n}
		\mathbf 1_{\{Z_{s-}=i\}}\,\nu_i^\star(dz)\,ds
		\right)
		\left(
		\sum_{i=1}^m
		\int_{t_{j-1}}^{t_j}\int_{|z|>u_n}
		z^2\mathbf 1_{\{Z_{s-}=i\}}\,\nu_i^\star(dz)\,ds
		\right).
		\]
		Since \(\sup_i\int z^2\nu_i^\star(dz)<\infty\), this gives
		\[
		|K_{n,j}|^2
		\lesssim
		h_n^2\bar\nu_n
		\lesssim
		\frac{h_n}{u_n^2}h_n^{q_0/2}u_n^{2-q_0}
		=o(h_n^{q_0/2}u_n^{2-q_0}).
		\]
		We shall also use the following elementary bounds:
		$\E_{j-1}[|Y_j|^{q_0}]
		+
		\E_{j-1}[|R_j^{(0)}|^{q_0}]
		\lesssim
		h_n^{q_0/2}R_{j-1}$.

		We first bound \(\E_{j-1}[(J_j^{\mathrm l})^2I_{n,j}]\). On
		\(\{N_{n,j}=0\}\), \(J_j^{\mathrm l}=-K_{n,j}\), so
		\[
		\E_{j-1}\big[(J_j^{\mathrm l})^2I_{n,j}\mathbf 1_{\{N_{n,j}=0\}}\big]
		\le
		|K_{n,j}|^2
		\lesssim
		h_n^{q_0/2}u_n^{2-q_0} R_{j-1}.
		\]
		On \(\{N_{n,j}\ge1\}\cap\{I_{n,j}=1\}\), since
		\[
		\Delta_jX=R_j^{(0)}+J_j^{\mathrm l},
		\qquad
		|\Delta_jX|\le u_n,
		\]
		we have
		$|J_j^{\mathrm l}|
		\le
		u_n+|R_j^{(0)}|$.
		Therefore,
		\[
		\begin{aligned}
			\E_{j-1}\big[(J_j^{\mathrm l})^2I_{n,j}\mathbf 1_{\{N_{n,j}\ge1\}}\big]
			&\lesssim
			u_n^2\Pb(N_{n,j}\ge1\mid\mathcal F_{j-1})
			+
			\E_{j-1}\big[|R_j^{(0)}|^2\mathbf 1_{\{N_{n,j}\ge1\}}\big].
		\end{aligned}
		\]
		The first term is bounded by
		$u_n^2 h_n\bar\nu_n
		\lesssim
		h_n^{q_0/2}u_n^{2-q_0}
		=
		h_n^{q_0/2}u_n^{2-q_0}$.
		For the second term, H\"older's inequality gives
		\[
		\begin{aligned}
			\E_{j-1}\big[|R_j^{(0)}|^2\mathbf 1_{\{N_{n,j}\ge1\}}\big]
			&\le
			\Big(\E_{j-1}[|R_j^{(0)}|^{q_0}]\Big)^{2/q_0}
			\Pb(N_{n,j}\ge1\mid\mathcal F_{j-1})^{1-2/q_0}
			\\
			&\lesssim
			h_n\,
			\big(h_n^{q_0/2}u_n^{-q_0}\big)^{1-2/q_0}R_{j-1}
			\\
			&=
			h_n^{q_0/2}u_n^{2-q_0}R_{j-1}.
		\end{aligned}
		\]
		Consequently,
		\begin{equation}
			\label{eq:Jlarge2I_new}
			\E_{j-1}\big[(J_j^{\mathrm l})^2I_{n,j}\big]
			\lesssim
			h_n^{q_0/2}u_n^{2-q_0}R_{j-1}.
		\end{equation}

		Next we control the mixed term with \(Y_j=C_j+J_j^{\mathrm s}\). We claim
		\begin{equation}
			\label{eq:YJlargeI_new}
			\big|\E_{j-1}[Y_jJ_j^{\mathrm l}I_{n,j}]\big|
			\lesssim
			h_n^{q_0/2}u_n^{2-q_0}R_{j-1}.
		\end{equation}
		We split according to \(N_{n,j}=0\) and \(N_{n,j}\ge1\).

		By Lemma~\ref{lem:fv_conditional_orthogonality},
		\(Y_j\) is conditionally centered given \(\mathfrak G_{n,j}\).
		Since \(K_{n,j}\mathbf1_{\{N_{n,j}=0\}}\) is
		\(\mathfrak G_{n,j}\)-measurable, the tower property gives
		\[
		\begin{aligned}
			\E_{j-1}\!\left[
			Y_jK_{n,j}\mathbf 1_{\{N_{n,j}=0\}}
			\right]
			&=
			\E_{j-1}\!\left[
			K_{n,j}\mathbf 1_{\{N_{n,j}=0\}}
			\E\!\left[Y_j\mid\mathfrak G_{n,j}\right]
			\right] =0.
		\end{aligned}
		\]
		Since \(I_{n,j}=1-\mathbf 1_{A_{n,j}}\), it follows that
		\[
		\begin{aligned}
			\E_{j-1}\big[
			Y_jJ_j^{\mathrm l}I_{n,j}\mathbf 1_{\{N_{n,j}=0\}}
			\big]
			&=
			-\E_{j-1}\big[
			Y_jK_{n,j}I_{n,j}\mathbf 1_{\{N_{n,j}=0\}}
			\big] =
			\E_{j-1}\big[
			Y_jK_{n,j}\mathbf 1_{A_{n,j}}\mathbf 1_{\{N_{n,j}=0\}}
			\big].
		\end{aligned}
		\]
		Using \(|K_{n,j}|\lesssim h_n/u_n\)
		and H\"older's inequality,
		\[
		\begin{aligned}
			\Big|
			\E_{j-1}\big[
			Y_jK_{n,j}\mathbf 1_{A_{n,j}}\mathbf 1_{\{N_{n,j}=0\}}
			\big]
			\Big|
			&\lesssim
			\frac{h_n}{u_n}
			\Big(\E_{j-1}[|Y_j|^{q_0}]\Big)^{1/q_0}
			\Pb(A_{n,j}\mid\mathcal F_{j-1})^{1-1/q_0}
			\\
			&\lesssim
			\frac{h_n}{u_n}
			h_n^{1/2}
			\big(h_n^{q_0/2}u_n^{-q_0}\big)^{1-1/q_0}R_{j-1}
			\\
			&{=
			\frac{h_n}{u_n^2}h_n^{q_0/2}u_n^{2-q_0}R_{j-1}
			\le
			h_n^{q_0/2}u_n^{2-q_0}R_{j-1}}
		\end{aligned}
		\]
		{for all sufficiently large \(n\); the displayed coefficient
		\(h_n/u_n^2\to0\) is deterministic and independent of \(j\).}

		On \(\{N_{n,j}\ge1\}\cap\{I_{n,j}=1\}\), we use
		\(|J_j^{\mathrm l}|\le u_n+|R_j^{(0)}|\). Thus
		\[
		\begin{aligned}
			\E_{j-1}\big[|Y_jJ_j^{\mathrm l}|I_{n,j}\mathbf 1_{\{N_{n,j}\ge1\}}\big]
			&\lesssim
			u_n\E_{j-1}\big[|Y_j|\mathbf 1_{\{N_{n,j}\ge1\}}\big]
			+
			\E_{j-1}\big[|Y_j||R_j^{(0)}|\mathbf 1_{\{N_{n,j}\ge1\}}\big].
		\end{aligned}
		\]
		For the first term,
		\[
		u_n\E_{j-1}\big[|Y_j|\mathbf 1_{\{N_{n,j}\ge1\}}\big]
		\le
		u_n
		\Big(\E_{j-1}[|Y_j|^{q_0}]\Big)^{1/q_0}
		\Pb(N_{n,j}\ge1\mid\mathcal F_{j-1})^{1-1/q_0}
		\lesssim
		h_n^{q_0/2}u_n^{2-q_0}R_{j-1}.
		\]
		For the second term,
		\[
		\begin{aligned}
			\E_{j-1}\big[|Y_j||R_j^{(0)}|\mathbf 1_{\{N_{n,j}\ge1\}}\big]
			&\le
			\Big(\E_{j-1}[|Y_j|^{q_0}]\Big)^{1/q_0}
			\Big(\E_{j-1}[|R_j^{(0)}|^{q_0}]\Big)^{1/q_0}
			\\
			&\quad\times
			\Pb(N_{n,j}\ge1\mid\mathcal F_{j-1})^{1-2/q_0}
			\\
			&\lesssim
			h_n
			\big(h_n^{q_0/2}u_n^{-q_0}\big)^{1-2/q_0}R_{j-1}
			=
			h_n^{q_0/2}u_n^{2-q_0}R_{j-1}.
		\end{aligned}
		\]
		This proves \eqref{eq:YJlargeI_new}.

		Finally, the mixed term with \(B_j\) is smaller. By the
		Cauchy--Schwarz inequality and
		\eqref{eq:Jlarge2I_new},
		\begin{equation}
		\begin{aligned}
			\big|\E_{j-1}[B_jJ_j^{\mathrm l}I_{n,j}]\big|
			&\le
			\Big(\E_{j-1}[B_j^2]\Big)^{1/2}
			\Big(\E_{j-1}[(J_j^{\mathrm l})^2I_{n,j}]\Big)^{1/2}
			\\
			&\lesssim
			h_n^{3/2}(h_n^{q_0/2}u_n^{2-q_0})^{1/2}R_{j-1}
			\lesssim
			(h_n^2+h_n^{q_0/2}u_n^{2-q_0})R_{j-1}.
		\end{aligned}
		\label{eq:supp_fv_large_jump_mixed}
		\end{equation}

		Combining \eqref{eq:Jlarge2I_new},
		\eqref{eq:YJlargeI_new}, and
		\eqref{eq:supp_fv_large_jump_mixed}, all terms in
		\[
		\E_{j-1}\big[(B_j+J_j^{\mathrm l})^2I_{n,j}\big]
		+
		2\E_{j-1}\big[(C_j+J_j^{\mathrm s})(B_j+J_j^{\mathrm l})I_{n,j}\big]
		\]
		that involve \(J_j^{\mathrm l}\) are bounded by
		$R_{j-1}(h_n^2+h_n^{q_0/2}u_n^{2-q_0})$.
		Thus the large-jump part contributes only
		$R_{j-1}\left(h_n^2+h_n^{q_0/2}u_n^{2-q_0}\right)$.

		\medskip
		\proofstep{Step 4: conclusion}
		Combining above estimates,
		we obtain
		\[
		\E_{j-1}\big[U_j^2 I_{n,j}\big]
		=
		h_n\,a_{j-1}^{\star2}
		+h_n\!\int_{|z|\le u_n}\! z^2\nu_{Z_{j-1}}^\star(dz)
		+R_{j-1}\Big(h_n^2+h_n^{q_0/2}u_n^{2-q_0}\Big).
		\]
	\end{proof}

	%%%%%%%%%%%%
	Set \(S_j:=B_j+C_j+J_j^{\mathrm s}\), and introduce the uncompensated large-jump sum
	\[
	L_{n,j}
	:=
	\sum_{i=1}^m
	\int_{t_{j-1}}^{t_j}\!\!\int_{|z|>u_n}
	z\,\mathbf 1_{\{Z_{s-}=i\}}\,N_i(ds,dz).
	\]
	Since \(U_j=S_j+J_j^{\mathrm l}\) and \(J_j^{\mathrm l}=L_{n,j}-K_{n,j}\), setting
	\(\widetilde S_j:=S_j-K_{n,j}\) yields
	\[
	U_j=\widetilde S_j+L_{n,j},
	\qquad
	L_{n,j}=0\ \text{ on }\ \{N_{n,j}=0\},
	\]
	{
	and, with
	\[
		\bar\kappa
		:=\sup_{i\in S}\int_{\mathbb R}|z|\,\nu_i^\star(dz)<\infty,
	\]
	the finite-variation assumption gives
	\(|K_{n,j}|\le h_n\bar\kappa\), uniformly in \(j\).
	}

	We record the moment estimates used repeatedly in the sequel; they follow from the
	preliminary moment bounds, Kunita's inequality and Assumption~\ref{ass:trunc}. For
	every \(r\in[4,q_0]\),
	\begin{equation}
		\label{eq:Sj_moment_reusable}
		\E_{j-1}[|S_j|^r]\lesssim h_n^{r/2}R_{j-1},
		\qquad
		\E_{j-1}[|\widetilde S_j|^r]\lesssim h_n^{r/2}R_{j-1};
	\end{equation}
	in particular \(\E_{j-1}[|\widetilde S_j|^4]\lesssim h_n^2R_{j-1}\) and
	\(\E_{j-1}[|\widetilde S_j|^8]\lesssim h_n^4R_{j-1}\). For the truncated large-jump
	contribution, introduce the deterministic rate
	{
	\[
		r_{r,n}^{\rm lg}
		:=h_n^{1-r/2}u_n^r\bar\nu_n,\qquad r\in[4,q_0].
	\]
	By \eqref{main-eq:trunc-levy-tail},
	\(\sup_{r\in[4,q_0]}r_{r,n}^{\rm lg}\to0\). Also put
	\[
		\widetilde r_{4,n}^{\rm FV}
		:=r_{4,n}^{\rm FV}+r_{4,n}^{\rm lg};
		\qquad
		\widetilde r_{4,n}^{\rm FV}\to0
	\]
	by \eqref{main-eq:fv_deterministic_remainders_2} and the FV tail-rate
	condition.
	}
	For every \(r\in[4,q_0]\),
	\begin{equation}
		\label{eq:large_jump_trunc_reusable}
		{
		\E_{j-1}\!\left[|U_j|^r I_{n,j}\mathbf 1_{\{N_{n,j}\ge1\}}\right]
		\le C
		u_n^rR_{j-1}\,\Pb(N_{n,j}\ge1\mid\F_{j-1})
		\le
		Ch_n^{r/2}r_{r,n}^{\rm lg}R_{j-1}.
		}
	\end{equation}
	since on \(\{I_{n,j}=1\}\) one has
	\(|U_j|=|\Delta_jX-h_nb(X_{t_{j-1}},Z_{j-1},\alpha^\star)|\le u_n+h_nR_{j-1}\lesssim u_nR_{j-1}\),
	while \(\Pb(N_{n,j}\ge1\mid\F_{j-1})\lesssim h_n\bar\nu_n\) and
	\(h_nu_n^r\bar\nu_n=o(h_n^{r/2})\) by \eqref{main-eq:trunc-levy-tail}.

	Finally, Lemma~\ref{lem:estim_on_event} and \eqref{main-eq:trunc-levy-tail} with \(q=8\)
	give the event bound
	\[
	\Pb(A_{n,j}\mid\F_{j-1})+\Pb(N_{n,j}\ge1\mid\F_{j-1})
	\lesssim
	h_n^4u_n^{-8}R_{j-1},
	\]
	whence, by H\"older's inequality together with \eqref{eq:Sj_moment_reusable},
	\begin{align}
		\label{eq:tildeS4_bad_event}
		\E_{j-1}\!\left[|\widetilde S_j|^4\big\{\mathbf 1_{A_{n,j}}+\mathbf 1_{\{N_{n,j}\ge1\}}\big\}\right]
		&\le
		Ch_n^2\left(\frac{h_n}{u_n^2}\right)^2R_{j-1},\\
		\label{eq:tildeS3_bad_event}
		\E_{j-1}\!\left[|\widetilde S_j|^3\big\{\mathbf 1_{A_{n,j}}+\mathbf 1_{\{N_{n,j}\ge1\}}\big\}\right]
		&\le
		C\frac{h_n^2}{u_n}
		\frac{\sqrt{h_n}}{u_n}R_{j-1}.
	\end{align}
	Indeed, by H\"older's inequality,
	\begin{align*}
		\E_{j-1}\big[|\widetilde S_j|^3\mathbf 1_{A_{n,j}\cup\{N_{n,j}\ge1\}}\big]
		&\le
		\E_{j-1}[|\widetilde S_j|^4]^{3/4}\,
		\Pb\big(A_{n,j}\cup\{N_{n,j}\ge1\}\mid\F_{j-1}\big)^{1/4}\\
		&\lesssim
		h_n^{3/2}\cdot\frac{h_n}{u_n^2}R_{j-1};
	\end{align*}
	\eqref{eq:tildeS4_bad_event} follows in the same way from
	\(\E_{j-1}[|\widetilde S_j|^8]\lesssim h_n^4R_{j-1}\) and the Cauchy--Schwarz inequality.

	%--------------------------------------------------------------------
	% Revised truncated moments lemma
	%--------------------------------------------------------------------
	
	\begin{lem}[Fourth and high-order moments]
		\label{lem:truncated_moments}
		{
		In Case FV, suppose Assumptions~\ref{ass:smooth},
		\ref{ass:levy},
		\ref{ass:fv}, and~\ref{ass:trunc} hold. Then, for all
		sufficiently large \(n\) and all \(j=1,\ldots,n\),
		}

		\begin{enumerate}[label=(\roman*), leftmargin=1.5em]
			\item
			{
			\begin{equation}
				\label{eq:trunc_moment_4_explicit_revised}
				\left|
				\E_{j-1}\big[U_j^4 I_{n,j}\big]
				-3h_n^2 a_{j-1}^{\star4}
				\right|
				\le
				Ch_n^2\widetilde r_{4,n}^{\rm FV}R_{j-1}.
			\end{equation}
			}

			\item
			For every \(q\in[4,q_0]\),
			\begin{equation}
				\label{eq:trunc_moment_q_explicit_revised}
				\E_{j-1}\big[|U_j|^q I_{n,j}\big]
				\lesssim
				h_n^{q/2}R_{j-1},
			\end{equation}
			and
			\begin{equation}
				\label{eq:trunc_moment_1_bound_revised}
				\Big|
				\E_{j-1}\big[U_j I_{n,j}\big]
				\Big|
				\lesssim {h_n\bar\kappa}+
				h_n^{3/2}R_{j-1}
				+
				\frac{h_n^{q/2}}{u_n^{q-1}}R_{j-1}.
			\end{equation}

			\item
			\begin{equation}
				\label{eq:U8I_bound_revised}
				\E_{j-1}\big[U_j^8I_{n,j}\big]
				\lesssim
				h_n^4R_{j-1}.
			\end{equation}
		\end{enumerate}
	\end{lem}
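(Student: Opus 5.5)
The plan is to work throughout with the decomposition \(U_j=\widetilde S_j+L_{n,j}\), with \(\widetilde S_j=B_j+C_j+J_j^{\mathrm s}-K_{n,j}\). Every expectation is split on the three events \(\{N_{n,j}=0\}\), \(\{N_{n,j}\ge1\}\), and \(A_{n,j}=\{I_{n,j}=0\}\). The basic inputs are the moment bound \eqref{eq:Sj_moment_reusable}, the retained large-jump bound \eqref{eq:large_jump_trunc_reusable}, and the bad-event bounds \eqref{eq:tildeS4_bad_event}--\eqref{eq:tildeS3_bad_event}.

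For (ii), write \(\E_{j-1}[|U_j|^qI_{n,j}]\le \E_{j-1}[|\widetilde S_j|^q\mathbf1_{\{N_{n,j}=0\}}]+\E_{j-1}[|U_j|^qI_{n,j}\mathbf1_{\{N_{n,j}\ge1\}}]\), using that \(U_j=\widetilde S_j\) on \(\{N_{n,j}=0\}\). The first term is at most \(h_n^{q/2}R_{j-1}\) by \eqref{eq:Sj_moment_reusable}. The second is \(h_n^{q/2}r^{\rm lg}_{q,n}R_{j-1}\) by \eqref{eq:large_jump_trunc_reusable}. This proves \eqref{eq:trunc_moment_q_explicit_revised}, and (iii) is the case \(q=8\le q_0\).

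For the first-moment bound \eqref{eq:trunc_moment_1_bound_revised}, write \(\E_{j-1}[U_jI_{n,j}]=\E_{j-1}[\widetilde S_j]-\E_{j-1}[\widetilde S_j(\mathbf1_{A_{n,j}}+\mathbf1_{\{N_{n,j}\ge1\}}I_{n,j})]+\E_{j-1}[L_{n,j}I_{n,j}]\), noting that \(\mathbf 1_{A_{n,j}}\mathbf1_{\{N=0\}}\le\mathbf1_{A_{n,j}}\). The pieces are handled as follows.
\begin{itemize}
\item \(\E_{j-1}[C_j]=0\) and \(\E_{j-1}[J_j^{\mathrm s}]=0\).
\item \(|\E_{j-1}B_j|\lesssim h_n^{3/2}R_{j-1}\).
\item \(|K_{n,j}|\le h_n\bar\kappa\).
\item The bad-event terms are bounded by Hölder's inequality with the \(q\)-th moment and Lemma~\ref{lem:estim_on_event}, which gives \((h_n^{1/2})(h_n^{q/2}u_n^{-q})^{1-1/q}\lesssim h_n^{q/2}u_n^{1-q}\).
\item On the retained one-jump event, \(|L_{n,j}|\le u_n+|\widetilde S_j|\), and \(u_n\Pb(N\ge1)\lesssim u_nh_n\bar\nu_n\lesssim h_n^{q/2}u_n^{1-q}\) by \eqref{main-eq:trunc-levy-tail}.
\end{itemize}

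For (i), again separate the retained one-jump part, which is \(O(h_n^2r^{\rm lg}_{4,n})\) by \eqref{eq:large_jump_trunc_reusable}, and reduce to \(\E_{j-1}[\widetilde S_j^4]-\E_{j-1}[\widetilde S_j^4\mathbf1_{A_{n,j}\cup\{N\ge1\}}]\). The subtracted term is \(h_n^2(h_n/u_n^2)^2R_{j-1}\) by \eqref{eq:tildeS4_bad_event}, and this is part of \(r_{4,n}^{\rm FV}\). Then expand \(\widetilde S_j^4=(C_j+E_j)^4\) with \(E_j=J_j^{\mathrm s}+B_j-K_{n,j}\). The key quantities are as follows.
\begin{itemize}
\item The leading term is \(\E_{j-1}C_j^4=3h_n^2a_{j-1}^{\star4}+O(h_n^{5/2}R_{j-1})\) from \eqref{eq:supp_fv_continuous_fourth_exact}, which accounts for the \(h_n^{1/2}\) entry of \(r_{4,n}^{\rm FV}\).
\item \(\E_{j-1}|J_j^{\mathrm s}|^4\lesssim h_n\int_{|z|\le u_n}z^4\nu+h_n^2(q_{2,n}^{\rm FV})^2\), which gives the quartic and \((q_{2,n}^{\rm FV})^2\) entries after dividing by \(h_n^2\).
\item \(\E|B_j|^4\lesssim h_n^5R_{j-1}\) and \(K_{n,j}^4\lesssim h_n^4\).
\item The cross terms \(C_j^{4-k}E_j^k\) are bounded by Hölder's inequality by \(h_n^{(4-k)/2}\|E_j\|_4^{k}\). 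Their sum is then \(h_n^2\) times powers of \(r_{4,n}^{\rm FV}\) to the exponents \(k\), which is dominated by \(h_n^2 r_{4,n}^{\rm FV}\) since \(r_{4,n}^{\rm FV}\to0\). This explains the fourth-root form of \(r_{4,n}^{\rm FV}\).
\item The term \(C_j^3J_j^{\mathrm s}\) vanishes in conditional mean by the orthogonality in Lemma~\ref{lem:fv_conditional_orthogonality}, although the crude Hölder bound above already suffices.
\end{itemize}

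The main obstacle is matching the cross-term rates precisely to the fourth-root definition of \(r_{4,n}^{\rm FV}\), with \(\|E_j\|_4/h_n^{1/2}\lesssim r_{4,n}^{\rm FV}\). I would check this first: each summand inside the bracket of \eqref{eq:fv_deterministic_remainders_2} is exactly \(h_n^{-2}\) times a fourth moment listed above, or a bad-event or tail term, so \(h_n^{-1/2}\|E_j\|_{4}\le C r_{4,n}^{\rm FV}R_{j-1}^{1/4}\). The tail term \(h_n^{-1}u_n^4\bar\nu_n\) is \(r^{\rm lg}_{4,n}\), which is why the bound uses \(\widetilde r_{4,n}^{\rm FV}\).
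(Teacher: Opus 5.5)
Your proposal is correct. Parts (ii)--(iii) and the fourth-moment bound in (i) follow the paper's proof. Both arguments reduce to $\E_{j-1}[\widetilde S_j^{\,4}]$ via \eqref{eq:large_jump_trunc_reusable} and \eqref{eq:tildeS4_bad_event}, compare it with $\E_{j-1}[C_j^4]$ by H\"older at the fourth-root scale $r_{4,n}^{\rm FV}$, and close with \eqref{eq:supp_fv_continuous_fourth_exact}. The paper removes $K_{n,j}$, then $B_j$, then $J_j^{\mathrm s}$ in turn, whereas you group them into $E_j$; this is the same estimate.

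The genuine difference is in the first-moment bound \eqref{eq:trunc_moment_1_bound_revised}.
\begin{itemize}
\item The paper splits on $\{N_{n,j}=0\}$ versus $\{N_{n,j}\ge1\}$ and bounds the second piece by $u_n\Pb(N_{n,j}\ge1\mid\F_{j-1})$. On the first piece it uses the exact identity $\E_{j-1}[Y_j\mathbf1_{\{N_{n,j}=0\}}]=0$ from \eqref{eq:fv_absence_large_jump_centering}. That identity needs the martingale property of $C_j$ and $J_j^{\mathrm s}$ in the filtration enlarged by the regime path and the large-jump Poisson measure (Lemma~\ref{lem:fv_conditional_orthogonality}). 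The leftover $\E_{j-1}[Y_j\mathbf1_{A_{n,j}}\mathbf1_{\{N_{n,j}=0\}}]$ is then handled by the three-event inclusion.
\item You use only the ordinary centering $\E_{j-1}C_j=\E_{j-1}J_j^{\mathrm s}=0$ of the whole $\widetilde S_j$. You then absorb both $A_{n,j}$ and $\{N_{n,j}\ge1\}\cap\{I_{n,j}=1\}$ by H\"older against Lemma~\ref{lem:estim_on_event}.
\end{itemize}
Your route loses nothing in rate, because $\Pb(N_{n,j}\ge1\mid\F_{j-1})\lesssim h_n\bar\nu_n=o(h_n^{q/2}u_n^{-q})$ by \eqref{main-eq:trunc-levy-tail}. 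It is more elementary and avoids the enlarged filtration. The paper's exact cancellation is the device it reuses in Lemma~\ref{lem:trunc_second_moment_refined} and Lemma~\ref{lem:fv_conditional_bridge}. The paper also proves the bound with $q_0$, which implies it for every $q\le q_0$, since $h_n^{q/2}u_n^{1-q}=u_n(\sqrt{h_n}/u_n)^q$ decreases in $q$.

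There are three minor slips, none affecting validity.
\begin{itemize}
\item Your first-moment identity carries a spurious term $-\E_{j-1}[\widetilde S_j\mathbf1_{\{N_{n,j}\ge1\}}I_{n,j}]$. The correct form is $\E_{j-1}[\widetilde S_j]-\E_{j-1}[\widetilde S_j\mathbf1_{A_{n,j}}]+\E_{j-1}[L_{n,j}I_{n,j}]$. Every piece is bounded at the target rate anyway.
\item On $\{I_{n,j}=1\}$ the correct bound is $|L_{n,j}|\le u_n+h_n|b_{j-1}^\star|+|\widetilde S_j|$.
\item Lemma~\ref{lem:fv_conditional_orthogonality} gives $\E[C_jJ_j^{\mathrm s}\mid\mathfrak G_{n,j}]=0$, not $\E_{j-1}[C_j^3J_j^{\mathrm s}]=0$. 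The latter need not hold when $a$ depends on the state, so drop that remark and keep the H\"older bound, which suffices.
\end{itemize}
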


	\begin{proof}
		We use the notation \(S_j,L_{n,j},K_{n,j},\widetilde S_j\) introduced above.

		\proofstep{Step 1: fourth moment}
		{
		Since \(L_{n,j}=0\) on \(\{N_{n,j}=0\}\),
		$U_j^4I_{n,j}
		=
		\widetilde S_j^{\,4}I_{n,j}\mathbf 1_{\{N_{n,j}=0\}}
		+
		U_j^4I_{n,j}\mathbf 1_{\{N_{n,j}\ge1\}}$.

		Equations~\eqref{eq:large_jump_trunc_reusable} and
		\eqref{eq:tildeS4_bad_event} give
		\[
		\left|
		\E_{j-1}[U_j^4I_{n,j}]
		-\E_{j-1}[\widetilde S_j^{\,4}]
		\right|
		\le
		Ch_n^2
		\left\{
		r_{4,n}^{\rm lg}
		+\left(\frac{h_n}{u_n^2}\right)^2
		\right\}R_{j-1}.
		\]
		Since \(\widetilde S_j=S_j-K_{n,j}\) and
		\(|K_{n,j}|\le h_n\bar\kappa\), use
		\[
		|\widetilde S_j^{\,4}-S_j^4|
		\lesssim
		|K_{n,j}||S_j|^3
		+
		|K_{n,j}|^2|S_j|^2
		+
		|K_{n,j}|^3|S_j|
		+
		|K_{n,j}|^4,
		\]
		and \eqref{eq:Sj_moment_reusable} to obtain the uniform bound
		\[
		\left|
		\E_{j-1}[\widetilde S_j^{\,4}]
		-\E_{j-1}[S_j^4]\right|
		\le
		Ch_n^2\sqrt{h_n}R_{j-1}.
		\]

		It remains to evaluate \(\E_{j-1}[S_j^4]\).
		All terms in the expansion of \(S_j^4\) containing at least one \(B_j\)
		are bounded by \(Ch_n^{5/2}R_{j-1}\), because
		$\E_{j-1}[|B_j|^4]\lesssim h_n^5R_{j-1}$,
		$\E_{j-1}[|C_j|^4]\lesssim h_n^2R_{j-1}$.
		Furthermore, by Kunita's inequality and \eqref{main-eq:smalljump_un},
		\[
		\E_{j-1}[|J_j^{\mathrm s}|^4]
		\lesssim
		h_n\int_{|z|\le u_n}|z|^4\nu_{Z_{j-1}}^\star(dz)
		+
		h_n^2
		\left(
		\int_{|z|\le u_n}z^2\nu_{Z_{j-1}}^\star(dz)
		\right)^2
		\le
		Ch_n^2r_{4,n}^{\rm FV}R_{j-1}.
		\]
		Hence every term in the expansion of \((C_j+J_j^{\mathrm s})^4\)
		containing \(J_j^{\mathrm s}\) is bounded, by the conditional
		H\"older inequality, by
		\(Ch_n^2r_{4,n}^{\rm FV}R_{j-1}\). Therefore
		\[
		\left|\E_{j-1}[S_j^4]-\E_{j-1}[C_j^4]\right|
		\le
		Ch_n^2(r_{4,n}^{\rm FV}+\sqrt{h_n})R_{j-1}.
		\]
		The exact continuous-martingale expansion
		\eqref{eq:supp_fv_continuous_fourth_exact} supplies the remaining
		\(Ch_n^{5/2}R_{j-1}\) error and, in particular, identifies the
		leading coefficient as \(3\).
		Combining the last four displays proves
		\eqref{eq:trunc_moment_4_explicit_revised}; every remainder coefficient
		is deterministic and independent of \(j\).
		}

		\proofstep{Step 2: high-order moment}
		For \(q\in[4,q_0]\),
		\[
		\begin{aligned}
			\E_{j-1}[|U_j|^qI_{n,j}]
			={}&\E_{j-1}[|U_j|^qI_{n,j};N_{n,j}=0]\\
			&+\E_{j-1}[|U_j|^qI_{n,j};N_{n,j}\ge1].
		\end{aligned}
		\]

		On \(\{N_{n,j}=0\}\), \(U_j=\widetilde S_j\), so by
		\eqref{eq:Sj_moment_reusable},
		\[
		\E_{j-1}[|U_j|^qI_{n,j}\mathbf 1_{\{N_{n,j}=0\}}]
		\le
		\E_{j-1}[|\widetilde S_j|^q]
		\lesssim
		h_n^{q/2}R_{j-1}.
		\]
		{
		The second term is bounded by
		\(Ch_n^{q/2}r_{q,n}^{\rm lg}R_{j-1}\) by
		\eqref{eq:large_jump_trunc_reusable}. Thus
		}
		\[
		\E_{j-1}[|U_j|^qI_{n,j}]
		\lesssim
		h_n^{q/2}R_{j-1}.
		\]

		\proofstep{Step 3: first moment}
		Decompose
		\[
			\E_{j-1}[U_jI_{n,j}]=T_{0,j-1}+T_{1,j-1},
		\]
		where
		\[
			T_{0,j-1}
			:=\E_{j-1}[U_jI_{n,j}\mathbf 1_{\{N_{n,j}=0\}}],
			\qquad
			T_{1,j-1}
			:=\E_{j-1}[U_jI_{n,j}\mathbf 1_{\{N_{n,j}\ge1\}}].
		\]

		First consider \(T_{1,j-1}\). On \(\{I_{n,j}=1\}\),
		$|U_j|\le u_n+h_nR_{j-1}\lesssim u_nR_{j-1}$.
		Therefore, by \eqref{main-eq:trunc-levy-tail},
		{
		\[
		|T_{1,j-1}|
		\lesssim
		u_nR_{j-1}\Pb(N_{n,j}\ge1\mid\F_{j-1})
		\lesssim
		h_nu_n\bar\nu_nR_{j-1}
		\le C
		\frac{h_n^{q_0/2}}{u_n^{q_0-1}}
		R_{j-1}
		\]
		for all sufficiently large \(n\), uniformly in \(j\).
		}

		Now consider \(T_{0,j-1}\). On \(\{N_{n,j}=0\}\),
		\(U_j=B_j+Y_j-K_{n,j}\).
		Hence
		\[
		|T_{0,j-1}|
		\le
		\E_{j-1}[|B_j|]
		+
		\E_{j-1}[|K_{n,j}|]
		+
		\left|
			\E_{j-1}[Y_jI_{n,j}\mathbf 1_{\{N_{n,j}=0\}}]
		\right|.
		\]

		The first term satisfies
		\[
		\E_{j-1}[|B_j|]\le \E_{j-1}[|B_j|^2]^{1/2}
		\lesssim h_n^{3/2}R_{j-1}.
		\]
		The second term satisfies
		{
		\[
		\E_{j-1}[|K_{n,j}|]\le h_n\bar\kappa.
		\]
		}
		% Since Assumption~\ref{ass:trunc} implies
		% $h_n=o\!\left(\frac{h_n^{q/2}}{u_n^{q-1}}\right)$,
		% we obtain
		% $\E_{j-1}[|K_{n,j}|]
		% =
		% o\!\left(
		% \frac{h_n^{q/2}}{u_n^{q-1}}
		% \right)$.

		It remains to bound the martingale term. The first identity in
		\eqref{eq:fv_absence_large_jump_centering} gives
		\(\E_{j-1}[Y_j\mathbf1_{\{N_{n,j}=0\}}]=0\); this is predictable
		centering in the enlarged filtration of
		Lemma~\ref{lem:fv_conditional_orthogonality}.
		Therefore
		$\E_{j-1}[Y_jI_{n,j}\mathbf 1_{\{N_{n,j}=0\}}]
		=
		-\E_{j-1}[Y_j\mathbf 1_{A_{n,j}}\mathbf 1_{\{N_{n,j}=0\}}]$.

		On \(A_{n,j}\cap\{N_{n,j}=0\}\),
		$\Delta_jX
		=
		h_nb_{j-1}^\star+B_j+Y_j-K_{n,j}$.
		{
		Since \(|K_{n,j}|\le h_n\bar\kappa\) and \(h_n/u_n\to0\), for all
		sufficiently large \(n\), \(|K_{n,j}|\le u_n/4\). Consequently,
		}
		\[
		\mathbf 1_{A_{n,j}}\mathbf 1_{\{N_{n,j}=0\}}
		\le
		\mathbf 1_{\{|Y_j|>u_n/4\}}
		+
		\mathbf 1_{\{|B_j|>u_n/4\}}
		+
		\mathbf 1_{\{h_n|b_{j-1}^\star|>u_n/4\}}.
		\]
		Thus
		\begin{align*}
			\E_{j-1}[|Y_j|\mathbf 1_{A_{n,j}}\mathbf 1_{\{N_{n,j}=0\}}]
			&\le
			\E_{j-1}[|Y_j|\mathbf 1_{\{|Y_j|>u_n/4\}}] +
			\E_{j-1}[|Y_j|\mathbf 1_{\{|B_j|>u_n/4\}}] \\
			&\quad+
			\E_{j-1}[|Y_j|\mathbf 1_{\{h_n|b_{j-1}^\star|>u_n/4\}}].
		\end{align*}
		For the first term,
		\[
		\E_{j-1}[|Y_j|\mathbf 1_{\{|Y_j|>u_n/4\}}]
		\lesssim
		u_n^{1-q_0}\E_{j-1}[|Y_j|^{q_0}]
		\lesssim
		\frac{h_n^{q_0/2}}{u_n^{q_0-1}}R_{j-1}.
		\]
		For the second term, H\"older's inequality gives
		\begin{align*}
			\E_{j-1}[|Y_j|\mathbf 1_{\{|B_j|>u_n/4\}}]
			&\le
			\E_{j-1}[|Y_j|^q_0]^{1/q_0}
			\Pb(|B_j|>u_n/4\mid\F_{j-1})^{1-1/q_0} \\
			&\lesssim
			h_n^{1/2}R_{j-1}
			\left(
			\frac{\E_{j-1}[|B_j|^{q_0}]}{u_n^{q_0}}
			\right)^{1-1/q_0} \\
			&\lesssim
			\frac{h_n^{q_0+1/2-1/q_0}}{u_n^{q_0-1}}R_{j-1}
			\lesssim
			\frac{h_n^{q_0/2}}{u_n^{q_0-1}}R_{j-1}.
		\end{align*}
		For the third term, since the event is \(\F_{j-1}\)-measurable,
		$\mathbf 1_{\{h_n|b_{j-1}^\star|>u_n/4\}}
		\le
		\left(\frac{4h_n|b_{j-1}^\star|}{u_n}\right)^{q_0-1}$.
		Therefore
		\[
		\E_{j-1}[|Y_j|\mathbf 1_{\{h_n|b_{j-1}^\star|>u_n/4\}}]
		\lesssim
		h_n^{1/2}R_{j-1}
		\left(\frac{h_n}{u_n}\right)^{q_0-1}
		\lesssim
		\frac{h_n^{q_0/2}}{u_n^{q_0-1}}R_{j-1}.
		\]
		Combining these estimates yields
		\[
		|T_{0,j-1}|
		\lesssim {h_n\bar\kappa}+
		h_n^{3/2}R_{j-1}
		+
		\frac{h_n^{q_0/2}}{u_n^{q_0-1}}R_{j-1}.
		\]
		Together with the estimate for \(T_{1,j-1}\), this proves
		\eqref{eq:trunc_moment_1_bound_revised}.

		\medskip
		\noindent
		\proofstep{Step 4: eighth retained moment}
		Again split according to \(\{N_{n,j}=0\}\) and \(\{N_{n,j}\ge1\}\):
		\[
		\E_{j-1}[|U_j|^8I_{n,j}]
		\le
		\E_{j-1}[|\widetilde S_j|^8]
		+
		\E_{j-1}[|U_j|^8I_{n,j}\mathbf 1_{\{N_{n,j}\ge1\}}].
		\]
		The first term is bounded by \(h_n^4R_{j-1}\) by
		\eqref{eq:Sj_moment_reusable}. {The second term is bounded by
		\(Ch_n^4r_{8,n}^{\rm lg}R_{j-1}\)
		by \eqref{eq:large_jump_trunc_reusable} with \(r=8\).} Hence
		\[
		\E_{j-1}[|U_j|^8I_{n,j}]
		\lesssim
		h_n^4R_{j-1}.
		\]
	\end{proof}

	%%%%%%%%%

	%%%%%%%%%%%%%%

	%--------------------------------------------------------------------
	% Revised third truncated moment lemma
	%--------------------------------------------------------------------
	\begin{lem}[Third-moment expansion]
		\label{lem:trunc_third_moment}
		{
		In Case FV, suppose Assumptions~\ref{ass:smooth},
		\ref{ass:levy},
		\ref{ass:fv}, and~\ref{ass:trunc} hold. Then, for all
		sufficiently large \(n\) and all \(j=1,\ldots,n\),
		}
		\begin{equation}
			\label{eq:trunc_third_moment_main_revised}
			\E_{j-1}\big[U_j^3I_{n,j}\big]
			=
			h_n\int_{|z|\le u_n}z^3\,\nu_{Z_{j-1}}^\star(dz)
			+
			R_{j-1}
			\left(
			h_n^{7/4}
			+
			\frac{h_n^2}{u_n}
			\right).
		\end{equation}
	\end{lem}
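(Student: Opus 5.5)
The plan is to follow the template of Lemma~\ref{lem:truncated_moments} and Lemma~\ref{lem:trunc_second_moment_refined}. I split on the large-jump count,
\[
U_j^3I_{n,j}=\widetilde S_j^{\,3}I_{n,j}\mathbf1_{\{N_{n,j}=0\}}+U_j^3I_{n,j}\mathbf1_{\{N_{n,j}\ge1\}}.
\]
For the second piece, I use \(|U_j|\lesssim u_nR_{j-1}\) on \(\{I_{n,j}=1\}\) and \(\Pb(N_{n,j}\ge1\mid\F_{j-1})\lesssim h_n\bar\nu_n\). This gives the bound \(h_nu_n^3\bar\nu_nR_{j-1}\), which is \(o(h_n^2/u_n)R_{j-1}\) by \eqref{main-eq:trunc-levy-tail} with \(q=4\), since \(h_nu_n^4\bar\nu_n=o(h_n^2)\).

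For the first piece, I write \(\widetilde S_j^{\,3}I_{n,j}\mathbf1_{\{N_{n,j}=0\}}=\widetilde S_j^{\,3}-\widetilde S_j^{\,3}\{\mathbf1_{A_{n,j}}+\mathbf1_{\{N_{n,j}\ge1\}}\}\) up to the overlap. The correction is bounded by \eqref{eq:tildeS3_bad_event}, namely \((h_n^2/u_n)(\sqrt{h_n}/u_n)R_{j-1}\lesssim (h_n^2/u_n)R_{j-1}\). This reduces the problem to computing \(\E_{j-1}[\widetilde S_j^{\,3}]\).

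Since \(\widetilde S_j=Y_j+B_j-K_{n,j}\) with \(|K_{n,j}|\le h_n\bar\kappa\), the terms involving \(K_{n,j}\) are controlled as follows. The quadratic cross term is \(3K_{n,j}\E_{j-1}[S_j^2]\lesssim h_n^2R_{j-1}\), and the higher-order ones are smaller. The \(B_j\) terms are handled by Hölder's inequality: for example \(\E_{j-1}|B_jY_j^2|\lesssim h_n^{3/2}h_n\), and products with \(B_j^2\) are even smaller. Both are absorbed into \(h_n^{7/4}\), and in fact \(h_n^2\le h_n^2/u_n\).

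The key computation is \(\E_{j-1}[Y_j^3]\) with \(Y_j=C_j+J_j^{\mathrm s}\). By Lemma~\ref{lem:fv_conditional_orthogonality} and Itô's formula, \(\E_{j-1}[(J_j^{\mathrm s})^3]=\E_{j-1}\int\!\int_{|z|\le u_n} \{3J_{s-}z^2+z^3\}\nu_{Z_{s-}}(dz)ds\). The \(z^3\) part gives \(h_n\int_{|z|\le u_n}z^3\nu_{Z_{j-1}}^\star(dz)\); replacing \(Z_{s-}\) by \(Z_{j-1}\) costs \(h_n^2\) by \eqref{eq:supp_one_transition_probability}. The \(3J_{s-}z^2\) part has zero conditional mean, because \(J\) is a centered martingale, up to regime switching, which again costs \(O(h_n^2)\).

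The continuous cube \(\E_{j-1}[C_j^3]=3\E_{j-1}\int C_{j,s}a_s^{\star2}ds\) is the main obstacle, together with the mixed terms \(\E[C_j^2J^{\mathrm s}_j]\) and \(\E[C_j(J^{\mathrm s}_j)^2]\). For \(\E_{j-1}[C_j^3]\), I would replace \(a_s^{\star2}\) by \(a_{j-1}^{\star2}\), which kills the leading term since \(C_{j,s}\) is centered. The Cauchy--Schwarz remainder is \(\int\{\E C_{j,s}^2\}^{1/2}\{\E|a_s^{\star2}-a_{j-1}^{\star2}|^2\}^{1/2}ds\lesssim h_n^2\), using \eqref{eq:supp_fv_continuous_local_bounds}. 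For the mixed terms, Itô's product rule together with \([C,J^{\mathrm s}]=0\) gives the identities \(\E[C^2J]=\E\int J\,a^2ds\) and \(\E[CJ^2]=\E\int C\,dJ^2\). The first is bounded by \(h_n\cdot(h_n q_{2,n}^{\rm FV})^{1/2}\), and the second leaves a term \(\E\int C_s\int z^2\nu\,ds\) with \(|\cdot|\lesssim h_n^{3/2}q_{2,n}^{\rm FV}\). Neither bound is obviously \(\le h_n^{7/4}\). If they fail, I would use the finite-variation bound \(q_{2,n}^{\rm FV}\lesssim u_n^{2-\beta}\) and the exponent constraints of Assumption~\ref{ass:trunc} to absorb them, possibly at the cost of replacing \(h_n^{7/4}\) by \(h_n^{3/2}u_n^{1-\beta/2}\).
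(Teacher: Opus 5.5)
Your proposal is correct and has the same skeleton as the paper's proof: the split on $N_{n,j}$ with the $q=4$ tail rate, removal of the bad event via \eqref{eq:tildeS3_bad_event}, the $O(h_n^2)$ cost of the shift $K_{n,j}$ and of the $B_j$ terms, and the compensator formula for $\E_{j-1}[(J_j^{\mathrm s})^3]$.

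The one real difference is in the mixed Brownian--small-jump terms. The paper freezes $C_j=a_{j-1}^\star\Delta_jW+\widetilde C_j$ and kills the leading piece $(a_{j-1}^\star)^2(\Delta_jW)^2J_j^{\mathrm s}$ by conditioning on the regime path and $\Delta_jW$. It bounds the remaining pieces by H\"older; its $h_n^{7/4}$ comes exactly from $\E_{j-1}[\Delta_jW\,\widetilde C_jJ_j^{\mathrm s}]$. You use It\^o product identities instead.

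Your closing hedge is unnecessary. You have $q_{2,n}^{\rm FV}\lesssim u_n^{2-\beta}\asymp h_n^{\rho(2-\beta)}$, and Assumption~\ref{ass:trunc} imposes $\rho>1/\{2(2-\beta)\}$, i.e.\ $\rho(2-\beta)-\tfrac12>0$. Hence your bounds give $h_n^{3/2}(q_{2,n}^{\rm FV})^{1/2}=o(h_n^{7/4})$ and $h_n^{3/2}q_{2,n}^{\rm FV}=o(h_n^{2})$. So the lemma holds exactly as stated. Replacing $h_n^{7/4}$ by $h_n^{3/2}u_n^{1-\beta/2}$ would give a sharper remainder, not a weaker one, and no change to the statement is needed.

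Your route can do even better on these terms:
\begin{itemize}
\item In $\E_{j-1}\int J_{s-}a_s^{\star2}\,ds$ you may subtract the $\F_{t_{j-1}}$-measurable $a_{j-1}^{\star2}$, since $\E_{j-1}J_{s-}=0$. Cauchy--Schwarz with \eqref{eq:supp_fv_continuous_local_bounds} then leaves $O\bigl(h_n^2(q_{2,n}^{\rm FV})^{1/2}\bigr)R_{j-1}$.
\item The term $\E_{j-1}\int C_{j,s}\int_{|z|\le u_n}z^2\nu^\star_{Z_{s-}}(dz)\,ds$ is exactly zero. This is because $C_{j,\cdot}$ remains a centered martingale after conditioning on the whole regime path.
\end{itemize}
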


	\begin{proof}
		We use the notation \(S_j,L_{n,j},K_{n,j},\widetilde S_j\) introduced above.
		{
		Define
		\[
			r_{3,n}^{\rm FV}
			:=
			h_n^{-1}u_n^4\bar\nu_n
			+\frac{\sqrt{h_n}}{u_n}
			+u_n.
		\]
		The FV tail-rate condition, \(\sqrt{h_n}/u_n\to0\), and
		\(u_n\to0\) imply \(r_{3,n}^{\rm FV}\to0\).
		}
		\proofstep{Step 1: large-jump reduction}
		Since \(L_{n,j}=0\) on \(\{N_{n,j}=0\}\),
		\[
		\E_{j-1}[U_j^3I_{n,j}]
		=
		\E_{j-1}[\widetilde S_j^{\,3}I_{n,j}\mathbf 1_{\{N_{n,j}=0\}}]
		+
		\E_{j-1}[U_j^3I_{n,j}\mathbf 1_{\{N_{n,j}\ge1\}}].
		\]
		On \(\{I_{n,j}=1\}\), \(|U_j|\lesssim u_nR_{j-1}\), so
		\[
		\left|
		\E_{j-1}[U_j^3I_{n,j}\mathbf 1_{\{N_{n,j}\ge1\}}]
		\right|
		\lesssim
		u_n^3R_{j-1}\Pb(N_{n,j}\ge1\mid\F_{j-1})
		\lesssim
		h_nu_n^3\bar\nu_nR_{j-1}.
		\]
		{
		By \eqref{main-eq:trunc-levy-tail} with \(q=4\),
		$h_nu_n^3\bar\nu_n
		=
		\frac{h_n^2}{u_n}
		\big(h_n^{-1}u_n^4\bar\nu_n\big)$.
		Thus
		\begin{equation}
			\label{eq:U3_large_jump_revised}
			\left|
			\E_{j-1}[U_j^3I_{n,j}]
			-\E_{j-1}[\widetilde S_j^{\,3}I_{n,j}
			\mathbf 1_{\{N_{n,j}=0\}}]\right|
			\le
			C\frac{h_n^2}{u_n}
			\big(h_n^{-1}u_n^4\bar\nu_n\big)R_{j-1}.
		\end{equation}
		}

		\proofstep{Step 2: removal of the truncation event}
		By \eqref{eq:tildeS3_bad_event},
		{
		\[
		\left|
		\E_{j-1}[\widetilde S_j^{\,3}I_{n,j}\mathbf 1_{\{N_{n,j}=0\}}]
		-\E_{j-1}[\widetilde S_j^{\,3}]
		\right|
		\le
		C\frac{h_n^2}{u_n}
		\frac{\sqrt{h_n}}{u_n}R_{j-1}.
		\]
		Since \(|K_{n,j}|\le h_n\bar\kappa\),
		$\widetilde S_j^{\,3}-S_j^3
		=
		-3K_{n,j}S_j^2
		+
		3K_{n,j}^2S_j
		-
		K_{n,j}^3$,
		we have,
		\[
		\begin{aligned}
		\left|
		\E_{j-1}[\widetilde S_j^{\,3}-S_j^3]
		\right|
		&\le C
		|K_{n,j}|\E_{j-1}[S_j^2]
		+
		|K_{n,j}|^2\E_{j-1}[|S_j|]
		+
		|K_{n,j}|^3\\
		&\le
		Ch_n^2R_{j-1}.
		\end{aligned}
		\]
		Hence
		\begin{equation}
			\label{eq:U3_reduce_to_S_revised}
			\left|
			\E_{j-1}[U_j^3I_{n,j}]
			-\E_{j-1}[S_j^3]\right|
			\le
			C\frac{h_n^2}{u_n}r_{3,n}^{\rm FV}R_{j-1}.
		\end{equation}
		}

		\proofstep{Step 3: cubic expansion}
		Expanding
		\(S_j=B_j+C_j+J_j^{\mathrm s}\), we have
		\[
		\begin{aligned}
			S_j^3
			={}&
			B_j^3+C_j^3+(J_j^{\mathrm s})^3
			+3B_j^2C_j
			+3B_j^2J_j^{\mathrm s}
			+3C_j^2B_j
			+3C_j^2J_j^{\mathrm s}  \\
			&\quad
			+3(J_j^{\mathrm s})^2B_j
			+3(J_j^{\mathrm s})^2C_j
			+6B_jC_jJ_j^{\mathrm s}.
		\end{aligned}
		\]
		All terms except \((J_j^{\mathrm s})^3\) have conditional expectation
		\(O(h_n^{7/4})R_{j-1}\). Terms containing \(B_j\) are bounded by
		H\"older's inequality and
		$\E_{j-1}[|B_j|^p]\lesssim h_n^{p+1}R_{j-1}$.
		Also
		$|\E_{j-1}[C_j^3]|\lesssim h_n^2R_{j-1}$.
		For the mixed terms involving \(C_j\) and \(J_j^{\mathrm s}\), write
		\[
		C_j=a_{j-1}^\star\Delta_jW+\widetilde C_j,
		\qquad
		\widetilde C_j
		:=
		\int_{t_{j-1}}^{t_j}
		\{a(X_s,Z_s,\gamma^\star)-a_{j-1}^\star\}\,dW_s.
		\]
		Then
		$\E_{j-1}[|\widetilde C_j|^4]\lesssim h_n^3R_{j-1}$,
		{
		\(\E_{j-1}[|J_j^{\mathrm s}|^4]
		\le Ch_n^2r_{4,n}^{\rm FV}R_{j-1}\).}
		The leading part of \(C_j^2J_j^{\mathrm s}\),
		$(a_{j-1}^\star)^2(\Delta_jW)^2J_j^{\mathrm s}$,
		has conditional expectation zero: after conditioning further on the
		complete regime path and \(\Delta_jW\), the compensated small-jump
		integral remains centered. The remaining terms satisfy
		\[
		\left|
		\E_{j-1}[\widetilde C_j^2J_j^{\mathrm s}]
		\right|
		\lesssim
		h_n^2R_{j-1},
		\]
		and
		\[
		\left|
		\E_{j-1}[\Delta_jW\,\widetilde C_jJ_j^{\mathrm s}]
		\right|
		\le
		\E_{j-1}[(\Delta_jW)^2]^{1/2}
		\E_{j-1}[\widetilde C_j^4]^{1/4}
		\E_{j-1}[|J_j^{\mathrm s}|^4]^{1/4}
		\lesssim
		h_n^{7/4}R_{j-1}.
		\]
		Similarly,
		$\left|
		\E_{j-1}[(J_j^{\mathrm s})^2C_j]
		\right|
		\lesssim h_n^2R_{j-1}$.
		Consequently,
		\begin{equation}
			\label{eq:S3_reduce_to_Js3_revised}
			\E_{j-1}[S_j^3]
			=
			\E_{j-1}[(J_j^{\mathrm s})^3]
			+
			h_n^{7/4}R_{j-1}.
		\end{equation}

		\proofstep{Step 4: small-jump third moment}
		By the compensator formula for the compensated Poisson integral,
		\[
		\E_{j-1}[(J_j^{\mathrm s})^3]
		=
		\E_{j-1}
		\left[
		\int_{t_{j-1}}^{t_j}
		\int_{|z|\le u_n}
		z^3\,\nu_{Z_{s-}}^\star(dz)\,ds
		\right].
		\]
		Decompose the integrand at the left endpoint:
		\[
		\begin{aligned}
			&\int_{t_{j-1}}^{t_j}
			\int_{|z|\le u_n}
			z^3\,\nu_{Z_{s-}}^\star(dz)\,ds   =
			h_n\int_{|z|\le u_n}z^3\,\nu_{Z_{j-1}}^\star(dz)
			+
			\int_{t_{j-1}}^{t_j}
			\int_{|z|\le u_n}
			z^3\{\nu_{Z_{s-}}^\star-\nu_{Z_{j-1}}^\star\}(dz)\,ds.
		\end{aligned}
		\]
		Since the probability that \(Z\) switches during
		\([t_{j-1},s]\) is \(O(s-t_{j-1})\), the second term has conditional
		expectation \(h_n^2R_{j-1}\). Hence
		\begin{equation}
			\label{eq:EJs3_revised}
			\E_{j-1}[(J_j^{\mathrm s})^3]
			=
			h_n\int_{|z|\le u_n}z^3\,\nu_{Z_{j-1}}^\star(dz)
			+
			h_n^2R_{j-1}.
		\end{equation}
		\proofstep{Step 5: conclusion}
		Combining
		\eqref{eq:U3_reduce_to_S_revised},
		\eqref{eq:S3_reduce_to_Js3_revised}, and
		\eqref{eq:EJs3_revised}, and absorbing \(h_n^2R_{j-1}\) into
		\(h_n^{7/4}R_{j-1}\), gives
		\[
		\E_{j-1}[U_j^3I_{n,j}]
		=
		h_n\int_{|z|\le u_n}z^3\,\nu_{Z_{j-1}}^\star(dz)
		+
		R_{j-1}
		\left(
		h_n^{7/4}
		+
		\frac{h_n^2}{u_n}
		\right).
		\]
	\end{proof}

\begin{proof}[Proof of Lemma~\ref{lem:fv_conditional_bridge}]
The preceding local lemmas already control the truncated moments of
\(U_j\). The final bridge restores the predictable compensator shift in
\(U_j^\star\), separates zero from positive large-jump counts for the
first moment, expands the shift in the higher moments, and then invokes
the FV rate audit to verify the four deterministic remainder sequences
in Appendix Lemma~\ref{lem:fv_conditional_bridge}.

Put
\[
	d_{j-1,n}:=h_n\kappa_{Z_{j-1},n},
	\qquad U_j^\star=U_j+d_{j-1,n}.
\]
The finite-variation assumption gives
\(\sup_{i,n}|\kappa_{i,n}|\le\bar\kappa<\infty\), so
\(|d_{j-1,n}|\le h_n\bar\kappa\).

We first sharpen the retained first-moment estimate by retaining the
large-jump compensator. Recall that
\[
	K_{n,j}=\int_{t_{j-1}}^{t_j}
	\kappa_{Z_{s-},n}\,ds.
\]
On an interval without a regime transition,
\(K_{n,j}=d_{j-1,n}\). Since a transition has conditional probability
\(O(h_n)\), uniformly over \(j\),
\[
	\E_{j-1}|K_{n,j}-d_{j-1,n}|
	\lesssim h_n^2R_{j-1}.
\]
On \(\{N_{n,j}=0\}\), we have
\(U_j^\star=B_j+Y_j-K_{n,j}+d_{j-1,n}\), while
\eqref{eq:fv_absence_large_jump_centering} gives
\(\E_{j-1}[Y_j;N_{n,j}=0]=0\). The false-detection estimates in the
proof of Lemma~\ref{lem:truncated_moments}, together with
Hölder's inequality, therefore give
\[
\begin{aligned}
	\left|\E_{j-1}[U_j^\star I_{n,j};N_{n,j}=0]\right|
	&\lesssim
	\left(h_n^{3/2}+h_n^2
	+h_n^{q_0/2}u_n^{1-q_0}\right)R_{j-1}.
\end{aligned}
\]
On \(\{I_{n,j}=1,N_{n,j}\ge1\}\),
\(|U_j^\star|\le u_n+h_nR_{j-1}\). Hence the conditional Poisson
count bound and \eqref{main-eq:trunc-levy-tail} imply
\[
\begin{aligned}
	\E_{j-1}[|U_j^\star|I_{n,j};N_{n,j}\ge1]
	&\lesssim (u_n+h_nR_{j-1})h_n\bar\nu_nR_{j-1}\\
	&\lesssim h_n^{q_0/2}u_n^{1-q_0}R_{j-1}.
\end{aligned}
\]
Combining the zero- and positive-count contributions yields
\begin{equation}
	| \E_{j-1}[I_{n,j}U_j^\star] |
	\lesssim h_n\left(
	\sqrt{h_n}+h_n+h_n^{q_0/2-1}u_n^{1-q_0}
	\right)R_{j-1}.
	\label{eq:supp_fv_bridge_first}
\end{equation}

For the higher moments, expand the predictable shift:
\[
\begin{aligned}
	I_{n,j}(U_j^\star)^2
	&=I_{n,j}U_j^2
	+2d_{j-1,n}I_{n,j}U_j
	+d_{j-1,n}^2I_{n,j},\\
	I_{n,j}(U_j^\star)^3
	&=I_{n,j}U_j^3
	+3d_{j-1,n}I_{n,j}U_j^2
	+3d_{j-1,n}^2I_{n,j}U_j
	+d_{j-1,n}^3I_{n,j},\\
	I_{n,j}(U_j^\star)^4
	&=I_{n,j}U_j^4
	+4d_{j-1,n}I_{n,j}U_j^3
	+6d_{j-1,n}^2I_{n,j}U_j^2\\
	&\quad+4d_{j-1,n}^3I_{n,j}U_j
	+d_{j-1,n}^4I_{n,j}.
\end{aligned}
\]
By \eqref{eq:trunc_moment_1_bound_revised},
\(|\E_{j-1}[I_{n,j}U_j]|\lesssim h_nR_{j-1}\), and
Lemma~\ref{lem:trunc_second_moment_refined} gives
\(\E_{j-1}[I_{n,j}|U_j|^2]\lesssim h_nR_{j-1}\).
Moreover, the conditional Hölder inequality and
\eqref{eq:trunc_moment_4_explicit_revised} give
\[
	\E_{j-1}[I_{n,j}|U_j|^3]\lesssim h_n^{3/2}R_{j-1}.
\]
Consequently, the predictable-shift contributions to the second, third,
and fourth moments are bounded respectively by
\(Ch_n^2R_{j-1}\), \(Ch_n^2R_{j-1}\), and
\(Ch_n^{5/2}R_{j-1}\). Combining these estimates with
Lemma~\ref{lem:trunc_second_moment_refined},
\eqref{eq:trunc_third_moment_main_revised}, and
\eqref{eq:trunc_moment_4_explicit_revised} gives
\[
\begin{aligned}
	\left|\E_{j-1}[I_{n,j}(U_j^\star)^2]
	-h_na_{j-1}^{\star2}\right|
	&\lesssim h_n\left(
	q_{2,n}^{\rm FV}+h_n
	+h_n^{q_0/2-1}u_n^{2-q_0}\right)R_{j-1},\\
	\left|\E_{j-1}[I_{n,j}(U_j^\star)^3]
	-h_n\int_{|z|\le u_n}z^3\nu_{Z_{j-1}}^\star(dz)\right|
	&\lesssim h_n^{3/2}\left(
	h_n^{1/4}+\frac{\sqrt{h_n}}{u_n}+\sqrt{h_n}\right)R_{j-1},\\
	\left|\E_{j-1}[I_{n,j}(U_j^\star)^4]
	-3h_n^2a_{j-1}^{\star4}\right|
	&\lesssim h_n^2\left(
	\widetilde r_{4,n}^{\rm FV}+\sqrt{h_n}\right)R_{j-1}.
\end{aligned}
\]

Thus the four deterministic sequences in the FV conditional moment
expansion may be chosen, up to a common multiplicative constant, as
\[
\begin{aligned}
	r_{\mathrm{fv},1,n}
	&:=\sqrt{h_n}+h_n+h_n^{q_0/2-1}u_n^{1-q_0},\\
	r_{\mathrm{fv},2,n}
	&:=q_{2,n}^{\rm FV}+h_n
	+h_n^{q_0/2-1}u_n^{2-q_0},\\
	r_{\mathrm{fv},3,n}
	&:=h_n^{1/4}+\frac{\sqrt{h_n}}{u_n}+\sqrt{h_n},\\
	r_{\mathrm{fv},4,n}
	&:=\widetilde r_{4,n}^{\rm FV}+\sqrt{h_n}.
\end{aligned}
\]
Lemma~\ref{lem:supp_fv_rate_audit} proves
\(r_{\mathrm{fv},k,n}\to0\) for
\(k=1,2,3,4\). Moreover,
\[
\begin{aligned}
	\sqrt{T_n}\,r_{\mathrm{fv},1,n}
	\lesssim{}&\sqrt{nh_n^2}+\sqrt{T_n}h_n\\
	&+\left(\sqrt n\,h_n^{q_0/2-1}u_n^{2-q_0}\right)
	\frac{\sqrt{h_n}}{u_n}
	\longrightarrow0,
\end{aligned}
\]
by \(nh_n^2\to0\), (FV-span), and
\(\sqrt{h_n}/u_n\to0\). This establishes every assertion of the FV
conditional moment expansion in the manuscript appendix.
\end{proof}

% The detected-tail calculations are contained in the proof
% of the tail-functional proposition in the main manuscript.

\section{Uniform IV off-diagonal densities and boundary estimates}
\label{sec:supp_iv_off_diagonal}

This section proves Appendix Proposition~\ref{prop:iv_off_diagonal_density}.
It combines Gaussian density and gradient bounds with a small-/large-jump
count decomposition, and then adapts them to the shrinking off-diagonal band
and random boundary strips generated by the IV threshold.  These estimates
are the input used later for detected-tail errors, the IV drift-score moments,
feasible--oracle replacement, and density estimation.

Fix \(i\in S\) and \(x\in\mathbb R\). Let \(X^{i,x}\) be the
nonswitching regime-\(i\) process started from \(x\). Put
\[
	L_t^i:=\int_0^t\int_{\mathbb R}z\,\widetilde N_i(ds,dz),
\]
and define
\[
	\xi_{h_n}^{i,x}
	:=a_i(\gamma^\star)W_{h_n}+L_{h_n}^i
	+h_n\{b(x,i,\alpha^\star)+\kappa_{i,n}\},
	\qquad
	D_{h_n}^{i,x}
	:=\int_0^{h_n}
	\{b(X_s^{i,x},i,\alpha^\star)-b(x,i,\alpha^\star)\}\,ds,
\]
\[
	Y_{h_n}^{i,x}:=X_{h_n}^{i,x}-x+h_n\kappa_{i,n}
	=\xi_{h_n}^{i,x}+D_{h_n}^{i,x}.
\]
Let \(p_{i,h_n}^{\mathrm{fr},x}\) and
\(p_{i,h_n}^{\mathrm{sd},x}\) denote the densities of
\(\xi_{h_n}^{i,x}\) and \(Y_{h_n}^{i,x}\), respectively.

\begin{proof}[Proof of Proposition~\ref{prop:iv_off_diagonal_density}]
The proof combines the Gaussian density and gradient estimates of
\citet{MenozziPesceZhang2021} with the classical small-/large-jump
decomposition and conditioning on zero, one, or multiple large jumps
used in the small-time density expansions of
\citet{figueroalopezhoudre2009small} for L\'evy processes and
\citet[Sections~2, 4, and 5]{FigueroaLopezLuoOuyang2014} for local
jump diffusions. We adapt these ingredients to the shrinking
off-diagonal band \(|y|\asymp v\), uniformly over the regimes,
thresholds, and initial states, and derive the boundary-strip estimate
required below.

Write \(b_i(z)=b(z,i,\alpha^\star)\),
\(a_i=a_i(\gamma^\star)\). The constants below are
uniform in \(i\) because \(S\) is finite. We first isolate the Gaussian
smoothing that remains after conditioning on the complete jump path.

\proofstep{Step 1: conditional diffusion kernel}
Let \(B:[0,h_n]\times\mathbb R\to\mathbb R\) be measurable in time and
globally Lipschitz in space with constant \(L\), and consider
\[
	dU_t=B(t,U_t)\,dt+a\,dW_t,\qquad U_0=u,
\]
where \(0<\underline a\le a\le\overline a\). Let \(\phi\) solve
\(\dot\phi_t=B(t,\phi_t)\), \(\phi_0=u\). The transition density
\(q_{h_n}^B(u,\cdot)\) exists, is continuously differentiable, and satisfies
\begin{equation}
	q_{h_n}^B(u,z)
	\le Ch_n^{-1/2}
	\exp\!\left\{-\frac{|z-\phi_{h_n}|^2}{Ch_n}\right\},
	\qquad
	|\partial_zq_{h_n}^B(u,z)|
	\le Ch_n^{-1}
	\exp\!\left\{-\frac{|z-\phi_{h_n}|^2}{Ch_n}\right\}.
	\label{eq:supp_conditional_diffusion_kernel}
\end{equation}
The constants depend only on \(L,\underline a,\overline a\) for
\(0<h_n\le h_0\), and not on \(B(t,0)\).

Put
\[
	V_t:=U_t-\phi_t,
	\qquad
	c_t(z):=B(t,z+\phi_t)-B(t,\phi_t).
\]
Subtracting the ordinary differential equation for \(\phi\) from the
stochastic differential equation for \(U\) gives
\begin{equation}
	dV_t=c_t(V_t)\,dt+a\,dW_t,\qquad V_0=0,
	\label{eq:supp_centered_conditional_diffusion}
\end{equation}
Writing \(\sigma(t,z):=a\), the coefficients in
\eqref{eq:supp_centered_conditional_diffusion} satisfy, for every
\(t,z,z'\),
\begin{equation}
\begin{aligned}
	&c_t(0)=0,\qquad
	|c_t(z)-c_t(z')|\le L|z-z'|,\qquad
	|c_t(z)|\le L|z|,\\
	&\kappa_0^{-1}|\theta|^2
	\le \sigma(t,z)^2|\theta|^2\le\kappa_0|\theta|^2,
	\qquad
	|\sigma(t,z)-\sigma(t,z')|
	+|\partial_z\sigma(t,z)|=0,
\end{aligned}
\label{eq:supp_mpz_hypotheses}
\end{equation}
where
\(\kappa_0:=1\vee\overline a^{\,2}\vee\underline a^{-2}\).
Thus the drift hypothesis of \citet[Theorem~1.2(i),(iv)]%
{MenozziPesceZhang2021} holds with spatial exponent one and constant
\(\kappa_1=1\vee L\); its diffusion H\"older and forward-gradient
hypotheses hold with any exponent in \((0,1)\), because the diffusion
coefficient is constant. The deterministic flow associated with \(c\)
and initial value zero is
\[
	\theta_{0,t}^c(0)=0,
\]
since \(c_t(0)=0\). Hence Theorem~1.2(i),(iv) and Remark~1.3 of the cited
paper give, with
\(g_\lambda(t,y):=t^{-1/2}\exp\{-\lambda |y|^2/t\}\),
\[
	q_t^c(0,y)\le Cg_\lambda(t,y),
	\qquad
	|\partial_yq_t^c(0,y)|
	\le Ct^{-1/2}g_\lambda(t,y).
\]
Equivalently, uniformly for \(0<t\le h_0\),
\begin{equation}
	q_t^c(0,y)
	\le Ct^{-1/2}\exp\!\left\{-\frac{|y|^2}{Ct}\right\},
	\qquad
	|\partial_yq_t^c(0,y)|
	\le Ct^{-1}\exp\!\left\{-\frac{|y|^2}{Ct}\right\}.
	\label{eq:supp_centered_heat_kernel_bound}
\end{equation}
Because \(U_t=V_t+\phi_t\), translation of the density gives
\[
	q_t^B(u,z)=q_t^c(0,z-\phi_t),
	\qquad
	\partial_zq_t^B(u,z)
	=\partial_yq_t^c(0,z-\phi_t).
\]
Substitution in \eqref{eq:supp_centered_heat_kernel_bound} proves
\eqref{eq:supp_conditional_diffusion_kernel}. The constants in
\eqref{eq:supp_mpz_hypotheses} depend on \(B\) only through \(L\); in
particular, the translation by \(\phi\) removes \(B(t,0)\). Therefore
\eqref{eq:supp_conditional_diffusion_kernel} remains uniform when \(B\)
is random and we condition on a realization with the same spatial
Lipschitz constant.

Condition now on a deterministic c\`adl\`ag path \(\ell\) of \(L^i\).
After the change of variable \(U_t=X_t^{i,x}-\ell_t\), set
\[
	B_\ell(t,u):=b_i(u+\ell_t),
	\qquad
	\varphi_t^\ell:=\Phi_t^\ell-\ell_t,
\]
where \(\Phi^\ell\) is defined by
\begin{equation}
	\Phi_t^\ell
	=x+\ell_t+\int_0^t b_i(\Phi_s^\ell)\,ds,
	\label{eq:supp_iv_deterministic_flow}
\end{equation}
so that
\(\dot\varphi_t^\ell=B_\ell(t,\varphi_t^\ell)\) and
\(\varphi_0^\ell=x\). Since
\[
	Y_{h_n}^{i,x}=U_{h_n}+\ell_{h_n}-x+h_n\kappa_{i,n},
\]
its conditional density is
\begin{equation}
\begin{aligned}
	p_{i,h_n}^{\mathrm{sd},x}(y\mid\ell)
	&=q_{h_n}^{B_\ell}
	\left(x,x+y-\ell_{h_n}-h_n\kappa_{i,n}\right),\\
	p_{i,h_n}^{\mathrm{sd},x}(y\mid\ell)
	&\le Ch_n^{-1/2}
	\exp\!\left\{-\frac{|y-m_{h_n}^{\mathrm{sd}}(\ell)|^2}
	{Ch_n}\right\},\\
	|\partial_yp_{i,h_n}^{\mathrm{sd},x}(y\mid\ell)|
	&\le Ch_n^{-1}
	\exp\!\left\{-\frac{|y-m_{h_n}^{\mathrm{sd}}(\ell)|^2}
	{Ch_n}\right\},
\end{aligned}
\label{eq:supp_conditional_sd_kernel}
\end{equation}
where
\[
	m_{h_n}^{\mathrm{sd}}(\ell)
	:=\Phi_{h_n}^\ell-x+h_n\kappa_{i,n}.
\]
For the frozen increment, conditioning on \(L^i=\ell\) gives the exact
Gaussian density
\begin{equation}
\begin{aligned}
	p_{i,h_n}^{\mathrm{fr},x}(y\mid\ell)
	&=\frac{1}{\sqrt{2\pi a_i^2h_n}}
	\exp\!\left\{-\frac{|y-m_{h_n}^{\mathrm{fr}}(\ell)|^2}
	{2a_i^2h_n}\right\},\\
	p_{i,h_n}^{\mathrm{fr},x}(y\mid\ell)
	&\le Ch_n^{-1/2}
	\exp\!\left\{-\frac{|y-m_{h_n}^{\mathrm{fr}}(\ell)|^2}
	{Ch_n}\right\},\\
	|\partial_yp_{i,h_n}^{\mathrm{fr},x}(y\mid\ell)|
	&\le Ch_n^{-1}
	\exp\!\left\{-\frac{|y-m_{h_n}^{\mathrm{fr}}(\ell)|^2}
	{Ch_n}\right\},
\end{aligned}
\label{eq:supp_conditional_frozen_kernel}
\end{equation}
where
\[
	m_{h_n}^{\mathrm{fr}}(\ell)
	:=\ell_{h_n}+h_n\{b_i(x)+\kappa_{i,n}\}.
\]

\proofstep{Step 2: L\'evy bounds and the small-jump aggregate}
Assumptions~\ref{ass:levy} and \ref{ass:iv} imply, uniformly
for \(0<r\le1\),
\begin{equation}
	\nu_i^\star(|z|>r)\le Cr^{-\beta},\qquad
	s_i^\star(z)\le C|z|^{-1-\beta}\quad(0<|z|\le1),
	\qquad
	\left|\int_{|z|>r}z\,\nu_i^\star(dz)\right|\le C.
	\label{eq:supp_iv_levy_uniform_bounds}
\end{equation}
For the last bound, the integral over \(|z|>1\) is finite by
Assumption~\ref{ass:levy}; on \(r<|z|\le1\), the symmetric leading
terms cancel, while the two linear terms are bounded by
\(C\int_r^1z^{1-\beta}\,dz\) and the remainders by
\(C\int_r^1z^{\rho_q}\,dz\).

For \(v\in\mathcal V_n\), put
\(\delta_{h_n,v}=h_n v^{-\beta}\). Assumption~\ref{ass:iv_sampling}
gives, uniformly on the finite threshold grid,
\begin{equation}
	\delta_{h_n,v}\to0,\qquad \frac{\sqrt{h_n}}{v}\to0.
	\label{eq:supp_iv_scale_separation_density}
\end{equation}
Fix an integer \(N\ge1\). Choose an integer \(K_N\) sufficiently large
and split \(L^i\) at \(r_v=v/K_N\):
\[
	L_t^i=M_t^{(v)}
	+\int_0^t\!\int_{|z|>r_v}z\,N_i(ds,dz)
	-t\int_{|z|>r_v}z\,\nu_i^\star(dz),
\]
where \(M^{(v)}\) is the compensated martingale with jumps bounded by
\(r_v\). The exponential supermartingale inequality gives
\begin{equation}
	\Pb\!\left(\sup_{s\le h_n}|M_s^{(v)}|>v/16\right)
	\le C_N\delta_{h_n,v}^{\,N}.
	\label{eq:supp_iv_small_jump_aggregate}
\end{equation}
Indeed, for every \(\lambda>0\),
\[
	\int_{|z|\le r_v}
	(e^{\lambda z}-1-\lambda z)\nu_i^\star(dz)
	\le C\lambda^2e^{\lambda r_v}r_v^{2-\beta}.
\]
Apply the exponential supermartingale separately to \(M^{(v)}\) and
\(-M^{(v)}\), take
\(\lambda r_v=\tfrac12\log(\delta_{h_n,v}^{-1})\), and then choose
\(K_N\) so that the negative term \(-\lambda v/16\) dominates
\(N\log(\delta_{h_n,v}^{-1})\). This proves
\eqref{eq:supp_iv_small_jump_aggregate}. This is the step that controls
the aggregate of the infinite-activity small jumps on the
zero-large-jump event.

\proofstep{Step 3: zero, one, and multiple large jumps}
Let \(J_{h_n,v}\) be the number of jumps larger than \(r_v\). It is
Poisson with parameter
\[
	\lambda_{h_n,v}=h_n\nu_i^\star(|z|>r_v)\le C_N\delta_{h_n,v}.
\]
First suppose that
\begin{equation}
	h_n(1+|x|)\le v/(64C).
	\label{eq:supp_iv_moderate_start}
\end{equation}
On \(\{J_{h_n,v}=0\}\) and the complement of the event in
\eqref{eq:supp_iv_small_jump_aggregate},
\eqref{eq:supp_iv_levy_uniform_bounds}, Gronwall's inequality applied to
\eqref{eq:supp_iv_deterministic_flow}, and
\(\sup_{i,n}|\kappa_{i,n}|<\infty\) give
\begin{equation}
\begin{aligned}
	|m_{h_n}^{\mathrm{sd}}(L^i)|
	&=|\Phi_{h_n}^{L^i}-x+h_n\kappa_{i,n}|\le v/4,\\
	|m_{h_n}^{\mathrm{fr}}(L^i)|
	&=|L_{h_n}^i+h_n\{b_i(x)+\kappa_{i,n}\}|\le v/4.
\end{aligned}
\label{eq:supp_iv_zero_jump_centers}
\end{equation}
Therefore
\eqref{eq:supp_conditional_sd_kernel} and
\eqref{eq:supp_conditional_frozen_kernel} give, for
\(|y|\in[v/2,3v/2]\),
\begin{align}
	p_{0,h_n}(y)&\le
	Ch_n^{-1/2}e^{-y^2/(Ch_n)}
	+Ch_n^{-1/2}\delta_{h_n,v}^{\,N},
	\label{eq:supp_iv_zero_jump_density}\\
	|\partial_yp_{0,h_n}(y)|&\le
	Ch_n^{-1}e^{-y^2/(Ch_n)}
	+Ch_n^{-1}\delta_{h_n,v}^{\,N}.
	\label{eq:supp_iv_zero_jump_derivative}
\end{align}

On \(\{J_{h_n,v}=1\}\), let \(\tau\) and \(z\) be the jump time and
size, and let \(\ell\) denote the remaining compensated small-jump
path, including the deterministic compensator in
\eqref{eq:supp_iv_levy_uniform_bounds}. For the state-dependent
increment define
\[
	F_{\tau,\ell}(z)
	:=\Phi_{h_n}^{\ell+z\mathbf1_{[\tau,h_n]}}-x+h_n\kappa_{i,n}.
\]
Since \(b_i\) is continuously differentiable and globally Lipschitz,
the derivative of the flow with respect to the jump size is
\begin{equation}
	\partial_zF_{\tau,\ell}(z)
	=\exp\!\left\{\int_\tau^{h_n}
	\partial_xb_i(\Phi_s^{\ell+z\mathbf1_{[\tau,h_n]}})\,ds\right\},
	\qquad
	e^{-Lh_n}\le\partial_zF_{\tau,\ell}(z)\le e^{Lh_n}.
	\label{eq:supp_iv_flow_bilipschitz}
\end{equation}
For the frozen increment the corresponding map is \(z\mapsto z+c\), and
\[
	\partial_z(z+c)=1\in[e^{-Lh_n},e^{Lh_n}],
\]
which is the frozen analogue of \eqref{eq:supp_iv_flow_bilipschitz}.
On the good small-jump event and under
\eqref{eq:supp_iv_moderate_start},
\(|F_{\tau,\ell}(0)|\le v/4\). Hence, whenever
\(|F_{\tau,\ell}(z)-y|\le v/8\),
\(|z|\asymp v\). By \eqref{eq:supp_iv_levy_uniform_bounds}, the change
of variables \(w=F_{\tau,\ell}(z)\), and
\eqref{eq:supp_conditional_sd_kernel}--%
\eqref{eq:supp_conditional_frozen_kernel},
\begin{align*}
	\int_{|z|>r_v}h_n^{-1/2}
	e^{-|y-F_{\tau,\ell}(z)|^2/(Ch_n)}\nu_i^\star(dz)
	&\le Cv^{-1-\beta}
	+Ch_n^{-1/2}v^{-\beta}e^{-v^2/(Ch_n)},\\
	\int_{|z|>r_v}h_n^{-1}
	e^{-|y-F_{\tau,\ell}(z)|^2/(Ch_n)}\nu_i^\star(dz)
	&\le Ch_n^{-1/2}v^{-1-\beta}
	+Ch_n^{-1}v^{-\beta}e^{-v^2/(Ch_n)}.
\end{align*}
The terms with \(|z|>1\) in the off-center region are bounded in the
same way using \(\nu_i^\star(|z|>1)<\infty\). The bad small-jump event
is bounded by \eqref{eq:supp_iv_small_jump_aggregate} and the global
conditional bounds in \eqref{eq:supp_conditional_sd_kernel}--%
\eqref{eq:supp_conditional_frozen_kernel}. The compensation formula
therefore gives
\begin{align}
	p_{1,h_n}(y)&\le Ch_nv^{-1-\beta}
	+Ch_n^{-1/2}e^{-y^2/(Ch_n)}
	+Ch_n^{-1/2}\delta_{h_n,v}^{\,N},
	\label{eq:supp_iv_one_jump_density}\\
	|\partial_yp_{1,h_n}(y)|&\le Ch_n^{1/2}v^{-1-\beta}
	+Ch_n^{-1}e^{-y^2/(Ch_n)}
	+Ch_n^{-1}\delta_{h_n,v}^{\,N}.
	\label{eq:supp_iv_one_jump_derivative}
\end{align}

Finally, the second Poisson factorial moment and the global conditional
bounds in \eqref{eq:supp_conditional_sd_kernel}--%
\eqref{eq:supp_conditional_frozen_kernel} yield
\begin{align}
	p_{\ge2,h_n}(y)&\le
	Ch_n^{-1/2}\delta_{h_n,v}^{\,2},
	\label{eq:supp_iv_multiple_jump_density}\\
	|\partial_yp_{\ge2,h_n}(y)|&\le
	Ch_n^{-1}\delta_{h_n,v}^{\,2}.
	\label{eq:supp_iv_multiple_jump_derivative}
\end{align}
Because \(\omega(\beta-1)<1/2\),
\[
	h_n^{-1/2}\delta_{h_n,v}^{\,2}
	\le Ch_nv^{-1-\beta},\qquad
	h_n^{-1}\delta_{h_n,v}^{\,2}
	\le Ch_n^{1/2}v^{-1-\beta}.
\]
Since \(1-\omega\beta>0\), choose \(N\) in
\eqref{eq:supp_iv_small_jump_aggregate} large enough that
\[
	h_n^{-1/2}\delta_{h_n,v}^{\,N}
	\le Ch_nv^{-1-\beta},\qquad
	h_n^{-1}\delta_{h_n,v}^{\,N}
	\le Ch_n^{1/2}v^{-1-\beta}.
\]
Combining \eqref{eq:supp_iv_zero_jump_density}--%
\eqref{eq:supp_iv_multiple_jump_derivative} proves
the off-diagonal density and derivative bounds under
\eqref{eq:supp_iv_moderate_start}.

If \eqref{eq:supp_iv_moderate_start} fails, the global conditional
bounds in \eqref{eq:supp_conditional_sd_kernel}--%
\eqref{eq:supp_conditional_frozen_kernel} give
\(p\le Ch_n^{-1/2}\) and
\(|\partial_yp|\le Ch_n^{-1}\). Since \(v\asymp h_n^\omega\), a fixed
sufficiently large power of \(1+|x|\) absorbs these bounds into the
right-hand sides of the off-diagonal density and derivative bounds.
This is the envelope
\(R(x)\). The dominating bounds
\eqref{eq:supp_iv_zero_jump_density}--%
\eqref{eq:supp_iv_multiple_jump_derivative} also justify
differentiation under the conditional expectation and the Poisson
decomposition, proving that both unconditional densities belong to
\(C^1(\mathbb R)\).

For \(0<r\le v/2\), the two boundary strips are contained in
\(\{|y|\in[v/2,3v/2]\}\) and have total length \(4r\). Integrating
the off-diagonal density bound over these strips proves
the off-diagonal boundary bound.
\end{proof}

\section{Conditional truncated moments for the IV drift score}
\label{sec:supp_iv_drift_moments}

This section proves Appendix
Lemma~\ref{lem:iv_drift_moments_main} and records the localized IV
estimates later used in Appendix
Lemma~\ref{lem:iv_debiasing_main}. The fixed-regime truncated
expansion is adapted from \citet{BonieceFigueroaLopezHan2024}. Since the
cited result assumes nonzero one-sided first-order coefficients, we first
prove its extension for a globally admissible fixed-regime density with
possibly zero coefficients. Proposition~\ref{prop:supp_localized_iv_moments}
then supplies the separate local-to-global argument: it constructs an
auxiliary global density, quantifies the finite-activity difference from
the model measure with the tail-centering/compensator cancellation made
explicit, and transfers the frozen bounds through the state-dependent
drift using
Proposition~\ref{prop:iv_off_diagonal_density}. The final lemma
conditions on a no-switch interval and verifies summability over the
growing observation horizon.

Put
\begin{equation}
	d_{i,1}:=\frac{2c_i}{2-\beta},
	\qquad
	d_{i,2}:=-\frac{c_i(\beta+1)(\beta+2)}{\beta}
	a_i^2(\gamma^\star),
	\label{eq:supp_iv_bias_coefficients}
\end{equation}
and recall \(\epsilon_0\) and \(\rho_n\) from
\eqref{eq:supp_iv_rho_definition}.
Put
\[
	\mathcal B_{h_n}^{i,x}(v)
	:=G_v(Y_{h_n}^{i,x})-a_i^2(\gamma^\star)W_{h_n}^2 .
\]
For a family of functions
\(\check q_i:\mathbb R\setminus\{0\}\to[0,\infty)\), call
\(\{\check q_i:i\in S\}\) globally admissible if the following bounds
hold with constants independent of \(i\): \(\check q_i\) is bounded and
Borel measurable,
\[
	|\check q_i(z)-1-\ell_i^+z|
	\lesssim |z|^{\beta+\rho_q}\quad(z\downarrow0),
	\qquad
	|\check q_i(z)-1-\ell_i^-z|
	\lesssim |z|^{\beta+\rho_q}\quad(z\uparrow0),
\]
and, for every \(p>0\),
\[
	\sup_{i\in S}\int_{|z|>1}|z|^p\check q_i(z)
	|z|^{-1-\beta}\,dz<\infty.
\]
For such a family, put
\[
	\check\nu_i(dz):=c_i\check q_i(z)|z|^{-1-\beta}\,dz,
	\qquad
	\check\kappa_{i,n}:=\int_{u_n<|z|\le1}z\,\check\nu_i(dz),
\]
let \(\check L^i\) be the compensated L\'evy process with measure
\(\check\nu_i\), and define
\[
	\check\xi_{h_n}^{i,x}:=a_i(\gamma^\star)W_{h_n}
	+\check L_{h_n}^i
	+h_n\{b(x,i,\alpha^\star)+\check\kappa_{i,n}\},
	\qquad
	\check R_{h_n}^{i,x}:=\check\xi_{h_n}^{i,x}
	-a_i(\gamma^\star)W_{h_n}.
\]

\begin{lem}[Global fixed-regime IV bounds with zero coefficients]
	\label{lem:supp_iv_zero_coefficient_extension}
	In Case IV, suppose Assumptions~\ref{ass:smooth},
	\ref{ass:iv}, \ref{ass:iv_coeff}, and
	\ref{ass:iv_sampling} hold, and let
	\(\{\check q_i:i\in S\}\) be a globally admissible family. The
	coefficients \(\ell_i^+\) and \(\ell_i^-\) are allowed to vanish.
	Uniformly over \(i\in S\), \(x\in\mathbb R\), and \(v<w\) in
	\(\mathcal V_n\),
	\begin{align}
		&\left|
		\E_x^iG_v(\check\xi_{h_n}^{i,x})
		-h_n\{a_i^2(\gamma^\star)+d_{i,1}v^{2-\beta}
		+d_{i,2}h_nv^{-\beta}\}\right|\nonumber\\
		&\qquad\lesssim R(x)\left\{
		h_n^3v^{-\beta-2}+h_n^{3/2}v^{1-\beta/2}
		+h_n^2v^{2-2\beta}+h_nv^{2-\epsilon_0}+h_n^2\right\},
		\label{eq:supp_zero_coefficient_mean}\\
		&\E_x^i[G_v(\check\xi_{h_n}^{i,x})^2]
		\lesssim R(x)(h_n^2+h_nv^{4-\beta}),
		\label{eq:supp_zero_coefficient_fourth}\\
		&\E_x^i\!\left[
		\{G_w(\check\xi_{h_n}^{i,x})
		-G_v(\check\xi_{h_n}^{i,x})\}^2\right]
		\lesssim h_nv^{4-\beta}R(x).
		\label{eq:supp_zero_coefficient_annulus}
	\end{align}
	Moreover,
	\begin{align}
		&\E_x^i\!\left[W_{h_n}^2(\check R_{h_n}^{i,x})^2
		\mathbf1_{\{|\check\xi_{h_n}^{i,x}|\le v\}}\right]
		\lesssim R(x)(h_n^2v^{2-\beta}+h_n^3),
		\label{eq:supp_zero_coefficient_mixed}\\
		&\E_x^i\!\left[(\check R_{h_n}^{i,x})^4
		\mathbf1_{\{|\check\xi_{h_n}^{i,x}|\le v\}}\right]
		\lesssim R(x)(h_nv^{4-\beta}
		+h_n^2v^{4-2\beta}+h_n^4),
		\label{eq:supp_zero_coefficient_jump_fourth}\\
		&\E_x^i\!\left[W_{h_n}^4
		\mathbf1_{\{|\check\xi_{h_n}^{i,x}|>v\}}\right]
		\lesssim R(x)\left\{h_n^3v^{-\beta}
		+h_n^2e^{-v^2/(Ch_n)}\right\}.
		\label{eq:supp_zero_coefficient_brownian_tail}
	\end{align}
\end{lem}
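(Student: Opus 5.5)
The plan is to follow the Boniece--Figueroa-L\'opez--Han strategy for the truncated second moment of a Brownian-plus-stable increment, and to isolate the single place where their nonvanishing of \(\ell_i^\pm\) is used. We write \(\check\xi=\sigma W_{h_n}+\check R\) with \(\sigma=a_i(\gamma^\star)\), and \(\check R=\check L_{h_n}^i+h_n\{b(x,i,\alpha^\star)+\check\kappa_{i,n}\}\). Further, \(\check L^i\) is split into a symmetric part and an asymmetric remainder, exactly as in Step~2 of the proof of Proposition~\ref{prop:tail_functional}. Set \(\check\nu_i^0(dz)=c_i\check q_i^{\rm sym}(|z|)|z|^{-1-\beta}\mathbf1_{\{|z|\le1\}}dz\) and \(\bar{\check\nu}_i=\check\nu_i-\check\nu_i^0\). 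The global admissibility bounds then give \(0\le d\bar{\check\nu}_i/dz\lesssim|z|^{-\beta}\) near zero. This holds whether or not \(\ell_i^\pm\) vanish: only \(|\ell_i^+-\ell_i^-|\le C\) is used, and a zero coefficient simply makes the linear asymmetry smaller. All subsequent bounds will therefore depend on the \(\ell_i^\pm\) only through upper bounds on \(|\ell_i^\pm|\), which is the precise sense in which zero coefficients are allowed.

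For \eqref{eq:supp_zero_coefficient_mean} I would compute \(\E G_v(\check\xi)\) by conditioning on \(W_{h_n}\) and using a Fourier/Taylor expansion in the jump part. The Gaussian part contributes \(h_n\sigma^2\), up to the Gaussian tail \(h_n^{-1/2}e^{-v^2/(Ch_n)}\) type terms, which are absorbed into \(h_n^2\). The symmetric stable part at cutoff \(v\) contributes \(h_n\int_{|z|\le v}z^2\check\nu_i^0(dz)=h_nd_{i,1}v^{2-\beta}+O(h_nv^{2-\beta+\rho_q'})\). The boundary interaction between the Gaussian and the stable density at \(|y|=v\) produces the \(d_{i,2}h_n^2v^{-\beta}\) term. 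For this I would expand \(y^2p(y)\) at \(\pm v\) to second order in the Gaussian variance, \(\tfrac12\sigma^2h_n\partial_y^2\{y^2c_i|y|^{-1-\beta}\}\) integrated over the strip. That second-order expansion should give the constant \(-c_i(\beta+1)(\beta+2)\sigma^2/\beta\).

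The asymmetric part \(\bar{\check\nu}_i\) enters linearly in \(h_n\) through \(\int_{|z|\le v}z^2\bar{\check\nu}_i(dz)\lesssim v^{3-\beta}\le v^{2-\epsilon_0}\). It enters quadratically through cross terms bounded by \(h_n^2v^{2-2\beta}\), via the drift-like first moment \(\int_{|z|\le v}|z|\bar{\check\nu}_i\lesssim v^{2-\beta}\) and \(\check\kappa_{i,n}=O(1)\) by the symmetric-cutoff cancellation. The remaining terms \(h_n^3v^{-\beta-2}\) and \(h_n^{3/2}v^{1-\beta/2}\) come from the third-order Gaussian remainder and the Brownian--jump cross moment \(\E|W|\,|\check R|\mathbf1\), controlled by Cauchy--Schwarz. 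The cross term \(2\sigma\E[W_{h_n}\check R;|\check\xi|\le v]\) would be handled by the symmetry of \(W\): it vanishes except on the strip \(\{||\sigma W+\check R|-v|\le\ldots\}\), and the strip is controlled by Proposition~\ref{prop:iv_off_diagonal_density}, applied with \(\check\nu_i\) in place of \(\nu_i^\star\); its proof only uses the uniform bounds \eqref{eq:supp_iv_levy_uniform_bounds}, which hold for admissible families.

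The moment bounds \eqref{eq:supp_zero_coefficient_fourth}--\eqref{eq:supp_zero_coefficient_brownian_tail} are simpler. We use \(G_v^2\le 8\sigma^4W^4+8\check R^4\mathbf1_{\{|\check\xi|\le v\}}\) and Kunita's inequality truncated at \(2v\), which gives \(h_nv^{4-\beta}+h_n^2v^{4-2\beta}\), plus a large-jump count bound for jumps exceeding \(2v\) while \(|\check\xi|\le v\); the latter forces \(|\sigma W|>v/2\), which is Gaussian-small. The annulus bound \eqref{eq:supp_zero_coefficient_annulus} uses \(G_w-G_v\le w^2\mathbf1_{\{v<|\check\xi|\le w\}}\) together with the density bound of Proposition~\ref{prop:iv_off_diagonal_density}, giving \(w^4h_nv^{-\beta}\asymp h_nv^{4-\beta}\) on the finite grid. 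Finally, \eqref{eq:supp_zero_coefficient_brownian_tail} splits into \(\{|\sigma W|>v/2\}\), which gives the Gaussian term, and \(\{|\check R|>v/2\}\), which gives \(h_n^2\cdot h_nv^{-\beta}\) by independence and \(\Pb(|\check R|>v/2)\lesssim h_nv^{-\beta}\).

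The main obstacle is the exact second-order constant \(d_{i,2}\) together with the \(v^{2-\epsilon_0}\) error, because the Gaussian--stable convolution at the threshold must be expanded uniformly in \(x\) and over the grid. I would first try to reproduce their characteristic-function argument verbatim for the symmetric part \(\check\nu_i^0\), where no one-sided coefficient appears. I would then treat \(\bar{\check\nu}_i\) purely perturbatively, using its \(|z|^{-\beta}\) density bound, so that no lower bound on \(|\ell_i^\pm|\) is ever needed.
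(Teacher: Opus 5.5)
Your outline for \eqref{eq:supp_zero_coefficient_fourth}--\eqref{eq:supp_zero_coefficient_brownian_tail} is essentially fine. One detail: a jump above $2v$ with $|\check\xi|\le v$ forces $|\sigma W_{h_n}|$ \emph{or} the sub-$2v$ jump part to be of order $v$, not only the Brownian part. The genuine gap is the mean expansion \eqref{eq:supp_zero_coefficient_mean} at order $h_n^2v^{-\beta}$.

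Write $G_v(\check\xi)=(\sigma^2W_{h_n}^2+2\sigma W_{h_n}\check R+\check R^2)\mathbf 1_{\{|\check\xi|\le v\}}$. Your formula $\tfrac12\sigma^2h_n\partial_y^2\{y^2c_i|y|^{-1-\beta}\}$ is the Gaussian smoothing of the $\check R^2$ piece alone. Its boundary contribution $\tfrac12\sigma^2h_n\cdot h_n[\partial_y(c_i|y|^{1-\beta})]_{-v}^{v}$ equals $c_i(1-\beta)\sigma^2h_n^2v^{-\beta}$, not $d_{i,2}h_n^2v^{-\beta}$. The two pieces you set aside are of exactly the same order.
\begin{itemize}
\item The Brownian square lost when a jump pushes $|\check\xi|$ above $v$ gives $-\sigma^2\E[W_{h_n}^2;|\check\xi|>v]\approx-2c_i\sigma^2h_n^2v^{-\beta}/\beta$. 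This cannot be absorbed into $h_n^2$; compare \eqref{eq:supp_zero_coefficient_brownian_tail}.
\item The cross term, by Gaussian integration by parts $\E[W_{h_n}f(W_{h_n})]=h_n\E f'(W_{h_n})$, is $\approx-4c_i\sigma^2h_n^2v^{-\beta}$, carried by $|\check R|\approx v$.
\end{itemize}
Only the sum $c_i\sigma^2\{-2/\beta-4+(1-\beta)\}=-c_i\sigma^2(\beta+1)(\beta+2)/\beta$ equals $d_{i,2}$. Equivalently, the smoothing must act on the density: $\E G_v(\check\xi)=\E\check\xi^2-h_n\int_{|y|>v}y^2(s*\varphi_{\sigma^2h_n})(y)\,dy+\cdots$. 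This produces $\int_{|y|>v}y^2s''$, not $\int(y^2s)''$.

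The remainder bounds you propose also fail. The strip estimate of Proposition~\ref{prop:iv_off_diagonal_density} bounds the cross term only by $O(h_n^2v^{-\beta})$, whereas the admissible remainder $h_n\rho_n$ is $o(h_n^2u_n^{-\beta})$ by Table~\ref{tab:supp_iv_rate_audit}. Cauchy--Schwarz on $\E[|W_{h_n}||\check R|;|\check\xi|\le v]$ gives $h_nv^{1-\beta/2}$, not $h_n^{3/2}v^{1-\beta/2}$. That is also outside the budget, since $\sqrt n\,u_n^{1-\beta/2}\to\infty$ while $\sqrt n\,\rho_n\to0$. So the second-order expansion with the stated remainder, which is the real content of the lemma, is not obtained by your plan.

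Second, the symmetric/asymmetric split does not remove the obstruction it was introduced for. Your $\check q_i^{\rm sym}(r)=1+\min(\ell_i^+,-\ell_i^-)r+O(r^{\beta+\rho_q})$ still carries a first-order coefficient, and it vanishes for instance when $\ell_i^+=\ell_i^-=0$. Running the Boniece--Figueroa-L\'opez--Han argument ``verbatim'' on $\check\nu_i^0$ therefore meets the same hypothesis. You never locate where nonvanishing enters, nor show that no reciprocal coefficient appears in their constants.

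The paper avoids recomputation altogether, in three moves.
\begin{itemize}
\item It writes the likelihood ratio to the stable reference as $U_{i,t}=U_{i,t}^{\rm fv}-\ell_i^+S_{i,t}^+-\ell_i^-S_{i,t}^-$, so a zero coefficient merely deletes a term.
\item It replaces $\ell_i^\pm$ by nonzero $\ell_{i,m}^\pm$ through a compactly supported modification $\check q_{i,m}$. It then audits that every imported constant depends only on $L_\ell,M_q,M_\phi,M_U$ and the stable and finite-activity moment bounds.
\item It lets $m\to\infty$ via a common Poisson coupling, $L^2$ convergence of $\check\xi^{(m)}$, null threshold boundaries from the Brownian component, and uniform integrability.
\end{itemize}
If you prefer your decomposition, choose a symmetric part whose coefficient is nonzero by construction. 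For instance, $c_i(1-K|z|)_+|z|^{-1-\beta}$ with $K$ large is dominated by $\check\nu_i$, and the nonzero-coefficient expansion could be imported for it. Only the finite-variation remainder would then be treated perturbatively. Even so, the $d_{i,2}$ term must come from that imported expansion, not from the hand computation above.
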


\begin{proof}[Proof of Lemma~\ref{lem:supp_iv_zero_coefficient_extension}]
The proof reconstructs the local change of measure, audits its constants
uniformly as either one-sided coefficient tends to zero, and then uses a
nonzero approximation. The global admissibility conditions isolate this
argument from the local-to-global transfer proved below.

\proofstep{Step 1: localized change of measure}
Put
\[
	\ell_i(z):=\ell_i^+\mathbf1_{\{z>0\}}
	+\ell_i^-\mathbf1_{\{z<0\}}.
\]
Choose \(r_0\in(0,1/4)\) such that
\(1/2\le\check q_i(z)\le3/2\) for
\(i\in S\) and \(0<|z|\le2r_0\). On this neighborhood, put
\[
	\widetilde\nu_i(dz):=c_i|z|^{-1-\beta}\,dz,
	\qquad
	\phi_i(z):=-\log\check q_i(z),
	\qquad
	\psi_i(z):=\phi_i(z)+\ell_i(z)z.
\]
Taylor's formula for \(-\log(1+y)\) gives
\begin{equation}
\begin{aligned}
	|\psi_i(z)|&\lesssim |z|^2+|z|^{\beta+\rho_q},
	&|\phi_i(z)|&\lesssim |z|,\\
	\sup_{i\in S}\int_{0<|z|\le2r_0}
	\{ |\psi_i(z)|+|\phi_i(z)|^2\}\widetilde\nu_i(dz)&<\infty.
\end{aligned}
	\label{eq:supp_zero_coefficient_phi_bounds}
\end{equation}
Let \(\widetilde\Pb_i\) denote the reference law with compensator
\(dt\,\widetilde\nu_i(dz)\) on this neighborhood, leaving the
finite-activity part outside unchanged. Define
\[
	\eta_i:=\int_{0<|z|\le2r_0}
	\{e^{-\phi_i(z)}-1+\phi_i(z)\}\widetilde\nu_i(dz),
\]
\begin{equation}
	U_{i,t}:=\int_0^t\int_{0<|z|\le2r_0}
	\phi_i(z)\widetilde N_i^0(ds,dz)+t\eta_i.
	\label{eq:supp_zero_coefficient_likelihood}
\end{equation}
The exponential formula for Poisson random measures yields, for every
\(p\in\mathbb R\),
\[
	\left.\frac{d\Pb_i}{d\widetilde\Pb_i}\right|_{\mathcal F_t}
	=e^{-U_{i,t}},
	\qquad
	\widetilde\E_i e^{pU_{i,t}}
	=\exp\!\left[t\left\{p\eta_i+
	\int(e^{p\phi_i}-1-p\phi_i)\,d\widetilde\nu_i\right\}\right].
\]
Consequently, uniformly over \(i\in S\) and \(0<t\le1\),
\begin{equation}
\begin{aligned}
	\widetilde\E_i|U_{i,t}|^2&\lesssim t,
	&\widetilde\E_i|e^{-U_{i,t}}-1|^2&\lesssim t,\\
	\widetilde\E_i(e^{-U_{i,t}}-1+U_{i,t})&\lesssim t,
	&\sup_{p\in[-8,8]}\widetilde\E_i e^{pU_{i,t}}&\lesssim1.
\end{aligned}
	\label{eq:supp_zero_coefficient_likelihood_moments}
\end{equation}
For
\[
	S_{i,t}^{\pm}:=
	\int_0^t\int_{\{0<\pm z\le2r_0\}}
	z\,\widetilde N_i^0(ds,dz),
\]
equations \eqref{eq:supp_zero_coefficient_phi_bounds} and
\eqref{eq:supp_zero_coefficient_likelihood} give
\begin{equation}
	U_{i,t}=U_{i,t}^{\rm fv}
	-\ell_i^+S_{i,t}^+-\ell_i^-S_{i,t}^-,
	\qquad
	\widetilde\E_i|U_{i,t}^{\rm fv}|\lesssim t,
	\label{eq:supp_zero_coefficient_decomposition}
\end{equation}
where
\[
	U_{i,t}^{\rm fv}:=\int_0^t\int_{0<|z|\le2r_0}
	\psi_i(z)\widetilde N_i^0(ds,dz)+t\eta_i.
\]
Thus a zero coefficient removes the corresponding term in
\eqref{eq:supp_zero_coefficient_decomposition}; no reciprocal
coefficient occurs.

\proofstep{Step 2: nonzero approximation and remainder audit}
Choose nonzero \(\ell_{i,m}^{\pm}\to\ell_i^{\pm}\), uniformly bounded
in \(i,m\), and put
\[
	\ell_{i,m}(z):=\ell_{i,m}^+\mathbf1_{\{z>0\}}
	+\ell_{i,m}^-\mathbf1_{\{z<0\}}.
\]
For a smooth cutoff \(\chi\) equal to one on \([-r_0,r_0]\) and zero
outside \((-2r_0,2r_0)\), set
\[
	\check q_{i,m}(z):=\check q_i(z)+\chi(z)
	\{\ell_{i,m}(z)-\ell_i(z)\}z.
\]
For all sufficiently large \(m\), this is a globally admissible family
with nonzero first-order coefficients and constants uniform in \(m\).
Hence the nonzero-coefficient results of
\citet[Propositions~4--5, Lemmas~3--4, 7, 17, and~19]{BonieceFigueroaLopezHan2024}
apply to \(\check q_{i,m}\).

On \(0<|z|\le2r_0\), define
\[
	\phi_{i,m}(z):=-\log\check q_{i,m}(z),
	\qquad
	\psi_{i,m}(z):=\phi_{i,m}(z)+\ell_{i,m}(z)z,
\]
and let \(U_{i,t}^{(m)}\) and \(U_{i,t}^{{\rm fv},(m)}\) be the
corresponding quantities in
\eqref{eq:supp_zero_coefficient_likelihood} and
\eqref{eq:supp_zero_coefficient_decomposition}. Because
\(\check q_{i,m}(z)-1-\ell_{i,m}(z)z
=\check q_i(z)-1-\ell_i(z)z\) on \(0<|z|\le r_0\), while
\(\check q_{i,m}\) is uniformly bounded above and away from zero on
\(r_0<|z|<2r_0\),
\begin{equation}
\begin{aligned}
	L_\ell&:=\sup_{i,m}
	\bigl(|\ell_{i,m}^+|+|\ell_{i,m}^-|\bigr)<\infty,\\
	M_q&:=\sup_{i,m}\int_{0<|z|\le1}
	|\check q_{i,m}(z)-1-\ell_{i,m}(z)z|
	|z|^{-1-\beta}\,dz<\infty,\\
	M_\phi&:=\sup_{i,m}\int_{0<|z|\le2r_0}
	\bigl\{|\psi_{i,m}(z)|+|\phi_{i,m}(z)|^2\bigr\}
	\widetilde\nu_i(dz)<\infty,\\
	M_U&:=\sup_{\substack{i,m,\,0<t\le1\\ |p|\le8}}
	\left\{\widetilde\E_i e^{pU_{i,t}^{(m)}}
	+t^{-1}\widetilde\E_i|U_{i,t}^{(m)}|^2
	+t^{-1}\widetilde\E_i
	|e^{-U_{i,t}^{(m)}}-1|^2\right\}<\infty.
\end{aligned}
	\label{eq:supp_zero_coefficient_uniform_inputs}
\end{equation}
The same calculation as in
\eqref{eq:supp_zero_coefficient_decomposition} now gives
\begin{equation}
	U_{i,t}^{(m)}=U_{i,t}^{{\rm fv},(m)}
	-\ell_{i,m}^+S_{i,t}^+-\ell_{i,m}^-S_{i,t}^-,
	\qquad
	\sup_{i,m,\,0<t\le1}t^{-1}
	\widetilde\E_i|U_{i,t}^{{\rm fv},(m)}|
	\lesssim M_\phi.
	\label{eq:supp_zero_coefficient_uniform_decomposition}
\end{equation}
The reference measure is independent of \(m\); hence, for every
\(r>\beta\) and \(0<v\le1\),
\begin{equation}
	\sup_i\int_{|z|\le v}|z|^r\widetilde\nu_i(dz)
	\lesssim v^{r-\beta},
	\qquad
	\sup_i\widetilde\nu_i(|z|>v)\lesssim v^{-\beta}.
	\label{eq:supp_zero_coefficient_stable_inputs}
\end{equation}
Moreover, global admissibility and the compactly supported perturbation
give, for every \(r>0\),
\begin{equation}
	\sup_{i,m}\int_{|z|>r_0}|z|^r
	\check q_{i,m}(z)|z|^{-1-\beta}\,dz<\infty.
	\label{eq:supp_zero_coefficient_finite_activity_input}
\end{equation}

The following audit identifies the quantity that fixes the constant in
each imported estimate. It replaces an appeal to inspection of the cited
proofs.
\begin{table}[H]
\centering
\small
\begin{adjustbox}{max width=\textwidth}
\begin{tabular}{@{}p{0.28\textwidth}p{0.29\textwidth}p{0.39\textwidth}@{}}
\toprule
Imported estimate & Quantity used by the cited proof
& Uniform control for \(\check q_{i,m}\) \\
\midrule
Change-of-measure factors in (C.19), (C.27), and Lemma~19
& Exponential moments of \(U_{i,t}^{(m)}\) and the
\(L^2\)-moments of \(U_{i,t}^{(m)}\) and
\(e^{-U_{i,t}^{(m)}}-1\)
& The constant is bounded by \(M_U\) in
\eqref{eq:supp_zero_coefficient_uniform_inputs}. \\
\addlinespace
Brownian-boundary terms in (C.18)--(C.26)
& Gaussian moments and the likelihood perturbation at the truncation
boundary
& The Gaussian constants use only the common bounds for
\(a_i(\gamma^\star)\); the perturbation is controlled by \(M_U\).
Neither quantity depends on a lower bound for
\(|\ell_{i,m}^{\pm}|\). \\
\addlinespace
Brownian--jump terms in (C.18)--(C.30) and Lemma~19
& Mixed truncated moments of the Brownian motion and the stable
reference process, together with the coefficients in the decomposition
of \(U_{i,t}^{(m)}\)
& The mixed moments use
\eqref{eq:supp_zero_coefficient_stable_inputs}; the coefficients occur
only through nonnegative powers bounded by \(L_\ell\), while the
finite-variation likelihood term is bounded by
\eqref{eq:supp_zero_coefficient_uniform_decomposition}. \\
\addlinespace
Multiple-jump terms in Propositions~4--5
& The large-jump intensity and the second factorial moment of its
Poisson count
& The stable part is bounded by \(v^{-\beta}\) in
\eqref{eq:supp_zero_coefficient_stable_inputs}; the part outside
\((-r_0,r_0)\) is bounded by
\eqref{eq:supp_zero_coefficient_finite_activity_input}. \\
\addlinespace
Local-density remainder in (C.31) and Lemma~7
& Integrability of
\(|\check q_{i,m}(z)-1-\ell_{i,m}(z)z||z|^{-1-\beta}\)
& Its common bound is exactly \(M_q\) in
\eqref{eq:supp_zero_coefficient_uniform_inputs}; on
\(|z|\le r_0\), the remainder is independent of \(m\). \\
\addlinespace
Fourth and annular estimates in (C.33), Lemmas~3--4, and Lemma~17
& Truncated stable moments of orders at least four and the
finite-activity moments
& Equations \eqref{eq:supp_zero_coefficient_stable_inputs} and
\eqref{eq:supp_zero_coefficient_finite_activity_input} give the common
\(v^{r-\beta}\) bounds, in particular \(v^{4-\beta}\). \\
\addlinespace
Mixed Brownian replacement in Lemmas~3--4 and~19
& The likelihood moments, the mixed stable moments, and Gaussian
moments
& These are controlled by \(M_U\),
\eqref{eq:supp_zero_coefficient_stable_inputs}, and the common bounds
for \(a_i(\gamma^\star)\), respectively. \\
\bottomrule
\end{tabular}
\end{adjustbox}
\caption{Uniformity audit for the nonzero-coefficient IV estimates.}
\label{tab:supp_zero_coefficient_uniformity_audit}
\end{table}
Thus every imported constant is a function only of
\(L_\ell,M_q,M_\phi,M_U\), the stable and finite-activity bounds in
\eqref{eq:supp_zero_coefficient_stable_inputs}--%
\eqref{eq:supp_zero_coefficient_finite_activity_input}, and the common
Gaussian coefficient bounds. All of these quantities are independent of
\(m\). In particular, no reciprocal coefficient or lower bound on
\(|\ell_{i,m}^{\pm}|\) enters. With \(Y=\beta\),
\(C_+=C_-=c_i\), \(\sigma=a_i(\gamma^\star)\), \(h=h_n\), and
\(\varepsilon=v\), the five remainder groups satisfy
\begin{equation}
\begin{aligned}
	|R_{i,m,n}^{\rm BB}(v)|
	&\lesssim R(x)h_n^3v^{-\beta-2},
	&|R_{i,m,n}^{\rm BJ}(v)|
	&\lesssim R(x)h_n^{3/2}v^{1-\beta/2},\\
	|R_{i,m,n}^{\ge2}(v)|
	&\lesssim R(x)h_n^2v^{2-2\beta},
	&|R_{i,m,n}^{q}(v)|
	&\lesssim R(x)h_nv^{2-\epsilon_0},\\
	|R_{i,m,n}^{\rm fa}(v)|
	&\lesssim R(x)h_n^2.
\end{aligned}
	\label{eq:supp_zero_coefficient_remainder_audit}
\end{equation}
Here the five superscripts denote the higher Brownian-boundary,
Brownian--small-jump, multiple-jump, local-density, and
finite-activity/drift terms. Their sum proves
\eqref{eq:supp_zero_coefficient_mean}; the cited fourth-moment and
mixed-moment estimates prove
\eqref{eq:supp_zero_coefficient_fourth}--%
\eqref{eq:supp_zero_coefficient_brownian_tail}, with constants uniform
in \(m\).

\proofstep{Step 3: passage to a zero coefficient}
Construct the compensated processes for \(\check q_{i,m}\) and
\(\check q_i\) from a common marked Poisson random measure. The
Burkholder--Davis--Gundy inequality gives, for \(t\le1\),
\[
	\E\!\left[\sup_{s\le t}
	|\check L_{i,s}^{(m)}-\check L_{i,s}|^2\right]
	\lesssim t\max_{\pm}|\ell_{i,m}^{\pm}-\ell_i^{\pm}|
	\longrightarrow0,
\]
and direct integration gives
\( |\check\kappa_{i,n}^{(m)}-\check\kappa_{i,n}|\to0\).
Thus \(\check\xi_{h_n}^{i,x,(m)}\to\check\xi_{h_n}^{i,x}\) in
\(L^2\). The nondegenerate Brownian component implies that the
threshold boundaries have probability zero. The globally admissible
moment bounds give uniform integrability of every integrand in the
lemma. Passing \(m\to\infty\) proves
\eqref{eq:supp_zero_coefficient_mean}--%
\eqref{eq:supp_zero_coefficient_brownian_tail}, including when either
coefficient is zero.
\end{proof}

There exist deterministic constants \(C_\star,c_\star>0\), depending
only on the constants in Assumptions~\ref{ass:smooth},
\ref{ass:levy}, and \ref{ass:iv}, such that all polynomial
moments used below are bounded by
\begin{equation}
	R_\star(x):=C_\star(1+|x|^{c_\star}).
	\label{eq:supp_local_iv_explicit_envelope}
\end{equation}

\begin{prop}[Localized IV moment expansion]
	\label{prop:supp_localized_iv_moments}
	In Case IV, suppose Assumptions~\ref{ass:smooth},
	\ref{ass:levy}, \ref{ass:iv},
	\ref{ass:iv_coeff}, and \ref{ass:iv_sampling} hold.
	Uniformly over \(i\in S\), \(x\in\mathbb R\), and
	\(v<w\) in \(\mathcal V_n\),
	\begin{align*}
		\left|
		\E_x^i[G_v(Y_{h_n}^{i,x})]
		-h_n\{a_i^2(\gamma^\star)+d_{i,1}v^{2-\beta}
		+d_{i,2}h_nv^{-\beta}\}
		\right|
		&\lesssim h_n\rho_nR_\star(x),\\
		\E_x^i[G_v(Y_{h_n}^{i,x})^2]
		&\lesssim R_\star(x)(h_n^2+h_nv^{4-\beta}),\\
		\E_x^i[
		\{G_w(Y_{h_n}^{i,x})-G_v(Y_{h_n}^{i,x})\}^2]
		&\lesssim h_nv^{4-\beta}R_\star(x).
	\end{align*}
	Moreover,
	\[
		\operatorname{Var}_x^i\{\mathcal B_{h_n}^{i,x}(v)\}
		\lesssim R_\star(x)
		(h_n^2v^{2-\beta}+h_nv^{4-\beta}+h_n^3).
	\]
\end{prop}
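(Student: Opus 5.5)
The plan is to reduce the model increment \(Y_{h_n}^{i,x}\) to the frozen globally admissible increment \(\check\xi_{h_n}^{i,x}\) of Lemma~\ref{lem:supp_iv_zero_coefficient_extension}, and then pay separately for the two discrepancies: the finite-activity difference in L\'evy measure and the state-dependent drift \(D_{h_n}^{i,x}\). Transferring the bounds of that lemma then yields each line of Proposition~\ref{prop:supp_localized_iv_moments}.

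\emph{Step 1: auxiliary admissible density.} Choose \(\check q_i:=q_i\) on \(0<|z|\le1\). Outside, choose a bounded extension with all polynomial moments finite. The cleanest choice is \(\check q_i(z)|z|^{-1-\beta}c_i:=s_i^\star(z)\) for \(|z|>1\); this is globally admissible by Assumptions~\ref{ass:iv} and \ref{ass:levy}. Then \(\check\nu_i=\nu_i^\star\) exactly, and the only remaining discrepancy is the centering. Recall that \(\check\kappa_{i,n}\) integrates only over \(u_n<|z|\le1\), whereas \(\kappa_{i,n}\) integrates over \(|z|>u_n\). Their difference, \(\int_{|z|>1}z\,\nu_i^\star(dz)\), is a constant. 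It is cancelled by the compensator of the \(|z|>1\) jumps inside \(L^i\), which is the tail-centering/compensator cancellation. One must check that \(\xi_{h_n}^{i,x}\) and \(\check\xi_{h_n}^{i,x}\) coincide, or differ by \(O(h_n)\). An \(O(h_n)\) deterministic shift costs at most \(h_n\cdot h_n\) in the mean, via Lemma~\ref{lem:supp_truncation_perturbation} with \(d=O(h_n)\): the linear term is \(2d\,\E[y;|y|\le v]=O(h_n\cdot h_n)\) by symmetry/first-moment bounds, and the boundary term is \(v^2\Pb(||y|-v|\le h_n)\lesssim v^2 h_n\cdot h_nv^{-1-\beta}\) by Proposition~\ref{prop:iv_off_diagonal_density}. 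Both terms are absorbed into \(h_n\rho_n\).

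\emph{Step 2: drift transfer.} Write \(Y=\xi+D\) with \(|D_{h_n}^{i,x}|\le C\int_0^{h_n}|X_s^{i,x}-x|\,ds\). This gives \(\E|D|^p\lesssim h_n^{p}(h_n+h_n^{p}\ldots)R(x)\), and in particular \(\E D^2\lesssim h_n^3R(x)\). The case \(|D|>v/2\) has probability \(O(h_n^N)\) by Markov's inequality. Otherwise, Lemma~\ref{lem:supp_truncation_perturbation} with \(y=\xi\), \(d=D\) gives
\[
|\E G_v(Y)-\E G_v(\xi)|\le 2|\E[D\xi\mathbf1_{\{|\xi|\le v\}}]|+C\E D^2+Cv^2\Pb(||\xi|-v|\le|D|).
\]
The cross term is bounded by Cauchy--Schwarz, \((\E D^2)^{1/2}(\E G_v(\xi))^{1/2}\lesssim h_n^{3/2}h_n^{1/2}=h_n^2\), which is \(\le h_n\rho_n\) since \(\rho_n\ge h_n\). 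For the boundary probability, split on \(|D|\le r_n\) with \(r_n=h_n^{3/2-\epsilon}\) and apply the boundary estimate of Proposition~\ref{prop:iv_off_diagonal_density}. This gives \(v^2h_nr_nv^{-1-\beta}\), plus a Markov tail. One must check that \(v^{1-\beta}r_n\lesssim\rho_n\); this holds since \(h_n^{1/2}u_n^{1-\beta/2}\) is a summand of \(\rho_n\). This step is the main obstacle: the rate bookkeeping must land inside \(\rho_n\) uniformly on \(\mathcal V_n\). If the crude split is too lossy, the fallback is the density bound itself, integrating \(p^{\rm sd}\) against the strip with \(D\) conditionally controlled.

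\emph{Step 3: second moments and the variance.} The second-moment and annulus bounds follow from \eqref{eq:supp_zero_coefficient_fourth}--\eqref{eq:supp_zero_coefficient_annulus}, using \(G_v(Y)^2\lesssim G_{2v}(\xi)^2+D^4+\ldots\) on \(\{|D|\le v/2\}\). Boundary indicator changes cost \(v^4\) times the strip probability, which is \(\lesssim h_nv^{4-\beta}\).

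For the variance of \(\mathcal B\), write \(G_v(\xi)-a_i^2W^2=2a_iW\check R\mathbf1+\check R^2\mathbf1-a_i^2W^2\mathbf1_{\{|\xi|>v\}}\). The three pieces are bounded by \eqref{eq:supp_zero_coefficient_mixed}, \eqref{eq:supp_zero_coefficient_jump_fourth}, and \eqref{eq:supp_zero_coefficient_brownian_tail}, giving \(h_n^2v^{2-\beta}+h_nv^{4-\beta}+h_n^3\) after noting that \(h_n^2v^{4-2\beta}\le h_nv^{4-\beta}\) and \(h_n^3v^{-\beta}\lesssim h_n^2v^{2-\beta}\) (as \(h_n\le v^2\)). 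Finally, the \(Y\)-versus-\(\xi\) difference is added in \(L^2\) via the same perturbation lemma.
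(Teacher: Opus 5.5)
Your architecture is the paper's: an admissible auxiliary density, Lemma~\ref{lem:supp_iv_zero_coefficient_extension} for the frozen increment, then a transfer through $D_{h_n}^{i,x}$ via Lemma~\ref{lem:supp_truncation_perturbation} and the strip bound of Proposition~\ref{prop:iv_off_diagonal_density}. However, the step you flag as the main obstacle fails as written. Splitting at $r_n=h_n^{3/2-\epsilon}$ leaves the tail $v^2\Pb(|D_{h_n}^{i,x}|>r_n)$, and this tail is not negligible. From $\E|D|^p\lesssim h_n^{p+1}R_\star(x)$ for $p\ge2$, Markov gives at best $\Pb(|D|>r_n)\lesssim h_n^{1-p(1/2-\epsilon)}\le h_n^{2\epsilon}$. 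No sharper moment bound can rescue this. After a jump of size $z$ early in the interval, $|D|$ is of order $h_n|\partial_xb(x,i,\alpha^\star)||z|$, not $h_n^{3/2}$. Hence $\Pb(|D|>r_n)$ is at least of order $h_n\nu_i^\star(|z|\gtrsim h_n^{1/2-\epsilon})\asymp h_n^{1-\beta(1/2-\epsilon)}$ whenever $\partial_xb\ne0$. Since $v^2\gg h_n$ and $\beta>1$, $v^2h_n^{1-\beta/2+\beta\epsilon}\gg h_n^{3/2}$ for small $\epsilon$, whereas \eqref{eq:iv_span} forces $h_n\rho_n=o(h_n^{3/2}T_n^{-1/2})$.

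The repair, which is what the paper does, is to split at the natural scale $|D|\le h_n$. Chebyshev gives $\Pb(|D|>h_n)\le\E D^2/h_n^2\lesssim h_nR_\star(x)$, so the tail costs $h_nv^2\lesssim h_nu_n^{2-\epsilon_0}\le h_n\rho_n$; the paper absorbs it into $h_n^{3/2}v^{1-\beta/2}$ using $\omega>(2+\beta)^{-1}$. The strip of width $h_n$ costs $v^2\cdot h_n^2v^{-1-\beta}=h_n^2v^{1-\beta}\lesssim h_n\cdot h_nu_n^{2-2\beta}\le h_n\rho_n$. The same split is needed in your $L^2$ transfers for the second moment, the annulus and $\operatorname{Var}\mathcal B_{h_n}^{i,x}(v)$, where the tail then costs $h_nv^4\le h_nv^{4-\beta}$. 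There it is also safer to use $|G_v(Y)^2-G_v(\xi)^2|\le2v^2|G_v(Y)-G_v(\xi)|$ than $G_{2v}(\xi)$, since the frozen bounds are stated only for thresholds in $\mathcal V_n$.

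Step~1 also does not go through as stated. Global admissibility requires $\check q_i$ to be bounded, and $s_i^\star(z)|z|^{1+\beta}/c_i$ on $|z|>1$ need not be. Assumption~\ref{ass:levy} gives finite polynomial moments, not a pointwise $O(|z|^{-1-\beta})$ envelope for the density. That is why the paper takes $\widehat q_i=q_i\mathbf 1_{\{|z|\le r_0\}}$ and reinserts the far jumps as an independent compound Poisson process $V_i$. The centering difference $\kappa_{i,n}-\widehat\kappa_{i,n}=\int_{|z|>r_0}z\,\nu_i^\star(dz)$ is exactly its compensator, so $\xi_{h_n}^{i,x}=\widehat\xi_{h_n}^{i,x}+V_{i,h_n}$ with no residual shift. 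Because $0\le G_v\le v^2$, the replacement costs at most $v^2\Pb(V_i\text{ jumps before }h_n)\lesssim h_nv^2$ (and $h_nv^4$ for the squares), which is absorbed.

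In your construction, by contrast, $\check L^i=L^i$, so the cancellation you invoke does not happen. Instead $\xi-\check\xi=h_n\int_{|z|>1}z\,\nu_i^\star(dz)$ is a genuine deterministic shift. Your shift argument handles it, but the claim $\E[\check\xi;|\check\xi|\le v]=O(h_n)$ needs the signed-tail count argument. Crude first-moment bounds give only $O(h_nv^{1-\beta}R_\star(x))$, which still suffices because $h_nv^{1-\beta}\lesssim h_nu_n^{2-2\beta}\le\rho_n$.
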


\begin{proof}[Proof of Proposition~\ref{prop:supp_localized_iv_moments}]
The proof has three transfers. We first construct a globally admissible
auxiliary L\'evy density that agrees with the model density near zero.
We then couple the two fixed-regime processes and quantify the
finite-activity difference. Finally, we transfer the resulting frozen
expansion through the state-dependent drift remainder.

\proofstep{Step 1: globally admissible auxiliary density}
Choose \(r_0\in(0,1/4)\) such that
\(1/2\le q_i(z)\le3/2\) for \(i\in S\) and
\(0<|z|\le r_0\). Define
\begin{equation}
	\widehat q_i(z):=
	\begin{cases}
		q_i(z),&0<|z|\le r_0,\\
		0,&|z|>r_0.
	\end{cases}
	\label{eq:supp_local_iv_auxiliary_density}
\end{equation}
This bounded Borel function agrees with \(q_i\) near zero and has
compact support. The finite regime set therefore makes
\(\{\widehat q_i:i\in S\}\) globally admissible with common constants.
Put
\[
	\widehat\nu_i(dz):=c_i\widehat q_i(z)|z|^{-1-\beta}\,dz,
	\qquad
	\widehat\kappa_{i,n}:=
	\int_{u_n<|z|\le1}z\,\widehat\nu_i(dz),
\]
and define \(\widehat L^i\), \(\widehat\xi_{h_n}^{i,x}\), and
\(\widehat R_{h_n}^{i,x}\) as in the setup of
Lemma~\ref{lem:supp_iv_zero_coefficient_extension}, with checks replaced
by hats.

Because \(v<r_0\) for all sufficiently large \(n\), the auxiliary and
model measures have the same local coefficients. In particular,
\begin{align}
	\int_{|z|\le v}z^2\nu_i^\star(dz)
	&=c_i\int_0^v z^{1-\beta}\{q_i(z)+q_i(-z)\}\,dz\nonumber\\
	&=d_{i,1}v^{2-\beta}
	+O(v^{3-\beta}+v^{2+\rho_q})
	=d_{i,1}v^{2-\beta}+O(v^{2-\epsilon_0}).
	\label{eq:supp_local_iv_levy_integral}
\end{align}
Applying Lemma~\ref{lem:supp_iv_zero_coefficient_extension} to
\(\widehat q_i\) gives
\begin{align}
	&\left|
	\E_x^iG_v(\widehat\xi_{h_n}^{i,x})
	-h_n\{a_i^2+d_{i,1}v^{2-\beta}
	+d_{i,2}h_nv^{-\beta}\}\right|\nonumber\\
	&\quad\lesssim R_\star(x)\{
	h_n^3v^{-\beta-2}+h_n^{3/2}v^{1-\beta/2}
	+h_n^2v^{2-2\beta}+h_nv^{2-\epsilon_0}+h_n^2\},
	\label{eq:supp_local_iv_auxiliary_expansion}\\
	\E_x^i[G_v(\widehat\xi_{h_n}^{i,x})^2]
	&\lesssim R_\star(x)(h_n^2+h_nv^{4-\beta}),\nonumber\\
	\E_x^i[\{G_w(\widehat\xi_{h_n}^{i,x})
	-G_v(\widehat\xi_{h_n}^{i,x})\}^2]
	&\lesssim h_nv^{4-\beta}R_\star(x).
	\label{eq:supp_local_iv_auxiliary_fourth}
\end{align}
The three Brownian-replacement bounds are those in
\eqref{eq:supp_zero_coefficient_mixed}--%
\eqref{eq:supp_zero_coefficient_brownian_tail}, with hats in place of
checks.

\proofstep{Step 2: finite-activity replacement}
Put
\begin{equation}
	\mu_i(dz):=\mathbf1_{\{|z|>r_0\}}\nu_i^\star(dz),
	\qquad
	\lambda_i^{\rm fa}:=\mu_i(\mathbb R).
	\label{eq:supp_local_iv_finite_activity_measures}
\end{equation}
Assumption~\ref{ass:levy} and finiteness of a L\'evy measure away
from zero give, for every fixed \(p>0\),
\begin{equation}
	\sup_{i\in S}\left\{
	\lambda_i^{\rm fa}+
	\int|z|^p\mu_i(dz)\right\}<\infty.
	\label{eq:supp_local_iv_finite_activity_moments}
\end{equation}
Let
\[
	V_{i,t}:=\int_0^t\int_{|z|>r_0}z\,N_i(ds,dz),
	\qquad
	m_i^{\rm fa}:=\int_{|z|>r_0}z\,\nu_i^\star(dz).
\]
The process \(V_i\) is compound Poisson, independent of
\((W,\widehat L^i)\). Since \(u_n<r_0\) for all sufficiently large
\(n\), \(\widehat\nu_i=\nu_i^\star\) on
\(\{u_n<|z|\le r_0\}\) and \(\widehat\nu_i=0\) on
\(\{|z|>r_0\}\). Consequently,
\begin{equation}
	\kappa_{i,n}-\widehat\kappa_{i,n}
	=\int_{|z|>u_n}z\,\nu_i^\star(dz)
	-\int_{u_n<|z|\le r_0}z\,\nu_i^\star(dz)
	=\int_{|z|>r_0}z\,\nu_i^\star(dz)
	=m_i^{\rm fa}.
	\label{eq:supp_local_iv_tail_compensator_cancellation}
\end{equation}
Under the corresponding disjoint-region coupling of the Poisson random
measures,
\begin{equation}
	L_t^i=\widehat L_t^i+V_{i,t}-tm_i^{\rm fa},
	\qquad
	\xi_{h_n}^{i,x}
	=\widehat\xi_{h_n}^{i,x}+V_{i,h_n}.
	\label{eq:supp_local_iv_finite_activity_decomposition}
\end{equation}
Indeed, the second identity follows from the first after substituting
\eqref{eq:supp_local_iv_tail_compensator_cancellation} into the
definitions of \(\xi_{h_n}^{i,x}\) and
\(\widehat\xi_{h_n}^{i,x}\). Thus the compensator of the added
finite-activity component cancels exactly with the difference between
the two tail-centerings; there is no further deterministic shift.

For \(F_{v,2}:=G_v\) and \(F_{v,4}:=G_v^2\), the algebraic
decomposition used in formula (C.35) of
\citet{BonieceFigueroaLopezHan2024} reads, for \(k\in\{2,4\}\),
\begin{align}
	|F_{v,k}(y+z)-F_{v,k}(y)|
	&\le |(y+z)^k-y^k|
	\mathbf1_{\{|y+z|\le v,\,|y|\le v\}}\nonumber\\
	&\quad+|y+z|^k
	\mathbf1_{\{|y+z|\le v,\,|y|>v\}}
	+|y|^k
	\mathbf1_{\{|y+z|>v,\,|y|\le v\}}\nonumber\\
	&\lesssim v^k\mathbf1_{\{z\ne0\}}.
	\label{eq:supp_local_iv_fa_pointwise}
\end{align}
Indeed, the first indicator implies \(|z|\le2v\), and each of the last
two terms is at most \(v^k\). If \(K_{i,h_n}^{\rm fa}\) denotes the
jump count of \(V_i\), then
\begin{equation}
	\Pb(K_{i,h_n}^{\rm fa}\ge1)\le h_n\lambda_i^{\rm fa},
	\qquad
	\Pb(K_{i,h_n}^{\rm fa}\ge2)\le
	\frac12h_n^2(\lambda_i^{\rm fa})^2.
	\label{eq:supp_local_iv_fa_count}
\end{equation}
Since \(V_i\) is independent of \(\widehat\xi^{i,x}\), equations
\eqref{eq:supp_local_iv_fa_pointwise}--%
\eqref{eq:supp_local_iv_fa_count} give
\begin{equation}
\begin{aligned}
	\E_x^i|F_{v,2}(\widehat\xi_{h_n}^{i,x}
	+V_{i,h_n})-F_{v,2}(\widehat\xi_{h_n}^{i,x})|
	&\lesssim h_nv^2,\\
	\E_x^i|F_{v,4}(\widehat\xi_{h_n}^{i,x}
	+V_{i,h_n})-F_{v,4}(\widehat\xi_{h_n}^{i,x})|
	&\lesssim h_nv^4.
\end{aligned}
	\label{eq:supp_local_iv_fa_addition}
\end{equation}
The same estimate at \(v\) and \(w\), with \(w/v\) uniformly bounded,
controls the annular functional by \(h_nv^4\). Moreover,
\(\omega>(4-\beta)^{-1}>(2+\beta)^{-1}\), because \(\beta>1\).
Hence, uniformly for \(v\asymp h_n^\omega\),
\begin{equation}
	\frac{h_nv^2}{h_n^{3/2}v^{1-\beta/2}}
	\asymp h_n^{\omega(1+\beta/2)-1/2}\longrightarrow0,
	\qquad
	h_nv^2=o\!\left(h_n^{3/2}v^{1-\beta/2}\right).
	\label{eq:supp_local_iv_fa_absorption}
\end{equation}
Combining \eqref{eq:supp_local_iv_fa_addition},
\eqref{eq:supp_local_iv_fa_absorption}, and
\eqref{eq:supp_local_iv_auxiliary_expansion} gives
\begin{align}
	&\left|
	\E_x^iG_v(\xi_{h_n}^{i,x})
	-h_n\{a_i^2+d_{i,1}v^{2-\beta}
	+d_{i,2}h_nv^{-\beta}\}\right|\nonumber\\
	&\quad\lesssim R_\star(x)\{
	h_n^3v^{-\beta-2}+h_n^{3/2}v^{1-\beta/2}
	+h_n^2v^{2-2\beta}+h_nv^{2-\epsilon_0}+h_n^2\},
	\label{eq:supp_local_iv_frozen_expansion}\\
	\E_x^i[G_v(\xi_{h_n}^{i,x})^2]
	&\lesssim R_\star(x)(h_n^2+h_nv^{4-\beta}),
	\label{eq:supp_local_iv_frozen_fourth}\\
	\E_x^i[\{G_w(\xi_{h_n}^{i,x})-G_v(\xi_{h_n}^{i,x})\}^2]
	&\lesssim h_nv^{4-\beta}R_\star(x).
	\label{eq:supp_local_iv_frozen_annulus}
\end{align}
For the Brownian-replacement bounds, put
\(\bar R_{h_n}^{i,x}:=\xi_{h_n}^{i,x}
-a_i(\gamma^\star)W_{h_n}\). On
\(\{K_{i,h_n}^{\rm fa}\ge1\}\),
\begin{equation}
\begin{aligned}
	W_{h_n}^2(\bar R_{h_n}^{i,x})^2
	\mathbf1_{\{|\xi_{h_n}^{i,x}|\le v\}}
	&\lesssim v^2W_{h_n}^2+W_{h_n}^4,\\
	(\bar R_{h_n}^{i,x})^4
	\mathbf1_{\{|\xi_{h_n}^{i,x}|\le v\}}
	&\lesssim v^4+W_{h_n}^4.
\end{aligned}
	\label{eq:supp_local_iv_fa_brownian_pointwise}
\end{equation}
The independence of \(K_{i,h_n}^{\rm fa}\) from \(W\), the Gaussian
moment formula, and \eqref{eq:supp_local_iv_fa_count} show that the
expectations of the two right-hand sides on this event are bounded by
\(h_n^2v^2+h_n^3\) and \(h_nv^4+h_n^3\), respectively. On the
zero-count event, \(V_{i,h_n}=0\), so
\eqref{eq:supp_local_iv_finite_activity_decomposition} gives the exact
identities
\[
	\xi_{h_n}^{i,x}=\widehat\xi_{h_n}^{i,x},
	\qquad
	\bar R_{h_n}^{i,x}=\widehat R_{h_n}^{i,x}.
\]
Thus the hatted bounds from Step~1 apply directly, without a boundary
perturbation. Combining the zero- and positive-count events proves
\begin{align}
	\E_x^i[W_{h_n}^2(\bar R_{h_n}^{i,x})^2
	\mathbf1_{\{|\xi_{h_n}^{i,x}|\le v\}}]
	&\lesssim R_\star(x)(h_n^2v^{2-\beta}+h_n^3),\nonumber\\
	\E_x^i[(\bar R_{h_n}^{i,x})^4
	\mathbf1_{\{|\xi_{h_n}^{i,x}|\le v\}}]
	&\lesssim R_\star(x)(h_nv^{4-\beta}
	+h_n^2v^{4-2\beta}+h_n^4),\nonumber\\
	\E_x^i[W_{h_n}^4\mathbf1_{\{|\xi_{h_n}^{i,x}|>v\}}]
	&\lesssim R_\star(x)\{h_n^3v^{-\beta}
	+h_n^2e^{-v^2/(Ch_n)}\}.
	\label{eq:supp_local_iv_frozen_brownian_moments}
\end{align}
For the last line, the deleted-Brownian contribution on
\(\{K_{i,h_n}^{\rm fa}\ge1\}\) is explicit:
\[
	\E[W_{h_n}^4;K_{i,h_n}^{\rm fa}\ge1]
	=3h_n^2\Pb(K_{i,h_n}^{\rm fa}\ge1)
	\lesssim h_n^3
	\lesssim h_n^3v^{-\beta}.
\]

\proofstep{Step 3: state-dependent drift and polynomial envelope}
Write \(a_i=a_i(\gamma^\star)\) and
\(R_{h_n}^{i,x}:=Y_{h_n}^{i,x}-a_iW_{h_n}\). By the Lipschitz
continuity of \(b\), Jensen's inequality, and the fixed-regime
increment moments, for every fixed \(p\ge2\),
\begin{equation}
	\E_x^i|D_{h_n}^{i,x}|^p
	\le h_n^{p-1}\int_0^{h_n}
	\E_x^i|b(X_s^{i,x},i,\alpha^\star)
	-b(x,i,\alpha^\star)|^p\,ds
	\lesssim h_n^{p+1}R_\star(x).
	\label{eq:supp_local_iv_drift_error}
\end{equation}
Proposition~\ref{prop:iv_off_diagonal_density} gives, for
\(0<r\le v/2\),
\begin{equation}
\begin{aligned}
	&\Pb_x^i\!\left(\left||\xi_{h_n}^{i,x}|-v\right|\le r\right)
	+\Pb_x^i\!\left(\left||Y_{h_n}^{i,x}|-v\right|\le r\right)\\
	&\qquad\lesssim R_\star(x)\left\{
	h_nrv^{-1-\beta}
	+rh_n^{-1/2}e^{-v^2/(Ch_n)}\right\}.
\end{aligned}
	\label{eq:supp_local_iv_boundary_transfer}
\end{equation}
Taking \(r=h_n\) and multiplying by \(v^2\) gives
\begin{equation}
\begin{aligned}
	&v^2\Pb_x^i\!\left(\left||\xi_{h_n}^{i,x}|-v\right|\le h_n\right)
	+v^2\Pb_x^i\!\left(\left||Y_{h_n}^{i,x}|-v\right|\le h_n\right)\\
	&\qquad\lesssim R_\star(x)\left\{
	h_n^2v^{1-\beta}
	+h_n^{1/2}v^2e^{-v^2/(Ch_n)}\right\}.
\end{aligned}
	\label{eq:supp_local_iv_boundary_hn}
\end{equation}
For \(|d|\le v/2\),
Lemma~\ref{lem:supp_truncation_perturbation} gives
\begin{equation}
	|G_v(y+d)-G_v(y)|
	\lesssim |y||d|+d^2
	+v^2\mathbf1_{\{||y|-v|\le|d|\}}.
	\label{eq:supp_local_iv_transfer_pointwise}
\end{equation}
Split according as \(|D_{h_n}^{i,x}|\le h_n\) or
\(|D_{h_n}^{i,x}|>h_n\). The Cauchy--Schwarz inequality,
Markov's inequality, and
\eqref{eq:supp_local_iv_drift_error}--%
\eqref{eq:supp_local_iv_boundary_hn} yield
\begin{align}
	&\E_x^i\left|G_v(Y_{h_n}^{i,x})
	-G_v(\xi_{h_n}^{i,x})\right|\nonumber\\
	&\quad\lesssim
	\E_x^i\!\left[
	\{|\xi_{h_n}^{i,x}||D_{h_n}^{i,x}|
	+|D_{h_n}^{i,x}|^2\}
	\mathbf1_{\{|D_{h_n}^{i,x}|\le h_n\}}\right]\nonumber\\
	&\qquad+v^2\Pb_x^i\!\left(
	\left||\xi_{h_n}^{i,x}|-v\right|\le h_n\right)
	+v^2\Pb_x^i(|D_{h_n}^{i,x}|>h_n)\nonumber\\
	&\quad\lesssim R_\star(x)\left\{
	h_n^2+h_n^2v^{1-\beta}
	+h_n^{1/2}v^2e^{-v^2/(Ch_n)}+h_nv^2\right\}\nonumber\\
	&\quad\lesssim R_\star(x)
	\{h_n^{3/2}v^{1-\beta/2}+h_n^2\}.
	\label{eq:supp_local_iv_mean_transfer_error}
\end{align}
The last absorption uses the rate relations in
Table~\ref{tab:supp_iv_rate_audit}. Since \(0\le G_v\le v^2\),
\begin{align*}
	&\left|G_v(Y_{h_n}^{i,x})^2
	-G_v(\xi_{h_n}^{i,x})^2\right|\\
	&\qquad\le2v^2
	\left|G_v(Y_{h_n}^{i,x})-G_v(\xi_{h_n}^{i,x})\right|.
\end{align*}
The same inequality at \(v\) and \(w\), together with the uniformly
bounded ratio \(w/v\), also controls the squared annular functional.
Thus \eqref{eq:supp_local_iv_mean_transfer_error} and
Table~\ref{tab:supp_iv_rate_audit} give, in particular,
\[
	\frac{h_n^{3/2}v^{3-\beta/2}}{h_nv^{4-\beta}}
	=h_n^{1/2}v^{-1+\beta/2}\to0,
	\qquad
	\frac{h_n^2v^2}{h_nv^{4-\beta}}
	=h_nv^{\beta-2}\to0,
\]
and hence
\begin{equation}
\begin{aligned}
	\left|\E_x^iG_v(Y_{h_n}^{i,x})^2
	-\E_x^iG_v(\xi_{h_n}^{i,x})^2\right|
	&\lesssim R_\star(x)
	\{h_n^{3/2}v^{3-\beta/2}+h_n^2v^2\}\\
	&\lesssim h_nv^{4-\beta}R_\star(x),\\
	\E_x^i\left|\{G_w(Y_{h_n}^{i,x})-G_v(Y_{h_n}^{i,x})\}^2
	-\{G_w(\xi_{h_n}^{i,x})-G_v(\xi_{h_n}^{i,x})\}^2\right|
	&\lesssim h_nv^{4-\beta}R_\star(x).
\end{aligned}
	\label{eq:supp_local_iv_fourth_transfer_error}
\end{equation}
Combining \eqref{eq:supp_local_iv_fourth_transfer_error} with
\eqref{eq:supp_local_iv_frozen_fourth}--%
\eqref{eq:supp_local_iv_frozen_annulus} yields
\begin{align}
	\E_x^iG_v(Y_{h_n}^{i,x})^2
	&\lesssim R_\star(x)(h_n^2+h_nv^{4-\beta}),\nonumber\\
	\E_x^i[\{G_w(Y_{h_n}^{i,x})-G_v(Y_{h_n}^{i,x})\}^2]
	&\lesssim h_nv^{4-\beta}R_\star(x).
	\label{eq:supp_local_iv_state_fourth}
\end{align}

Finally,
\begin{equation}
\begin{aligned}
	\left|\mathbf1_{\{|Y_{h_n}^{i,x}|\le v\}}
	-\mathbf1_{\{|\xi_{h_n}^{i,x}|\le v\}}\right|
	&\le\mathbf1_{\{||\xi_{h_n}^{i,x}|-v|
	\le|D_{h_n}^{i,x}|\}},\\
	R_{h_n}^{i,x}&=\bar R_{h_n}^{i,x}+D_{h_n}^{i,x}.
\end{aligned}
	\label{eq:supp_local_iv_brownian_transfer}
\end{equation}
The two pointwise consequences needed for the retained moments are
\begin{align*}
	(R_{h_n}^{i,x})^2\mathbf1_{\{|Y_{h_n}^{i,x}|\le v\}}
	&\lesssim (\bar R_{h_n}^{i,x})^2
	\mathbf1_{\{|\xi_{h_n}^{i,x}|\le v\}}
	+|D_{h_n}^{i,x}|^2\\
	&\quad+\{(\bar R_{h_n}^{i,x})^2+|D_{h_n}^{i,x}|^2\}
	\mathbf1_{\{||\xi_{h_n}^{i,x}|-v|
	\le|D_{h_n}^{i,x}|\}},\\
	(R_{h_n}^{i,x})^4\mathbf1_{\{|Y_{h_n}^{i,x}|\le v\}}
	&\lesssim (\bar R_{h_n}^{i,x})^4
	\mathbf1_{\{|\xi_{h_n}^{i,x}|\le v\}}
	+|D_{h_n}^{i,x}|^4\\
	&\quad+\{(\bar R_{h_n}^{i,x})^4+|D_{h_n}^{i,x}|^4\}
	\mathbf1_{\{||\xi_{h_n}^{i,x}|-v|
	\le|D_{h_n}^{i,x}|\}}.
\end{align*}
Split the two strips at \(|D_{h_n}^{i,x}|=h_n\). Applying H\"older's
inequality with a sufficiently large fixed moment order, then using
\eqref{eq:supp_local_iv_drift_error},
\eqref{eq:supp_local_iv_boundary_transfer}, and
\eqref{eq:supp_local_iv_frozen_brownian_moments}, gives
\begin{align*}
	\E_x^i[W_{h_n}^2(R_{h_n}^{i,x})^2
	\mathbf1_{\{|Y_{h_n}^{i,x}|\le v\}}]
	&\lesssim R_\star(x)(h_n^2v^{2-\beta}+h_n^3),\\
	\E_x^i[(R_{h_n}^{i,x})^4
	\mathbf1_{\{|Y_{h_n}^{i,x}|\le v\}}]
	&\lesssim R_\star(x)(h_nv^{4-\beta}
	+h_n^2v^{4-2\beta}+h_n^4),\\
	\E_x^i[W_{h_n}^4\mathbf1_{\{|Y_{h_n}^{i,x}|>v\}}]
	&\lesssim R_\star(x)\{h_n^3v^{-\beta}
	+h_n^2e^{-v^2/(Ch_n)}\}.
\end{align*}
Expanding
\(G_v(a_iW_{h_n}+R_{h_n}^{i,x})-a_i^2W_{h_n}^2\), and using
\(h_n/v^2\to0\), proves
\[
	\E_x^i|\mathcal B_{h_n}^{i,x}(v)|^2
	\lesssim R_\star(x)
	(h_n^2v^{2-\beta}+h_nv^{4-\beta}+h_n^3).
\]
Combining this estimate with
\eqref{eq:supp_local_iv_frozen_expansion},
\eqref{eq:supp_local_iv_mean_transfer_error}, and
\eqref{eq:supp_local_iv_state_fourth} proves the proposition. The
constants are uniform over the finite regime set and threshold grid,
and the displayed envelope \eqref{eq:supp_local_iv_explicit_envelope}
contains all dependence on the initial state.
\end{proof}

Put
\[
	\mathcal B_{i,j,n}(v)
	:=\Gamma_{i,j}\left\{G_v(Y_{i,j,n}^{\circ})
	-a_i^2(\gamma^\star)(\Delta_jW)^2\right\}.
\]

\begin{lem}[IV conditional expansion]
	\label{lem:iv_conditional_bridge}
	In Case IV, suppose Assumptions~\ref{ass:smooth},
	\ref{ass:levy}, \ref{ass:iv},
	\ref{ass:iv_coeff}, \ref{ass:iv_sampling}, and
	\ref{ass:ergodic} hold. Then
	\[
		\sqrt n\,\rho_n\to0,
		\qquad
		\max_{i\in S}\frac{\sqrt n}{T_{i,n}^{\circ}}
		\sum_{j=1}^nh_n\rho_nR_{j-1}=o_p(1).
	\]
	Uniformly over \(i\in S\), \(j\le n\), and
	\(v\in\mathcal V_n\),
	\begin{align*}
		\left|
		\E_{j-1}[\Gamma_{i,j}G_v(Y_{i,j,n}^{\circ})]
		-\E_{j-1}[\Gamma_{i,j}]h_n
		\{a_i^2(\gamma^\star)+d_{i,1}v^{2-\beta}
		+d_{i,2}h_nv^{-\beta}\}
		\right|
		&\lesssim h_n\rho_nR_{j-1},\\
		\E_{j-1}[\Gamma_{i,j}G_v(Y_{i,j,n}^{\circ})^2]
		&\lesssim R_{j-1}(h_n^2+h_nv^{4-\beta}).
	\end{align*}
	For \(v<w\) in \(\mathcal V_n\), uniformly over the same
	\(i\) and \(j\),
	\[
		\E_{j-1}\!\left[\Gamma_{i,j}
		\{G_w(Y_{i,j,n}^{\circ})-G_v(Y_{i,j,n}^{\circ})\}^2\right]
		\lesssim h_nv^{4-\beta}R_{j-1}.
	\]
	Finally,
	\[
		\E_{j-1}\!\left[
		\{\mathcal B_{i,j,n}(v)
		-\E_{j-1}\mathcal B_{i,j,n}(v)\}^2\right]
		\lesssim R_{j-1}
		(h_n^2v^{2-\beta}+h_nv^{4-\beta}+h_n^3).
	\]
\end{lem}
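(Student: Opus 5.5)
The plan is to reduce everything to the fixed-regime, deterministic-initial-state bounds of Proposition~\ref{prop:supp_localized_iv_moments}. This is done by conditioning on the event \(\{\Gamma_{i,j}=1\}\) and using the independence of the regime path from \(W\) and the Poisson measures. The two rate statements come first and are essentially bookkeeping. The claim \(\sqrt n\,\rho_n\to0\) is exactly \eqref{eq:iv_span}. For the second claim, write
\[
\frac{\sqrt n}{T_{i,n}^{\circ}}\sum_{j=1}^nh_n\rho_nR_{j-1}
=\sqrt n\rho_n\cdot\frac{T_n}{T_{i,n}^{\circ}}\cdot\frac1n\sum_{j=1}^nR_{j-1}.
\]
The middle factor is \(O_p(1)\) by Lemma~\ref{lem:endpoint_exposure_main} and \(\pi_i>0\). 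The last factor is \(O_p(1)\) in \(L^1\) by Lemma~\ref{lem:moment_ergodic_main}, and the maximum over the finite set \(S\) is harmless.

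For the conditional bounds, I will use the following representation. Let \(\mathcal H_j=\mathcal F_{t_{j-1}}\vee\sigma(Z_s:s\in[t_{j-1},t_j])\). On \(\{\Gamma_{i,j}=1\}\) the process \(X\) on \([t_{j-1},t_j]\) solves the nonswitching regime-\(i\) equation driven by the shifted \(W\) and \(N_i\), started at \(X_{t_{j-1}}\). Strong uniqueness and the independence of \((W,N_1,\dots,N_m)\) from \(Z\) then identify the conditional law of \((Y^\circ_{i,j,n},\Delta_jW)\) given \(\mathcal H_j\) on \(\{\Gamma_{i,j}=1\}\) with that of \((Y^{i,x}_{h_n},W_{h_n})\) at \(x=X_{t_{j-1}}\). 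By the Markov property this is a function of \(X_{t_{j-1}}\) alone. Hence
\[
\E_{j-1}[\Gamma_{i,j}F(Y^\circ_{i,j,n},\Delta_jW)]=\E_{j-1}[\Gamma_{i,j}]\;\E^i_x F(Y^{i,x}_{h_n},W_{h_n})\big|_{x=X_{t_{j-1}}}
\]
for every nonnegative or integrable \(F\). Applying this with \(F=G_v\), \(G_v^2\), and \((G_w-G_v)^2\), and using \(\E_{j-1}\Gamma_{i,j}\le1\) and \(R_\star(X_{t_{j-1}})\le R_{j-1}\), gives the first three displays directly from Proposition~\ref{prop:supp_localized_iv_moments}. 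Note that the leading term carries the factor \(\E_{j-1}[\Gamma_{i,j}]\) exactly, with no error.

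The variance bound is the step I expect to need the most care, because \(\mathcal B_{i,j,n}(v)\) carries the random factor \(\Gamma_{i,j}\). I will bound the conditional variance by the conditional second moment \(\E_{j-1}[\Gamma_{i,j}\{G_v(Y^\circ)-a_i^2(\Delta_jW)^2\}^2]\). The factorization above reduces this to \(\E^i_x|\mathcal B^{i,x}_{h_n}(v)|^2\), which the last step of the proof of Proposition~\ref{prop:supp_localized_iv_moments} bounds by \(R_\star(x)(h_n^2v^{2-\beta}+h_nv^{4-\beta}+h_n^3)\). This is the raw second moment, which is stronger than the stated variance bound. The delicate point is that this second-moment bound itself relies on \(h_n/v^2\to0\) and on the Brownian-tail estimate \eqref{eq:supp_local_iv_frozen_brownian_moments}. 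Without them, the centered square \(\E(\Delta_jW)^4\asymp h_n^2\) would leave an \(h_n^2\) term instead of \(h_n^2v^{2-\beta}\). So I would check that the expansion of \(G_v(a_iW+R)-a_i^2W^2\) produces only terms of the following forms:
\[
W^2R^2\mathbf 1_{\{|Y|\le v\}},\qquad R^4\mathbf 1_{\{|Y|\le v\}},\qquad W^4\mathbf 1_{\{|Y|>v\}}.
\]
These are exactly the three quantities controlled in that proof.

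Finally, uniformity over \(j\) holds because all constants are deterministic. Uniformity over \(v\in\mathcal V_n\) holds because the grid is finite with a bounded ratio, and uniformity over \(i\) holds because \(S\) is finite.
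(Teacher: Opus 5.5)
Your proposal is correct and follows essentially the same route as the paper. The rate claims are handled the same way, and the same no-switch factorization $\E_{j-1}[\Gamma_{i,j}\Phi]=\E_{j-1}[\Gamma_{i,j}]\,\E_x^i[\Phi]$ reduces the first three displays to Proposition~\ref{prop:supp_localized_iv_moments}. The only difference is in the last display. The paper uses the Bernoulli-mixture identity $\operatorname{Var}_{j-1}\{\mathcal B_{i,j,n}(v)\}=p_{i,j,n}\operatorname{Var}_x^i\{\mathcal B_{h_n}^{i,x}(v)\}+p_{i,j,n}(1-p_{i,j,n})\overline m_{i,n}(v)^2$, combining the stated variance bound of the proposition with a separate mean-square estimate. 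You instead bound the variance by the raw second moment $p_{i,j,n}\E_x^i|\mathcal B_{h_n}^{i,x}(v)|^2$, which the proposition's proof indeed establishes through exactly the three terms you list, so both routes are valid.
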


\begin{proof}[Proof of Lemma~\ref{lem:iv_conditional_bridge}]
This proof converts the one-regime estimates into the conditional
triangular-array bounds used by the estimators. The Markov property
identifies the law on a no-switch interval, and an explicit
Bernoulli-mixture variance decomposition accounts for the random
no-switch selector and its conditional mean.

Fix \(i,j\), condition on \(\mathcal F_{t_{j-1}}\), and write
\[
	x:=X_{t_{j-1}},\qquad
	p_{i,j,n}:=\E_{j-1}\Gamma_{i,j}
	=\mathbf1_{\{Z_{j-1}=i\}}e^{q_{ii}h_n}.
\]
By \eqref{eq:supp_local_iv_explicit_envelope} and the envelope
convention in Section~\ref{sec:supp_notation},
\(R_\star(X_{t_{j-1}})\le R_{j-1}\) after enlarging the deterministic
constants and exponent in \(R_{j-1}\).
By the Markov property and the independence of \(Z\) from \(W\) and
the Poisson random measures, for every integrable \(\Phi\),
\begin{equation}
	\E_{j-1}[\Gamma_{i,j}\Phi(Y_{i,j,n}^{\circ})]
	=p_{i,j,n}\E_x^i[\Phi(Y_{h_n}^{i,x})].
	\label{eq:supp_iv_bridge_identity}
\end{equation}
Put
\[
	m_{i,n}(v):=a_i^2(\gamma^\star)+d_{i,1}v^{2-\beta}
	+d_{i,2}h_nv^{-\beta}.
\]
Applying the first assertion of
Proposition~\ref{prop:supp_localized_iv_moments} in
\eqref{eq:supp_iv_bridge_identity} with \(\Phi=G_v\) gives
\begin{align*}
	&\left|
	\E_{j-1}[\Gamma_{i,j}G_v(Y_{i,j,n}^{\circ})]
	-p_{i,j,n}h_nm_{i,n}(v)\right|\\
	&\qquad=p_{i,j,n}
	\left|\E_x^iG_v(Y_{h_n}^{i,x})-h_nm_{i,n}(v)\right|
	\lesssim h_n\rho_nR_{j-1}.
\end{align*}
The second and third assertions of
Proposition~\ref{prop:supp_localized_iv_moments}, applied in
\eqref{eq:supp_iv_bridge_identity} with \(\Phi=G_v^2\) and
\(\Phi=(G_w-G_v)^2\), respectively, give
\begin{align*}
	\E_{j-1}[\Gamma_{i,j}G_v(Y_{i,j,n}^{\circ})^2]
	&\lesssim R_{j-1}(h_n^2+h_nv^{4-\beta}),\\
	\E_{j-1}\!\left[\Gamma_{i,j}
	\{G_w(Y_{i,j,n}^{\circ})-G_v(Y_{i,j,n}^{\circ})\}^2\right]
	&\lesssim h_nv^{4-\beta}R_{j-1}.
\end{align*}

It remains to prove the centered bound for \(\mathcal B_{i,j,n}(v)\).
Conditionally on \(\mathcal F_{t_{j-1}}\) and
\(\{\Gamma_{i,j}=1\}\), the pair consisting of
\(Y_{i,j,n}^{\circ}\) and \(\Delta_jW\) has the same law as
\((Y_{h_n}^{i,x},W_{h_n})\). Define
\[
	\overline m_{i,n}(v)
	:=\E_x^i\mathcal B_{h_n}^{i,x}(v)
	=\E_x^iG_v(Y_{h_n}^{i,x})-a_i^2(\gamma^\star)h_n.
\]
The conditional variance formula for the Bernoulli mixture
\(\mathcal B_{i,j,n}(v)=\Gamma_{i,j}
\mathcal B_{h_n}^{i,x}(v)\) is
\begin{equation}
\begin{aligned}
	&\operatorname{Var}_{j-1}\{\mathcal B_{i,j,n}(v)\}\\
	&\quad=p_{i,j,n}
	\operatorname{Var}_x^i\{\mathcal B_{h_n}^{i,x}(v)\}
	+p_{i,j,n}(1-p_{i,j,n})\overline m_{i,n}(v)^2.
\end{aligned}
\label{eq:supp_iv_bridge_variance_mixture}
\end{equation}
By the first assertion of
Proposition~\ref{prop:supp_localized_iv_moments},
\[
	|\overline m_{i,n}(v)|
	\lesssim h_nR(x)
	\{v^{2-\beta}+h_nv^{-\beta}+\rho_n\}.
\]
The relative-rate bounds in Table~\ref{tab:supp_iv_rate_audit} imply
\begin{equation}
	|\overline m_{i,n}(v)|^2
	\lesssim R(x)
	\{h_n^2v^{2-\beta}+h_nv^{4-\beta}+h_n^3\}.
	\label{eq:supp_iv_bridge_mean_square}
\end{equation}
Combining the final assertion of
Proposition~\ref{prop:supp_localized_iv_moments},
\eqref{eq:supp_iv_bridge_variance_mixture},
\eqref{eq:supp_iv_bridge_mean_square}, and \(p_{i,j,n}\le1\) proves
the required conditional variance estimate.

The IV exponent audit in Table~\ref{tab:supp_iv_rate_audit}
gives \(\sqrt n\rho_n\to0\). Finally,
part~(i) of Lemma~\ref{lem:supp_moment_bounds} and
\eqref{main-eq:Tin_convergence} imply
\begin{align*}
	\max_{i\in S}\frac{\sqrt n}{T_{i,n}^{\circ}}
	\sum_{j=1}^nh_n\rho_nR_{j-1}
	&=\sqrt n\rho_n
	\max_{i\in S}\frac{T_n}{T_{i,n}^{\circ}}
	\left(\frac1n\sum_{j=1}^nR_{j-1}\right)\\
	&=O_p(\sqrt n\rho_n)=o_p(1),
\end{align*}
which completes the proof.
\end{proof}

% The signed-tail decomposition and its global aggregation are contained
% in the proof of the tail-functional proposition in the main
% manuscript.

Use \(\mathcal V_n\), \(G_v\), \(\Gamma_{i,j}\), and
\(Y_{i,j,n}^{\circ}\) as defined in
Section~\ref{sec:supp_notation}.

For deterministic \(0<r\le v/2\), define
\[
	B_{i,n}(v,r):=\frac1{T_{i,n}^{\circ}}
	\sum_{j=1}^n\Gamma_{i,j}
	\mathbf1_{\{\left||Y_{i,j,n}^{\circ}|-v\right|\le r\}}.
\]

\begin{lem}[IV boundary counts]
	\label{lem:iv_boundary_probability}
	In Case IV, suppose Assumptions~\ref{ass:smooth},
	\ref{ass:levy},
	\ref{ass:iv}, \ref{ass:iv_coeff}, and
	\ref{ass:iv_sampling} hold. Uniformly over \(i,j\),
	\(v\in\mathcal V_n\), and \(0<r\le v/2\),
	\begin{equation}
		\label{eq:iv_boundary_probability}
		\E_{j-1}\!\left[
		\Gamma_{i,j}
		\mathbf1_{\{\left||Y_{i,j,n}^{\circ}|-v\right|\le r\}}
		\right]
		\le CR_{j-1}\left\{
			h_nrv^{-1-\beta}
		+r h_n^{-1/2}e^{-v^2/(Ch_n)}\right\}.
		\end{equation}
	If Assumption~\ref{ass:ergodic} also holds and
	\(0<r_n\le u_n/2\) is deterministic, then, for every fixed \(K>0\),
	\begin{equation}
		\max_{i\in S}\max_{v\in\mathcal V_n}B_{i,n}(v,r_n)
		=O_p\!\left(r_nu_n^{-1-\beta}
		+\sqrt{\frac{r_nu_n^{-1-\beta}}{T_n}}\right)+O_p(h_n^K).
		\label{eq:supp_fo_empirical_boundary}
	\end{equation}
\end{lem}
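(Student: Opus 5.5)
The plan is to derive \eqref{eq:iv_boundary_probability} directly from the bridge identity \eqref{eq:supp_iv_bridge_identity} and the boundary-strip bound in Proposition~\ref{prop:iv_off_diagonal_density}. The empirical statement \eqref{eq:supp_fo_empirical_boundary} then follows by splitting each boundary count into its predictable compensator and a martingale remainder.

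\emph{Local bound.} I condition on \(\mathcal F_{t_{j-1}}\) and set \(x=X_{t_{j-1}}\). On \(\{\Gamma_{i,j}=1\}\) the regime stays fixed, so the Markov property and the independence of \(Z\) from \((W,N_i)\) identify the law of \(Y_{i,j,n}^{\circ}\) with that of \(Y_{h_n}^{i,x}\). This gives
\[
\E_{j-1}\!\left[\Gamma_{i,j}\mathbf1_{\{||Y_{i,j,n}^{\circ}|-v|\le r\}}\right]
=p_{i,j,n}\,\Pb_x^i\!\left(\left||Y_{h_n}^{i,x}|-v\right|\le r\right),
\qquad p_{i,j,n}\le1 .
\]
The last inequality of Proposition~\ref{prop:iv_off_diagonal_density} holds uniformly over \(v\in\mathcal V_n\) and \(0<r\le v/2\), and it yields \eqref{eq:iv_boundary_probability} directly once \(R(x)\) is absorbed into \(R_{j-1}\). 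Since \(\mathcal V_n\) contains only nine points, uniformity in \(v\) costs nothing.

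\emph{Empirical bound.} Fix \(i\) and \(v\), and write \(\chi_j:=\Gamma_{i,j}\mathbf1_{\{||Y_{i,j,n}^{\circ}|-v|\le r_n\}}\). I split
\[
\textstyle\sum_j\chi_j=\sum_j\E_{j-1}\chi_j+\sum_j(\chi_j-\E_{j-1}\chi_j).
\]
\begin{itemize}
\item \emph{Compensator.} Because \(v\asymp u_n\) on the grid, \(v^{-1-\beta}\lesssim u_n^{-1-\beta}\). Summing the local bound and using Lemma~\ref{lem:supp_moment_bounds}(i) gives
\[
\textstyle\sum_j\E_{j-1}\chi_j=O_p\big(T_n r_nu_n^{-1-\beta}+nr_nh_n^{-1/2}e^{-u_n^2/(Ch_n)}\big).
\]
Since \(u_n/\sqrt{h_n}\to\infty\) polynomially, \(e^{-u_n^2/(Ch_n)}\) decays faster than any power of \(h_n\). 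Also \(nr_nh_n^{-1/2}\le nu_n\), which is polynomial in \(h_n^{-1}\) under the IV sampling scheme because \(T_n\) is slaved to \(h_n\) through \(\sqrt n\rho_n\to0\). The second term is therefore \(O(T_nh_n^K)\) for every fixed \(K\).
\item \emph{Martingale.} The increments are indicators, so their conditional variances are at most \(\E_{j-1}\chi_j\). The quadratic variation is then \(O_p(T_nr_nu_n^{-1-\beta}+T_nh_n^K)\), and Lenglart's inequality (or Chebyshev after truncating \(R_{j-1}\)) bounds the martingale by \(O_p\big(\sqrt{T_nr_nu_n^{-1-\beta}}\big)+O_p(T_nh_n^{K})\).
\end{itemize}
Dividing by \(T_{i,n}^{\circ}\) and using \(T_{i,n}^{\circ}/T_n\cip\pi_i>0\) from Lemma~\ref{lem:supp_endpoint_exposure} gives the rate in \eqref{eq:supp_fo_empirical_boundary}. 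The maxima over the finite sets \(S\) and \(\mathcal V_n\) are then taken by a union bound.

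\emph{Main obstacle.} The delicate point is the Gaussian term. I must check that \(nr_nh_n^{-1/2}e^{-u_n^2/(Ch_n)}/T_n\) is \(O(h_n^K)\) uniformly, which requires \(n\) to grow at most polynomially in \(h_n^{-1}\). This follows from \(nh_n^2\to0\) together with \(h_n\to0\). A second point is that the envelope \(R_{j-1}\) is random, so the martingale step uses a conditional second moment rather than a deterministic one. For that step I will rely on the uniform moment bounds of Lemma~\ref{lem:supp_moment_bounds}(i) in expectation.
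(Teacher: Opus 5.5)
Your proposal is correct and follows the paper's proof: the local bound comes from the no-switch bridge identity combined with the boundary-strip estimate of Proposition~\ref{prop:iv_off_diagonal_density}. The empirical bound comes from splitting the count into its predictable compensator and a martingale whose conditional variance is dominated by that compensator, then normalizing by $T_{i,n}^{\circ}$ through the exposure law. Your detour through polynomial growth of $n$ is unnecessary, and your inequality $nr_nh_n^{-1/2}\le nu_n$ harmlessly drops a factor $h_n^{-1/2}$: after division by $T_n=nh_n$ the Gaussian term is just $r_nh_n^{-3/2}e^{-u_n^2/(Ch_n)}$, which is $O(h_n^K)$ for every $K$ because $u_n^2/h_n\to\infty$ polynomially in $h_n^{-1}$.
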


\begin{proof}[Proof of Lemma~\ref{lem:iv_boundary_probability}]
This is the empirical form of the boundary estimate in
Proposition~\ref{prop:iv_off_diagonal_density}. Conditioning on a
no-switch interval gives the one-step probability; summation and
conditional isometry then separate the predictable boundary count from
its martingale fluctuation.

If \(Z_{j-1}\ne i\), the left-hand side of
\eqref{eq:iv_boundary_probability} is zero. Otherwise condition on
\(\Gamma_{i,j}=1\). Independence of the chain from the Brownian motion
and Poisson random measures identifies the conditional law of
\(Y_{i,j,n}^{\circ}\) with that of \(Y_{h_n}^{i,x}\) at
\(x=X_{t_{j-1}}\). Hence
\begin{align*}
	&\E_{j-1}\left[\Gamma_{i,j}
	\mathbf1_{\{\bigl||Y_{i,j,n}^{\circ}|-v\bigr|\le r\}}\right]\\
	&\qquad=\Pb_{j-1}(\Gamma_{i,j}=1)
	\Pb_{X_{t_{j-1}}}^i\left(
	\bigl||Y_{h_n}^{i,X_{t_{j-1}}}|-v\bigr|\le r\right).
\end{align*}
The first factor is at most one. The boundary assertion of
Proposition~\ref{prop:iv_off_diagonal_density}, proved in
Section~\ref{sec:supp_iv_off_diagonal}, bounds the second factor by the
right-hand side of \eqref{eq:iv_boundary_probability}.

For the empirical assertion, sum
\eqref{eq:iv_boundary_probability}, divide by
\(T_{i,n}^{\circ}\), and use
part~(i) of Lemma~\ref{lem:supp_moment_bounds} and
\eqref{main-eq:Tin_convergence}. Uniformly over \(i\) and the finite
grid,
\begin{align}
	\frac1{T_{i,n}^{\circ}}\sum_{j=1}^n
	\E_{j-1}\!\left[
	\Gamma_{i,j}\mathbf1_{\{\left||Y_{i,j,n}^{\circ}|-v\right|\le r_n\}}
	\right]
	=O_p\!\left(r_nv^{-1-\beta}
	+r_n h_n^{-3/2}e^{-v^2/(Ch_n)}\right).
	\label{eq:supp_fo_boundary_predictable}
\end{align}
The centered summands are bounded martingale differences. Conditional
isometry and the predictable estimate
\eqref{eq:supp_fo_boundary_predictable} give
\begin{align}
	&\frac1{T_{i,n}^{\circ}}
	\sum_{j=1}^n\left[
	\Gamma_{i,j}\mathbf1_{\{\left||Y_{i,j,n}^{\circ}|-v\right|\le r_n\}}
	-\E_{j-1}\!\left\{
	\Gamma_{i,j}\mathbf1_{\{\left||Y_{i,j,n}^{\circ}|-v\right|\le r_n\}}
	\right\}\right]\nonumber\\
	&\qquad=O_p\!\left(
	\sqrt{\frac{r_nv^{-1-\beta}}{T_n}}
	+\sqrt{\frac{r_n h_n^{-3/2}e^{-v^2/(Ch_n)}}{T_n}}
	\right).
	\label{eq:supp_fo_boundary_martingale}
\end{align}
Because \(u_n/\sqrt{h_n}\to\infty\), both exponential terms are smaller
than every fixed power of \(h_n\), uniformly over the grid.  Since
\(v\asymp u_n\) there, the last two displays prove
\eqref{eq:supp_fo_empirical_boundary}.
\end{proof}

We next record the predictable random-width localization used whenever a
boundary width contains a
predictable polynomial envelope. Let \(r_{j-1,n}\ge0\) and
\(q_{j-1,n}\) be \(\mathcal F_{t_{j-1}}\)-measurable and satisfy
\[
	r_{j-1,n}+|q_{j-1,n}|\le C_0h_nR_{j-1}.
\]
For \(v\in\mathcal V_n\), define
\[
	\mathcal G_{j,n}(v)
	:=\{4C_0h_nR_{j-1}\le v\}.
\]
On \(\mathcal G_{j,n}(v)\), both the width and the predictable shift
are fixed under \(\E_{j-1}\), and their sum is at most \(v/4\).
Hence Lemma~\ref{lem:iv_boundary_probability}, after translation by
\(q_{j-1,n}\), gives for \(k\in\{1,2,4\}\)
\begin{align}
	&\E_{j-1}\!\left[
	\Gamma_{i,j}|Y_{i,j,n}^{\circ}-q_{j-1,n}|^k
	\mathbf1_{\{\left||Y_{i,j,n}^{\circ}-q_{j-1,n}|-v\right|
	\le r_{j-1,n}\}};
	\mathcal G_{j,n}(v)\right]
	\le Ch_n^2v^{k-1-\beta}R_{j-1}.
	\label{eq:supp_predictable_width_good}
\end{align}
Here products of polynomial envelopes are absorbed into \(R_{j-1}\),
and the Gaussian term is smaller than every power of \(h_n\).

On the complement, for every fixed integer \(p\ge1\),
\begin{equation}
	\mathbf1_{\mathcal G_{j,n}(v)^c}
	\le\left(\frac{4C_0h_nR_{j-1}}{v}\right)^p.
	\label{eq:supp_predictable_width_bad_indicator}
\end{equation}
The one-step moment bounds give
\[
	\E_{j-1}|Y_{i,j,n}^{\circ}-q_{j-1,n}|^k
	\le Ch_n^{a_k}R_{j-1},
	\qquad a_1=\tfrac12,\quad a_2=a_4=1.
\]
Since \(v\asymp h_n^\omega\) and \(1-\omega>0\), choose \(p\)
large enough, simultaneously for the finite set
\(k\in\{1,2,4\}\), that
\[
	h_n^{a_k}\left(\frac{h_n}{v}\right)^p
	\le Ch_n^2v^{k-1-\beta}.
\]
Multiplying \eqref{eq:supp_predictable_width_bad_indicator} by the
conditional moment bound therefore gives the same right-hand side as
\eqref{eq:supp_predictable_width_good}. Consequently,
\begin{equation}
\begin{aligned}
	&\E_{j-1}\!\left[
	\Gamma_{i,j}|Y_{i,j,n}^{\circ}-q_{j-1,n}|^k
	\mathbf1_{\{\left||Y_{i,j,n}^{\circ}-q_{j-1,n}|-v\right|
	\le r_{j-1,n}\}}\right]
	\le Ch_n^2v^{k-1-\beta}R_{j-1}.
	\label{eq:supp_predictable_width_localized}
\end{aligned}
\end{equation}
The identical argument applies after conditioning on a complete regime
path. Since a transition interval has conditional probability
\(O(h_n)\), its contribution is of no larger order.

For the drift step, define
\begin{equation}
	\label{eq:iv_drift_residual}
	d_{j-1,n}^{\rm dr}:=
	h_n\{b_{j-1}^\star-\kappa_{Z_{j-1},n}\},\qquad
	U_j:=\Delta_jX-d_{j-1,n}^{\rm dr},\qquad
	\bar I_{n,j}:=\mathbf1_{\{|U_j|\le u_n\}}.
\end{equation}
Let
\begin{equation}
	\label{eq:iv_drift_fourth_remainder}
	\chi_n^{\rm dr}:=
	h_n\left(h_n^2u_n^{2-\beta}
	+h_nu_n^{4-\beta}+h_n^3\right)^{1/2}
	+h_n^2u_n^{3-\beta}+h_n^{5/2}+h_n^3.
\end{equation}

\begin{lem}[IV drift-score moments]
	\label{lem:iv_drift_truncated_moments}
	In Case IV, suppose Assumptions~\ref{ass:smooth},
	\ref{ass:levy}, \ref{ass:iv}, \ref{ass:iv_coeff}, and
	\ref{ass:iv_sampling} hold. Uniformly over \(j\le n\) and
	\(k\in\{1,2,4\}\),
	\begin{equation}
		\label{eq:iv_drift_indicator_shift}
		|I_{n,j}-\bar I_{n,j}|
		\le\mathbf1_{\{
		\left||U_j|-u_n\right|\le|d_{j-1,n}^{\rm dr}|\}},
		\qquad
		\E_{j-1}\!\left[
		|U_j|^k|I_{n,j}-\bar I_{n,j}|\right]
		\le Ch_n^2u_n^{k-1-\beta}R_{j-1}.
	\end{equation}
	\begin{equation}
		\label{eq:iv_drift_moments}
		\begin{aligned}
			|\E_{j-1}[I_{n,j}U_j]|
			&\le Ch_nb_n^{\rm det}R_{j-1},\\
			\left|\E_{j-1}[I_{n,j}U_j^2]
			-h_na_{Z_{j-1}}^2(\gamma^\star)\right|
			&\le Ch_n(u_n^{2-\beta}+h_nu_n^{-\beta})R_{j-1},\\
			\left|\E_{j-1}[I_{n,j}U_j^4]
			-3h_n^2a_{Z_{j-1}}^4(\gamma^\star)\right|
			&\le C\chi_n^{\rm dr}R_{j-1},
		\end{aligned}
	\end{equation}
	\[
		\chi_n^{\rm dr}/h_n^2\to0.
	\]
\end{lem}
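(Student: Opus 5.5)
The plan is to reduce everything to the no-switch fixed-regime expansions already established, and then treat the predictable drift shift \(d_{j-1,n}^{\rm dr}\) as a boundary perturbation. Note that \(U_j=\Delta_jX-d_{j-1,n}^{\rm dr}=Y_{i,j,n}^{\circ}-h_nb_{j-1}^\star\) on \(\{Z_{j-1}=i\}\). Also note that \(I_{n,j}\) tests \(|\Delta_jX|=|U_j+d_{j-1,n}^{\rm dr}|\), while \(\bar I_{n,j}\) tests \(|U_j|\).

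\emph{Indicator-shift estimate \eqref{eq:iv_drift_indicator_shift}.} The pointwise inclusion follows from the reverse triangle inequality, exactly as in the proof of Proposition~\ref{prop:tail_functional}. For the moment bound, we first apply Lemma~\ref{lem:endpoint_exposure_main}: we replace \(\mathbf1_{\{Z_{j-1}=i\}}\) by \(\Gamma_{i,j}\) at cost \(O(h_n)\) times the conditional moment of a transition interval, which is of no larger order. Then \(|U_j|^k\mathbf1_{\{||U_j|-u_n|\le|d^{\rm dr}_{j-1,n}|\}}\) has exactly the form in \eqref{eq:supp_predictable_width_localized}, with \(q_{j-1,n}=h_nb_{j-1}^\star\), \(r_{j-1,n}=|d_{j-1,n}^{\rm dr}|\le C_0h_nR_{j-1}\), and \(v=u_n\). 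This gives \(Ch_n^2u_n^{k-1-\beta}R_{j-1}\).

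\emph{Second and fourth moments.} We write \(\E_{j-1}[I_{n,j}U_j^k]=\E_{j-1}[\bar I_{n,j}U_j^k]+O(h_n^2u_n^{k-1-\beta}R_{j-1})\). On \(\Gamma_{i,j}\), \(\bar I_{n,j}U_j^2=G_{u_n}(Y^{\circ}_{i,j,n}-h_nb^\star_{j-1})\). Lemma~\ref{lem:supp_truncation_perturbation}, with \(d=-h_nb^\star_{j-1}\), together with the boundary count (again \eqref{eq:supp_predictable_width_localized}) lets us pass to \(G_{u_n}(Y^{\circ}_{i,j,n})\) at cost \(h_n\E|Y^\circ|+h_n^2+h_n^2u_n^{1-\beta}\). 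Lemma~\ref{lem:iv_conditional_bridge} then gives \(h_n\{a_i^2+d_{i,1}u_n^{2-\beta}+d_{i,2}h_nu_n^{-\beta}\}+O(h_n\rho_n)\). All corrections are \(O(h_n(u_n^{2-\beta}+h_nu_n^{-\beta}))\) by Table~\ref{tab:supp_iv_rate_audit}, since \(\rho_n=o(u_n^{2-\beta})\) and \(h_nu_n^{1-\beta}\le h_nu_n^{-\beta}\).

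For the fourth moment, we expand \(\bar I_{n,j}(a_i\Delta_jW+R)^4\), where \(R\) is the non-Brownian part. The \(3h_n^2a_i^4\) term comes from \(\E(\Delta_jW)^4\), corrected by the Brownian-tail bound \eqref{eq:supp_zero_coefficient_brownian_tail} transferred in Proposition~\ref{prop:supp_localized_iv_moments}. The cross terms \(W^3R\), \(W^2R^2\), \(WR^3\) and \(R^4\) are controlled by Cauchy--Schwarz using the mixed bound \(h_n^2u_n^{2-\beta}+h_n^3\) and the quartic bound \(h_nu_n^{4-\beta}+h_n^2u_n^{4-2\beta}+h_n^4\). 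This produces exactly the terms of \(\chi_n^{\rm dr}\): the square-root term comes from \(\E[W^2\cdot WR]\), and \(h_n^2u_n^{3-\beta}\) comes from the indicator shift with \(k=4\). The drift shift contributes \(h_n^{5/2}\) and \(h_n^3\). Finally, \(\chi_n^{\rm dr}/h_n^2\to0\) is checked by exponent bookkeeping: it uses \(u_n^{4-\beta}/h_n\to0\) from Table~\ref{tab:supp_iv_rate_audit}, together with \(u_n\to0\).

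\emph{First moment.} This is the main obstacle, because we need \(h_nb_n^{\rm det}\) precision, which is much finer than \(h_n\). We plan to avoid redoing it by reusing Step~2 of the proof of Proposition~\ref{prop:tail_functional}. On \(\Gamma_{i,j}\), \(U_j=\Delta_jX-d^{\rm dr}_{j-1,n}\) has conditional mean \(O(h_n^{2}R_{j-1})\), because the compensated jumps are centered and the drift is subtracted up to \(D_{h_n}\), with \(\E|D|\lesssim h_n^2\). Hence \(\E_{j-1}[\bar I U_j]=-\E_{j-1}[(1-\bar I)U_j]+O(h_n^2)\). Up to the indicator shift with \(k=1\) (cost \(h_n^2u_n^{-\beta}\), included in \(b_n^{\rm det}\)) and a predictable shift of \(O(h_n)\), the term \((1-I_{n,j})U_j\) equals \(H_{i,j,n}-(\text{its compensator }h_n\kappa_{i,n})\). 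By \eqref{eq:main_iv_detection_error} and \(\E_{j-1}L_{i,j,n}=\kappa_{i,n}h_n+O(h_n^2)\), this is \(O(h_nb_n^{\rm det}R_{j-1})\). The symmetric cancellation \eqref{eq:main_iv_boundary_cancel} is what makes this work. Transition intervals again cost \(O(h_n^2R_{j-1})\).
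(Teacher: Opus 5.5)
\textbf{Overall.} Most of your plan follows the paper's proof. The indicator displacement uses the localization \eqref{eq:supp_predictable_width_localized} with $q_{j-1,n}=h_nb^\star_{j-1}$ and $r_{j-1,n}=|d^{\rm dr}_{j-1,n}|$. The second moment uses Lemma~\ref{lem:supp_truncation_perturbation} followed by Lemma~\ref{lem:iv_conditional_bridge} at $v=u_n$. The first moment reuses the count decomposition behind \eqref{eq:main_iv_detection_error}.

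\textbf{Fourth moment.} Your route differs mildly from the paper's, and it works. The paper first evaluates the frozen untruncated fourth moment of $a_iW_{h_n}+J^{(\le u_n)}_{i,h_n}$ by Poisson cumulants and then adds correction terms. You instead bound the cross terms of $(a_i\Delta_jW+R)^4$ directly by Cauchy--Schwarz, using the mixed and quartic Brownian-replacement bounds, and you handle the $W^4$ term by the Brownian-tail bound. This avoids the cumulant identity. However, it does not produce ``exactly'' the terms of $\chi_n^{\rm dr}$. The Brownian-tail term $h_n^3u_n^{-\beta}$ and the $WR^3$ term $h_n^{3/2}u_n^{3-\beta}$ must still be absorbed. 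This uses $h_nu_n^{-1-\beta/2}\to0$ (from $\omega<2/(2+\beta)$) and $\beta<2$.

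\textbf{The first-moment step is wrong as written.}
\begin{itemize}
\item On $\Gamma_{i,j}$ you have $U_j=\Delta_jX-h_nb^\star_{j-1}+h_n\kappa_{i,n}$. Hence $\E_{j-1}[U_j]=h_n\kappa_{i,n}+O(h_n^{3/2}R_{j-1})$, not $O(h_n^2R_{j-1})$: you dropped the added tail-mean shift.
\item Separately, $\E|D|$ is of order $h_n^{3/2}$, not $h_n^2$, because of the Brownian motion inside the drift integral. Only $|\E_{j-1}D|$ is $O(h_n^2)$, via Dynkin's formula.
\item Symmetrically, $(1-I_{n,j})U_j=\Delta_jX\mathbf1_{\{|\Delta_jX|>u_n\}}-d^{\rm dr}_{j-1,n}\mathbf1_{\{|\Delta_jX|>u_n\}}$. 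This is $H_{i,j,n}$ up to a shift \emph{multiplied by the detection indicator}, whose mean is $O(h_n^2u_n^{-\beta}R_{j-1})$. It is not $H_{i,j,n}-h_n\kappa_{i,n}$. An unweighted predictable shift of size $O(h_n)$ cannot be discarded at the target precision $h_nb_n^{\rm det}$.
\end{itemize}
The two $h_n\kappa_{i,n}$ errors cancel, so your conclusion is correct. But each intermediate claim is off by a term far larger than the quantity being bounded.

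\textbf{Correct bookkeeping.} The paper works directly with $I_{n,j}$:
\[
\E_{j-1}[I_{n,j}U_j]=\bigl\{\E_{j-1}\Delta_jX-h_nb^\star_{j-1}\bigr\}+\bigl\{h_n\kappa_{Z_{j-1},n}-\E_{j-1}[\Delta_jX\mathbf1_{\{|\Delta_jX|>u_n\}}]\bigr\}+d^{\rm dr}_{j-1,n}\Pb_{j-1}(|\Delta_jX|>u_n).
\]
\begin{itemize}
\item The first bracket is $O(h_n^{3/2}R_{j-1})$. It is absorbed into $h_nb_n^{\rm det}$ because $h_n^{1/2}=o(u_n^{2-\beta})$.
\item The second bracket is $O(h_nb_n^{\rm det}R_{j-1})$, by the count decomposition with the endpoint selector removed.
\item The last term is $O(h_n^2u_n^{-\beta}R_{j-1})$, using $\Pb_{j-1}(|\Delta_jX|>u_n)\lesssim R_{j-1}\{h_nu_n^{-\beta}+e^{-u_n^2/(Ch_n)}\}$.
\end{itemize}
This identity also removes the need for your detour through $\bar I_{n,j}$ and the $k=1$ indicator shift.
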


\begin{proof}[Proof of Lemma~\ref{lem:iv_drift_truncated_moments}]
This final calculation proves Appendix
Lemma~\ref{lem:iv_drift_moments_main}. It first compares the
observable and drift-centered threshold indicators by the predictable
boundary localization above, and then derives the first, second, and
fourth retained moments in turn. Thus the small-jump bias, threshold
displacement, and Gaussian fourth-moment remainder remain separated.

\proofstep{Step 1: indicator displacement}
Since \(\Delta_jX=U_j+d_{j-1,n}^{\rm dr}\), the indicators
\(I_{n,j}\) and \(\bar I_{n,j}\) can differ only on the boundary strip
in \eqref{eq:iv_drift_indicator_shift}. Apply the predictable-width
localization \eqref{eq:supp_predictable_width_localized} with
\[
	r_{j-1,n}=|d_{j-1,n}^{\rm dr}|,
	\qquad q_{j-1,n}=h_nb_{j-1}^\star,
\]
and condition on the complete regime path on transition intervals. For
\(k=1,2,4\), this gives
\[
	\E_{j-1}\!\left[
	|U_j|^k|I_{n,j}-\bar I_{n,j}|\right]
	\le Ch_n^2u_n^{k-1-\beta}R_{j-1}.
\]
Here \(|d_{j-1,n}^{\rm dr}|+|h_nb_{j-1}^\star|\le
Ch_nR_{j-1}\). This proves
\eqref{eq:iv_drift_indicator_shift}.

\proofstep{Step 2: first retained moment}
The compensated SDE, Lipschitz continuity of the
drift, and Lemma~\ref{lem:supp_moment_bounds} imply
\[
	\left|\E_{j-1}\Delta_jX-h_nb_{j-1}^\star\right|
	\le Ch_n^{3/2}R_{j-1}.
\]
Repeat the zero-, one-, and multiple-count decomposition leading to
\eqref{eq:main_iv_count_bounds}--%
\eqref{eq:main_iv_detection_error} in the proof of
Proposition~\ref{prop:tail_functional}, with the endpoint selector
removed. This gives
\[
	\left|\E_{j-1}\!\left[
	\Delta_jX\mathbf1_{\{|\Delta_jX|>u_n\}}\right]
	-h_n\kappa_{Z_{j-1},n}\right|
	\le Ch_nb_n^{\rm det}R_{j-1},
\]
and
\[
	\Pb_{j-1}(|\Delta_jX|>u_n)
	\le CR_{j-1}\left\{h_nu_n^{-\beta}
	+e^{-u_n^2/(Ch_n)}\right\}.
\]
Removing the selector adds only paths with a chain transition; after
conditioning on the regime path, their contribution is \(O(h_n^2)R_{j-1}\).
Consequently,
\begin{align*}
	\E_{j-1}[I_{n,j}U_j]
	={}&\E_{j-1}\Delta_jX-h_nb_{j-1}^\star
	+h_n\kappa_{Z_{j-1},n}\\
	&-\E_{j-1}\!\left[
	\Delta_jX\mathbf1_{\{|\Delta_jX|>u_n\}}\right]
	+d_{j-1,n}^{\rm dr}\Pb_{j-1}(|\Delta_jX|>u_n).
\end{align*}
Since \(h_n^{1/2}=o(u_n^{2-\beta})\), this proves the first line of
\eqref{eq:iv_drift_moments}.

\proofstep{Step 3: second retained moment}
Work first on the no-switch event, put
\(i=Z_{j-1}\), and write
\[
	Y_j^\circ:=\Delta_jX+h_n\kappa_{i,n},\qquad
	q_{j-1}:=h_nb(X_{t_{j-1}},i,\alpha^\star).
\]
Then \(U_j=Y_j^\circ-q_{j-1}\). The deterministic inequality
\[
	|G_{u_n}(y-q)-G_{u_n}(y)|
	\le C(|q||y|+q^2)\mathbf1_{\{|y|\le3u_n/2\}}
	+Cu_n^2\mathbf1_{\{\lvert |y|-u_n\rvert\le|q|\}}
\]
and \eqref{eq:supp_predictable_width_localized}, applied with
\(r_{j-1,n}=|q_{j-1}|\) and zero translation, imply
\[
	\E_{j-1}\!\left[
	\Gamma_{i,j}|G_{u_n}(U_j)-G_{u_n}(Y_j^\circ)|\right]
	\le CR_{j-1}\{h_n^{3/2}
	+h_n^2u_n^{1-\beta}+h_n^2\}.
\]
Combining this with the mean expansion in
Lemma~\ref{lem:iv_conditional_bridge} at \(v=u_n\), using
\(\rho_n\lesssim u_n^{2-\beta}+h_nu_n^{-\beta}\), and restoring
the transition intervals yields
\[
	\left|\E_{j-1}[\bar I_{n,j}U_j^2]
	-h_na_{Z_{j-1}}^2(\gamma^\star)\right|
	\le Ch_n\{u_n^{2-\beta}+h_nu_n^{-\beta}\}R_{j-1}.
\]
The indicator-shift estimate with \(k=2\) transfers the formula from
\(\bar I_{n,j}\) to \(I_{n,j}\).

	\proofstep{Step 4: fourth retained moment}
	Set
	\(\mu_{r,i}(u)=\int_{|z|\le u}z^r\nu_i^\star(dz)\). If
	\(J_{i,h_n}^{(\le u_n)}\) denotes the frozen compensated small-jump
	martingale, then the frozen Brownian and small-jump variables are functions
	of independent driving measures. The Poisson cumulant identities give
\begin{align*}
	&\E\!\left[
	\{a_i(\gamma^\star)W_{h_n}
	+J_{i,h_n}^{(\le u_n)}\}^4\right]\\
	&\quad=3h_n^2a_i^4(\gamma^\star)
	+6h_n^2a_i^2(\gamma^\star)\mu_{2,i}(u_n)
	+h_n\mu_{4,i}(u_n)+3h_n^2\mu_{2,i}(u_n)^2.
\end{align*}
Assumption~\ref{ass:iv} gives
\(|\mu_{2,i}(u_n)|\lesssim u_n^{2-\beta}\) and
\(|\mu_{4,i}(u_n)|\lesssim u_n^{4-\beta}\). The Brownian-replacement
	bound in Proposition~\ref{prop:supp_localized_iv_moments}, the conditional
	Cauchy--Schwarz inequality, the predictable shift \(q_{j-1}\) localized as in
	\eqref{eq:supp_predictable_width_localized}, and the transition
intervals add at most
\[
	CR_{j-1}\left[
	h_n\{h_n^2u_n^{2-\beta}+h_nu_n^{4-\beta}+h_n^3\}^{1/2}
	+h_n^2u_n^{3-\beta}+h_n^{5/2}+h_n^3\right].
\]
This is \(C\chi_n^{\rm dr}R_{j-1}\); applying the indicator-shift
estimate with \(k=4\) proves the last line of
\eqref{eq:iv_drift_moments}.

\proofstep{Step 5: rate conclusion}
If
\(K_n^{\rm dr}=h_n^2u_n^{2-\beta}+h_nu_n^{4-\beta}+h_n^3\), then
\[
	\frac{K_n^{\rm dr}}{h_n^2}
	=u_n^{2-\beta}+\frac{u_n^{4-\beta}}{h_n}+h_n\to0.
\]
Therefore
\[
	\frac{\chi_n^{\rm dr}}{h_n^2}
	=O\!\left(\sqrt{K_n^{\rm dr}/h_n^2}
	+u_n^{3-\beta}+\sqrt{h_n}+h_n\right)\to0.
\]
\end{proof}

\section{Feasible two-step debiasing}

This section proves Appendix
Lemma~\ref{lem:iv_debiasing_main}. The two successive rational
corrections are adapted from the debiased truncated-variation method of
\citet{BonieceFigueroaLopezHan2024}. The proof separates four issues:
the abstract two-scale rational-map cancellation, one concrete feasible
expansion, verification of the abstract hypotheses, and the martingale
central limit theorem for the final debiased regime variances. The
feasible replacement and random regime exposures are specific to the
present switching model.

\begin{lem}[Rational-map perturbation]
	\label{lem:supp_rational_map_perturbation}
	Let \(\Psi(a,d):=a^2/d\).  Suppose \(x>0\),
	\(|a|\le C_0x\), \(|d|\ge c_0x\), and
	\(|\delta_d|\le |d|/2\).  Then
	\begin{align}
		\Psi(a+\delta_a,d+\delta_d)-\Psi(a,d)
		&=\frac{2a\delta_a}{d}
		-\frac{a^2\delta_d}{d^2}
		+\mathcal R(a,d;\delta_a,\delta_d),\nonumber\\
		|\mathcal R(a,d;\delta_a,\delta_d)|
		&\le
		C\frac{(|\delta_a|+|\delta_d|)^2}{x},
		\label{eq:supp_rational_taylor}
	\end{align}
	and consequently
	\begin{equation}
		|\Psi(a+\delta_a,d+\delta_d)-\Psi(a,d)|
		\le C\left\{e+\frac{e^2}{x}\right\},
		\qquad e:=|\delta_a|+|\delta_d|.
		\label{eq:supp_rational_lipschitz}
	\end{equation}
  In particular, let \(F,\delta:(0,\infty)\to\mathbb R\), fix
  \(\zeta>1\) and \(v>0\), and define
  \[
    \widetilde F(w):=F(w)+\delta(w),\qquad w>0.
  \]
  Thus \(\delta=\widetilde F-F\) is the pointwise perturbation function.
  Put
	\(a=\Delta_\zeta F(v)\), \(d=\Delta_\zeta^2F(v)\),
	\(\delta_a=\Delta_\zeta\delta(v)\), and
	\(\delta_d=\Delta_\zeta^2\delta(v)\).  On an event on which the
	stabilization indicators of both debiasing maps equal one and the
	preceding size conditions hold,
	\begin{align}
		&(\mathcal D_{\zeta,n}\widetilde F)(v)
		-(\mathcal D_{\zeta,n}F)(v)\nonumber\\
		&\quad=\delta(v)-\frac{2a\delta_a}{d}
		+\frac{a^2\delta_d}{d^2}
		+O\!\left(\frac{(|\delta_a|+|\delta_d|)^2}{x}\right).
		\label{eq:supp_debias_map_perturbation}
	\end{align}
\end{lem}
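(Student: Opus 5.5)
The argument is a direct second-order Taylor expansion of the smooth map \(\Psi(a,d)=a^2/d\) on the region where the denominator stays comparable to \(x\). The debiasing identity then follows by substituting the specific increments.

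Under the size conditions, every point on the segment from \((a,d)\) to \((a+\delta_a,d+\delta_d)\) has second coordinate \(d+t\delta_d\) with \(|d+t\delta_d|\ge|d|/2\ge c_0x/2\). Before expanding, I would dispose of the case \(e>x\). There I bound the left side of \eqref{eq:supp_rational_taylor} crudely by
\[
|\Psi(a+\delta_a,d+\delta_d)|+|\Psi(a,d)|\lesssim (x+e)^2/x,
\]
and the first-order terms by \(C e\); all of these are \(\lesssim e^2/x\) when \(e>x\). This also removes any need for a bound on \(|\delta_a|\) in that case.

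When \(e\le x\), the first coordinate along the segment is \(O(x)\). I would then compute the second derivatives directly:
\[
\partial_a^2\Psi=2/d,\qquad \partial_a\partial_d\Psi=-2a/d^2,\qquad \partial_d^2\Psi=2a^2/d^3 .
\]
Each is bounded by \(C/x\) on the segment, because \(|a+t\delta_a|\lesssim x\) and \(|d+t\delta_d|\gtrsim x\). Taylor's formula with integral remainder then gives \eqref{eq:supp_rational_taylor}, with linear part \(2a\delta_a/d-a^2\delta_d/d^2\). Since \(|2a/d|+|a^2/d^2|\le C\), the linear part is at most \(Ce\), and \eqref{eq:supp_rational_lipschitz} follows immediately.

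For the debiasing statement, linearity of the finite differences gives
\[
\Delta_\zeta\widetilde F(v)=a+\delta_a,\qquad \Delta_\zeta^2\widetilde F(v)=d+\delta_d .
\]
On the stated event both indicators in \eqref{eq:iv_debias_operator} equal one. Hence
\[
(\mathcal D_{\zeta,n}\widetilde F)(v)-(\mathcal D_{\zeta,n}F)(v)=\delta(v)-\{\Psi(a+\delta_a,d+\delta_d)-\Psi(a,d)\},
\]
and inserting \eqref{eq:supp_rational_taylor} yields \eqref{eq:supp_debias_map_perturbation}.

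The only delicate point is keeping \(a+t\delta_a\) of order \(x\) along the segment. That is exactly why the case \(e>x\) has to be handled first, by the crude bound, rather than left to the Taylor expansion.
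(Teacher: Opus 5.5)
Your argument is correct, but it controls the remainder differently from the paper. The paper does not invoke Taylor's theorem. It writes the remainder exactly as
\[
\mathcal R(a,d;\delta_a,\delta_d)=\frac{\delta_a^2}{d+\delta_d}-\frac{2a\delta_a\delta_d}{d(d+\delta_d)}+\frac{a^2\delta_d^2}{d^2(d+\delta_d)},
\]
and bounds each term using only \(|d+\delta_d|\ge|d|/2\ge c_0x/2\) and \(|a/d|\le C_0/c_0\).

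This identity is quadratic in \((\delta_a,\delta_d)\), and its coefficients \(1\), \(a/d\), \(a^2/d^2\) are already bounded. So no control of \(a+t\delta_a\) along a segment is needed, and your case split \(e>x\) versus \(e\le x\) disappears. The ``delicate point'' you identify comes from passing through the Hessian. Its entries \(2(a+t\delta_a)/(d+t\delta_d)^2\) and \(2(a+t\delta_a)^2/(d+t\delta_d)^3\) grow with \(\delta_a\), which the exact rational remainder never sees.

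Your route is more generic: it works for any smooth map whose second derivatives are \(O(1/x)\) on the relevant region. Your crude bound for \(e>x\) is correctly carried out. The paper's exact identity is shorter because it exploits the rational form of \(\Psi\).

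The final step is the same as in the paper. Linearity of \(\Delta_\zeta\) and \(\Delta_\zeta^2\) identifies the difference of the two debiasing maps with \(\delta(v)-\{\Psi(a+\delta_a,d+\delta_d)-\Psi(a,d)\}\).
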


\begin{proof}[Proof of Lemma~\ref{lem:supp_rational_map_perturbation}]
	This deterministic Taylor bound is the algebraic device used to
	propagate feasible--oracle errors through both debiasing maps without
	repeating the same quotient expansion at each scale.

	After subtracting the two linear terms in
	\eqref{eq:supp_rational_taylor}, the remainder is
	\[
		\frac{\delta_a^2}{d+\delta_d}
		-\frac{2a\delta_a\delta_d}{d(d+\delta_d)}
		+\frac{a^2\delta_d^2}{d^2(d+\delta_d)}.
	\]
	The hypotheses imply \(|d+\delta_d|\ge|d|/2\ge c_0x/2\);
	bounding the three terms proves
	\eqref{eq:supp_rational_taylor}.  Its linear terms are bounded by
	\(Ce\), which proves \eqref{eq:supp_rational_lipschitz}.  Finally,
	on the stated event,
	\[
		(\mathcal D_{\zeta,n}\widetilde F)(v)
		-(\mathcal D_{\zeta,n}F)(v)
		=\delta(v)-\{\Psi(a+\delta_a,d+\delta_d)-\Psi(a,d)\},
	\]
	and \eqref{eq:supp_debias_map_perturbation} follows from
	\eqref{eq:supp_rational_taylor}.
\end{proof}

Fix \(p>0\), \(q<0\), and \(b,c\ne0\). Suppose
\begin{equation}
	\label{eq:debias_abstract_expansion}
	F_n(v)=A_n+bv^p+ch_nv^q+E_n(v),
	\qquad v\in\mathcal V_n.
\end{equation}
For deterministic \(s_n>0\), define
\begin{equation}
	\label{eq:debias_remainder_scales}
	s_{1,n}:=s_n+
	\frac{(h_nu_n^q+s_n)^2}{u_n^p},
	\qquad
	s_{2,n}:=s_{1,n}+\frac{s_{1,n}^2}{h_nu_n^q}.
\end{equation}
Set
\[
	F_n^{(1)}(v):=(\mathcal D_{\zeta_1,n}F_n)(v).
\]
The stabilization level in each occurrence of
\(\mathcal D_{\zeta,n}\) is the deterministic sequence \(h_n\)
specified in the main-paper definition; no additional stabilization
sequence is used.

\begin{lem}[Abstract two-step rational-map lemma]
	\label{lem:iv_debias_stability}
	Suppose
	\begin{align}
		&\max_{v\in\mathcal V_n}|E_n(v)|=o_p(n^{-1/2}),\qquad
		\max_{v,w\in\mathcal V_n}|E_n(v)-E_n(w)|=O_p(s_n),
		\label{eq:debias_remainder_conditions}\\
		&s_n=o(u_n^p),\quad
		h_nu_n^q=o(u_n^p),\quad
		h_n=o(u_n^p),\quad h_n=o(h_nu_n^q),
		\label{eq:debias_scale_separation}\\
		&\sqrt n\,s_{1,n}\to0,\qquad
		s_{1,n}/(h_nu_n^q)\to0.
		\label{eq:debias_rate_conditions}
	\end{align}
	Then, uniformly over \(v\in\mathcal W_n\),
	\begin{align}
		\Delta_{\zeta_1}^2F_n(v)
		&=b\eta_p(\zeta_1)^2v^p\{1+o_p(1)\},
		\label{eq:debias_first_denominator}\\
		F_n^{(1)}(v)
		&=A_n+\lambda_{p,q}(\zeta_1)ch_nv^q
		+E_n^{(1)}(v),
		\label{eq:debias_first_expansion}
	\end{align}
	\[
		\max_{v\in\mathcal W_n}|E_n^{(1)}(v)|
		=o_p(n^{-1/2})+O_p(s_{1,n}),\qquad
		\max_{v,w\in\mathcal W_n}
		|E_n^{(1)}(v)-E_n^{(1)}(w)|=O_p(s_{1,n}).
	\]
	Moreover,
	\begin{align}
		\Delta_{\zeta_2}^2F_n^{(1)}(u_n)
		&=\lambda_{p,q}(\zeta_1)c
		\eta_q(\zeta_2)^2h_nu_n^q\{1+o_p(1)\},
		\label{eq:debias_second_denominator}\\
		(\mathcal D_{\zeta_2,n}F_n^{(1)})(u_n)
		&=A_n+o_p(n^{-1/2})+O_p(s_{2,n})
		=A_n+o_p(n^{-1/2}).
		\label{eq:debias_second_expansion}
	\end{align}
	Finally,
	\[
		\Pb\!\left(
		\min\left\{
		\min_{v\in\mathcal W_n}|\Delta_{\zeta_1}^2F_n(v)|,
		|\Delta_{\zeta_2}^2F_n^{(1)}(u_n)|\right\}\ge h_n
		\right)\to1.
	\]
\end{lem}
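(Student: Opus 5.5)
The proof is a deterministic algebraic computation on the finite grid, followed by two applications of Lemma~\ref{lem:supp_rational_map_perturbation}. Write $F_n=A_n+P_n+E_n$ with $P_n(v):=bv^p+ch_nv^q$.

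\textbf{First difference.} Since $A_n$ is annihilated by $\Delta_\zeta$ and $\Delta^2_\zeta$, we have
\[
\Delta_{\zeta_1}P_n(v)=b\eta_p v^p+ch_n\eta_q v^q
\quad\text{and}\quad
\Delta^2_{\zeta_1}P_n(v)=b\eta_p^2v^p+ch_n\eta_q^2v^q,
\]
with $\eta_s=\eta_s(\zeta_1)$. Because $h_nu_n^q=o(u_n^p)$ and $\max|E_n(v)-E_n(w)|=O_p(s_n)=o_p(u_n^p)$ on the grid, where $v\asymp u_n$, this gives \eqref{eq:debias_first_denominator}. Since $b\neq0$ and $h_n=o(u_n^p)$, the stabilization indicator equals one with probability tending to one.

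\textbf{First expansion.} Next I compute the exact rational map for the pure two-power function. Putting $x=bv^p$, $y=ch_nv^q$ and $\epsilon=y/x=o(1)$, a Taylor expansion of
\[
(\eta_px+\eta_qy)^2/(\eta_p^2x+\eta_q^2y)
\]
in $\epsilon$ gives
\[
x+y\bigl(2\eta_q/\eta_p-\eta_q^2/\eta_p^2\bigr)+O(y^2/x).
\]
Hence
\[
\mathcal D_{\zeta_1}(A_n+P_n)(v)=A_n+y\bigl(1-\eta_q/\eta_p\bigr)^2+O(h_n^2u_n^{2q-p}).
\]
The coefficient of $y$ is exactly $\lambda_{p,q}(\zeta_1)$, and the remainder is $O((h_nu_n^q)^2/u_n^p)\le s_{1,n}$.

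Lemma~\ref{lem:supp_rational_map_perturbation} is then applied with $F=A_n+P_n$, $\delta=E_n$ and $x\asymp u_n^p$; its size hypotheses hold because $|a|\lesssim u_n^p$ and $|\delta_d|=O_p(s_n)=o_p(u_n^p)$. It yields $E_n^{(1)}(v)=E_n(v)+O_p(s_n)+O_p(s_n^2/u_n^p)$, since the linear terms $2a\delta_a/d$ and $a^2\delta_d/d^2$ are $O(s_n)$. This gives the bound $\max|E^{(1)}_n|=o_p(n^{-1/2})+O_p(s_{1,n})$. For the oscillation, I write $E_n^{(1)}(v)-E_n(v)$ as a combination of differences of $E_n$ over the grid, which are $O_p(s_n)$, plus the deterministic $O(s_{1,n})$ remainders. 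This controls $\max_{v,w}|E^{(1)}_n(v)-E^{(1)}_n(w)|$. This is the reason $\mathcal V_n$ contains the products $\zeta_1^k\zeta_2^\ell u_n$: $F^{(1)}_n$ must be evaluated on $\mathcal W_n$, which uses $F_n$ at $\zeta_1^k w$ for $w\in\mathcal W_n$.

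\textbf{Second step.} I repeat the argument with $F^{(1)}_n=A_n+\lambda ch_nv^q+E^{(1)}_n$ on $\mathcal W_n$. Now there is a single power, so the exact rational map removes $\lambda ch_nv^q$ completely, since $(\eta_q y)^2/(\eta_q^2y)=y$. The denominator is $\lambda c\eta_q(\zeta_2)^2h_nu_n^q+O_p(s_{1,n})$, which gives \eqref{eq:debias_second_denominator} by $s_{1,n}=o(h_nu_n^q)$. Its stabilization holds because $h_n=o(h_nu_n^q)$ and $\lambda_{p,q}\neq0$, which follows from $p\neq q$. Applying Lemma~\ref{lem:supp_rational_map_perturbation} again, now with $x\asymp h_nu_n^q$ and $e=O_p(s_{1,n})$, gives
\[
(\mathcal D_{\zeta_2,n}F^{(1)}_n)(u_n)=A_n+E^{(1)}_n(u_n)+O_p(s_{1,n}+s_{1,n}^2/(h_nu_n^q)),
\]
that is, the error is $O_p(s_{2,n})$. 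Finally, $\sqrt n\,s_{2,n}\le\sqrt n\,s_{1,n}(1+o(1))\to0$, which gives \eqref{eq:debias_second_expansion}. The probability statement follows from the two denominator asymptotics together with the scale separations.

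\textbf{Main obstacle.} The delicate step is the oscillation bound for $E^{(1)}_n$. The second debiasing map only needs differences of $E^{(1)}_n$, but $E^{(1)}_n$ itself contains $E_n(v)$ at size $o_p(n^{-1/2})$, not $O_p(s_n)$. The first-order perturbation terms must therefore be checked to be differences of $E_n$ rather than levels, and I would verify this term by term.
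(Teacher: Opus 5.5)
Your proposal is correct and follows the paper's proof step for step (first denominator, first map via Lemma~\ref{lem:supp_rational_map_perturbation} at scale $u_n^p$, second denominator, second map at scale $h_nu_n^q$). The only bookkeeping difference is in the first map. The paper takes the base function $A_n+bv^p$, on which the map is exactly $A_n$, and puts $ch_nv^q$ into the perturbation, so that $\lambda_{p,q}(\zeta_1)$ comes out of the linear terms $\delta-\frac{2}{\eta_p}\Delta_{\zeta_1}\delta+\frac{1}{\eta_p^2}\Delta_{\zeta_1}^2\delta$. You instead expand the exact map on the two-power function and perturb only by $E_n$; this gives the same $(h_nu_n^q+s_n)^2/u_n^p$ remainder.

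The oscillation step you flag as the main obstacle is immediate from \eqref{eq:supp_debias_map_perturbation}. There $\delta(v)=E_n(v)$ is the only level term, and every other linear term involves $\Delta_{\zeta_1}E_n$ or $\Delta_{\zeta_1}^2E_n$. Hence $E_n^{(1)}(v)-E_n^{(1)}(w)=E_n(v)-E_n(w)+O_p(s_{1,n})=O_p(s_{1,n})$.
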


\begin{proof}[Proof of Lemma~\ref{lem:iv_debias_stability}]
The first rational map removes the \(v^p\) term, and the second removes
the \(h_nv^q\) term. Lemma~\ref{lem:supp_rational_map_perturbation}
controls the nonlinear remainder at both denominator scales. Because
\(\mathcal V_n/u_n\) is a fixed finite set, all powers of a grid point
are comparable to the corresponding powers of \(u_n\).

\proofstep{Step 1: first denominator}
Equation~\eqref{eq:debias_abstract_expansion} gives
\begin{align*}
	\Delta_{\zeta_1}F_n(v)
	&=a_{0,n}(v)+a_{1,n}(v),\\
	\Delta_{\zeta_1}^2F_n(v)
	&=d_{0,n}(v)+d_{1,n}(v),
\end{align*}
where
\begin{align*}
	a_{0,n}(v)&:=b\eta_p(\zeta_1)v^p,
	&d_{0,n}(v)&:=b\eta_p(\zeta_1)^2v^p,\\
	a_{1,n}(v)&:=c\eta_q(\zeta_1)h_nv^q
	+\Delta_{\zeta_1}E_n(v),
	&d_{1,n}(v)&:=c\eta_q(\zeta_1)^2h_nv^q
	+\Delta_{\zeta_1}^2E_n(v).
\end{align*}
The remainder-difference condition and
\eqref{eq:debias_scale_separation} imply
\[
	\max_{v\in\mathcal W_n}
	\{|a_{1,n}(v)|+|d_{1,n}(v)|\}=o_p(u_n^p).
\]
Since \(b\eta_p(\zeta_1)^2\ne0\), this proves
\eqref{eq:debias_first_denominator}. The relation
\(h_n=o(u_n^p)\) shows that the first stabilization indicator equals
one uniformly over \(\mathcal W_n\), with probability tending to one.

\proofstep{Step 2: first rational map}
Apply Lemma~\ref{lem:supp_rational_map_perturbation} with
\((a_0,d_0,\delta_a,\delta_d)
=(a_{0,n},d_{0,n},a_{1,n},d_{1,n})\) and \(x=u_n^p\). The identity
\[
	1-\frac{2\eta_q(\zeta_1)}{\eta_p(\zeta_1)}
	+\frac{\eta_q(\zeta_1)^2}{\eta_p(\zeta_1)^2}
	=\lambda_{p,q}(\zeta_1)
\]
gives \eqref{eq:debias_first_expansion}. The part linear in the
remainder is
\[
	E_n(v)-\frac{2}{\eta_p(\zeta_1)}\Delta_{\zeta_1}E_n(v)
	+\frac{1}{\eta_p(\zeta_1)^2}\Delta_{\zeta_1}^2E_n(v),
\]
whereas \eqref{eq:supp_rational_taylor} bounds the nonlinear part by
\[
	O_p\!\left(\frac{(h_nu_n^q+s_n)^2}{u_n^p}\right)
\]
uniformly over the grid. Hence
\begin{align}
	\max_{v\in\mathcal W_n}|E_n^{(1)}(v)|
	&=o_p(n^{-1/2})+O_p(s_{1,n}),\nonumber\\
	\max_{v,w\in\mathcal W_n}|E_n^{(1)}(v)-E_n^{(1)}(w)|
	&=O_p(s_{1,n}).
	\label{eq:debias_first_remainder_proof}
\end{align}

\proofstep{Step 3: second denominator}
Put \(B:=\lambda_{p,q}(\zeta_1)c\ne0\). Then
\[
	F_n^{(1)}(v)=A_n+Bh_nv^q+E_n^{(1)}(v).
\]
Since \(s_{1,n}=o(h_nu_n^q)\),
\eqref{eq:debias_first_remainder_proof} gives
\[
	\Delta_{\zeta_2}^2F_n^{(1)}(u_n)
	=B\eta_q(\zeta_2)^2h_nu_n^q\{1+o_p(1)\},
\]
which proves \eqref{eq:debias_second_denominator}. The relation
\(h_n=o(h_nu_n^q)\) makes the second stabilization indicator equal one
with probability tending to one.

\proofstep{Step 4: second rational map}
Apply Lemma~\ref{lem:supp_rational_map_perturbation} with leading scale
\(x=h_nu_n^q\). Then
\begin{align*}
	(\mathcal D_{\zeta_2,n}F_n^{(1)})(u_n)
	&=A_n+E_n^{(2)}(u_n),\\
	|E_n^{(2)}(u_n)|
	&=o_p(n^{-1/2})+O_p\!\left(
	s_{1,n}+\frac{s_{1,n}^2}{h_nu_n^q}\right).
\end{align*}
The last order is \(o_p(n^{-1/2})+O_p(s_{2,n})\). Moreover,
\[
	\sqrt n\,s_{2,n}
	=\sqrt n\,s_{1,n}
	+\sqrt n\,s_{1,n}\frac{s_{1,n}}{h_nu_n^q}\to0.
\]
Thus \eqref{eq:debias_rate_conditions} proves
\eqref{eq:debias_second_expansion}. The two denominator expansions and
the two scale relations involving \(h_n\) prove the stabilization
probability. This completes the proof.
\end{proof}

For the concrete IV expansion, define the oracle curve
\[
 C_{i,n}^{\circ}(v):=\frac1{T_{i,n}^{\circ}}
 \sum_j\Gamma_{i,j}G_v(Y_{i,j,n}^{\circ}),
\]
with value zero when \(T_{i,n}^{\circ}=0\), and put
\[
 R_{i,n}^{(0)}(v):=\widehat C_{i,n}(v)-C_{i,n}^{\circ}(v).
\]
Define
\[
 \varepsilon_{0,n}^{\rm fo}
 :=\frac{h_n}{\sqrt{T_n}}+\frac1n
 +\frac{h_nu_n^{1-\beta}}{\sqrt{T_n}}
 +\frac{h_n^{1/2}u_n^{(3-\beta)/2}}{T_n^{3/4}},
\]
and put \(p:=2-\beta\), \(q:=-\beta\),
\[
	\mathcal Z_{i,n}^{\circ}:=
	\frac{\sqrt n}{T_{i,n}^{\circ}}
	\sum_{j=1}^n\Gamma_{i,j}a_i^2(\gamma^\star)
	\{(\Delta_jW)^2-h_n\},
\]
with \(\mathcal Z_{i,n}^{\circ}=0\) when \(T_{i,n}^{\circ}=0\). Finally,
define
\begin{align*}
	s_n^{\rm iv}
	&:=\rho_n+\frac{u_n^{(4-\beta)/2}}{\sqrt{T_n}}
	+\varepsilon_{0,n}^{\rm fo},\\
	E_{i,n}^{\rm fea}(v)
	&:=\widehat C_{i,n}(v)-a_i^2(\gamma^\star)
	-\frac{\mathcal Z_{i,n}^{\circ}}{\sqrt n}
	-d_{i,1}v^{2-\beta}-d_{i,2}h_nv^{-\beta}.
\end{align*}

\begin{prop}[Feasible IV expansion]
\label{prop:supp_iv_feasible_oracle}
In Case IV, suppose Assumptions~\ref{ass:smooth},
\ref{ass:levy}, \ref{ass:iv}, \ref{ass:iv_coeff},
\ref{ass:iv_sampling}, and \ref{ass:ergodic} hold. Then
\[
 \Pb\!\left(A_{i,j}=\Gamma_{i,j}\text{ for all }
 i\in S,\ j\le n\right)\to1,
 \qquad
 \Pb\!\left(T_{i,n}=T_{i,n}^{\circ}\text{ for all }
 i\in S\right)\to1.
\]
Uniformly over the displayed finite grids,
\begin{align*}
 \max_{i\in S}\max_{v\in\mathcal V_n}|R_{i,n}^{(0)}(v)|
 &=O_p(\varepsilon_{0,n}^{\rm fo}),\\
 \max_{\substack{i\in S,\ a\in\{1,2\}\\
 v,\zeta_av,\zeta_a^2v\in\mathcal V_n}}
 \left\{|\Delta_{\zeta_a}R_{i,n}^{(0)}(v)|
 +|\Delta_{\zeta_a}^2R_{i,n}^{(0)}(v)|\right\}
 &=O_p(\varepsilon_{0,n}^{\rm fo}).
\end{align*}
Moreover, uniformly over \(i\in S\) and \(v\in\mathcal V_n\),
\begin{equation}
	\widehat C_{i,n}(v)
	=a_i^2(\gamma^\star)+\frac{\mathcal Z_{i,n}^{\circ}}{\sqrt n}
	+d_{i,1}v^{2-\beta}+d_{i,2}h_nv^{-\beta}
	+E_{i,n}^{\rm fea}(v),
	\label{eq:supp_feasible_curve_expansion}
\end{equation}
where
\begin{equation}
\begin{aligned}
	\max_{i\in S}\max_{v\in\mathcal V_n}|E_{i,n}^{\rm fea}(v)|
	&=o_p(n^{-1/2}),\\
	\max_{i\in S}\max_{v,w\in\mathcal V_n}
	|E_{i,n}^{\rm fea}(v)-E_{i,n}^{\rm fea}(w)|
	&=O_p(s_n^{\rm iv}).
	\label{eq:supp_feasible_curve_remainders}
\end{aligned}
\end{equation}
The deterministic rates satisfy
\begin{equation}
\begin{gathered}
	\sqrt n\,\varepsilon_{0,n}^{\rm fo}\to0,
	\qquad
	\frac{\varepsilon_{0,n}^{\rm fo}}{u_n^p}\to0,
	\qquad
	\frac{\varepsilon_{0,n}^{\rm fo}}{h_nu_n^q}\to0,\\
	\sqrt n\,s_n^{\rm iv}\to0,
	\qquad
	\frac{s_n^{\rm iv}}{u_n^p}\to0,
	\qquad
	\frac{s_n^{\rm iv}}{h_nu_n^q}\to0.
	\label{eq:supp_feasible_curve_rates}
\end{gathered}
\end{equation}
\end{prop}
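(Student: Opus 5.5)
The plan is to prove the statement in four steps: the endpoint identification, the feasible--oracle bound for \(R_{i,n}^{(0)}\), the oracle curve expansion giving \eqref{eq:supp_feasible_curve_expansion}--\eqref{eq:supp_feasible_curve_remainders}, and the rate audit \eqref{eq:supp_feasible_curve_rates}. The first two probability statements are exactly \eqref{eq:supp_endpoint_no_switch_global} and \eqref{eq:supp_endpoint_exact_identification} of Lemma~\ref{lem:supp_endpoint_exposure}. On \(\Omega_n^{\rm ns}\) we have \(A_{i,j}=\Gamma_{i,j}\) and \(T_{i,n}=T_{i,n}^{\circ}\), so
\[
R_{i,n}^{(0)}(v)=\frac1{T_{i,n}^{\circ}}\sum_j\Gamma_{i,j}\bigl\{G_v(Y_{i,j,n}^{\circ}+d_{i,n})-G_v(Y_{i,j,n}^{\circ})\bigr\},
\qquad d_{i,n}:=h_n(\widehat\kappa_{i,n}-\kappa_{i,n}).
\]
This uses \(\overline\Delta_{j,n}X=Y_{i,j,n}^{\circ}+d_{i,n}\) on \(\{Z_{j-1}=i\}\). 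By Proposition~\ref{prop:tail_functional}, \(|d_{i,n}|=O_p(h_n/\sqrt{T_n})\), which is \(o(u_n)\). Hence Lemma~\ref{lem:supp_truncation_perturbation} applies with probability tending to one and splits the summand into three pieces: a linear piece \(2d_{i,n}\,Y^{\circ}G\)-type term (namely \(2d_{i,n}\,T^{-1}\sum\Gamma Y^\circ\mathbf1_{\{|Y^\circ|\le v\}}\)), a quadratic piece \(d_{i,n}^2\), and a boundary piece \(v^2 B_{i,n}(v,|d_{i,n}|)\).

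For the linear piece I will use the IV first-moment information. The predictable mean of \(\Gamma Y^\circ\mathbf1_{\{|Y^\circ|\le v\}}\) is \(O(h_n(u_n^{2-\beta}+\dots))\) by the argument of Lemma~\ref{lem:iv_drift_truncated_moments}. Its martingale part is \(O_p(\sqrt{h_n/T_n}\cdot\sqrt{h_n})\), so overall it is \(O_p(1)\) times small quantities, giving a contribution \(O_p(h_n/\sqrt{T_n})\) and sharper pieces. The quadratic piece is \(O_p(h_n^2/T_n)\), which is at most \(1/n\). The boundary piece is the delicate one, because the width \(|d_{i,n}|\) is random. I would handle it by choosing a deterministic \(r_n\asymp K h_n/\sqrt{T_n}\) dominating \(|d_{i,n}|\) with probability at least \(1-\epsilon\), and then applying \eqref{eq:supp_fo_empirical_boundary}. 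This yields
\[
v^2B_{i,n}\lesssim u_n^2\Bigl(r_nu_n^{-1-\beta}+\sqrt{r_nu_n^{-1-\beta}/T_n}\Bigr)
= \frac{h_nu_n^{1-\beta}}{\sqrt{T_n}}+\frac{h_n^{1/2}u_n^{(3-\beta)/2}}{T_n^{3/4}},
\]
which is exactly the form of \(\varepsilon_{0,n}^{\rm fo}\). Differences over the grid are bounded by twice the maximum, giving the second display. I expect this random-width boundary step to be the main obstacle, specifically checking that the one-sided localization remains valid uniformly on the finite grid \(\mathcal V_n\).

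Next I would expand the oracle curve \(C_{i,n}^{\circ}(v)\) by writing \(\Gamma G_v(Y^\circ)=\Gamma a_i^2(\Delta_jW)^2+\mathcal B_{i,j,n}(v)\). The Brownian part gives \(a_i^2+\mathcal Z_{i,n}^{\circ}/\sqrt n\) up to \(h_n\sum(\Gamma-\E_{j-1}\Gamma)\) corrections. For \(\mathcal B\), Lemma~\ref{lem:iv_conditional_bridge} gives the predictable mean \(\E_{j-1}\Gamma\,h_n(d_{i,1}v^{2-\beta}+d_{i,2}h_nv^{-\beta})\) with error \(h_n\rho_nR_{j-1}\); after division by \(T_{i,n}^\circ\) this is \(O_p(\rho_n)=o_p(n^{-1/2})\). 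Replacing \(\sum\E_{j-1}\Gamma\) by \(\sum\Gamma\) costs \(O_p(\sqrt{h_n}\cdot h_n u_n^{p}/\sqrt{T_n})\), which is negligible. The martingale part of \(\mathcal B\) has conditional variance \(R(h_n^2v^{2-\beta}+h_nv^{4-\beta}+h_n^3)\). Dividing by \(T_n\), it is \(O_p\bigl(\sqrt{u_n^{2-\beta}/n}+u_n^{(4-\beta)/2}/\sqrt{T_n}+\sqrt{h_n/n}\bigr)=o_p(n^{-1/2})\). Differences across \(v<w\) use the annulus bound, which gives \(O_p(u_n^{(4-\beta)/2}/\sqrt{T_n})\) and matches \(s_n^{\rm iv}\).

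Finally, the rates \eqref{eq:supp_feasible_curve_rates} follow term by term from Table~\ref{tab:supp_iv_rate_audit}. The entries \(\sqrt n\rho_n\to0\), \(\rho_n/u_n^{2-\beta}\to0\), \(\rho_n/(h_nu_n^{-\beta})\to0\), and \(u_n^{(4-\beta)/2}/\sqrt{T_n}=o(\cdot)\) cover the \(\rho_n\) and annulus parts. For \(\varepsilon_{0,n}^{\rm fo}\), multiplying by \(\sqrt n\) gives \(\sqrt{h_n}\), \(n^{-1/2}\), \(\sqrt{h_n}u_n^{1-\beta}\) (which vanishes since \(\omega(\beta-1)<1/2\)), and \(n^{1/2}h_n^{1/2}u_n^{(3-\beta)/2}T_n^{-3/4}=u_n^{(3-\beta)/2}T_n^{-1/4}\to0\). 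The ratios to \(u_n^p\) and \(h_nu_n^q\) are checked in the same way using \(T_n\to\infty\).
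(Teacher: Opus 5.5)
Your proposal is correct and follows the paper's proof essentially step for step. The shared steps are: exact selector and exposure replacement on $\Omega_n^{\rm ns}$; the three-term split from Lemma~\ref{lem:supp_truncation_perturbation} with $S_{i,n}(v)=O_p(1)$; domination of the random width by a deterministic $Mh_n/\sqrt{T_n}$ on a high-probability event before applying \eqref{eq:supp_fo_empirical_boundary}; the Brownian-plus-$\mathcal B_{i,j,n}(v)$ decomposition of the oracle curve, with the annulus bound for grid differences; and the rate table.

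A few intermediate orders are misstated, but none of them changes the stated bounds:
\begin{itemize}
\item the averaged quadratic piece is $d_{i,n}^2/h_n=O_p(n^{-1})$;
\item the centered part of $S_{i,n}(v)$ is $O_p(T_n^{-1/2})$;
\item its predictable part is only $O(h_nR_{j-1})$ per step, because of the drift $h_nb(X_{t_{j-1}},i,\alpha^\star)$;
\item the exposure correction is $O_p(h_nu_n^{2-\beta}/\sqrt{T_n})$, without your extra $\sqrt{h_n}$;
\item the term $u_n^{(3+\beta)/2}/(h_n^{1/2}T_n^{3/4})$ in the $h_nu_n^{q}$ ratio needs $\omega(3+\beta)>1$, not just $T_n\to\infty$; this is automatic from the threshold interval.
\end{itemize}
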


\begin{proof}[Proof of Proposition~\ref{prop:supp_iv_feasible_oracle}]
This proposition isolates the part of Appendix
Lemma~\ref{lem:iv_debiasing_main} that is absent in the
fixed-regime oracle calculation. We first prove the selector, exposure,
tail-mean, and boundary replacements and then combine them with the
oracle grid expansion. No rational map is applied in this proposition.

\proofstep{Step 1: endpoint equality and exposure}
Lemma~\ref{lem:supp_endpoint_exposure} gives
\begin{equation}
	\Pb\{(\Omega_n^{\rm ns})^c\}
	\le Cnh_n^2\to0.
	\label{eq:supp_fo_no_leave_return}
\end{equation}
On \(\Omega_n^{\rm ns}\),
\begin{equation}
	A_{i,j}=\Gamma_{i,j}\quad(i\in S,j\le n),
	\qquad T_{i,n}=T_{i,n}^{\circ}\quad(i\in S).
	\label{eq:supp_fo_exact_selector_exposure}
\end{equation}
Thus the selector and exposure replacements are exact simultaneously on
an event whose probability tends to one; in particular, they remain
exact after any of the normalizations in the proposition.

\proofstep{Step 2: uniform signed first-moment bound}
Put
\[
	H_v(y):=y\mathbf1_{\{|y|\le v\}},
	\qquad
	\kappa_i(v):=\int_{|z|>v}z\nu_i^\star(dz).
\]
The symmetric leading part of the L\'evy density has zero signed integral.
Moreover, Assumption~\ref{ass:iv} gives, uniformly in \(i\) and
\(0<v\le1\),
\begin{equation}
\begin{aligned}
	\left|\int_{v<|z|\le1}z\nu_i^\star(dz)\right|
	&=c_i\left|\int_v^1z^{-\beta}
	\{q_i(z)-q_i(-z)\}\,dz\right|\\
	&\le C\int_v^1z^{1-\beta}\,dz+C\int_v^1z^{\rho_q}\,dz
	\le C.
	\label{eq:supp_fo_signed_tail_integral}
\end{aligned}
\end{equation}
The part outside \([-1,1]\) is finite by
Assumption~\ref{ass:levy}; hence
\begin{equation}
	\sup_{i\in S}\sup_{0<v\le1}|\kappa_i(v)|<\infty.
	\label{eq:supp_fo_kappa_v_bounded}
\end{equation}
The compensated SDE, the Lipschitz drift, and the one-step moment bound
give
\begin{equation}
\begin{aligned}
	\E_{j-1}[\Gamma_{i,j}Y_{i,j,n}^{\circ}]
	={}&\E_{j-1}[\Gamma_{i,j}]h_n
	\{b(X_{t_{j-1}},i,\alpha^\star)+\kappa_{i,n}\}
	+O(h_n^{3/2}R_{j-1}).
	\label{eq:supp_fo_full_first_moment}
\end{aligned}
\end{equation}
Next repeat the signed-tail count calculation
\eqref{eq:main_iv_count_bounds}--%
\eqref{eq:main_iv_detection_error} from the proof of
Proposition~\ref{prop:tail_functional}, with the cutoff \(v\) in
place of \(u_n\). Its constants and rates remain uniform because
\(v/u_n\) ranges over a fixed finite set. Replacing \(\Delta_jX\)
by \(Y_{i,j,n}^{\circ}=\Delta_jX+h_n\kappa_{i,n}\) changes the indicator
only on a boundary strip of width \(O(h_n)\);
Lemma~\ref{lem:iv_boundary_probability} makes the contribution of this strip
\(O(h_nR_{j-1})\). Consequently,
\begin{equation}
	\left|\E_{j-1}\!\left[
	\Gamma_{i,j}Y_{i,j,n}^{\circ}
	\mathbf1_{\{|Y_{i,j,n}^{\circ}|>v\}}\right]
	-\E_{j-1}[\Gamma_{i,j}]h_n\kappa_i(v)\right|
	\le Ch_nR_{j-1}.
	\label{eq:supp_fo_detected_first_moment}
\end{equation}
Subtracting \eqref{eq:supp_fo_detected_first_moment} from
\eqref{eq:supp_fo_full_first_moment}, and using
\eqref{eq:supp_fo_kappa_v_bounded} and
\(\sup_{i,n}|\kappa_{i,n}|<\infty\), yields
\begin{equation}
	\left|\E_{j-1}\{
	\Gamma_{i,j}H_v(Y_{i,j,n}^{\circ})\}\right|
	\le Ch_nR_{j-1},
	\qquad v\in\mathcal V_n.
	\label{eq:supp_fo_signed_one_step}
\end{equation}
Also, the second-moment assertion of
Lemma~\ref{lem:iv_conditional_bridge} gives
\begin{equation}
	\E_{j-1}\{
	\Gamma_{i,j}H_v(Y_{i,j,n}^{\circ})^2\}
	\le Ch_nR_{j-1}.
	\label{eq:supp_fo_signed_second_moment}
\end{equation}
Indeed, \(H_v^2=G_v\), and the conditional expectation of \(G_v\) is
bounded by \(Ch_nR_{j-1}\) uniformly on the grid.

Let
\[
	S_{i,n}(v):=\frac1{T_{i,n}^{\circ}}
	\sum_{j=1}^n\Gamma_{i,j}H_v(Y_{i,j,n}^{\circ}).
\]
The predictable part is \(O_p(1)\) by
\eqref{eq:supp_fo_signed_one_step},
part~(i) of Lemma~\ref{lem:supp_moment_bounds}, and
\eqref{main-eq:Tin_convergence}. By
\eqref{eq:supp_fo_signed_second_moment}, the conditional variance of the
centered part is \(O_p(T_n^{-1})\). Consequently,
\begin{equation}
	\max_{i\in S}\max_{v\in\mathcal V_n}|S_{i,n}(v)|=O_p(1).
	\label{eq:supp_fo_truncated_first_sum}
\end{equation}

\proofstep{Step 3: empirical boundary strip}
Lemma~\ref{lem:iv_boundary_probability} gives, for every deterministic
\(0<r\le u_n/2\), uniformly over \(i\) and the threshold grid,
\[
	\max_{i\in S}\max_{v\in\mathcal V_n}B_{i,n}(v,r)
	=O_p\!\left(ru_n^{-1-\beta}
	+\sqrt{\frac{ru_n^{-1-\beta}}{T_n}}\right)+O_p(h_n^K)
\]
for every fixed \(K>0\).

\proofstep{Step 4: replacement of the tail-mean correction}
Set
\[
	e_{i,n}:=\widehat\kappa_{i,n}-\kappa_{i,n},
	\qquad \delta_{i,n}:=h_ne_{i,n}.
\]
Proposition~\ref{prop:tail_functional} implies
\(\max_i|e_{i,n}|=O_p(T_n^{-1/2})\). Given \(M>0\), define
\[
	\mathcal E_{n,M}:=\{\max_i|e_{i,n}|\le MT_n^{-1/2}\},
	\qquad r_{n,M}:=Mh_nT_n^{-1/2}.
\]
Then
\(\lim_{M\to\infty}\liminf_{n\to\infty}
\Pb(\mathcal E_{n,M})=1\), and \(r_{n,M}=o(u_n)\).
For \(|d|\le v/2\), Lemma~\ref{lem:supp_truncation_perturbation}
gives
\begin{equation}
	|G_v(y+d)-G_v(y)-2dy\mathbf1_{\{|y|\le v\}}|
	\le Cd^2+Cv^2
	\mathbf1_{\{\left||y|-v\right|\le|d|\}}.
	\label{eq:supp_fo_G_shift}
\end{equation}

On \(\Omega_n^{\rm ns}\cap\mathcal E_{n,M}\), equations
\eqref{eq:supp_fo_exact_selector_exposure} and
\eqref{eq:supp_fo_G_shift} give, uniformly in \(i,v\),
\begin{align}
	|R_{i,n}^{(0)}(v)|
	\le{}&2|\delta_{i,n}|\,|S_{i,n}(v)|
	+\frac{n\delta_{i,n}^2}{T_{i,n}^{\circ}}
	+Cv^2B_{i,n}(v,r_{n,M}).
	\label{eq:supp_fo_raw_decomposition}
\end{align}
This is the sample-level localization for the random width
\(h_n|\widehat\kappa_{i,n}-\kappa_{i,n}|\): the deterministic-width
boundary estimate is used only on \(\mathcal E_{n,M}\), where the width
is bounded by \(r_{n,M}=o(u_n)\), and no boundary estimate is invoked on
its complement.
By \eqref{eq:supp_fo_truncated_first_sum}, the first term is
\(O_p(h_nT_n^{-1/2})\), and the second is \(O_p(n^{-1})\). Applying
\eqref{eq:supp_fo_empirical_boundary} with \(r=r_{n,M}\) to the last
term yields
\[
	O_p\!\left(
	\frac{h_nu_n^{1-\beta}}{\sqrt{T_n}}
	+\frac{h_n^{1/2}u_n^{(3-\beta)/2}}{T_n^{3/4}}
	\right).
\]
First let \(n\to\infty\) and then \(M\to\infty\). Together with
\eqref{eq:supp_fo_no_leave_return}, this proves
\begin{equation}
	\max_{i\in S}\max_{v\in\mathcal V_n}|R_{i,n}^{(0)}(v)|
	=O_p(\varepsilon_{0,n}^{\rm fo}).
	\label{eq:supp_fo_raw_rate}
\end{equation}
Every displayed first or second threshold difference is a fixed linear
combination of at most three values in \(\mathcal V_n\); hence
\eqref{eq:supp_fo_raw_rate} proves the asserted level, first-difference,
and second-difference bounds.

\proofstep{Step 5: oracle grid expansion}
Sum the conditional expansion in
Lemma~\ref{lem:iv_conditional_bridge} and separate the common
Brownian-square martingale. This gives
\begin{equation}
	C_{i,n}^{\circ}(v)
	=a_i^2(\gamma^\star)+\frac{\mathcal Z_{i,n}^{\circ}}{\sqrt n}
	+d_{i,1}v^{2-\beta}+d_{i,2}h_nv^{-\beta}
	+E_{i,n}^{\circ}(v).
	\label{eq:supp_fo_oracle_grid_expansion}
\end{equation}
The summable conditional-mean remainder is \(o_p(n^{-1/2})\).
To verify all stochastic remainders explicitly, put
\[
	p_{i,j}:=\E_{j-1}[\Gamma_{i,j}],
	\qquad \overline T_{i,n}^{\circ}:=h_n\sum_{j=1}^np_{i,j}.
\]
Since
\(T_{i,n}^{\circ}-\overline T_{i,n}^{\circ}
=h_n\sum_j(\Gamma_{i,j}-p_{i,j})\) and
\(\operatorname{Var}_{j-1}(\Gamma_{i,j})\le Ch_nR_{j-1}\),
conditional isometry gives
\begin{equation}
	\max_{i\in S}
	\left|\frac{\overline T_{i,n}^{\circ}}{T_{i,n}^{\circ}}-1\right|
	=O_p\!\left(\frac{h_n}{\sqrt{T_n}}\right).
	\label{eq:supp_fo_predictable_exposure}
\end{equation}
Multiplication by either
\(v^{2-\beta}\) or \(h_nv^{-\beta}\) therefore
produces an \(o_p(n^{-1/2})\) term uniformly over the grid.

For the centered Brownian-replacement remainder, let
\(\mathcal O_n:=\{\min_iT_{i,n}^{\circ}\ge
(\min_i\pi_i)T_n/2\}\). Then \(\Pb(\mathcal O_n)\to1\). The variance
bound in Lemma~\ref{lem:iv_conditional_bridge}, conditional isometry,
and localization of \(n^{-1}\sum_jR_{j-1}\) give
\begin{align}
	&\mathbf1_{\mathcal O_n}\left|
	\frac{\sqrt n}{T_{i,n}^{\circ}}
	\sum_{j=1}^n\left[
	\mathcal B_{i,j,n}(v)-\E_{j-1}\mathcal B_{i,j,n}(v)
	\right]\right|^2\\
	&\qquad=O_p\!\left(
	v^{2-\beta}+\frac{v^{4-\beta}}h_n+h_n\right)=o_p(1),
	\label{eq:supp_fo_oracle_level_variance}
\end{align}
because \(u_n^{4-\beta}/h_n\to0\). For \(v<w\) on the grid, the annular
estimate in Lemma~\ref{lem:iv_conditional_bridge} gives
\begin{equation}
	\frac1{T_{i,n}^{\circ}}\sum_{j=1}^n
	\left[\Gamma_{i,j}\{G_w(Y_{i,j,n}^{\circ})
	-G_v(Y_{i,j,n}^{\circ})\}
	-\E_{j-1}\!\left\{\Gamma_{i,j}(G_w-G_v)
	(Y_{i,j,n}^{\circ})\right\}\right]
	=O_p\!\left(\frac{u_n^{(4-\beta)/2}}{\sqrt{T_n}}\right)
	=o_p(u_n^{2-\beta}).
	\label{eq:supp_fo_oracle_annulus_rate}
\end{equation}
The bound in \eqref{eq:supp_fo_oracle_annulus_rate} applies to every
second difference by the triangle inequality. Combining
\eqref{eq:supp_fo_predictable_exposure}--%
\eqref{eq:supp_fo_oracle_annulus_rate} with the summability assertion of
Lemma~\ref{lem:iv_conditional_bridge} yields
\begin{equation}
	\max_{i,v}|E_{i,n}^{\circ}(v)|=o_p(n^{-1/2}),
	\qquad
	\max_{i,a,v}
	\{|\Delta_{\zeta_a}E_{i,n}^{\circ}(v)|
	+|\Delta_{\zeta_a}^2E_{i,n}^{\circ}(v)|\}
	=O_p(s_n^{\circ}),
	\label{eq:supp_fo_oracle_remainders}
\end{equation}
where the second maximum is over admissible grid triples and
\begin{equation}
	s_n^{\circ}:=\rho_n
	+\frac{u_n^{(4-\beta)/2}}{\sqrt{T_n}}
	+\frac{h_n}{\sqrt{T_n}}
	\{u_n^{2-\beta}+h_nu_n^{-\beta}\}.
	\label{eq:supp_fo_oracle_difference_scale}
\end{equation}
The last term in \eqref{eq:supp_fo_oracle_difference_scale} is
\(O(\varepsilon_{0,n}^{\rm fo})\), because
\(u_n^{2-\beta}+h_nu_n^{-\beta}=o(1)\). Hence
\begin{equation}
	s_n^{\circ}=O(s_n^{\rm iv}).
	\label{eq:supp_fo_oracle_scale_absorption}
\end{equation}

\proofstep{Step 6: feasible expansion and rate verification}
By the definitions of \(E_{i,n}^{\rm fea}\) and \(R_{i,n}^{(0)}\),
\[
	E_{i,n}^{\rm fea}(v)=E_{i,n}^{\circ}(v)+R_{i,n}^{(0)}(v).
\]
Equations \eqref{eq:supp_fo_raw_rate},
\eqref{eq:supp_fo_oracle_remainders}, and
\eqref{eq:supp_fo_oracle_scale_absorption} therefore give
\[
	\max_{i,v}|E_{i,n}^{\rm fea}(v)|
	=o_p(n^{-1/2}),
	\qquad
	\max_{i,v,w}|E_{i,n}^{\rm fea}(v)-E_{i,n}^{\rm fea}(w)|
	=O_p(s_n^{\rm iv}).
\]
For the second bound, any two points of the fixed \(3\times3\) grid
\(\mathcal V_n/u_n\) are joined by at most four first-difference steps,
so the first-difference bounds in
\eqref{eq:supp_fo_raw_rate} and
\eqref{eq:supp_fo_oracle_remainders} control every pairwise difference.
This proves \eqref{eq:supp_feasible_curve_expansion}--%
\eqref{eq:supp_feasible_curve_remainders}.

Using \(n=T_n/h_n\),
\[
	\sqrt n\,\varepsilon_{0,n}^{\rm fo}
	\lesssim\sqrt{h_n}+n^{-1/2}
	+\sqrt{h_n}\,u_n^{1-\beta}
	+T_n^{-1/4}u_n^{(3-\beta)/2}\to0.
\]
Furthermore,
\begin{align*}
	\frac{\varepsilon_{0,n}^{\rm fo}}{u_n^{2-\beta}}
	\lesssim{}&
	\frac{h_n}{\sqrt{T_n}u_n^{2-\beta}}
	+\frac{h_n}{T_nu_n^{2-\beta}}
	+\frac{h_n}{\sqrt{T_n}u_n}
	+\frac{h_n^{1/2}u_n^{(\beta-1)/2}}{T_n^{3/4}},\\
	\frac{\varepsilon_{0,n}^{\rm fo}}{h_nu_n^{-\beta}}
	\lesssim{}&
	\frac{u_n^\beta}{\sqrt{T_n}}
	+\frac{u_n^\beta}{T_n}
	+\frac{u_n}{\sqrt{T_n}}
	+\frac{u_n^{(3+\beta)/2}}{h_n^{1/2}T_n^{3/4}}.
\end{align*}
Every term converges to zero by
Table~\ref{tab:supp_iv_rate_audit}. For the last term, the lower bound
\(\omega>2/(4+\beta)\) implies \(\omega(3+\beta)>1\).

Finally,
\begin{align*}
	\sqrt n\,s_n^{\rm iv}
	&\lesssim\sqrt n\,\rho_n
	+\left(\frac{u_n^{4-\beta}}{h_n}\right)^{1/2}
	+\sqrt n\,\varepsilon_{0,n}^{\rm fo}\to0,\\
	\frac{s_n^{\rm iv}}{u_n^{2-\beta}}
	&\lesssim\frac{\rho_n}{u_n^{2-\beta}}
	+\frac{u_n^{\beta/2}}{\sqrt{T_n}}
	+\frac{\varepsilon_{0,n}^{\rm fo}}{u_n^{2-\beta}}\to0,\\
	\frac{s_n^{\rm iv}}{h_nu_n^{-\beta}}
	&\lesssim\frac{\rho_n}{h_nu_n^{-\beta}}
	+\frac{u_n^{(4+\beta)/2}}{h_n\sqrt{T_n}}
	+\frac{\varepsilon_{0,n}^{\rm fo}}{h_nu_n^{-\beta}}\to0,
\end{align*}
again by Table~\ref{tab:supp_iv_rate_audit}. This proves
\eqref{eq:supp_feasible_curve_rates} and the proposition.
\end{proof}

Fix \(i\in S\). The concrete quantities are matched to the abstract
notation of Lemma~\ref{lem:iv_debias_stability} by
\begin{equation}
\begin{gathered}
	F_n(v)=\widehat C_{i,n}(v),\qquad
	A_n=a_i^2(\gamma^\star)+\frac{\mathcal Z_{i,n}^{\circ}}{\sqrt n},
	\qquad (b,c)=(d_{i,1},d_{i,2}),\\
	p=2-\beta,\qquad q=-\beta,\qquad
	E_n(v)=E_{i,n}^{\rm fea}(v),\qquad s_n=s_n^{\rm iv}.
\end{gathered}
	\label{eq:debias_feasible_substitution}
\end{equation}
Proposition~\ref{prop:supp_iv_feasible_oracle}, specifically
\eqref{eq:supp_feasible_curve_expansion}--%
\eqref{eq:supp_feasible_curve_rates}, verifies the expansion, the two
remainder bounds, and the three basic rate conditions for this
substitution. Table~\ref{tab:debias_feasible_verification} records the
remaining deterministic checks.
Here \(s_{1,n}\) is defined by \eqref{eq:debias_remainder_scales} with
\(s_n=s_n^{\rm iv}\).
\begin{table}[H]
\centering
\scriptsize
\renewcommand{\arraystretch}{1.25}
\begin{adjustbox}{max width=\textwidth}
\begin{tabular}{
	>{\raggedright\arraybackslash}p{4.0cm}
	>{\raggedright\arraybackslash}p{9.7cm}
	>{\raggedright\arraybackslash}p{2.9cm}}
\toprule
Abstract hypothesis & IV verification & Verification source \\
\midrule
\(p>0\), \(q<0\)
& \(p=2-\beta>0\) and \(q=-\beta<0\).
& Assumption~\ref{ass:iv} \\
\(b,c\ne0\)
& \(b=d_{i,1}=2c_i/(2-\beta)>0\) and
  \(c=d_{i,2}=-c_i(\beta+1)(\beta+2)
  a_i^2(\gamma^\star)/\beta\ne0\).
& \eqref{eq:supp_iv_bias_coefficients} \\
\(\zeta_1,\zeta_2>1\), deterministic \(s_n>0\)
& The multipliers are fixed by Assumption~\ref{ass:iv_sampling}, and
  \(s_n^{\rm iv}\) in \eqref{eq:debias_feasible_substitution} is a positive
  deterministic sequence.
& Assumption~\ref{ass:iv_sampling};
  \eqref{eq:debias_feasible_substitution} \\
Expansion \eqref{eq:debias_abstract_expansion}
& \(\widehat C_{i,n}(v)=A_n+d_{i,1}v^{2-\beta}
  +d_{i,2}h_nv^{-\beta}+E_n(v)\).
& \eqref{eq:supp_feasible_curve_expansion} \\
\(\max_v|E_n(v)|=o_p(n^{-1/2})\)
& The feasible remainder is uniformly \(o_p(n^{-1/2})\) on the
  finite grid.
& \eqref{eq:supp_feasible_curve_remainders} \\
\(\max_{v,w}|E_n(v)-E_n(w)|=O_p(s_n)\)
& The feasible remainder differences are uniformly
  \(O_p(s_n^{\rm iv})\) on the finite grid.
& \eqref{eq:supp_feasible_curve_remainders} \\
\(s_n=o(u_n^p)\)
& \(s_n^{\rm iv}/u_n^{2-\beta}\to0\).
& \eqref{eq:supp_feasible_curve_rates} \\
\(h_nu_n^q=o(u_n^p)\)
& \(h_nu_n^q/u_n^p=h_n/u_n^2
  =O(h_n^{1-2\omega})\to0\).
& Table~\ref{tab:supp_iv_rate_audit} \\
\(h_n=o(u_n^p)\)
& \(h_n/u_n^p=O\{h_n^{1-\omega(2-\beta)}\}\to0\).
& Table~\ref{tab:supp_iv_rate_audit} \\
\(h_n=o(h_nu_n^q)\)
& \(h_n/(h_nu_n^q)=u_n^\beta\to0\).
& \(\beta>1\), \(u_n\to0\) \\
\(\sqrt n\,s_{1,n}\to0\)
& \(\sqrt n\,s_n^{\rm iv}\to0\),
  \(\sqrt n\,h_n^2u_n^{-\beta-2}\to0\), and the two relative rates
  imply this through
  \eqref{eq:debias_remainder_scales}.
& \eqref{eq:supp_feasible_curve_rates},
  \eqref{eq:supp_iv_rho_definition}, and
  Table~\ref{tab:supp_iv_rate_audit} \\
\(s_{1,n}/(h_nu_n^q)\to0\)
& \(s_n^{\rm iv}/(h_nu_n^{-\beta})\to0\),
  \(h_n/u_n^2\to0\), and
  \(s_n^{\rm iv}/u_n^{2-\beta}\to0\) imply this.
& \eqref{eq:supp_feasible_curve_rates} and
  Table~\ref{tab:supp_iv_rate_audit} \\
\bottomrule
\end{tabular}
\end{adjustbox}
\caption{Verification of the abstract two-step stability hypotheses for
the feasible IV variation curve.}
\label{tab:debias_feasible_verification}
\end{table}

For completeness, the two composite rate implications used in the last
two rows follow from
\begin{align*}
	\sqrt n\,s_{1,n}
	&\lesssim \sqrt n\,s_n^{\rm iv}
	+\sqrt n\,
	\frac{(h_nu_n^q)^2+(s_n^{\rm iv})^2}{u_n^p}=o(1),\\
	\frac{s_{1,n}}{h_nu_n^q}
	&\lesssim \frac{s_n^{\rm iv}}{h_nu_n^q}
	+\frac{h_nu_n^q}{u_n^p}
	+\frac{(s_n^{\rm iv})^2}{h_nu_n^{p+q}}=o(1).
\end{align*}
Indeed, because \(p=2-\beta\) and \(q=-\beta\),
\[
	\sqrt n\,\frac{(h_nu_n^q)^2}{u_n^p}
	=\sqrt n\,h_n^2u_n^{-\beta-2}\to0
\]
by \eqref{eq:supp_iv_rho_definition} and
Table~\ref{tab:supp_iv_rate_audit}. The other quadratic terms factor as
\[
	\sqrt n\,\frac{(s_n^{\rm iv})^2}{u_n^p}
	=(\sqrt n\,s_n^{\rm iv})
	\frac{s_n^{\rm iv}}{u_n^p},
	\qquad
	\frac{(s_n^{\rm iv})^2}{h_nu_n^{p+q}}
	=\frac{s_n^{\rm iv}}{u_n^p}
	\frac{s_n^{\rm iv}}{h_nu_n^q},
\]
and \((h_nu_n^q)/u_n^p=h_n/u_n^2\to0\). Hence every hypothesis of
Lemma~\ref{lem:iv_debias_stability} has now been verified for the
substitution \eqref{eq:debias_feasible_substitution}, uniformly over the
finite regime set.

Under this substitution,
\eqref{eq:debias_first_denominator},
\eqref{eq:debias_second_denominator}, and the stabilization conclusion of
Lemma~\ref{lem:iv_debias_stability} give, uniformly over \(i\in S\),
\begin{align*}
	\Delta_{\zeta_1}^2\widehat C_{i,n}(v)
	&=d_{i,1}(\zeta_1^{2-\beta}-1)^2
	v^{2-\beta}\{1+o_p(1)\},
	\qquad v\in\mathcal W_n,\\
	\Delta_{\zeta_2}^2\widehat C_{i,n}^{(1)}(u_n)
	&=\lambda_{p,q}(\zeta_1)d_{i,2}
	(\zeta_2^{-\beta}-1)^2h_nu_n^{-\beta}\{1+o_p(1)\}.
\end{align*}
Moreover,
\[
	\Pb\!\left(
	\min\left\{
	\min_{i\in S}\min_{v\in\mathcal W_n}
	|\Delta_{\zeta_1}^2\widehat C_{i,n}(v)|,
	\min_{i\in S}|\Delta_{\zeta_2}^2
	\widehat C_{i,n}^{(1)}(u_n)|\right\}\ge h_n
	\right)\to1.
\]
These are the denominator and stabilization assertions of Appendix
Lemma~\ref{lem:iv_debiasing_main}.

\begin{lem}[IV debiased variation]
	\label{lem:iv_debiased_variation}
	In Case IV, under Assumptions~\ref{ass:smooth},
	\ref{ass:levy},
	\ref{ass:iv}, \ref{ass:iv_coeff},
	\ref{ass:iv_sampling}, and \ref{ass:ergodic}, there are
	\((\mathcal Z_{i,n}:i\in S)\) such that
	\[
		\widehat a_{i,n}^{\,2,\rm db}
		=a_i^2(\gamma^\star)+\frac{\mathcal Z_{i,n}}{\sqrt n}
		+o_p(n^{-1/2}),
	\]
	uniformly over \(i\in S\), where
	\[
		\mathcal Z_{i,n}
		=\frac1{\sqrt n\,\pi_i}\sum_{j=1}^n
		\Gamma_{i,j}a_i^2(\gamma^\star)
		\left\{\frac{(\Delta_jW)^2}{h_n}-1\right\}+o_p(1).
	\]
	Moreover,
	\[
		(\mathcal Z_{i,n}:i\in S)^\top
		\cil N\!\left(0,\operatorname{diag}\left(
		\frac{2a_i^4(\gamma^\star)}{\pi_i}:i\in S\right)\right).
	\]
\end{lem}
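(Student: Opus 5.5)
The plan is to combine the abstract two-step stability result, Lemma~\ref{lem:iv_debias_stability}, with the feasible expansion of Proposition~\ref{prop:supp_iv_feasible_oracle}, using the substitution \eqref{eq:debias_feasible_substitution}. Table~\ref{tab:debias_feasible_verification} has already checked every hypothesis of the abstract lemma, uniformly over the finite regime set.

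\emph{Step 1: expansion of the debiased variance.} Fix \(i\in S\). By \eqref{eq:debias_second_expansion}, on an event of probability tending to one both stabilization indicators equal one. On this event
\[
\widehat a_{i,n}^{\,2,\rm db}=(\mathcal D_{\zeta_2,n}\widehat C_{i,n}^{(1)})(u_n)=A_n+o_p(n^{-1/2}),
\qquad
A_n=a_i^2(\gamma^\star)+\mathcal Z_{i,n}^{\circ}/\sqrt n .
\]
One point needs care. The abstract lemma is stated for a deterministic leading constant \(A_n\), but here \(A_n\) is random. This causes no difficulty, because \(A_n\) cancels exactly in \(\Delta_\zeta\) and \(\Delta_\zeta^2\), so the rational maps never see it. Since \(S\) is finite, uniformity over \(i\) is automatic. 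We therefore set \(\mathcal Z_{i,n}:=\mathcal Z_{i,n}^{\circ}\), which gives the first displayed assertion.

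\emph{Step 2: the martingale representation.} By definition,
\[
\mathcal Z_{i,n}^{\circ}=\frac{T_n}{T_{i,n}^{\circ}}\cdot\frac1{\sqrt n}\sum_j\Gamma_{i,j}a_i^2\{(\Delta_jW)^2/h_n-1\}.
\]
By Lemma~\ref{lem:supp_endpoint_exposure}, \(T_n/T_{i,n}^{\circ}\cip 1/\pi_i\). The normalized sum \(\sqrt n\sum_j\xi^{Q,i}_{n,j}\) is \(O_p(1)\); this follows from its conditional variance \(2a_i^4 n^{-1}\sum_j\E_{j-1}\Gamma_{i,j}=O_p(1)\). Replacing \(T_n/T_{i,n}^{\circ}\) by \(1/\pi_i\) therefore costs \(o_p(1)\), which yields the stated representation of \(\mathcal Z_{i,n}\).

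\emph{Step 3: the CLT.} Apply the vector martingale CLT to the array \(\xi^{Q,i}_{n,j}\), \(i\in S\), exactly as in Step~4 of the proof of Theorem~\ref{thm:parametric_inference}. We check the conditional covariance first. The Brownian increment is independent of the regime path, so
\[
\E_{j-1}[\xi^{Q,i}\xi^{Q,k}]=\mathbf1_{\{i=k\}}\frac{2a_i^4}{n\pi_i^2}\,\E_{j-1}\Gamma_{i,j},
\]
since \(\Gamma_{i,j}\Gamma_{k,j}=0\) for \(i\ne k\). Moreover \(\E_{j-1}\Gamma_{i,j}=\mathbf1_{\{Z_{j-1}=i\}}e^{q_{ii}h_n}\), and Lemma~\ref{lem:moment_ergodic_main} applied to \(\mathbf1_{\{k=i\}}\) gives \(n^{-1}\sum_j\mathbf1_{\{Z_{j-1}=i\}}\cip\pi_i\). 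Hence the covariance converges to \(\operatorname{diag}(2a_i^4/\pi_i)\). The Lyapunov condition holds because \(\sum_j\E_{j-1}|\xi^{Q,i}_{n,j}|^4=O_p(n^{-1})\). Slutsky's lemma then completes the proof.

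\emph{Main obstacle.} The substantive difficulty, namely the feasible and oracle remainder bounds and the rate conditions, has already been absorbed into Proposition~\ref{prop:supp_iv_feasible_oracle}. What remains delicate is justifying that the abstract lemma applies with a random \(A_n\) and with a random \(E_n\) that is controlled only in probability. I would handle this by rereading the proof of Lemma~\ref{lem:iv_debias_stability}. Every step there is a pathwise algebraic bound on the events where the denominators are comparable to \(u_n^p\) and \(h_nu_n^q\), and those events have probability tending to one.
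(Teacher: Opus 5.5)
Your proposal is correct and follows the paper's proof essentially step for step. You apply Lemma~\ref{lem:iv_debias_stability} under the substitution \eqref{eq:debias_feasible_substitution} to obtain $\widehat a_{i,n}^{\,2,\rm db}=a_i^2(\gamma^\star)+\mathcal Z_{i,n}^{\circ}/\sqrt n+o_p(n^{-1/2})$, write $\mathcal Z_{i,n}^{\circ}=(T_n/T_{i,n}^{\circ})Q_{i,n}$, and finish with the vector martingale CLT, the exposure law, and Slutsky's theorem. Two small remarks: the $O_p(1)$ quantity in your Step~2 is $\sum_j\xi^{Q,i}_{n,j}=Q_{i,n}/\pi_i$, not $\sqrt n\sum_j\xi^{Q,i}_{n,j}$; and the abstract lemma never assumes $A_n$ deterministic, so your extra justification for a random $A_n$ is harmless but unnecessary.
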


\begin{proof}[Proof of Lemma~\ref{lem:iv_debiased_variation}]
This is the final probabilistic step in Appendix
Lemma~\ref{lem:iv_debiasing_main}. The abstract stability lemma
reduces the estimator to the centered Brownian quadratic term, after
which the exposure law and a vector martingale central limit theorem
identify its joint regime-wise limit.

Apply Lemma~\ref{lem:iv_debias_stability} under the substitution
\eqref{eq:debias_feasible_substitution}. Its second-map expansion
\eqref{eq:debias_second_expansion} and
Table~\ref{tab:debias_feasible_verification} give, uniformly over the
finite regime set,
\[
	\widehat a_{i,n}^{\,2,\rm db}
	=a_i^2(\gamma^\star)
	+\frac{\mathcal Z_{i,n}^{\circ}}{\sqrt n}
	+o_p(n^{-1/2}).
\]

It remains to identify the joint law. Put
\[
	Q_{i,n}:=\frac1{\sqrt n}\sum_{j=1}^n
	\Gamma_{i,j}a_i^2(\gamma^\star)
	\left\{\frac{(\Delta_jW)^2}{h_n}-1\right\}.
\]
Then
\[
	\mathcal Z_{i,n}^{\circ}
	=\frac{T_n}{T_{i,n}^{\circ}}Q_{i,n}.
\]
The vector summands defining \(Q_n=(Q_{i,n}:i\in S)\) are martingale
differences. If
\(p_{i,j}:=\E_{j-1}\Gamma_{i,j}\), independence of \(W\) and \(Z\)
and the Gaussian fourth moment give
\[
	\sum_{j=1}^n\E_{j-1}
	\left[\frac{\Gamma_{i,j}a_i^2}{\sqrt n}
	\left\{\frac{(\Delta_jW)^2}{h_n}-1\right\}
	\frac{\Gamma_{k,j}a_k^2}{\sqrt n}
	\left\{\frac{(\Delta_jW)^2}{h_n}-1\right\}\right]
	=\frac{2a_i^4}{n}\mathbf1_{\{i=k\}}\sum_{j=1}^np_{i,j}.
\]
Here and below \(a_i=a_i(\gamma^\star)\). Since
\[
	p_{i,j}=\mathbf1_{\{Z_{j-1}=i\}}\{1+O(h_n)\},
	\qquad
	\frac1n\sum_{j=1}^np_{i,j}\cip\pi_i,
\]
	the predictable covariance of \(Q_n\) converges to
	\(\operatorname{diag}(2\pi_i a_i^4:i\in S)\). The corresponding sum of
	conditional fourth moments is \(O_p(n^{-1})\), so the vector Lindeberg
	condition holds. In particular, \(Q_{i,n}=O_p(1)\). Since
	\(T_{i,n}^{\circ}/T_n\cip\pi_i\),
	\[
		\mathcal Z_{i,n}^{\circ}
		=\frac{T_n}{T_{i,n}^{\circ}}Q_{i,n}
		=\frac1{\pi_i}Q_{i,n}+o_p(1)
		=\frac1{\sqrt n\,\pi_i}\sum_{j=1}^n
		\Gamma_{i,j}a_i^2(\gamma^\star)
		\left\{\frac{(\Delta_jW)^2}{h_n}-1\right\}+o_p(1).
	\]
	The martingale central limit theorem and Slutsky's theorem yield
	\[
		(\mathcal Z_{i,n}^{\circ}:i\in S)^\top
	\cil N\!\left(0,\operatorname{diag}
	\left(\frac{2a_i^4(\gamma^\star)}{\pi_i}:i\in S\right)\right).
\]
Thus the assertion holds with
\(\mathcal Z_{i,n}=\mathcal Z_{i,n}^{\circ}\).
\end{proof}

\section{Density estimation}

% The tail-second-moment argument is now part of the proof of
% the studentization corollary in the main manuscript.

This section proves Appendix
Lemma~\ref{lem:kernel_small_time_main}. Its one-target-jump
decomposition follows the small-time L\'evy-density principle developed
for L\'evy and local jump-diffusion models by
\citet{figueroalopezhoudre2009small} and
\citet{FigueroaLopezLuoOuyang2014}. Here it is strengthened to a
conditional \(L^2(B)\) kernel expansion with a changing bandwidth,
uniform polynomial state envelope, observed-regime selector, and
feasible-residual replacement.

\begin{proof}[Proof of Lemma~\ref{lem:kernel_small_time_main}]
The proof first replaces the endpoint selector by the no-switch
selector, then conditions on zero, one, or multiple target jumps to
obtain the conditional kernel expansion. It next aggregates the oracle
count and finally transfers that count to the feasible residuals using
the maximal plug-in error. This order separates the classical
small-time jump argument from the two switching-model replacements.

\proofstep{Step 1: selector and nonswitching reduction}
Let \(c_K>0\) satisfy
\(\operatorname{supp}(K)\subset[-c_K,c_K]\). Choose \(n_0\) such that
\(c_K\eta_n<\eta_0/2\) for \(n\ge n_0\), and put
\[
	B_n:=\{y\in\mathbb R:\operatorname{dist}(y,B)\le c_K\eta_n\}.
\]
All estimates below are for \(n\ge n_0\), uniformly over the finite set
of regimes.

We first prove the two conditional kernel bounds. If \(Z_{j-1}\ne i\),
then \(A_{i,j}=0\), and both leading terms vanish. Work henceforth on
\(\{Z_{j-1}=i\}\). Equation
\eqref{eq:supp_endpoint_no_switch_local} in
Lemma~\ref{lem:supp_endpoint_exposure} gives
\[
	\Pb_{j-1}(A_{i,j}\ne\Gamma_{i,j})\le Ch_n^2.
\]
Minkowski's integral inequality and
\(\|K_{\eta_n}\|_2=\eta_n^{-1/2}\|K\|_2\) give
\[
\begin{aligned}
	&\left\|\E_{j-1}\!\left[
	(A_{i,j}-\Gamma_{i,j})K_{\eta_n}(\cdot-Y_j^\star)
	\right]\right\|_{L^2(B)}
	\le Ch_n^2\eta_n^{-1/2},\\
	&\E_{j-1}\!\left[
	|A_{i,j}-\Gamma_{i,j}|
	\|K_{\eta_n}(\cdot-Y_j^\star)\|_{L^2(B)}^2
	\right]
	\le Ch_n^2\eta_n^{-1}.
\end{aligned}
\]
These bounds are dominated by the asserted rates. It therefore suffices
to work with \(\Gamma_{i,j}\).

Conditionally on \(\mathcal F_{t_{j-1}}\) and
\(\Gamma_{i,j}=1\), the increment has the law of the nonswitching process
\[
	\bar X_t=x+\int_0^t b(\bar X_s,i,\alpha^\star)ds
	+\int_0^t a(\bar X_s,i,\gamma^\star)dW_s
	+\int_0^t\int_{\mathbb R}z\,\widetilde N_i(ds,dz),
\]
started from \(x=X_{t_{j-1}}\). Define
\[
	\bar Y_{h_n}^{x,i}
	:=\bar X_{h_n}-x-h_n\{b(x,i,\alpha^\star)-\kappa_{i,n}\}.
\]
Independence of the future driving noises and the regime path gives, for
every integrable functional \(F\),
\begin{equation}
	\E_{j-1}[\Gamma_{i,j}F(Y_j^\star)]
	=\mathbf1_{\{Z_{j-1}=i\}}e^{q_{ii}h_n}
	\E_x^i[F(\bar Y_{h_n}^{x,i})].
\label{eq:supp_kernel_nonswitching_bridge}
\end{equation}
Under this reduction, it remains to establish
\begin{align}
	\left\|\E_x^iK_{\eta_n}(\cdot-\bar Y_{h_n}^{x,i})
	-h_n(K_{\eta_n}*s_i^\star)\right\|_{L^2(B)}
	&\le R(x)\frac{h_n^2}{\eta_n^{5/2}},
	\label{eq:kernel_nonswitching_first_supp}\\
	\E_x^i\|K_{\eta_n}(\cdot-\bar Y_{h_n}^{x,i})\|_{L^2(B)}^2
	&\le R(x)\frac{h_n}{\eta_n}.
	\label{eq:kernel_nonswitching_second_supp}
\end{align}
Here and below \(R(x)\le C(1+|x|)^C\) is enlarged when necessary.

\proofstep{Step 2: target-jump decomposition}
Choose fixed open neighborhoods
\(B^{\eta_0/2}\Subset U\Subset B^{\eta_0}\), and put
\(N_{h_n}^U=N_i((0,h_n]\times U)\) and
\(\lambda_i=\nu_i^\star(U)<\infty\). Decompose
\begin{equation}
	\E_x^iK_{\eta_n}(\cdot-\bar Y_{h_n}^{x,i})
	=E_{0,n}+E_{1,n}+E_{\ge2,n}
\label{eq:supp_kernel_target_decomposition}
\end{equation}
according as \(N_{h_n}^U=0,1\), or at least \(2\). Since \(N_{h_n}^U\) is
Poisson with parameter \(h_n\lambda_i\),
\[
	\|E_{\ge2,n}\|_{L^2(B)}
	\le\Pb(N_{h_n}^U\ge2)\|K_{\eta_n}\|_2
	\le Ch_n^2\eta_n^{-1/2}.
\]

We next prove the no-target-jump estimate
\begin{equation}
	\Pb_x^i(\bar Y_{h_n}^{x,i}\in B_n,\,N_{h_n}^U=0)
	\le R(x)h_n^2.
	\label{eq:kernel_no_target_supp}
\end{equation}
Choose an open \(U_1\) such that
\(B_n\subset U_1\), for all sufficiently large \(n\), and
\(\overline U_1\Subset U\). Put
\[
	d_0:=\operatorname{dist}(\overline U_1,U^c)>0,
	\qquad r_0:=\inf_{y\in U_1}|y|>0.
\]
Fix \(\varepsilon>0\) such that
\(\varepsilon<\operatorname{dist}(0,U)/2\) and
\(r_0/(6\varepsilon)>2\), and define
\[
	V_\varepsilon:=U^c\cap\{|z|>\varepsilon\},
	\qquad
	N_{h_n}^{V_\varepsilon}:=N_i((0,h_n]\times V_\varepsilon).
\]
On \(\{N_{h_n}^U=0\}\), write
\[
	\bar Y_{h_n}^{x,i}=J_{h_n}^{V_\varepsilon}+C_{h_n}^{(\varepsilon)},
\]
where
\[
	J_{h_n}^{V_\varepsilon}
	:=\int_0^{h_n}\int_{V_\varepsilon}zN_i(ds,dz)
\]
and \(C_{h_n}^{(\varepsilon)}\) contains the drift remainder, the Brownian
integral, the deterministic compensators, and
\[
	M_{h_n}^{(\varepsilon)}
	:=\int_0^{h_n}\int_{|z|\le\varepsilon}
	z\,\widetilde N_i(ds,dz).
\]
If \(N_{h_n}^{V_\varepsilon}=1\), reaching \(U_1\) requires
\(|C_{h_n}^{(\varepsilon)}|\ge d_0\); if
\(N_{h_n}^{V_\varepsilon}=0\), it requires
\(|C_{h_n}^{(\varepsilon)}|\ge r_0\). Consequently,
\begin{align*}
	\Pb_x^i(\bar Y_{h_n}^{x,i}\in B_n,N_{h_n}^U=0)
	\le{}&\Pb(N_{h_n}^{V_\varepsilon}\ge2)\\
	&+\Pb_x^i(N_{h_n}^{V_\varepsilon}=1,
	|C_{h_n}^{(\varepsilon)}|\ge d_0)\\
	&+\Pb_x^i(N_{h_n}^{V_\varepsilon}=0,
	|C_{h_n}^{(\varepsilon)}|\ge r_0).
\end{align*}
The first term is \(O(h_n^2)\). Assumption~\ref{ass:smooth},
Lemma~\ref{lem:supp_moment_bounds}, and the conditional
Burkholder--Davis--Gundy inequality give
\[
	\E_x^i\!\left[
	\mathbf1_{\{N_{h_n}^{V_\varepsilon}=1\}}
	|C_{h_n}^{(\varepsilon)}|^2\right]\le R(x)h_n^2,
\]
so Markov's inequality bounds the second term by \(R(x)h_n^2\).

For the last term, the drift remainder has fourth moment \(R(x)h_n^5\),
and the Brownian integral has fourth moment \(R(x)h_n^2\), by
H\"older's inequality and the Burkholder--Davis--Gundy inequality. The
martingale \(M^{(\varepsilon)}\) has jumps bounded by \(\varepsilon\) and
predictable quadratic variation
\(t\int_{|z|\le\varepsilon}z^2\nu_i^\star(dz)\). The Bennett inequality
for compensated counting martingales in
\citet[Theorem~5]{LeGuevel2021} gives
\[
	\Pb_x^i\!\left(
	\sup_{t\le h_n}|M_t^{(\varepsilon)}|\ge r_0/3\right)
	\le Ch_n^2,
\]
because \(r_0/(6\varepsilon)>2\). The deterministic compensators are
\(O(h_n)\): in Case FV this uses
\(\sup_{i,n}|\kappa_{i,n}|<\infty\), while in Case IV it follows from
\(\kappa_{i,n}=m_i^\star+O(u_n^{2-\beta})\). Markov's inequality now
bounds the last term by \(R(x)h_n^2\), proving
\eqref{eq:kernel_no_target_supp}. Therefore
\[
	\|E_{0,n}\|_{L^2(B)}
	\le \|K_{\eta_n}\|_2
	\Pb_x^i(\bar Y_{h_n}^{x,i}\in B_n,N_{h_n}^U=0)
	\le R(x)h_n^2\eta_n^{-1/2}.
\]

On \(\{N_{h_n}^U=1\}\), let \(\zeta\) be the unique \(U\)-jump. If
\(\lambda_i>0\), its conditional density is
\(s_i^\star(z)\mathbf1_U(z)/\lambda_i\), and
\(\bar Y_{h_n}^{x,i}=\zeta+C_{h_n}\). Conditional moment inequalities give,
uniformly for \(z\in U\),
\begin{equation}
	|\E_x^i(C_{h_n}\mid\zeta=z,N_{h_n}^U=1)|\le R(x)h_n,
	\qquad
	\E_x^i(C_{h_n}^2\mid\zeta=z,N_{h_n}^U=1)\le R(x)h_n.
	\label{eq:kernel_remainder_moments_supp}
\end{equation}
These estimates use only the second moment of the compensated small-jump
integral; no first absolute small-jump moment is invoked in Case IV.
If \(\lambda_i=0\), the one-jump contribution is zero.

For \(\lambda_i>0\), the conditional density of \(\zeta\) gives
\[
	E_{1,n}=h_ne^{-h_n\lambda_i}\int_U
	\E_x^i\!\left[
	K_{\eta_n}(\cdot-z-C_{h_n})\mid\zeta=z,N_{h_n}^U=1\right]
	s_i^\star(z)dz.
\]
Taylor's formula with integral remainder yields
\begin{align*}
	K_{\eta_n}(y-z-C_{h_n})-K_{\eta_n}(y-z)
	={}&-C_{h_n}K_{\eta_n}'(y-z)\\
	&+C_{h_n}^2\int_0^1(1-r)
	K_{\eta_n}''(y-z-rC_{h_n})dr.
\end{align*}
By \eqref{eq:kernel_remainder_moments_supp}, Young's inequality, and
\[
	\|K_{\eta_n}'\|_1=\eta_n^{-1}\|K'\|_1,
	\qquad
	\|K_{\eta_n}''\|_2=\eta_n^{-5/2}\|K''\|_2,
\]
the first- and second-order terms, before multiplication by the leading
factor \(h_n\), are bounded in \(L^2(B)\) by
\(R(x)h_n\eta_n^{-1}\) and
\(R(x)h_n\eta_n^{-5/2}\), respectively. Moreover,
\(|e^{-h_n\lambda_i}-1|\le Ch_n\), and Young's inequality bounds
\(\|K_{\eta_n}*s_i^\star\|_{L^2(B)}\) uniformly. Moreover, for
\(y\in B\), the support of \(K_{\eta_n}(y-\cdot)\) is contained in
\(U\), so the integral over \(U\) equals
\((K_{\eta_n}*s_i^\star)(y)\). Hence
\[
	\|E_{1,n}-h_n(K_{\eta_n}*s_i^\star)\|_{L^2(B)}
	\le R(x)\frac{h_n^2}{\eta_n^{5/2}}.
\]
Combining the zero-, one-, and multiple-jump estimates proves
\eqref{eq:kernel_nonswitching_first_supp}.

For the conditional second moment, note that
\(K_{\eta_n}(z-y)=0\) for every \(z\in B\) unless \(y\in B_n\).
The target-jump decomposition
\eqref{eq:supp_kernel_target_decomposition} gives
\[
	\Pb_x^i(\bar Y_{h_n}^{x,i}\in B_n)
	\le\Pb_x^i(\bar Y_{h_n}^{x,i}\in B_n,N_{h_n}^U=0)
	+\Pb(N_{h_n}^U\ge1)\le R(x)h_n.
\]
Therefore
\[
	\E_x^i\|K_{\eta_n}(\cdot-\bar Y_{h_n}^{x,i})\|_{L^2(B)}^2
	\le\eta_n^{-1}\|K\|_2^2
	\Pb_x^i(\bar Y_{h_n}^{x,i}\in B_n)
	\le R(x)h_n\eta_n^{-1},
\]
which proves \eqref{eq:kernel_nonswitching_second_supp}.

\proofstep{Step 3: switching and oracle count}
Returning to the switching model multiplies the leading term by
\(e^{q_{ii}h_n}=1+O(h_n)\). Young's inequality shows that the resulting
error is \(O(h_n^2)\). Combining
\eqref{eq:supp_endpoint_no_switch_local},
\eqref{eq:supp_kernel_nonswitching_bridge}, and
\eqref{eq:kernel_nonswitching_first_supp}--%
\eqref{eq:kernel_nonswitching_second_supp} proves the first two
assertions of Lemma~\ref{lem:kernel_small_time_main}.

We next prove the oracle count assertions. Because
\(B^{\eta_0}\Subset U_0\), choose \(\eta_1>\eta_0\) such that
\(B^{\eta_1}\Subset U_0\). Repeat the target-jump decomposition
\eqref{eq:supp_kernel_target_decomposition} and the estimates
\eqref{eq:kernel_no_target_supp} and
\eqref{eq:kernel_remainder_moments_supp} with \(B^{\eta_1}\) in place
of \(B_n\). This gives
\begin{equation}
	\E_{j-1}\!\left[
	A_{i,j}\mathbf1_{\{Y_j^\star\in B^{\eta_1}\}}\right]
	\le h_nR_{j-1}.
\label{eq:supp_kernel_oracle_count}
\end{equation}
The third assertion follows because
\(B^{\eta_0}\subset B^{\eta_1}\). By the tower property and
part~(i) of Lemma~\ref{lem:supp_moment_bounds},
\begin{equation}
\begin{aligned}
	\E\!\left[
	\sum_{j=1}^nA_{i,j}
	\mathbf1_{\{Y_j^\star\in B^{\eta_1}\}}R_{j-1}\right]
	&\le h_n\sum_{j=1}^n\E[R_{j-1}^2]
	=O(T_n).
\end{aligned}
\label{eq:supp_kernel_weighted_oracle_count}
\end{equation}
Markov's inequality applied to
\eqref{eq:supp_kernel_weighted_oracle_count} proves the fourth
assertion, with either \(\eta_0\) or \(\eta_1\).

\proofstep{Step 4: feasible residual replacement}
The mean-value theorem,
the finite number of regimes, and the common coefficient envelopes give,
for all \(i,j\),
\[
	|\delta_{j,n}^{\,\ell}|
	\le h_nR_{j-1}(1+L_n^\ell),
	\qquad
	L_n^\ell:=|\bar\alpha_n^\ell-\alpha^\star|
	+\max_{k\in S}|\widehat\kappa_{k,n}-\kappa_{k,n}|.
\]
Theorem~\ref{thm:parametric_inference} and
Proposition~\ref{prop:tail_functional} give
\(L_n^\ell=O_p(1)\). For every \(p>2\),
part~(i) of Lemma~\ref{lem:supp_moment_bounds} and the union bound imply
\begin{equation}
	\max_{j\le n}R_{j-1}=O_p(n^{1/p}).
\label{eq:supp_kernel_max_envelope}
\end{equation}
Choose \(p>4\) sufficiently large that also
\(p>2/(1-\rho)\) in Case FV and \(p>2/(1-\omega)\) in Case IV.
Since \(nh_n^2\to0\),
\begin{equation}
	\sqrt{h_n}\,n^{1/p}\to0,
	\qquad
	\frac{h_n}{u_n}n^{1/p}\to0.
\label{eq:supp_kernel_mesh_envelope_rates}
\end{equation}
Equations \eqref{eq:supp_kernel_max_envelope} and
\eqref{eq:supp_kernel_mesh_envelope_rates} prove
\(\max_j|\delta_{j,n}^{\,\ell}|/\sqrt{h_n}=o_p(1)\) and
\(\max_j|\delta_{j,n}^{\,\ell}|/u_n=o_p(1)\).

Finally, on the event
\(\{\max_j|\delta_{j,n}^{\,\ell}|<\eta_1-\eta_0\}\), whose
probability tends to one,
\[
	\{Y_j^\star\in B^{\eta_0}\}
	\cup\{\widehat Y_{i,j}^{\,\ell}\in B^{\eta_0}\}
	\subset\{Y_j^\star\in B^{\eta_1}\}.
\]
The weighted count \eqref{eq:supp_kernel_weighted_oracle_count} with
\(B^{\eta_1}\) therefore implies the final assertion of
Lemma~\ref{lem:kernel_small_time_main}. This completes the proof.
\end{proof}

\end{document}